\newtheorem{theorem}{Theorem}[section]
\newtheorem{corollary}[theorem]{Corollary}
\newtheorem{lemma}[theorem]{Lemma}
\newtheorem{proposition}[theorem]{Proposition}
\theoremstyle{definition}
\newtheorem{definition}[theorem]{Definition}
\theoremstyle{remark}
\newtheorem{remark}[theorem]{Remark}
\numberwithin{equation}{section}
\newcommand{\fin}{\mbox{}\hfill $\square$ \\[0.2cm]}
\newcommand{\Timet}{\textsf{t}}
\newcommand{\Times}{\textsf{s}}
\newcommand{\Timeu}{\textsf{u}}
\newcommand{\Timeq}{\textsf{q}}
\newcommand{\N}{\mathbf{N}}
\newcommand{\Z}{\mathbf{Z}}
\newcommand{\Q}{\mathbf{Q}}
\newcommand{\R}{\mathbf{R}}
\newcommand{\C}{\mathbf{C}}
\newcommand{\D}{\mathbf{D}}
\newcommand{\FF}{\mathcal{F}}
\newcommand{\LL}{L}
\newcommand{\oo}{\gamma}
\newcommand{\OO}{\Gamma}
\begin{document}

\title[Dimension of harmonic currents]{The dimension of harmonic currents on foliated complex surfaces}
\author{Bertrand Deroin, Christophe Dupont \& Victor Kleptsyn}%

\address{CNRS - Laboratoire AGM - UMR CNRS 8088 - Universit\'e de Cergy-Pontoise}
\email{bertrand.deroin@cyu.fr}

\address{IRMAR - UMR CNRS 6625 - Universit\'e de Rennes}
\email{christophe.dupont@univ-rennes.fr}

\address{CNRS - IRMAR - UMR CNRS 6625 - Universit\'e de Rennes}
\email{victor.kleptsyn@univ-rennes.fr}

\thanks{The authors benefited from the support of CY Initiative (ANR-16-IDEX-0008) for B.D.,  of ERC (GOAT 101053021) for C.D., of Labex Lebesgue (ANR-11-LABX-0020-01) for C.D. and V.K., and of ANR Gromeov (ANR-19-CE40-0007) for V.K. We also thank the Research in Paris program of Institut Henri Poincar\'e for bringing us together and hosting us during fall 2021.}

\subjclass[2010]{32M25; 37F75; 32U40; 37C45}

\keywords{holomorphic foliations, harmonic currents, entropy, Lyapunov exponent, dimension theory}%



\begin{abstract}
Let $\FF$ be a singular holomorphic foliation on an algebraic complex surface $S$, with hyperbolic singularities and no foliated cycle.   We prove a formula for the transverse Hausdorff dimension of the unique  harmonic current, involving the Furstenberg entropy and the Lyapunov exponent. In particular, we extend Brunella's inequality to every holomorphic foliation $\FF$ on $\mathbb P^2$: if $\FF$ has degree \(d \geq 2 \), then the Hausdorff dimension of its harmonic current is smaller than or equal to ${d-1 \over d+2}$, in particular the harmonic current is singular with respect to the Lebesgue measure. We also show that the Hausdorff dimension  of the harmonic current of the Jouanolou foliation of degree $2$ is equal to \(1/4\), and that the same property holds for topologically conjugate foliations on $\mathbb P^2$.
\end{abstract}

\maketitle
\tableofcontents 

\part{Overview, statements and notations}

{\bf Overview. } Let \(\mathcal F\) be a holomorphic foliation on a compact K\"ahler surface \(S\), with hyperbolic singularities and no foliated cycle. For instance, for any \(d\geq 2\), a holomorphic foliation of the complex projective plane $\mathbb P^2$ has this property if it is chosen in a real Zariski dense open subset. For a foliation \(\mathcal F\) as before, leaves are transcendental, and by Ahlfors' theory, there is no entire curve tangent to the foliation. In particular, any leaf \(L\) can be uniformized by a covering map \( \phi : \mathbb D \rightarrow L\) and equipped with the Poincar\'e metric \(g_P\) of constant gaussian curvature\footnote{hence  \(\phi ^* g_P = 2 \frac{|dz|}{1-|z|^2}\) on the unit disc $\mathbb D$.} $-1$.  This produces a continuous metric on the tangent bundle $T_\FF$ with a logarithmic pole at singular points. 

There is a remarkable unique ergodicity property in this context: normalized leafwise hyperbolic balls of large radius converge to a unique current (independent of the leaf) after a Cesaro type mean\footnote{The sequence of \((1,1)\)-currents $T_ r := c_r \phi_* \log ^+ \frac{r}{|z|}$ converges when \(r\) tends to \(1\) to a  current \( T\)  which does not depend on the leaf \(L\) nor on its uniformization \(\phi\).}. This was proved by Fornaess-Sibony  on $\mathbb P^2$  \cite{FS} and by Dinh-Ngyuen-Sibony  on any K\"ahler complex surface \cite{DNS_unique ergodicity} (see  \cite{Deroin-Kleptsyn} for a similar property involving leafwise paths). The current $T$ is positive, $\FF$-directed and \(dd^c\)-closed, it is called the \emph{harmonic current} of $\FF$.  
An important consequence of the unique ergodicity theorem is that \(\mathcal F\) carries a unique pseudo-minimal set\footnote{A pseudo-minimal set is a closed subset of $S$, saturated by the leaves of $\FF$, not included in the singular set, in which every leaf is dense.}  and that the current $T$ is supported on this set. 

 When $S$ is an algebraic surface, Nguyen \cite{Nguyen} proved that the positive measure $\mu := T\wedge \text{vol}_{g_P}$  has finite mass\footnote{In the remainder, $T$ is normalized such that $\mu$ has mass one.} 
 on $S$. He precisely established the $\mu$-integrability of functions $\rho$ with logarithmic growth at singular points. This integrability property will be fundamental in the present work. 
 
In local coordinates $(z,t) \in \mathbb D \times \mathbb D$, foliated by horizontal discs, the evaluation of $T$ on a $(1,1)$-form $\omega$ takes the form  
\[ T (\omega ) = \int_{\mathbb D}  \left( \int_{\mathbb D\times \{t\}}  \tau (\cdot , t)\, \omega \right) \nu(dt d\overline{t} ) \]  
where \(\nu \) is a Radon measure on the vertical disc and \( \tau\in L^1 (dz d\overline{z} \nu (dtd\overline{t} ) ) \) restricts to \(\nu \)-a.e. plaque \( \mathbb D \times \{t\}\) as a non negative harmonic function of \(z\). One may think to  the current \(T\) as a family of measures on holomorphic transversals $\Sigma$, varying harmonically. Precisely, if $\Sigma$ is given by $\{ (z,t), z= z(t)\}$,  the transverse measure is defined by
\begin{equation} \label{eq: local expression harmonic current2} T_{\vert \Sigma} := \tau (z(t), t) \nu (dtd\overline{t}).\end{equation}

The goal of the present article is to provide a formula for the Hausdorff dimension of those measures $T_{\vert \Sigma}$, in the vein of Ledrappier-Young theory. We will prove that, when not identically vanishing, all the $T_{\vert \Sigma}$'s have the same dimension, equal to the \emph{quotient of an entropy by a Lyapunov exponent}, as usual in unidimensional complex dynamics. Moreover, we prove that the $T_{\vert \Sigma}$'s are \emph{exact dimensional}. Our result applies for foliations on $\mathbb P^2$: we obtain an \emph{upper estimate for the dimension} involving the degree of the foliation, and obtain an \emph{exact value} for the Jouanolou foliation.

\vspace{0.2cm} 

{\bf Entropies and Lyapunov exponent.} 
The \textit{dynamical (Furstenberg) entropy} of the harmonic current $T$ is the non negative number defined by
\[ h_D := \int _S dd^c_{\mathcal F} \log \tau \wedge T =  - \int_S \Delta_{g_P} \log \tau \ d\mu  . \] 
It was introduced by Kaimanovich \cite{Kaimanovich} and Frankel \cite{Frankel}: it is the counterpart in the context of foliations of the Furstenberg entropy of harmonic measures related to random matrix products. It vanishes if and only if $\FF$ supports an invariant measure\footnote{which means that is \(T\) is closed, or is a foliation cycle in Ruelle-Sullivan's terminology.}. 
 The \textit{leaf entropy} is defined by  
\begin{equation*}  h_L : = \lim _{\Timet\rightarrow +\infty} - \frac{1}{\Timet} \int_{L_x}  p(x,y,\Timet) \log p(x,y,\Timet) \mathrm{vol}_{g_P} (dy) , \end{equation*}
where \(p(x,y,t)\) is the leafwise heat kernel associated to \(g_P\), the limit does not depend on the $\mu$-generic point \(x\), see \cite{Kaimanovich, L3}. Observe that
\[ 0\leq h_D \leq h_L \leq 1.\]
The inequality $h_D \leq h_L $ is due to Kaimanovich \cite{Kaimanovich}, we provide a proof in an Appendix for reader's convenience. The bound $h_L \leq 1$ comes from the fact that the entropy of the hyperbolic disc is equal to \(1\), and that the entropy increases by performing coverings. The \textit{Lyapunov exponent} is 
\begin{equation}\label{eq: cohomological expression Lyapunov} 
\lambda :=  \frac{T\cdot N_{\mathcal F} }{T \cdot T_{\mathcal F} } , 
\end{equation}
where\footnote{ \( T\cdot E\) denotes the evaluation of \(T\) on the curvature of an hermitian metric on \(E\), for every line bundle \(E\) over the complex surface $S$.} \( N_{\mathcal F}\) and \(T_{\mathcal F}\) denote the normal and tangent bundle of $\FF$.
Under our assumption (hyperbolic singularities and no foliated cycles for $\FF$), the Lyapunov exponent is negative, see the Notes at the end of this Section.\\

{\bf Results.} In the present work we prove that $h_D$ is  the opposite of the exponential rate of the Radon-Nikodym derivative (wrt $T_{\vert \Sigma}$) of the holonomy maps \( h_{\gamma_{[0,t]}} \) for generic (wrt a measure $W^\mu_{g_P}$) leafwise continuous path $\gamma$. Likewise,  $\lambda$ will be the exponential rate of the derivative of generic holonomy maps  \( h_{\gamma_{[0,t]}}\), justifying the terminology of Lyapunov exponent. 

\begin{theorem}\label{thmB}
Let $\FF$ be a foliation on an algebraic surface $S$ with hyperbolic singularities and no foliated cycle and $T$ be its harmonic current. For any holomorphic transversal $\Sigma$ intersecting the support of $T$, we have for the transversal measure $T_{\vert \Sigma}$ defined in Equation \eqref{eq: local expression harmonic current2}:  
$$ \textrm{for } T_{\vert \Sigma}-a.e. \ t \in \D \ , \  \lim_{r \to 0} {\log T_{\vert \Sigma} (\D_t(r)  )\over \log r} = {h_D \over \vert \lambda \vert},$$
In particular, the Hausdorff dimension\footnote{The Hausdorff dimension of a measure $\nu$ on the disc $\D$ is the infimum of the Hausdorff dimension of Borel sets $E \subset \D$ satisfying $\nu(E) = \nu(\D)$ \cite{Y82}.}   of $T_{\vert \Sigma}$ is equal to  ${h_D /  \vert \lambda \vert}$.
\end{theorem}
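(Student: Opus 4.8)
The plan is to run a Ledrappier--Young type argument using the leafwise Brownian motion and the measure $W^\mu_{g_P}$ on leafwise continuous paths. As recalled in the overview, two facts are available: for $W^\mu_{g_P}$-a.e. path $\gamma$ the holonomy germs contract with exponential rate $\lambda<0$, i.e. $\frac1\Timet\log\abs{h'_{\gamma_{[0,\Timet]}}}\to\lambda$, and the Radon--Nikodym cocycle $J_{\gamma_{[0,\Timet]}}:=d\,(h_{\gamma_{[0,\Timet]}})_*T_{\vert\Sigma}/d\,T_{\vert\Sigma'}$ satisfies $\frac1\Timet\log J_{\gamma_{[0,\Timet]}}\to h_D$. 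The heuristic is the standard one: a small ball $\D_t(r)$ on a transversal is the image, under a long contracting holonomy $h_{\gamma_{[0,\Times]}}$ of leafwise length $\Times\approx\frac1{\abs\lambda}\log(1/r)$, of a ball of fixed size $\rho_0$, and along this holonomy the transverse mass is multiplied by $J_{\gamma_{[0,\Times]}}^{-1}\approx e^{-h_D\Times}$; hence $T_{\vert\Sigma}(\D_t(r))\approx e^{-h_D\Times}\approx r^{h_D/\abs\lambda}$, which is exactly the claimed pointwise dimension. The final Hausdorff dimension statement then follows from Young's lemma relating exact pointwise dimension to the dimension of a measure.

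To make this precise I would carry out the following steps. (1) \emph{Disintegration.} Show that for $T_{\vert\Sigma}$-a.e. $t$, $W^\mu_{g_P}$-a.e. path issued from the point $x_t\in\Sigma$ over $t$ is generic for both cocycle limits and recurs in a fixed flow box around $x_t$; this uses that $\mu=T\wedge\vol_{g_P}$ disintegrates locally as an average of plaque measures whose transverse part is proportional to $T_{\vert\Sigma}$, together with the ergodicity and the recurrence in $S$ of the foliated heat flow. (2) \emph{Bounded distortion.} On balls of radius $\leq\rho_0$, for a fixed $\rho_0$ adapted to a finite atlas of flow boxes, the holonomy germs $h_{\gamma_{[0,\Timet]}}$ have uniformly bounded distortion and $\log J_{\gamma_{[0,\Timet]}}$ has uniformly bounded oscillation, so that ``transported radius $\approx$ initial radius times $\abs{h'}$'' and ``$(h_{\gamma_{[0,\Timet]}})_*T_{\vert\Sigma}(\text{ball})\approx J_{\gamma_{[0,\Timet]}}(\text{center})\cdot T_{\vert\Sigma'}(\text{ball})$'' hold with multiplicative errors controlled by $\rho_0$. (3) \emph{Shadowing.} At the recurrence times $\Timet_k\to\infty$ the moving transversal $h_{\gamma_{[0,\Timet_k]}}(\Sigma)$ returns $\eps$-close to $\Sigma$ near $x_t$ with a transition holonomy of bounded derivative and bounded Radon--Nikodym derivative; transporting the contracted ball back to $\Sigma$ and using (2) yields the near self-similarity
\[ T_{\vert\Sigma}(\D_t(r))\;\asymp\;e^{h_D\Timet_k}\,T_{\vert\Sigma}\bigl(\D_{w_k}(r\,e^{-\abs\lambda\,\Timet_k})\bigr) \]
for base points $w_k\to t$, the implicit constant depending only on $\rho_0$. (4) \emph{Conclusion.} Choosing $\Timet_k=\Timet_k(r)$ so that $r\,e^{-\abs\lambda\,\Timet_k}$ stays between two fixed scales gives $\Timet_k(r)=\frac1{\abs\lambda}\log(1/r)+O(1)$ and the two-sided bound $\log T_{\vert\Sigma}(\D_t(r))=-\frac{h_D}{\abs\lambda}\log(1/r)+o(\log(1/r))$; monotonicity of $r\mapsto T_{\vert\Sigma}(\D_t(r))$ and boundedness of the gaps $\Timet_{k+1}-\Timet_k$ upgrade this from scales along a sequence to the full limit, and the $\rho_0$-dependent errors are harmless since the limit cannot depend on $\rho_0$. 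Ergodicity of the heat flow makes the pointwise dimension an a.e. constant, so it equals $h_D/\abs\lambda$ on a $T_{\vert\Sigma}$-full set.

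The main obstacle is step (3): carrying out the shadowing with errors that are genuinely negligible on the exponential scale. One must handle the recentering $w_k\to t$ (recurrence gives only $\eps$-closeness, not $r\,e^{-\abs\lambda\,\Timet_k}$-closeness), which requires a Lebesgue-differentiation argument for the family $\set{T_{\vert\Sigma'}}$ or working with all $T_{\vert\Sigma}$-typical base points simultaneously; and one must keep the distortion of $h_{\gamma_{[0,\Timet_k]}}$ and the oscillation of $\log J_{\gamma_{[0,\Timet_k]}}$ under control over arbitrarily long compositions, which rests on the compactness of $\supp(T)$ away from the singular set and, near the hyperbolic singularities, on Nguyen's $\mu$-integrability of functions with logarithmic growth --- the same input that underlies the validity of the two cocycle limits. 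A secondary point is to check that these rates also govern the time-reversed process, so that a $T_{\vert\Sigma}$-typical point $x_t$ is indeed the endpoint of good contracting holonomies; this follows from the (quasi-)reversibility of the foliated heat flow with respect to $\mu$, and can in any case be bypassed by working with the natural extension of the shift on path space.
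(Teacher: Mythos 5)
Your overall skeleton (pointwise cocycle limits $\tfrac1\Timet\log|h'_{\gamma_{[0,\Timet]}}|\to\lambda$ and $\tfrac1\Timet\log J_{\gamma_{[0,\Timet]}}\to -h_D$ along Brownian paths, extension of both to discs of definite size by Koebe, then transfer to a fixed $T_{\vert\Sigma}$-typical point) is indeed the paper's strategy, but your central transfer step (3) fails as written. The contracted image $h_{\gamma_{[0,\Timet_k]}}(\D_t(\rho_0))$ is a ball of radius $\approx\rho_0 e^{-|\lambda|\Timet_k}$ centered at the \emph{endpoint} $\gamma(\Timet_k)$, hence, after the return holonomy, at the point $w_k$; recurrence only gives $|w_k-t|\le\eps$, which is astronomically larger than the radius of the ball whose mass you control. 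So your scheme yields the mass of exponentially small balls centered at the moving points $w_k$, and no Lebesgue-differentiation argument for the family $\{T_{\vert\Sigma'}\}$ can convert this into the mass of $\D_t(r)$ for the fixed point $t$ (also note the sign slip: with your choice of $\Timet_k$ the radius $r e^{-|\lambda|\Timet_k}$ cannot stay at a fixed scale). The fix is precisely what you relegate to a final aside: make the typical point the endpoint of the contracting holonomies by working in the natural extension of $(\Gamma,\sigma_1,W^\mu_{g_P})$. This is the paper's route: it first proves the decay estimate for $\nu_{k_n}(\D_{t_{k_n}(\gamma(n))}(e^{n(\lambda\pm\epsilon')}))$ at the endpoint (Proposition \ref{prop:expentropy}), then transfers it to time $0$ via the natural extension together with a Birkhoff density argument (Lemma \ref{prop: double}) to handle the path-dependent threshold $n_1(\cdot)$, and finally identifies $(\pi_0)_*W^1_K=\mu$ with $\nu_k$ up to Harnack equivalence. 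Recurrence to a fixed flow box plays no role.

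The second place where your proposal is thinner than the proof requires is the distortion control over long compositions, which is exactly where the singularities enter. Holonomy along $\gamma_{[n,n+1]}$ is a composition of $Q(\sigma_1^n\gamma)$ box-to-box holonomies, each contributing a Cauchy/Koebe factor and a Radon--Nikodym error; ``uniform bounded distortion for a finite atlas'' is not enough, because near a hyperbolic singularity one unit of leafwise time can cross arbitrarily many boxes. The paper's mechanism is: (i) boxes near the singularities built from the linearization via the projection $\pi$; (ii) the bound $Q\le\zeta D_1^{g_s}$ and the $W^\mu_{g_P}$-integrability of $D_1^{g_s}$ (via Nguyen's theorem), giving $Q(\sigma_1^n\gamma)=o(n)$ and Birkhoff averages $\le M_+$, so the accumulated Koebe/Cauchy factors are $e^{o(n)}$ and the domain $\D(\delta_\gamma)$ survives all times (Proposition \ref{prop:holodist}); (iii) Harnack bounding the local Radon--Nikodym errors $\eta$ by $Q$, and dominated convergence plus Birkhoff making $\sum_j\eta_{r_0}(\sigma_1^j\gamma)=o(n)$ (Proposition \ref{l: lebesg}). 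You cite the right input (Nguyen's integrability) but not this mechanism, and without it steps (2)--(3) of your plan cannot be closed.
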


Let us recall that for every holomorphic foliation $\FF$ of degree \(d \) on $\mathbb P^2$, we have $N_{\mathcal F} \simeq \mathcal O(d+2)$ and $K_{\mathcal F} \simeq \mathcal O(d-1)$. We deduce from Theorem \ref{thmB} and Equation \eqref{eq: cohomological expression Lyapunov} the following Corollary, it extends Brunella's upper estimate \cite{Brunella conforme} established for the special case of \emph{conformal} harmonic currents. 

\begin{corollary}
Let $\FF$ be a holomorphic foliation of degree \(d\geq 2\) on $\mathbb P^2$ with hyperbolic singularities and no foliated cycle. Then the Hausdorff dimension ${h_D \over  \vert \lambda \vert}$ of its harmonic current $T$ is  \( \leq \frac{d-1}{d+2}\). In particular, $T$ is never absolutely continuous with respect to Lebesgue measure. 
\end{corollary}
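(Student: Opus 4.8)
The plan is to deduce the Corollary directly from Theorem \ref{thmB} together with the cohomological identities recalled just before it. The first step is to invoke Theorem \ref{thmB}, which asserts that the Hausdorff dimension of $T_{\vert \Sigma}$ equals $h_D/\vert\lambda\vert$. Then one uses the two inequalities $h_D \leq h_L \leq 1$ from the overview, so the numerator is at most $1$. For the denominator, I would compute $\vert\lambda\vert$ from Equation \eqref{eq: cohomological expression Lyapunov}, namely $\lambda = (T\cdot N_{\mathcal F})/(T\cdot T_{\mathcal F})$, using $N_{\mathcal F}\simeq\mathcal O(d+2)$ and $T_{\mathcal F} = K_{\mathcal F}^{-1}\simeq\mathcal O(-(d-1))$, i.e. $K_{\mathcal F}\simeq\mathcal O(d-1)$.

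The key point is that for a foliation on $\mathbb P^2$ all line bundles are powers of $\mathcal O(1)$, so $T\cdot\mathcal O(k) = k\,(T\cdot\mathcal O(1))$ for any integer $k$; moreover $T\cdot\mathcal O(1) > 0$ since $\mathcal O(1)$ is ample and $T$ is a nonzero positive closed-in-the-appropriate-sense current (its intersection with an ample class is strictly positive). Hence
\[
\vert\lambda\vert = \frac{T\cdot N_{\mathcal F}}{T\cdot (-T_{\mathcal F})} = \frac{(d+2)\,(T\cdot\mathcal O(1))}{(d-1)\,(T\cdot\mathcal O(1))} = \frac{d+2}{d-1},
\]
using that under the hypotheses $\lambda<0$ (quoted in the Notes) so that $T\cdot T_{\mathcal F}<0$ and the ratio above is the absolute value. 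Combining, the Hausdorff dimension of $T_{\vert\Sigma}$ is $h_D/\vert\lambda\vert = h_D\,\frac{d-1}{d+2} \leq \frac{d-1}{d+2}$, which is the claimed bound. Since $d\geq 2$, this quantity is strictly less than $1$, while the Lebesgue measure on a transversal has dimension $2$ (real dimension of $\D$); a current absolutely continuous with respect to Lebesgue would induce transverse measures of dimension $2$ on generic transversals, contradicting the bound, so $T$ cannot be absolutely continuous with respect to Lebesgue measure.

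I do not anticipate a genuine obstacle here, as the statement is essentially a substitution into Theorem \ref{thmB}; the only points requiring a word of care are (i) the strict positivity $T\cdot\mathcal O(1)>0$, which follows from $T$ being a nonzero positive $dd^c$-closed current and $\mathcal O(1)$ ample, and (ii) making sure the transversal $\Sigma$ can be taken to meet $\mathrm{supp}\,T$ so that $T_{\vert\Sigma}\not\equiv 0$ and Theorem \ref{thmB} applies — but for the statement about absolute continuity one argues by contradiction on a generic transversal, where $T_{\vert\Sigma}$ is automatically nontrivial. The comparison with Brunella's result \cite{Brunella conforme} is then just the remark that we recover and extend his bound $\frac{d-1}{d+2}$, previously known only for conformal harmonic currents, to all foliations satisfying our hypotheses.
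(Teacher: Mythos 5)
Your argument is correct and is essentially the paper's own deduction: the corollary is obtained by plugging $h_D\le h_L\le 1$ and $\vert\lambda\vert=(d+2)/(d-1)$ (from Equation \eqref{eq: cohomological expression Lyapunov} with $N_{\FF}\simeq\mathcal O(d+2)$, $T_{\FF}\simeq\mathcal O(1-d)$ and $T\cdot\mathcal O(1)>0$) into Theorem \ref{thmB}, and the singularity with respect to Lebesgue follows since the transverse dimension is at most $\frac{d-1}{d+2}<1$.
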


From Theorem \ref{thmB}, to obtain an explicit value for the Hausdorff dimension of $T$, we need to compute $h_D$, which is generally a difficult problem. For that purpose we prove the following result, generalizing works by Kaimanovich \cite{K2000} and Ledrappier \cite{L1, L2} for random matrix products. 

\begin{theorem}\label{t: dischh}  Let $\FF$ be a foliation on an algebraic surface $S$ with hyperbolic singularities and no foliated cycle.  If the holonomy pseudogroup of $\FF$ in restriction to the pseudo-minimal set is discrete, then $h_D = h_L$. In particular, $T_{\vert \Sigma}$ is exact dimensional, with Hausdorff dimension equal to ${h_L \over  \vert \lambda \vert }$.
\end{theorem}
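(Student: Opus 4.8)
\textbf{Proof proposal for Theorem \ref{t: dischh}.}

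The plan is to establish $h_D = h_L$ by showing that the only loss in Kaimanovich's inequality $h_D \le h_L$ comes from possible non-discreteness of the holonomy pseudogroup. The conceptual picture is that $h_L$ measures the exponential decay of the leafwise heat kernel on the universal cover $\D$ of a leaf, while $h_D$ measures the exponential decay of the Radon--Nikodym cocycle of holonomy along generic leafwise Brownian paths (with respect to the harmonic measure $W^\mu_{g_P}$ and the transverse measures $T_{|\Sigma}$), as announced in the paragraph preceding Theorem \ref{thmB}. Both quantities can be read off from the asymptotics of the same object: the leafwise heat kernel $p(x,y,\Timet)$ pushed forward, on one hand to the leaf $L_x$ itself (giving $h_L$ after the Ledrappier--Kaimanovich formula), and on the other hand to the transversal via the holonomy, where the Poisson-type boundary of the leaf enters. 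When the holonomy pseudogroup restricted to the pseudo-minimal set is discrete, the leafwise covering $\phi : \D \to L_x$ factors the holonomy faithfully enough that the harmonic measure on $\partial \D$ is carried, without collapse, onto the transverse measure class; this identifies the two entropies.

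Concretely, I would proceed as follows. First, recall (or reprove, following the Appendix and \cite{Kaimanovich, L3}) the identity $h_L = \int_S \beta\, d\mu$ where $\beta$ is the leafwise ``Kaimanovich entropy density,'' together with the Furstenberg-type formula $h_D = -\int_S \Delta_{g_P}\log\tau\, d\mu$ from the definition in the excerpt. Second, lift the harmonic current locally: on a flow box $\D_z \times \D_t$ one has $T = \int \tau(\cdot,t)\,[\D\times\{t\}]\,\nu(dt d\bar t)$ with $\tau(\cdot,t)$ positive harmonic, and the heat semigroup acts leafwise; the discreteness hypothesis guarantees that the holonomy pseudogroup is, on the pseudo-minimal set, a genuine (countable) pseudogroup action, so that the leafwise universal cover $\widetilde L_x \cong \D$ maps to a transversal with discrete fibers. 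Third — the key step — identify the Poisson boundary of the leafwise Brownian motion: for a discrete holonomy pseudogroup, the boundary of the leaf $L_x$ equipped with the Poincaré metric is $\partial \D$, and the harmonic measure $W^\mu_{g_P}$ disintegrates over $\mu$ with conditionals being the harmonic measure class on $\partial\D$ that represents the positive harmonic function $\tau(\cdot,t)$ via the Poisson kernel. Fourth, compute the Radon--Nikodym cocycle of holonomy along a generic Brownian path in terms of the Poisson kernel, and show its exponential rate equals both $h_D$ (by the transverse interpretation) and $h_L$ (by Kaimanovich's formula, using that no information is lost to the boundary when holonomy is discrete). The exact-dimensionality and the value $h_L/|\lambda|$ of the Hausdorff dimension of $T_{|\Sigma}$ then follow immediately by combining $h_D = h_L$ with Theorem \ref{thmB}.

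The main obstacle I anticipate is the third step: making precise the claim that discreteness of the holonomy pseudogroup forces the leafwise Poisson boundary to coincide (as a measured space, over $\mu$) with the circle $\partial\D$, rather than a proper quotient of it. In the non-discrete case the holonomy can identify points of $\partial\D$ and the Poisson boundary of the foliated Brownian motion is strictly smaller, which is exactly where the strict inequality $h_D < h_L$ can occur; so the argument must genuinely use discreteness, presumably via an entropy-rigidity statement à la Kaimanovich \cite{K2000} or Ledrappier \cite{L1, L2} for $\mathrm{SL}_2(\R)$: there, discreteness of the subgroup generated by the support of the measure is precisely what makes the Furstenberg entropy of the boundary action equal to the random walk entropy. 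I would adapt that dichotomy to the foliated setting, using Nguyen's integrability of logarithmically-growing functions against $\mu$ \cite{Nguyen} to control the heat kernel and the cocycle near the (hyperbolic) singularities, and the unique ergodicity of $T$ \cite{FS, DNS_unique ergodicity} to ensure the pseudo-minimal set and the associated boundary structure are canonical. A secondary technical point is to verify that the transverse measures $T_{|\Sigma}$ do not vanish on the relevant transversals — but this is handled exactly as in Theorem \ref{thmB} by choosing $\Sigma$ meeting $\supp T$.
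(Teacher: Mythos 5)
Your proposal correctly reduces the problem to proving $h_L\le h_D$ (Kaimanovich's inequality gives the other direction), and the analogy you draw with boundary-entropy rigidity for discrete subgroups of $SL_2(\R)$ is indeed the right heuristic. But the heart of the argument — your ``third step'' — is exactly the part you leave unproven, and as formulated it is not quite the right statement. Discreteness of the holonomy pseudogroup does not make ``the Poisson boundary of the leaf'' equal to $\partial\D$: the Poisson boundary of a non-simply-connected hyperbolic leaf $L=\D/\Gamma$ is the quotient measured space, independently of any holonomy hypothesis, and what one actually needs is not an identification of Poisson boundaries at all but the quantitative statement that the transverse boundary structure (the measures $T_{|\Sigma}$ acted on by holonomy) captures the full entropy $h_L$. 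No foliated ray/strip criterion or entropy-rigidity theorem of this kind is available off the shelf, and adapting Kaimanovich's or Ledrappier's arguments to a singular foliation with the integrability issues near the hyperbolic singularities would itself be the bulk of the proof. So there is a genuine gap: the step where the discreteness hypothesis must do its work is only gestured at.

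The paper's route is quite different and much more concrete. It discretizes the leaf entropy (Propositions \ref{l: signification of entropy} and \ref{p: adapt}): for a $\mu$-generic $x$, Brownian paths at time $n$ reach at least $e^{n(h_L-\epsilon)}$ separated positions on $\LL_x\cap\Sigma_x$, where the projection to separated points is controlled near the singular set via the $\mu$-integrability of $\rho$. For each such position $z$ it produces a holonomy map $h_{x,z}$ on a fixed disc $\D_0(r_1)$, contracting at rate $e^{n\lambda}$ (Proposition \ref{prop:holodist}, using $\lambda<0$), whose image has transverse measure at least $e^{-n(h_D+O(\epsilon))}$ (Proposition \ref{prop:expentropy}). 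Discreteness enters only at the last moment and in an elementary way: the set $\mathcal H$ of holonomy maps on $\D_0(r_1)$ with $|\log|h'(0)||\le\kappa'$ is \emph{finite}, so after passing to classes of $I_n$ modulo right composition by $\mathcal H$ one gets $e^{n(h_L-3\epsilon)}$ holonomy images that are pairwise disjoint inside $\D_0(r_1)$; summing their $\nu_x$-masses gives $e^{n(h_L-3\epsilon)}e^{-n(h_D+O(\epsilon))}\le\nu_x(\D_0(r_1))$, hence $h_L\le h_D$. If you want to salvage your approach you would have to prove a foliated analogue of the ``discreteness $\Rightarrow$ the boundary has full entropy'' theorem, which is a research-level task; the counting argument above bypasses it entirely, and is the reason the paper can handle the singular case with only Nguyen's integrability and the negativity of $\lambda$ as inputs.
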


Observe that $h_D$ is a conformal invariant. It is unclear wether $h_L$ is also a conformal invariant, but Theorem \ref{t: dischh} establishes this fact when the holonomy pseudo-group is discrete in restriction to the pseudo-minimal set. The idea of the proof is to observe that the growth rate of the cardinality of leafwise separated subsets is at least $h_L$, and that each separated position is weighted at the rate $-h_D$. Some arguments are borrowed from \cite{Deroin-Dupont}. Difficulties appear here due to the singular points (see for instance Proposition \ref{p: adapt}). The fact that $\lambda$ is negative implies that the holonomy maps are contracting and allows to use the discreteness assumption.\\

As a classical fact, if the holonomy pseudogroup of $\FF$ is not discrete, then the support of $T$ is an analytic submanifold of $S$. For foliations in the stability component of the Jouanolou foliation\footnote{The Jouanolou foliation of degree \(d\) on $\mathbb P^2$ is defined by the vector field \( y^d \partial_x + z^d \partial _y + x^d \partial _z\). It is conjectured in \cite{AlvarezDeroin} (proved for \(d=2\)) that those foliations are structurally stable, with a non trivial fractal Julia set, and simply connected leaves appart from a countable number. This would imply that  the  dimension of its harmonic current is  \( \frac{d-1}{d+2}\).} of $\mathbb P^2$ of degree $2$, we are able to discard this analytic regularity, and also to prove that $h_L = 1$.

\begin{corollary} \label{c: dimension Jouanolou}
Let $\FF$ be foliation on $\mathbb P^2$ of degree \(2\) topologically conjugated to  the Jouanolou foliation. Then $\nu$ is exact dimensional, with Hausdorff dimension equal to \(1/4\). \end{corollary}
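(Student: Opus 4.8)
The plan is to combine Theorem \ref{thmB}, Theorem \ref{t: dischh}, the value of the Lyapunov exponent of the Jouanolou foliation, and the topological invariance of the relevant quantities. First I would recall that for a degree-$2$ foliation on $\mathbb P^2$ one has $N_\FF \simeq \mathcal O(4)$ and $K_\FF = T_\FF^* \simeq \mathcal O(1)$, so by Equation \eqref{eq: cohomological expression Lyapunov}, $\lambda = (T\cdot N_\FF)/(T\cdot T_\FF) = -(T\cdot \mathcal O(4))/(T\cdot \mathcal O(1)) = -4$ since $T\cdot \mathcal O(1) > 0$ (the harmonic current is nonzero and not concentrated on an algebraic curve, so it pairs positively with a hyperplane class). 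Thus $\vert\lambda\vert = 4$. By Theorem \ref{thmB}, the Hausdorff dimension of $T_{\vert\Sigma}$ equals $h_D/\vert\lambda\vert = h_D/4$, so it suffices to prove $h_D = 1$; once this is done, the dimension is exactly $1/4$ and exact dimensionality follows from Theorem \ref{thmB}.

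Next I would prove $h_D = 1$ for any foliation $\FF$ topologically conjugate to the Jouanolou foliation of degree $2$. The strategy is: (i) invoke the structural stability result for the Jouanolou foliation of degree $2$ (proved in \cite{AlvarezDeroin}) — since $\FF$ is topologically conjugate to it, $\FF$ itself lies in the stability component, so its holonomy pseudogroup in restriction to the pseudo-minimal set is discrete (otherwise, by the classical dichotomy recalled just before the Corollary, the support of $T$ would be an analytic submanifold, which is incompatible with the fractal Julia set of the Jouanolou picture and with structural stability); (ii) apply Theorem \ref{t: dischh} to conclude $h_D = h_L$; (iii) prove $h_L = 1$. For step (iii), I would use that $h_L = 1$ is equivalent to the leaves being, $\mu$-generically, "as large as the hyperbolic disc" from the point of view of Brownian motion — concretely, $h_L < 1$ would force the leafwise Brownian motion to have nontrivial Poisson boundary quotient structure reflecting nontrivial holonomy-type identifications, whereas for $\FF$ topologically conjugate to Jouanolou the generic leaf is simply connected (all but countably many leaves are simply connected, as in the Jouanolou model) and has no nontrivial holonomy, so the universal cover map $\phi:\mathbb D\to L$ is a biholomorphism onto the leaf and carries $g_P$ to the disc's Poincaré metric, giving heat kernel and entropy exactly those of $\mathbb D$, namely $h_L = 1$.

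The main obstacle is step (iii), establishing $h_L = 1$: the leaf entropy is defined via the leafwise heat kernel, and one must show it is insensitive to the embedding $L\hookrightarrow S$ and to the logarithmic poles of $g_P$ at the singular points. The simple-connectedness of generic leaves removes the covering/holonomy contribution, but one still needs that a simply connected leaf equipped with a complete metric of constant curvature $-1$ has heat-kernel entropy $1$; this is essentially the statement that $\phi:(\mathbb D,g_P)\to(L,g_P)$ is an isometry, hence the leafwise heat kernel is the disc heat kernel and $h_L(\mathbb D)=1$ by the classical computation. The delicate point is to control the measure-theoretic genericity: one must ensure that the non-simply-connected leaves, and the leaves accumulating on singularities, form a $\mu$-null set or at least do not affect the entropy limit — here Nguyen's integrability of functions with logarithmic growth at the singularities \cite{Nguyen} is the key input, as it guarantees the heat-kernel estimates near singular points are under control. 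Once $h_L = 1$ is in hand, the chain $h_D = h_L = 1$, $\vert\lambda\vert = 4$, Theorem \ref{thmB} gives Hausdorff dimension $1/4$ and exact dimensionality, completing the proof.
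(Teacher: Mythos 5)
Your skeleton agrees with the paper's: $\lambda=-4$ from the cohomological formula \eqref{eq: cohomological expression Lyapunov}, Theorem \ref{t: dischh} to get $h_D=h_L$, and $h_L=1$ from simple connectivity of $\mu$-generic leaves (the Anosov property of \cite{AlvarezDeroin}, preserved under topological conjugacy). But two steps are genuinely missing. First, you never verify the standing hypotheses: that any foliation topologically conjugate to $\mathcal J_2$ has hyperbolic singularities and no foliated cycle. These are needed before Theorem \ref{thmB}, Theorem \ref{t: dischh}, or even the uniqueness of $T$ can be invoked, and they require an argument: hyperbolicity of singularities is a topological-conjugacy invariant \cite{CLNS84} and holds for $\mathcal J_2$ because the eigenvalue ratio at its seven singular points is non-real; absence of a foliated cycle holds because its support would have to meet the singular set \cite{CLNS88}, hence (the singularities being hyperbolic) be carried by separatrices and produce an algebraic leaf, contradicting the transcendence of the leaves of $\mathcal J_2$ \cite{Jouanolou}.

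Second, and more seriously, your step (i) -- discreteness of the holonomy pseudogroup on the pseudo-minimal set -- is asserted rather than proved, and the assertion as stated is not sound: topological conjugacy to $\mathcal J_2$ does not place $\FF$ in the stability component (structural stability says nearby foliations are conjugate to $\mathcal J_2$, not the converse), and ``incompatible with the fractal Julia set'' is not an argument, since ruling out that the pseudo-minimal set is an analytic object is precisely the nontrivial point. The paper instead notes that discreteness in restriction to the pseudo-minimal set is itself a topological-conjugacy invariant and proves it for $\mathcal J_2$ directly: the Fatou set is a disc fibration over the Klein quartic \cite{AlvarezDeroin}, so $\mathcal M_2\subsetneq\mathbb P^2$; there is no exceptional minimal set \cite{Camacho de Figueiredo}, so $\mathcal M_2$ contains a hyperbolic singularity and hence a separatrix whose holonomy multiplier is non-real; and if the pseudogroup were non-discrete, the Loray--Rebelo renormalization \cite{LorayRebelo} would produce, after conjugation by that non-real hyperbolic holonomy, two $\mathbb R$-independent local flows in the closure of the pseudogroup, forcing $\mathcal M_2$ to have nonempty interior and hence to equal $\mathbb P^2$, a contradiction. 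Without an argument of this kind your chain $h_D=h_L$ has no basis. (A minor remark: your concerns about heat-kernel control near the singular points for $h_L=1$ are unnecessary; once the $\mu$-generic leaf is simply connected it is isometric to the Poincar\'e disc, whose entropy is $1$, which is exactly how the paper concludes.)
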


The proof precisely needs results concerning the Jouanolou foliation, with among others the fact that it does not have any transcendental leaf (Jouanolou) neither any minimal set (Camacho-de Figuereido), the fact that its Fatou set is not empty and that it has the Anosov property (Alvarez-Deroin).
 
\vspace{0.2cm}

{\bf Outline of the proof of Theorem \ref{thmB}.} Let $\Gamma$ denote the set of leafwise continuous paths parametrized by $\R^+$ and let $W^\mu_{g_P}$ denote the measure on $\Gamma$ given by the $\mu$-average of the Wiener measures induced by the leafwise Poincar\'e metric $g_P$. 
We get a dynamical system $(\Gamma, \sigma, W^\mu_{g_P})$ where $\sigma$ is a shift semi-group acting on $\Gamma$ (see Section \ref{ss: harmonicm2} for details). 

\vspace{0.2cm} 

{\bf First step.} We introduce adapted metrics with respect to the singular points. Near those points, our smooth metrics $m$ on $N_\FF$ and $g_s$ on $T_\FF$ are respectively defined by the modulus of a holomorphic $1$-form defining $\FF$ and by assigning unit norm to a vector field defining $\FF$, see Section \ref{s: dgf}. In particular $m$ and $g_s$ have zero curvature near the singular points. 

\vspace{0.2cm} 

{\bf Second step.}
We prove that the $g_s$-length of leafwise continuous paths is $W^\mu_{g_P}$-integrable, see Proposition \ref{p: integrability}, this will be crucial. We use Nguyen's $\mu$-integrability of  functions $\rho$ with logarithmic growth at singular points. 

\vspace{0.2cm} 

{\bf Third step.} We prove a general Limit Theorem for measurable families of leafwise $1$-forms, see Theorem \ref{p: expression of a}.  Thanks to the $W^\mu_{g_P}$-integrability of the $g_s$-length of leafwise continuous paths, the assumptions of the Limit Theorem only require $g_s$-boundedness. The arguments involves Kingman's ergodic theorem and Dynkin's formula. A first application is Proposition \ref{d: lyapunov exponent}, which gives 
the existence of $\lambda  \in \R$ such that for $W_{g_P}^\mu$-a.e. $\gamma \in \OO$,    
\begin{equation}\label{lambdapointw}
 \lim _{\Timet\rightarrow +\infty} {1 \over \Timet} \log \vert Dh_{\oo_{[0,\Timet]}, \Sigma_0, \Sigma_\Timet}(\oo(0)) \vert_{m}  = \lambda  .
 \end{equation}
 A second application is Proposition \ref{p: integral of beta} which  yields for $W_{g_P}^\mu$-a.e. $\gamma \in \OO$,  
\begin{equation}\label{hpointw}    \lim _{\Timet\rightarrow +\infty} \frac{1}{\Timet} \log \frac{D (h_{\gamma_{[0,\Timet]}, \Sigma_0, \Sigma_\Timet})_*^{-1} T_{|\Sigma_\Timet} }{D T_{|\Sigma_0}} (\gamma(0)) = - h_D . 
 \end{equation} 
 To prove Theorem \ref{thmB}, we need to extend those pointwise limits to nearby leaves, which is the aim of the next two important steps.

\vspace{0.2cm} 
 
{\bf Fourth step.} Given a $W_{g_P}^\mu$-generic leafwise path $\gamma$, we show in Proposition \ref{prop:holodist} that there exists a disc $\D(\delta_\gamma) \subset \Sigma_{\gamma(0)}$ such that  the $\gamma$-holonomy mapping $h_{\gamma_{[0,t]}}$ is well defined on $\D(\delta_\gamma)$ for every $t \geq 0$ and is contracting with the exponential rate $\lambda$. For that purpose we introduce in Section \ref{s: ccd} a finite family of foliation boxes $(U_k)_{k \in K}$ such that, near every singular point, the boxes are built on the product structure provided by linearization coordinates. Then we prove that the restricted path $\gamma_{[n , n+1]}$ crosses at most $n\epsilon$ boxes (up to homotopy): the number $Q$ of crossings is indeed bounded below by the $g_s$-length of $\gamma_{[n , n+1]}$  (Proposition \ref{p: estimq}), which is $W^\mu_{g_P}$-integrable. Proposition  \ref{prop:holodist} then follows by induction, concatening the successive $n\epsilon$ restrictions $\gamma_{[n , n+1]}$, using $\vert Dh_{\oo_{[0,n]}}(\oo(0)) \vert_{m} \simeq e^{n (\lambda \pm \epsilon)}$ (given by Equation \eqref{lambdapointw}) and Koebe distortion Theorem.

\vspace{0.2cm} 

{\bf Fifth step.} 
We extend the pointwise asymptotic derivative $$ { D (h_{\gamma_{[0,n]}})_*^{-1} T_{|\Sigma_n} \over  D T_{|\Sigma_0} }(\gamma(0)) \simeq e^{-n (h_D \pm \epsilon)}$$ provided by Equation \eqref{hpointw} to the disc $\D(\delta_\gamma)$.
Our aim is to get 
\begin{equation}\label{eq: entr}
{ T_{\vert \Sigma_n}( h_{\oo_{[0,n]}} (\D(\delta_\gamma)) ) \over T_{\vert \Sigma_0} (\D(\delta_\gamma))} \simeq e^{-n (h_D \pm 2\epsilon)}.
\end{equation}
As in the fourth step, we proceed by induction on the successive  restrictions $\gamma_{[n , n+1]}$. We precisely study the multiplicative error $e^{\eta_n}$ between 
$${ T_{\vert \Sigma_{n+1}}( h_{\oo_{[0,n+1]}} (\D(\delta_\gamma))) \over T_{\vert \Sigma_n} (h_{\oo_{[0,n]}} (\D(\delta_\gamma)))} \textrm{ and  } {D (h_{\gamma_{[n,n+1]}})_*^{-1} T_{|\Sigma_{n+1}} \over D T_{|\Sigma_n}} (\gamma(n)) . $$
By classical geometric measure theory,  $\eta_n$ tends to $0$ since $h_{\oo_{[0,n]}}$ is contracting. Moreover, thanks to Harnack inequality for positive harmonic functions, $\eta_n$ is bounded above by the $W^\mu_{g_P}$-integrable number $Q$ of crossings, introduced in the fourth step. Birkhoff ergodic theorem then implies that the full error term $\eta_1 + \ldots + \eta_n$  is $ \leq n \epsilon$. This yields the desired Equation \eqref{eq: entr}.  

\vspace{0.2cm} 

{\bf Sixth step.} Combining the third and fourth step, we get 
\begin{equation*}
{ T_{\vert \Sigma_n} (\D_{\gamma(n)}(\delta_\gamma e^{n(\lambda \pm \epsilon) } ))  \over T_{\vert \Sigma_0} (\D_{\gamma(0)}(\delta_\gamma))} \simeq { T_{\vert \Sigma_n}( h_{\oo_{[0,n]}} (\D(\delta_\gamma)) ) \over T_{\vert \Sigma_0} (\D(\delta_\gamma))} \simeq e^{-n (h_D \pm 2\epsilon)}.
\end{equation*}
To complete the proof of Theorem \ref{thmB}, we have to replace the target point $\gamma(n)$ by a fixed $\nu$-generic point $t$ in a  transversal $\Sigma$. We manage this by working in the natural extension of the dynamical system $(\Gamma, \sigma, W^\mu_{g_P})$.

\vspace{0.2cm} 

{\bf Notes and related results.}  

For random matrix products on $SL_2(\R)$ and $SL_2(\C)$, the dimension of harmonic measures was computed in \cite{L00, L1}: ${\log \nu (\D_t(r)  ) /  \log r}$ converges in probability to  Furstenberg entropy/Lyapunov exponent. 

For Ricatti foliations on ruled surfaces, the dimension of harmonic currents was investigated in \cite{Deroin-Dujardin} in the spirit of \cite{L00, L1}, here the limit of  ${\log \nu (\D_t(r)  ) /  \log r}$ is bounded above (in probability) by $h_D / \vert \lambda \vert$. 

For random matrix products on $SL_2(\R)$, the exactness of the dimension of harmonic measures was proved in \cite{HS} and extended to $SL_d(\R)$ in~\cite{LL}.

The notion of discreteness for the holonomy pseudogroup of a foliation was introduced in \cite{Ghys93, N94}. It is proved  in \cite{Deroin-Dupont} that the transverse Hausdorff dimension of a non singular minimal set with discrete holonomy pseudogroup is bounded below by $h_L / \vert \lambda\vert$ (without introducing $h_D$). This result is now a consequence of Theorem \ref{t: dischh}.  
 
 The cohomological formula \eqref{eq: cohomological expression Lyapunov} for the Lyapunov exponent was proved in the non singular context in \cite{Deroin} and in the singular one in \cite{NguyenII}. Along the present article, we provide an alternative proof in the singular context. The difference is that we use the smooth metric $m$ on $N_\FF$, instead of an ambiant metric on $S$, which makes easier computations. To be more precise, a first step is Proposition \ref{p: cohomological interpretation of the mass of the harmonic measure},  asserting that ${T\cdot T_{\FF}} = - {1 \over 2 \pi} \int_S T\wedge \text{vol}_{g_P}$. This is done by comparing the curvature  of the leafwise Poincar\'e metric $g_P$ with the curvature of the metric $g_s$ on $T_\FF$. For that purpose we use estimates for  the ratio $e^{-\xi}$ of these two metrics \cite{DNS_Entropy II} and the fact that $T\wedge \text{vol}_{g_P}$ has finite mass on $S$ \cite{Nguyen}. The second step is to establish that $\lambda =  - 2\pi \, T\cdot N_{\FF}$, which follows from the Limit Theorem.
 
The fact that $\lambda$ is negative is a consequence of the property \( T\cdot N_{\mathcal F} >0 \). This was proved in \cite{Deroin-Kleptsyn} in the non singular context and in \cite{NguyenII} in the singular one, see also  \cite{DDK} for another proof. The fact that $\lambda$ is the exponential rate of the holonomy derivative was established in \cite{Deroin, Deroin-Kleptsyn} in the non singular context,  in \cite{Nguyen} in the singular context. 

Concerning our Limit Theorem, contrary to the method used in \cite{Deroin-Kleptsyn}, we shall not need the transverse regularity of the Poincar\'e metric, here this is provided by the uniqueness of the harmonic current $T$. Analogous Limit Theorems were already established in other situations \cite{Candel, Deroin, NguyenAMS}. 

\vspace{0.2cm} 
 
{\bf Notations.} 

\small

\vspace{2mm}

\begin{tabular}{ll}
\(S\) & complex algebraic surface 
\\
\(\FF\) & foliation on \(S\) with hyperbolic singularities and no foliated cycle
\\
\( \text{sing}(\FF) \) & singular set of \(\FF\) 
\\
\( S^* \) &  \( S \setminus \text{sing}(\FF) \), the regular part of \(\FF\) 
\\
\( \mathcal L \), \( \mathcal L_x \) & \( \mathcal L \)  is a leaf of $\FF$, \( \mathcal L_x \)  is the leaf containing $x \in S^*$.  
\\
\( \rho \) &\(  - \log r \), where \(  r \) is the distance in $S$ to the singular set
\\
$A$ & angular domain in $\C$ defined by a hyperbolic singularity
\\
$d_e^A$, $d_P^A$ & euclidian and Poincar\'e metric in the angular domain $A$ 
\\
\( N_{\FF} \) & normal bundle to \( \FF\)
\\
\( m\) & hermitian metric on \( N_{\FF}\)  
\\
\( T_\FF \), \( K_{\FF}  \) & tangent bundle to \(\FF\), canonical bundle to \( \FF \)
\\
\( g_s\) & smooth hermitian metric on \( T_\FF\) 
\\
\( g_P\) & Poincar\'e metric on \( T_\FF \) 
\\
\( \text{vol}_{g} \) & leafwise volume \((1,1)\)-form associated to a metric \(g\) on \( T_\FF \)
\\
\( \Delta_g\) & leafwise Laplacian operator associated to a metric \(g\) on \( T_\FF \)
\\
 \( T \) & harmonic current
 \\
 \( \Sigma , t \) & transversal for $\FF$, transversal coordinate or local first integral
\\
 \( \tau (z,t) , \nu \) & in a foliation box \((z,t)\) one has \( T = \tau (z,t) \, \nu (dt) \) 
 \\
 \( \mu \) &  \( T \wedge \text{vol}_{g_{P}} \) 
\\
\(\OO, \gamma \) & $\Gamma$ is the set of leafwise continuous paths \(\oo : [0,+\infty) \rightarrow S^*\) 
\\
\( h_{\gamma_{[a,b]},t,t'} \) & germ of holonomy map from $(\C, t(\gamma(a)))$ to $(\C, t'(\gamma(b)))$
\\
\(\sigma\) & shift semi-group acting on \(\OO\) 
\\
\(\OO ^x\) & subset of \( \OO \) of  leafwise continuous paths $\gamma$ starting at  \(\oo(0)= x \).  
\\
\(\Timet, \Times \) & time parameters of leafwise continuous paths \(\oo : [0,+\infty) \rightarrow S^*\) 
\\
\( W^x_g \) & Wiener measure on \(\OO ^x\) generated by the Laplacian of the metric~\(g\). 
\\
 \( d_{\FF}, d^c_{\FF}, \partial _{\FF},  \overline{\partial} _{\FF} \) & differential operators along the leaves of \(\FF\).  
\end{tabular}

\vspace{2mm}

We have $d^c = \frac{1}{2i\pi} (\partial - \overline{\partial} ) = \frac{1}{2\pi} (\partial_x dy - \partial_y dx)$,   
and \( d d^c = \frac{i}{\pi} \partial \overline{\partial} \), see \cite[\S3, p.144]{Demailly}. On a Riemann surface equipped with a Hermitian metric~\(g\), $dd^c f = \frac{1}{2\pi} (\Delta_g f) \cdot \text{vol} _g$.

\normalsize

\part{Differential geometry and Harmonic currents}

\section{Differential geometry for foliations}\label{s: dgf}

\subsection{Holomorphic foliations on surfaces} \label{clanotion}

A \emph{holomorphic foliation} \(\FF\) on an algebraic complex surface \(S\) is the data of a line bundle \( T_\FF \rightarrow S\) and a morphism \(\chi: T_\FF \rightarrow TS\) which vanishes only at a finite number of points. Such a point is called a singularity of \( \FF\), their set is denoted by \(\text{sing}(\FF)\), and we set \(S^*:= S\setminus \text{sing}(\FF) \).   \(T_\FF\) is called the \emph{tangent bundle}, its dual \( K_{\FF} := T^* \FF\) is called the \emph{canonical bundle}. The vector fields on \(S\) that are images of local sections of \(T_\FF\) by the morphism \(\chi\) form a subsheaf \(\mathcal O (\FF)\) of \(\mathcal O (TS)\) called the \emph{tangent sheaf}.

The \emph{leaves} of $\FF$ are the equivalence classes of the relation: two points of $S^*$ are equivalent if they belong to the same integral curve of locally defined vector fields belonging to \(\mathcal O (\FF)\). A \emph{local first integral} of \(\FF\) is a complex valued function defined on an open subset of \(S\), which is constant along the leaves. On a neighborhood \(U\) of any regular point \(p\) is defined a holomorphic  \emph{foliation chart} \( (z, t) : U \rightarrow {\bf D}\times {\bf D}\) that maps \(\mathcal O (\FF)\) to the sheaf of horizontal vector fields. The function \( t: U \rightarrow {\bf D}\) is a submersion and a local first integral. Every first integral in \(U\) is a function of \(t\). 

\subsection{Differential operators, curvature}\label{sub:DOC}

Let \( d_{\FF}, \partial _{\FF} \) and \( \overline{\partial} _{\FF}\) denote the differential operators along the leaves, we shortly denote $\partial  \overline{\partial} _{\FF} := \partial _{\FF}  \overline{\partial} _{\FF}\).  Setting \( d^c_{\FF} := \frac{1}{2i\pi} (\partial_{\FF} - \overline{\partial}_{\FF} ) \), we get \( d d^c_{\FF} = \frac{i}{\pi} \partial \overline{\partial}_{\FF} \). If $g$ is a hermitian metric on $T_\FF$ and if $\Delta_g$ denotes its leafwise laplacian, then $$  \Delta_g  \cdot \text{vol} _g =  2i \,  \partial \overline{\partial}_{\FF} = 2 \pi \, dd^c_{\FF}  .$$
If $h$ is an hermitian metric on a line bundle $E$ over $S$ (for instance \( T_\FF\), $K_\FF$ or $N_\FF$ introduced below), the curvature form of $h$ is locally defined by 
$$ \Theta_{h} =  - \frac{i}{\pi}\partial \overline{\partial} _{\FF} \log h (s) = -dd^c_\FF \log h (s) , $$ where \( s \) is any non vanishing holomorphic section of $E$.

\subsection{Hyperbolic singularities, angular domain}\label{sub:singpoints} 

Let $p$ be a singular point of $\FF$. There is a germ of vector field \(V\) belonging to the tangent sheaf of the foliation, that vanishes only at \(p\). By Hartog's lemma, such a vector field is the \(\chi\)-image of a generating section of \(T_\FF\). Any other germ of vector field with these properties differ from \(V\) by multiplication by a non vanishing holomorphic function. 

The singular point \(p\) is \emph{hyperbolic} if the two eigenvalues of \(V\) are not \({\bf R}\)-colinear. In that case, by Poincar\'e linearization theorem \cite[Theorem~5.5]{IY}, there exist a neighborhood \(B_p\) of \(p\) and coordinates \((x,y): B_p \rightarrow \D\times \D \) in which  \(V\) takes the form 
\begin{equation}\label{eq: V}
 V= a x \partial _x + b y \partial _y . 
 \end{equation}
 We denote $B_p^* := B_p \setminus \{ p \}$. With no loss of generality (multiplying \(V\) by a non zero complex number), we assume that $a,b$ have positive real part. We also assume that \( (x,y)\) is defined on a  domain containing the closure of~\(B_p\).  In these coordinates, the leaves of \( \FF \) consist in the two separatrices (the two axis punctured at $p$) and in the image of the complex curves 
\begin{equation}\label{eq: parametrization}  \Gamma_{(x_0,y_0)} :  u\mapsto (x_0 \exp (a u) , y_0 \exp (b u)) , \end{equation} 
 where $(x_0 , y_0) \in \mathbb S^1 \times \mathbb S^1$. These leaves accumulate on both separatrices and are not closed, even if we take out $p$. If the leaf is not a separatrice, then $$A := \{ u \in \C \, , \, \Gamma_{x_0, y_0} (u) \in \D \times \D \}$$ is an angular domain which does not depend on $(x_0,y_0)$.  Its boundary
  $$\partial A := \{ u \in \C \, ,  \, \vert \Gamma_{x_0,y_0}(u) \vert_\infty = 1 \} , $$
where $\vert (x,y) \vert_\infty = \sup (|x|,|y|)$, is the union of two semi-lines. Let $d_e^A$ and $d_P^A$ denote the euclidian  and the Poincar\'e metric in the angular domain $A$. 
 
 \begin{definition} \label{def:rho}
Let \(\rho \) be a positive smooth function on $S^*$ which satisfies 
 \begin{equation} \label{eq: rho}
 \rho (x,y) = - \log \left(|x|^2+|y|^2 \right) 
\end{equation} 
on $B_p^*$, for every singular point $p$.
\end{definition}

The following lemma will be useful. The second item relies:

- the distance in $S$ between $q \in B_p^*$ and $p$ (given by the function $e^{-\rho})$,

- the euclidian distance $d_e^A$ in $A$ between $u(q)$ and  $\partial B_p \simeq \partial A$.

 \begin{lemma}\label{l: inthedomain} Let $q  = \Gamma_{x_0,y_0}(u) \in B_p^*$ and let $u \in A$.  
 \begin{enumerate}
  \item \(  \log d_e^A (u , \partial A ) \sim d_P^A (u, \partial A)\),
   \item \( \rho (q) \sim d_e^A ( u  , \partial A ) \). 
\end{enumerate}
 \end{lemma}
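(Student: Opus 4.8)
\textbf{Proof plan for Lemma \ref{l: inthedomain}.}

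The plan is to work entirely in the linearization coordinates $(x,y):B_p\to\D\times\D$ and the parametrization \eqref{eq: parametrization}, reducing both statements to one-dimensional computations in the angular domain $A\subset\C$.

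\emph{Item (1).} The angular domain $A$ is bounded by two half-lines through a common vertex, so up to an affine change of the complex coordinate $u$ we may take $A$ to be a sector $\{\,\arg u\in(\alpha,\beta)\,\}$ of opening $\beta-\alpha<\pi$ (the opening is $<\pi$ because the singularity is hyperbolic, i.e. $a,b$ are not $\R$-colinear, so $A\neq\C$ and $A$ is a genuine proper sector; in fact $A$ is a convex sector). The conformal map $w\mapsto u=w^{\theta}$ with $\theta=(\beta-\alpha)/\pi$ (composed with a rotation) sends the half-plane $\HH=\{\Im w>0\}$ onto $A$, and under this map $d_P^{A}=$ the push-forward of the Poincar\'e metric of $\HH$. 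For a point $u\in A$ close to $\partial A$ with $d_e^{A}(u,\partial A)=\delta$ small, the corresponding $w\in\HH$ has $\Im w\asymp \delta^{1/\theta}$ and $|w|\asymp 1$ (staying in a fixed compact part of $\partial\HH$, since $q$ ranges over the fixed compact set $B_p^*$), and the hyperbolic distance from $w$ to $\partial\HH$ in $\HH$ is $\tfrac12\log\frac{1}{\Im w}+O(1)\sim -\tfrac{1}{\theta}\log\delta$. Hence $d_P^{A}(u,\partial A)\sim -C\log d_e^{A}(u,\partial A)\sim \log d_e^A(u,\partial A)$ up to the positive multiplicative constant absorbed in the $\sim$ (as both sides tend to $+\infty$ when $u\to\partial A$). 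I will phrase this so that $\sim$ means equality up to a bounded multiplicative factor, uniformly for $q\in B_p^*$.

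\emph{Item (2).} With $q=\Gamma_{x_0,y_0}(u)=(x_0e^{au},y_0e^{bu})$ and $|x_0|=|y_0|=1$, we have $|x(q)|=e^{\Re(au)}$ and $|y(q)|=e^{\Re(bu)}$, so $\rho(q)=-\log(|x(q)|^2+|y(q)|^2)=-\log\big(e^{2\Re(au)}+e^{2\Re(bu)}\big)$. Writing $a=|a|e^{i\phi_a}$, $b=|b|e^{i\phi_b}$, the linear forms $u\mapsto\Re(au)$ and $u\mapsto\Re(bu)$ are the (signed) distances to the two boundary half-lines of $A$ times the constants $|a|$ resp. $|b|$; indeed $\partial A$ is exactly the locus where $\max\{\Re(au),\Re(bu)\}=0$, i.e. where $\max(|x|,|y|)=1$. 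For $u\in A$ both $\Re(au)$ and $\Re(bu)$ are negative, and the one that is closest to $0$ (i.e. $\max$) controls: $\rho(q)=-\log\big(e^{2\Re(au)}+e^{2\Re(bu)}\big)\asymp -\max\{2\Re(au),2\Re(bu)\}=-2\max\{\Re(au),\Re(bu)\}$. On the other hand, a short convex-geometry argument shows that for a convex sector the Euclidean distance $d_e^{A}(u,\partial A)$ is comparable to $-\max\{\Re(au),\Re(bu)\}/\max(|a|,|b|)$ (the distance to a finite intersection of half-planes is, within a bounded factor depending only on the angles, the minimum of the distances to each half-plane, and here the two half-plane distances are $-\Re(au)/|a|$ and $-\Re(bu)/|b|$; near one edge of the sector the relevant one dominates and near the vertex they are all comparable). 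Combining, $\rho(q)\asymp d_e^{A}(u,\partial A)$, which is item (2).

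\emph{Main obstacle.} The one subtlety I expect to have to be careful about is the passage, in item (2), from the $\min$ of the two half-plane distances to the genuine distance $d_e^A(u,\partial A)$ to the sector (an intersection of two half-planes): this comparison holds with a constant that degenerates if the sector opening approaches $0$ or $\pi$, but here the opening is a fixed number in $(0,\pi)$ determined by $\arg a,\arg b$, so the constant is harmless; I just need to state it uniformly over the finitely many singular points $p$. The analogous care in item (1) is that the conformal exponent $\theta$ and the compact subset of $\partial\HH$ swept out stay uniformly bounded, which again follows from $q$ ranging over the fixed relatively compact $B_p^*$ and there being finitely many $p$.

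\fin
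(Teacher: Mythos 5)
Your overall strategy (uniformize the sector $A$ by a half-plane for item (1), use the exponential parametrization and convex-sector geometry for item (2)) is exactly the paper's, and your treatment of item (2) is correct and in fact more detailed than the paper's one-line hint; the comparison of $\min(-\Re(au),-\Re(bu))$ with $d_e^A(u,\partial A)$, with constants depending only on the (fixed, non-degenerate) opening of the sector, is the right point to make.

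Item (1), however, has a genuine gap: you analyze the wrong asymptotic regime. The equivalence $\log d_e^A(u,\partial A)\sim d_P^A(u,\partial A)$ can only hold (with both sides positive and large) when $u$ goes \emph{deep} into the sector, i.e. $d_e^A(u,\partial A)\to\infty$, which corresponds to $q\to p$; this is precisely the regime in which the lemma is used later (to bound the Poincar\'e length of the exit path $\Lambda_q$ by $\log\rho(q)$ when $q$ is close to the singularity, cf.\ Lemma \ref{lemma: angularproj} and the proof of Lemma \ref{l: estimates on drho}). You instead take $u$ close to $\partial A$, $\delta=d_e^A(u,\partial A)\to 0$, justified by the claim that ``$q$ ranges over the fixed compact set $B_p^*$'' so that $|w|\asymp 1$; but $B_p^*=B_p\setminus\{p\}$ is not compact, and the points of interest have $u\to\infty$, hence $|w|\to\infty$ in the half-plane. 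Moreover, in your regime the two sides of item (1) have opposite signs: your computation gives $d_P^A(u,\partial A)\sim -C\log\delta\to+\infty$ while $\log d_e^A(u,\partial A)=\log\delta\to-\infty$, and the negative factor $-C$ cannot be ``absorbed'' into $\sim$ if $\sim$ means comparability up to bounded positive factors (your parenthetical ``both sides tend to $+\infty$ when $u\to\partial A$'' is false for the left-hand side). The correct computation is the opposite one: for $u$ deep in $A$, the power map sends $u$ to $w$ with $\operatorname{Im} w\asymp|w|$ large, and the hyperbolic distance in $\HH$ from $w$ to the unit-height horocycle is $\log\operatorname{Im} w+O(1)\asymp\log d_e^A(u,\partial A)$. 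Relatedly, you should say explicitly how you read $d_P^A(u,\partial A)$: the literal hyperbolic distance to $\partial A$ is infinite, so it must be interpreted as the hyperbolic distance to the set of points at bounded Euclidean distance from $\partial A$ (equivalently, the hyperbolic cost of exiting towards $\partial B_p$), which is how the paper uses it; your formula ``distance from $w$ to $\partial\HH$ is $\tfrac12\log\frac1{\operatorname{Im}w}$'' implicitly makes such a choice but in the wrong regime.
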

 
\begin{proof}
The first item is obtained by uniformizing the angular domain $A$ by the upper half plane. The second one comes from the parametrizations $\Gamma_{(x_0,y_0)}$ which involve the exponential function, see Equation \eqref{eq: parametrization}.
\end{proof}

\subsection{The metric $m$ on the normal bundle $N_\FF$}\label{normalbundle}
The normal bundle $N_{\FF}$ is defined by \( TS / \chi(T_\FF) \) over $S^*$, it extends as a line bundle on $S$ in the same way as for the tangent bundle $T_\FF$. Near a singular point, $V$ takes the form $a x \partial _x + b y \partial _y$, see Equation \eqref{eq: V}. Hence the \(1\)-form 
\begin{equation}\label{eq: equation for F} \omega =  a x dy - b y dx  \end{equation}
vanishes exactly on \(T_\FF\) in \(S^*\) and  defines a section of the dual normal bundle \( N_{\FF} ^* \) over \(S^*\)  that does not vanish. 
This section extends to a section of \(N_{\FF}^*\) (still denoted $\omega$) over $S$ and does not vanish there as well. In particular, near a singular point, a  metric \( m  \) on \( N_{\FF} \) has the following expression 
\begin{equation}\label{eq: smooth metric on NF} m := e^\psi |\omega| , \end{equation}
where \(\psi\) is a smooth function (including at the singularity). The curvature form \( \Theta_{m} \) of a metric $m$ on $N_\FF$ is locally defined by \(- \frac{i}{\pi}\partial \overline{\partial} _{\FF} \log m (s) \) where \( s \) is a non vanishing holomorphic section of $N_\FF$.

\begin{remark}\label{vanishcurvNF}
For the remainder of the article, we work with a metric $m$ on $N_\FF$ which satisfies \( m = \vert \omega \vert \) near the singular set. In particular, the curvature of $m$ vanishes in the neighborhood of  \(Sing(\mathcal F)\).
\end{remark}

In the regular part of $\FF$, the metric $m$ can locally be written
\begin{equation}\label{eq: varphi}  m = e^{\varphi_m}  | dt|  ,\end{equation}  
where \( t \) is a submersion defining $\FF$. If $p,q$ are in the same plaque, the norm with respect to $m$ of the derivative of the holonomy map $h_{p,q} : (\Sigma_p , p) \to (\Sigma_q , q)$ from a transversal $\Sigma_p$ at $p$ to a transversal $\Sigma_q$ at $q$ is given by
\begin{equation}\label{eq: holodi}
 \log \vert D h_{p,q} \vert_m = \varphi_m(q) - \varphi_m(p) .
\end{equation}
When passing from a foliation box to another one, the function $\varphi_m$ is modified by adding a leafwise constant function, hence any leafwise derivative of \(\varphi_m\) defines a global form. This yields the following definition.

\begin{definition}
We define on $S^*$ the leafwise \(1\)-form 
\begin{equation}\label{eq: eta} \eta_m := d_{\FF} \varphi_m .\end{equation}
\end{definition}

Near a singular point, there exists a holomorphic $1$-form $\Omega$ such that $d \omega= \Omega \wedge \omega$. Even if $\Omega$ is not unique, its restriction to $T_\FF$ is a well defined section of $K_\FF$, whose expression in the linearizing coordinates is given by 
\begin{equation}\label{eq: alphaomega}
 \Omega_{\vert \FF} =\frac{a+b}{a} \frac{dx}{x} \Big \vert_ \FF = \frac{a+b}{b} \frac{dy}{y}  \Big \vert_ \FF .
 \end{equation}
 
\begin{lemma}\label{l: dvarphi} Let $m$ be a metric on $N_\FF$ satisfying  \( m = \vert \omega \vert \) near the singular set, as in Remark \ref{vanishcurvNF}. Then $\eta_m = \Re \Omega_{\vert \FF}$ near the singular set.
\end{lemma}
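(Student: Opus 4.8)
The plan is to compute both sides of the claimed identity $\eta_m = \Re\,\Omega_{\vert\FF}$ in the linearizing coordinates $(x,y)$ near a singular point $p$, where by hypothesis $m = \vert\omega\vert$ with $\omega = a\,x\,dy - b\,y\,dx$ and the leaves are the curves $\Gamma_{(x_0,y_0)}(u) = (x_0 e^{au}, y_0 e^{bu})$. First I would pick a convenient local first integral $t$ defining $\FF$ in $B_p^*$ (away from the separatrices), namely a function whose level sets are the leaves; a natural choice is something like $t = x^b y^{-a}$ (or a branch thereof), since $\Gamma_{(x_0,y_0)}(u)$ has $x^b y^{-a} = x_0^b y_0^{-a}$ constant along $u$. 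With this choice, $m = e^{\varphi_m}\vert dt\vert$ defines $\varphi_m$ up to a leafwise-constant, and $\eta_m = d_\FF \varphi_m$ is the leafwise exterior derivative of $\varphi_m$; the leafwise-constant ambiguity disappears upon differentiating, so the computation is well posed.

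The key computational step is to express $\varphi_m$ as $\log\frac{|\omega|}{|dt|}$ restricted to a leaf, i.e. to compute the ratio of the two $1$-forms $\omega$ and $dt$ when both are evaluated on the leafwise tangent vector $V = ax\partial_x + by\partial_y$. One has $\omega(V) = a x\cdot by - by\cdot ax = 0$ — wait, that vanishes, which is exactly why $\omega$ descends to $N_\FF^*$; so instead I must be careful that $\omega$ is a section of $N_\FF^*$ and $dt$ is a leafwise object, and $\varphi_m$ compares $m$ (a metric on $N_\FF$) in its two presentations. The cleaner route: near the singularity $m = |\omega|$ means $\varphi_m = \log\bigl(|\omega|/|dt|\bigr)$ where both are viewed as metrics on $N_\FF$ via the trivializing section $dt$, so $\varphi_m = -\log\bigl|\,dt/\omega\,\bigr|$, and $dt/\omega$ is a holomorphic function on $B_p^*$ (the transition between two trivializations of $N_\FF^*$). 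Then $\eta_m = d_\FF\varphi_m = -d_\FF\log|dt/\omega| = -\Re\,d_\FF\log(dt/\omega)$ since $dt/\omega$ is holomorphic and $d_\FF\log|f| = \Re\,d_\FF\log f$ for holomorphic $f$. It remains to identify $-d_\FF\log(dt/\omega)$ with $\Omega_{\vert\FF}$: using $d\omega = \Omega\wedge\omega$ and $d(dt) = 0$, one differentiates the relation defining the transition function and checks that $d\log(\omega/dt)\wedge\omega = d\omega/\,\cdot\, = \Omega\wedge\omega$ modulo $\omega$, so that $d_\FF\log(\omega/dt) = \Omega_{\vert\FF}$, whence $\eta_m = \Re\,\Omega_{\vert\FF}$. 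Finally I would cross-check against the explicit formula \eqref{eq: alphaomega}: with $t = x^b y^{-a}$ one gets $dt/t = b\,dx/x - a\,dy/y$, and restricted to $\FF$ (where $a\,dx/x = b\,dy/y \cdot(a/b)$... more precisely $dx/x|_\FF$ and $dy/y|_\FF$ are proportional with ratio $a:b$ by \eqref{eq: alphaomega}), one should recover $\Omega_{\vert\FF} = \frac{a+b}{a}\frac{dx}{x}\big|_\FF$, confirming the constant.

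The main obstacle I anticipate is bookkeeping the distinction between $\omega$ as a section of $N_\FF^*$ (a transverse object, which annihilates $T_\FF$) versus $dt$ as the differential of a first integral: both trivialize $N_\FF^*$ on $B_p^*$, but one must take real parts and leafwise differentials at the right moment and justify that $d_\FF\log|f| = \Re(\Omega_{\vert\FF})$ really equals the leafwise restriction of the globally-intended form $\Re\,\Omega_{\vert\FF}$ (recalling that $\Omega$ itself is only defined up to a form vanishing on $T_\FF$, so only $\Omega_{\vert\FF}$ is canonical — this is already noted in the text). A secondary subtlety is that the first integral $t = x^b y^{-a}$ is multivalued (because $a,b$ need not be rational or even commensurable), so strictly one works on a simply connected piece of $B_p^*$, or with a local branch; but since we only differentiate, the multivaluedness of $\log t$ is harmless and the resulting $1$-form $\eta_m$ is single-valued and globally defined, matching $\Re\,\Omega_{\vert\FF}$. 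I expect the whole argument to be short once the correct identification $\varphi_m = -\log|dt/\omega|$ is in place.
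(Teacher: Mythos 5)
Your argument is correct, and it takes a genuinely more local route than the paper's. The paper stays at the level of holonomy: it notes that the primitives of $\omega$ endow every transversal with a translation structure whose euclidean metric is $\vert\omega\vert = m$, quotes the formula $Dh_\gamma = \exp\int_\gamma \Omega_{\vert\FF}$ for the derivative of holonomy in these charts, and concludes via Equation \eqref{eq: holodi}, i.e. $\log\vert Dh_\gamma\vert_m = \varphi_m(q)-\varphi_m(p)$. You instead compare the two trivializations $dt$ and $\omega$ of $N_\FF^*$ directly: writing $\omega = f\,dt$ with $f$ holomorphic and non-vanishing, the definition $m = e^{\varphi_m}\vert dt\vert$ together with $m=\vert\omega\vert$ gives $\varphi_m = \log\vert f\vert$, while comparing $d\omega = df\wedge dt = d\log f\wedge\omega$ with $d\omega = \Omega\wedge\omega$ shows that $\Omega - d\log f$ is a multiple of $\omega$, hence vanishes on $T_\FF$, so $\Omega_{\vert\FF} = d_\FF\log f$ and $\eta_m = d_\FF\log\vert f\vert = \Re\, d_\FF\log f = \Re\,\Omega_{\vert\FF}$ (this tidies up your slightly garbled "modulo $\omega$" step). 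What your approach buys is self-containedness: it proves, in infinitesimal form, the holonomy-derivative formula that the paper's proof merely quotes. Two small points to clean up: the multivalued first integral $t = x^b y^{-a}$ is only needed for your cross-check, which does work ($d\log f(V) = (1-b)a + (1+a)b = a+b = \Omega_{\vert\FF}(V)$, matching \eqref{eq: alphaomega}); for the proof itself you should run the general step $\omega = f\,dt$ with an arbitrary local submersion $t$ defining $\FF$, since $x^b y^{-a}$ is not defined along the separatrices, which are part of the neighbourhood where the lemma is asserted — alternatively, note that $\eta_m$ and $\Re\,\Omega_{\vert\FF}$ are continuous leafwise forms on $B_p^*$, so the identity extends from the complement of the axes by continuity.
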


\begin{proof}
The form $\omega$ induces by restriction to any transversal to $\FF$ a non vanishing holomorphic $1$-form. In particular it induces a translation structure whose charts are the local primitive of $\omega$. Now observe that $\vert \omega \vert$ is the euclidian metric associated to the translation structure induced by $\omega$, and recall that $m = \vert \omega \vert$ near the singular point. The derivative of the holonomy map $h_\gamma : (\Sigma_p , p) \to (\Sigma_q , q)$ along a leafwise smooth path $\gamma$ between $p$ and $q$ is given, with respect to this structure, by 
$$ Dh_\gamma  = \exp  \int_\gamma \Omega_{\vert \FF} .$$
Hence $\log \vert Dh_\gamma \vert_m =  \int_\gamma   \Re \Omega_{\vert \FF} = \varphi_m(q) - \varphi_m(p)$ by Equation \eqref{eq: holodi}. We deduce that $ \Re \Omega_{\vert \FF} = d_\FF \varphi_m $ near the singular set, as desired. \end{proof}

\subsection{The metrics $g_s$ and $g_P$ on the tangent bundle \( T_\FF \)} \label{ss: gsgp}

\subsubsection{Smooth metrics}\label{smoothmetric}  A \emph{smooth metric} $g_s$ on $T_\FF$ is a smooth hermitian metric on this line bundle. This is a metric on $T_\FF$ over $S^*$ in the usual sense, with the additional property that at the neighborhood of a singularity, the function \( \log g_s (V) \) extends to a non vanishing smooth function, where $V = a x \partial _x + b y \partial _y$ as in Equation \eqref{eq: V}. A leaf of $\FF$ equipped with the  smooth metric is a complete Riemann surface. 

\begin{remark}\label{vanishcurv}
In the remainder of the article, we work with a fixed smooth metric $g_s$ which satisfies \( g_s (V)=1 \). In particular, the curvature of this metric is vanishing on $B_p$ for every  \(p \in Sing(\mathcal F)\).
\end{remark}

\begin{lemma}\label{eq: bound eta smooth}
Let $g_s$ be the smooth metric on \( T_\FF \) defined by Remark \ref{vanishcurv} and let $m$ be the metric on $N_\FF$ defined by Remark \ref{vanishcurvNF}. Let $\eta_m$ be the leafwise $1$-form defined by Equation \eqref{eq: eta}. Then $\vert \eta_{m} \vert _{g_s}$ is bounded from above and below (away of zero) on $S$. 
 \end{lemma}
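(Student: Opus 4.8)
The claim is that $\vert \eta_m\vert_{g_s}$ is bounded above and below away from zero on all of $S$. The plan is to split $S$ into the union of the neighborhoods $B_p$ of the (finitely many) singular points and a relatively compact remainder, and treat the two regions separately. Away from the singularities, $\eta_m = d_\FF\varphi_m$ is a smooth leafwise $1$-form and $g_s$ is a smooth metric on $T_\FF$, so $\vert\eta_m\vert_{g_s}$ is a continuous function on the compact set $S\setminus\bigcup_p B_p$; the only thing to rule out there is that it vanishes somewhere, but actually the two-sided bound on this compact piece is immediate once we know $\eta_m$ is nowhere zero — and in fact we do not even need that, since near the boundary of $B_p$ we can match with the explicit computation below, and on the interior compact part any continuous function has a finite sup, while a strictly positive lower bound will follow from the fact that $\eta_m$ does not vanish (or, if it could vanish, one restricts attention to a slightly shrunk neighborhood system so that the "core" compact region is where we only need the upper bound and the lower bound near $0$ is handled by the singular computation). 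The cleaner route: observe that $\varphi_m$ is harmonic nowhere in general, so $\eta_m$ need not be nonvanishing, hence the two-sided bound must genuinely come from combining compactness (for the upper bound everywhere) with an explicit near-singularity computation that forces a positive lower bound precisely in the region where $g_s$-norms could degenerate.

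The heart of the argument is the behavior near a singular point $p$. By Lemma~\ref{l: dvarphi}, in the linearizing coordinates $(x,y)$ on $B_p$ we have $\eta_m = \Re\,\Omega_{\vert\FF}$, and by Equation~\eqref{eq: alphaomega},
\[
\Omega_{\vert\FF} = \frac{a+b}{a}\,\frac{dx}{x}\Big\vert_\FF = \frac{a+b}{b}\,\frac{dy}{y}\Big\vert_\FF .
\]
On the other hand, by Remark~\ref{vanishcurv} the metric $g_s$ is normalized so that $g_s(V) = 1$ with $V = ax\,\partial_x + by\,\partial_y$. Therefore I must compute $\vert\Re\,\Omega_{\vert\FF}\vert_{g_s}$, i.e. evaluate the real $1$-form $\Re\,\Omega_{\vert\FF}$ on the real tangent vectors of $g_s$-unit length, namely on the real and imaginary parts of $V$ (which span the real tangent plane to the leaf and have controlled $g_s$-norm since $g_s(V)=1$). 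Now $\Omega_{\vert\FF}(V) = \frac{a+b}{a}\cdot\frac{1}{x}\cdot(ax) = a+b$, a nonzero constant. Hence $\Omega_{\vert\FF}$ evaluated on the complex tangent vector $V$ equals $a+b\neq 0$, and since $\vert V\vert_{g_s}=1$, the complex-linear functional $\Omega_{\vert\FF}$ on $T_\FF$ has $g_s$-operator-norm exactly $\vert a+b\vert$ on $B_p$. Passing to its real part changes the norm by at most a bounded factor (between $\vert a+b\vert$ and, say, $\vert a+b\vert$ times a constant depending only on the argument of $a+b$, but in any case bounded above and below), so $\vert\eta_m\vert_{g_s}$ is pinched between two positive constants on each $B_p$. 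Explicitly: for a complex-linear form $\ell$ on a one-complex-dimensional space with $\vert\ell\vert = c$ (operator norm), the real form $\Re\,\ell$ has operator norm $c$ as well (it attains $c$ on the appropriately rotated unit vector and $0$ on the orthogonal one), so in fact $\vert\eta_m\vert_{g_s} = \vert a+b\vert$ identically on $B_p$ if the norm convention is the sup over real unit vectors — in any reasonable convention it is a fixed positive constant there.

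Finally I assemble the pieces. There are finitely many singular points $p$, giving finitely many positive constants $\vert a_p + b_p\vert$, so on $\bigcup_p B_p$ the function $\vert\eta_m\vert_{g_s}$ is bounded above by $\max_p$ and below by $\min_p$ of these, all strictly positive. On the complement $K := S\setminus\bigcup_p (\text{interior of } B_p)$, which is compact, $\vert\eta_m\vert_{g_s}$ is a continuous function, hence bounded above; for the lower bound on $K$, note that $K$ overlaps each $B_p$ in a collar where we already have the positive constant lower bound, and on the truly interior part of $K$ — disjoint from all $B_p$ — we invoke that $\vert\eta_m\vert_{g_s}$ is continuous and positive (or, if one worries $\eta_m$ could vanish in the regular region, simply enlarge the $B_p$ slightly so that $K$ is covered by collars; more honestly, the lemma as used later only needs the bound and the authors will have ensured by the choice of $g_s$ and $m$ that $\eta_m$ is nonvanishing on the relevant compact pseudo-minimal set, or the statement is understood on $S$ minus a neighborhood where it is anyway controlled). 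The main obstacle, and the only real content, is the near-singularity computation: confirming via Lemma~\ref{l: dvarphi} and Equation~\eqref{eq: alphaomega} that $\eta_m$ is, up to bounded factors, the constant $1$-form dual to $V$ in the $g_s(V)=1$ normalization, so that the potential degeneration of $g_s$ near $p$ (where $\vert dx\vert_{g_s}$, $\vert dy\vert_{g_s}$ blow up) is exactly cancelled by the $1/x$, $1/y$ poles of $\Omega_{\vert\FF}$.
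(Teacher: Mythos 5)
Your central computation is exactly the paper's proof: by Lemma \ref{l: dvarphi}, $\eta_m=\Re\,\Omega_{\vert\FF}$ near the singular set, and by Equation \eqref{eq: alphaomega} together with the normalization $g_s(V)=1$ of Remark \ref{vanishcurv} one gets $\Omega_{\vert\FF}(V)=a+b$, so $\vert\eta_m\vert_{g_s}$ is a (nonzero) constant on each $B_p$; the paper compresses this into the remark that $\Omega_{\vert\FF}$ ``has constant coefficients near the singular set'' and stops there. Your handling of the complement by compactness is what the paper leaves implicit for the upper bound.

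The one weak spot is the lower bound away from the singularities, and your own hedging shows you sensed it: nothing in Remark \ref{vanishcurvNF} prevents $\eta_m=d_\FF\varphi_m$ from vanishing at regular points (replacing $m$ by $e^{f}m$ for a bump function $f$ supported away from the singular set changes $\eta_m$ by $d_\FF f$ and can create zeros), and the proposed fix of ``enlarging the $B_p$'' does not work, since $m=\vert\omega\vert$ and formula \eqref{eq: alphaomega} are only valid inside the linearization domains. To be fair, the paper's own one-line proof does not address this either, and in the sequel only the upper bound is used (Property $(i)$ in the proof of Proposition \ref{d: lyapunov exponent}); so the ``bounded below'' clause is really only justified near the singular set, where your computation does give it. What you should not do is present the appeal to ``the authors will have ensured nonvanishing'' as part of a proof — either prove the global lower bound (which, for an arbitrary $m$ as in Remark \ref{vanishcurvNF}, one cannot), or note explicitly that the statement's lower bound holds near $\mathrm{sing}(\FF)$ and that only the upper bound is used later.
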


\begin{proof}
By Equation \eqref{eq: alphaomega}, $\Omega_{\vert \FF}$ has constant coefficients near the singular set, hence is bounded with respect to $g_s$. Lemma  \ref{l: dvarphi} gives $\eta_{m} = \Re \Omega_{\vert \FF}$. 
\end{proof}

\subsubsection{Poincar\'e metric} \label{ss: Poincare metric} We assume  that $\FF$ does not support any foliated cycle (see Section \ref{ss: harmonic} for the definition), hence every leaf $\mathcal L$ is hyperbolic. The Poincar\'e metric  on $\D$ is defined by  
\[  g_\D := 2 \frac{|dz|} {1-|z|^2} . \] 
We define $g_P := \phi_* g_\D$ where  $\phi : \D \to \mathcal L$ is any uniformization, this is a hermitian metric on $\mathcal L$. Let $d_P$ denote the associated leafwise distance. A computation yields $\text{vol}_{g_P} = -2\pi \, \Theta_{g_P}$, where   \(  \Theta_{g_P} - \frac{i}{\pi}\partial \overline{\partial} _{\FF} \log g_P \).  

\subsubsection{Relation between smooth metrics and the Poincar\'e metric}  \label{ss: Poincare metric2}

Let \( g_s\) be the smooth metric on \( T_\FF \) defined by Remark \ref{vanishcurv}. Let \(\xi\) be the function on $S^*$ defined  by
\begin{equation} \label{eq: def phi} g_P= e^{-\xi} g_s ,
 \end{equation}  
it has leafwise continuous derivatives \cite{Verjovsky, Candel_uniformization, Brunella}. 

\begin{lemma}\label{PTM}  Let $r = d (\cdot, Sing(\FF))$ and $\rho \simeq - \log r$ as in Definition \ref{def:rho}. 
\begin{enumerate}
\item $\xi = \log \rho + O(1)$, 
\item in particular, $g_P \simeq \rho^{-1}  g_s$.
\end{enumerate}
\end{lemma}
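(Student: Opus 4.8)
The plan is to estimate the ratio $e^{-\xi} = g_P/g_s$ near a hyperbolic singular point $p$ by transporting the computation to the angular domain $A \subset \C$ associated to $p$, where both metrics are explicit. On the leaf $\mathcal L$ through $q = \Gamma_{x_0,y_0}(u)$, the parametrization $u \mapsto \Gamma_{x_0,y_0}(u)$ of Equation \eqref{eq: parametrization} identifies a neighborhood of $q$ in $\mathcal L$ with a neighborhood of $u$ in $A$. The first step is to compute $g_s$ in the $u$-coordinate: since $g_s(V) = 1$ and $V = ax\partial_x + by\partial_y$ is exactly $\partial_u$ under this parametrization, $g_s$ pulls back to the euclidean metric $|du| = d_e^A$ on $A$. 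Hence in these coordinates the problem becomes comparing the Poincaré metric of the leaf $\mathcal L$ (as an abstract hyperbolic surface, pushed forward from $\D$) with the flat metric on $A$.

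Next I would localize: the leaf $\mathcal L$ is not simply $A$, since globally it wraps around the separatrices, but the comparison $g_P \simeq \rho^{-1} g_s$ is a pointwise statement, so by the Schwarz–Pick lemma and completeness it suffices to control $g_P$ at $u$ in terms of the distance from $u$ to $\partial A$ measured in the intrinsic geometry. The key input is Lemma \ref{l: inthedomain}: item (1) gives $\log d_e^A(u,\partial A) \sim d_P^A(u,\partial A)$, i.e. the intrinsic Poincaré distance of $A$ from $u$ to the boundary is comparable to $\log$ of the euclidean distance, and item (2) gives $\rho(q) \sim d_e^A(u,\partial A)$. On the one hand, since $A \hookrightarrow \mathcal L$ (away from the separatrices the leaf restricted to $B_p$ is $A$), Schwarz–Pick gives $g_P \leq g_A$ where $g_A = \phi_{A,*} g_\D$ is the Poincaré metric of $A$. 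On the other hand, standard estimates for the Poincaré metric of an angular sector — uniformizing $A$ by the upper half-plane $\HH$ via a power map — give that $g_A$ at $u$ is comparable to $\frac{|du|}{d_e^A(u,\partial A)\,|\log d_e^A(u,\partial A)|}$, so that $g_A / g_s \simeq \big(d_e^A(u,\partial A)\,|\log d_e^A(u,\partial A)|\big)^{-1}$. Combined with $\rho \sim d_e^A(u,\partial A)$ and $\log\rho \sim \log d_e^A(u,\partial A)$ (up to the correction $e^{-\rho}$ vs $\rho$, which only changes things by $O(1)$ after taking logs, as noted in Definition \ref{def:rho}), one gets the \emph{upper} bound $\xi \leq \log\rho + O(1)$ on $B_p^*$. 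For the lower bound one needs that the leaf, viewed near $q$, is not too much smaller than $A$: here one uses that the $d_P$-ball of definite radius around $q$ in $\mathcal L$ is contained in the image of a euclidean ball in the $u$-parametrization whose radius is comparable to $d_e^A(u,\partial A)$ (again from Lemma \ref{l: inthedomain}(2) and the exponential form of $\Gamma_{x_0,y_0}$), so Schwarz–Pick applied the other way gives $g_P \gtrsim |du|/d_e^A(u,\partial A) \cdot (\text{log factor})^{-1}$, yielding $\xi \geq \log\rho - O(1)$. Away from $\text{sing}(\FF)$, $\rho$ is bounded above and below and $\xi$ is a continuous function on a compact set, so the estimate is trivial there; thus (1) holds on all of $S^*$, and (2) is just the exponential of (1).

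The main obstacle is the lower bound on $g_P$, equivalently the upper bound on $\xi$ — no, rather: the subtle direction is showing $g_P \gtrsim \rho^{-1} g_s$, i.e. that the leaf's hyperbolic metric is \emph{at least} as large as expected. The inclusion $A \hookrightarrow \mathcal L$ only gives $g_P \leq g_A$ for free; to get a matching lower bound one must produce, around each point $q$, an embedded hyperbolic disc (or annulus) inside $\mathcal L$ whose modulus is controlled, so that Schwarz–Pick can be run in the reverse direction. This is exactly where one invokes that the $\Gamma_{x_0,y_0}$ wrap with exponential speed and that the relevant scale at $q$ is $d_e^A(u,\partial A) \sim \rho(q)$; the transverse contraction/expansion rates $a,b$ enter only through multiplicative constants absorbed into the $O(1)$. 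I would cite \cite{DNS_Entropy II} (mentioned in the Notes) for the precise form of these two-sided estimates rather than reproving them in full, since the statement here is their qualitative consequence $\xi = \log\rho + O(1)$.
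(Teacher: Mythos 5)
Your plan has two genuine problems. First, the estimate you assert for the Poincar\'e metric of the angular domain is wrong: $A$ is the intersection of two half-planes, hence a simply connected plane sector, so by the Koebe one-quarter theorem its Poincar\'e density at $u$ is comparable to $1/d_e^A(u,\partial A)$, with \emph{no} factor $|\log d_e^A(u,\partial A)|$ (the logarithmic correction occurs for punctured discs, not sectors). The logarithm in the lemma comes entirely from the change of scale $r=e^{-\rho}$ versus $d_e^A\simeq\rho$ recorded in Lemma \ref{l: inthedomain}(2), not from the geometry of $A$. With your formula, Schwarz--Pick applied to the plaque parametrization $\Gamma_{x_0,y_0}:A\to\LL$ (which does pull $g_s$ back to $|du|$, as you say) would give $e^{-\xi}=g_P/g_s\lesssim(\rho\log\rho)^{-1}$, i.e.\ $\xi\ge\log\rho+\log\log\rho-O(1)$, which is incompatible with the statement you are proving; the corrected sector estimate gives instead the (easy) inequality $\xi\ge\log\rho-O(1)$. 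Note also the bookkeeping: since $g_P=e^{-\xi}g_s$, the inclusion $A\hookrightarrow\LL$ bounds $g_P$ \emph{above} and hence bounds $\xi$ \emph{below}; you label this as the upper bound on $\xi$.

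Second, and more seriously, your mechanism for the hard direction $g_P\gtrsim\rho^{-1}g_s$ (equivalently $\xi\le\log\rho+O(1)$) cannot work: exhibiting an embedded hyperbolic disc or annulus inside $\LL$ and ``running Schwarz--Pick in reverse'' only makes $g_P$ \emph{smaller}, i.e.\ again yields upper bounds on $g_P$. A lower bound on $g_P$ requires an upper bound on $|f'(0)|$ for \emph{every} holomorphic map $f:\D\to\LL$ with $f(0)=q$ — a disc in the leaf may leave $B_p$ and return, so this is a global statement resting on Brody hyperbolicity (no entire leaves, coming from the absence of foliated cycles) together with a compactness/linearization analysis near the hyperbolic singularity; your auxiliary claim that the $d_P$-ball of definite radius about $q$ stays in the image of a euclidean ball of radius $\simeq d_e^A(u,\partial A)$ already presupposes exactly this bound, so the argument is circular. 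This hard estimate is precisely the content of \cite[Proposition 3.3]{DNS_Entropy II}, which the paper simply invokes: it gives $g_S=r|\log r|\,e^{O(1)}g_P$ for an ambient metric $g_S$, and the lemma follows from $g_S=g_S(V)\,g_s$ with $g_S(V)=r\,e^{O(1)}$. Your final fallback of citing \cite{DNS_Entropy II} is therefore the right (and the paper's) move, but as written your derivation from it is undermined by the spurious log factor and the reversed Schwarz--Pick step.
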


\begin{proof}
Let $g_S$ be the leafwise restriction of a smooth metric on the compact complex surface $S$. We recall that every singular point of $\FF$ is hyperbolic. Dinh-Nguyen-Sibony proved \cite[Proposition 3.3]{DNS_Entropy II} that 
\[ g_S = r \cdot |\log r| \cdot e^{O(1)} g_P = r \rho \cdot e^{O(1)} g_P \]
near every singular point. Now $g_S$ and $g_s$ differ by a factor that is the length of the tangent vector field $V$, namely
$g_S= g_S(V) g_s$. Since $g_S(V)= r \cdot e^{O(1)}$, we finally get $g_P =  \rho^{-1} e^{O(1)} g_s$, as desired.
\end{proof}

We shall need the following estimates. 

\begin{lemma} \label{l: estimates on drho} Near the singular set, we have
\begin{enumerate}
\item $ \left| d_\FF \rho  \right|_{g_{P}}  = O ( \rho)$.
\item $ \left| d_\FF \xi  \right|_{g_{P}} = O (1)$.  
\end{enumerate}
\end{lemma}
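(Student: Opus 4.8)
\textbf{Proof proposal for Lemma \ref{l: estimates on drho}.}

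The plan is to work entirely in the linearizing coordinates $(x,y)$ near a singular point $p$, and to transfer everything to the angular domain $A$ via the parametrization $\Gamma_{(x_0,y_0)}$ of Equation \eqref{eq: parametrization}, using Lemma \ref{PTM} to compare $g_P$ with $g_s$ and Lemma \ref{l: inthedomain} to control $\rho$ by the euclidean distance to $\partial A$.

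First I would establish item (1). Restricting to a leaf $\Gamma_{(x_0,y_0)}$ and using the variable $u \in A$ as a leafwise holomorphic coordinate, the function $\rho$ pulls back (by Equation \eqref{eq: rho}) to $-\log(|x_0|^2 e^{2\Re(au)} + |y_0|^2 e^{2\Re(bu)})$; since $|x_0|=|y_0|=1$, this is $-\log(e^{2\Re(au)} + e^{2\Re(bu)})$, an explicit real-analytic function of $u$ with $d_\FF \rho = -\frac{2a\, e^{2\Re(au)} + 2b\, e^{2\Re(bu)}}{e^{2\Re(au)}+e^{2\Re(bu)}}\, du$ (after identifying the leafwise $(1,0)$ and $(0,1)$ parts), so $|d_\FF \rho|_{d_e^A} = O(1)$ uniformly on $A$. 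On the other hand, by Lemma \ref{l: inthedomain}(2) we have $\rho(q) \sim d_e^A(u,\partial A)$, and by Lemma \ref{PTM} combined with the classical comparison $d_P^A \sim (d_e^A(\cdot,\partial A))^{-1} d_e^A$ near $\partial A$ — i.e. $g_P$ on the leaf is comparable to $\rho^{-1} g_s$ and $g_s$ is comparable to the euclidean metric $d_e^A$ on $A$ (both having vanishing curvature, and $g_s$ being built from the translation structure of $\omega = a x\,dy - b y\,dx$) — we get $|du|_{g_P} = O(\rho)\,|du|_{d_e^A}$. Hence $|d_\FF\rho|_{g_P} = |d_\FF\rho|_{d_e^A}\cdot \frac{|du|_{g_P}}{|du|_{d_e^A}} = O(1)\cdot O(\rho) = O(\rho)$, which is item (1).

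For item (2), I would use Lemma \ref{PTM}(1): $\xi = \log\rho + O(1)$, where the $O(1)$ term has leafwise continuous derivative (the regularity statement after Equation \eqref{eq: def phi}). It would be cleanest to argue that the $O(1)$ error in $\xi - \log\rho$ in fact has $g_P$-bounded leafwise derivative — this follows by differentiating the Dinh-Nguyen-Sibony comparison $g_S = r\rho\, e^{O(1)} g_P$ from \cite[Proposition 3.3]{DNS_Entropy II} and the explicit form of $g_S(V) = r\, e^{O(1)}$, since all these $O(1)$'s come from smooth functions of $(x,y)$ on the compact box $B_p$, whose $g_s$-gradients are $O(1)$ and whose $g_P$-gradients are therefore $O(\rho^{-1/2}) = O(1)$. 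Then $d_\FF\xi = d_\FF\log\rho + O(1)_{g_P} = \rho^{-1} d_\FF\rho + O(1)_{g_P}$, and by item (1) the first term has $g_P$-norm $\rho^{-1}\cdot O(\rho) = O(1)$. This gives item (2).

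The main obstacle is the second item: the estimate $|d_\FF\xi|_{g_P} = O(1)$ is genuinely borderline (the trivial bound from $\xi = \log\rho + O(1)$ only gives $|d_\FF\log\rho|_{g_P} = \rho^{-1}O(\rho) = O(1)$ for the principal term, but one must verify the error term's leafwise derivative does not blow up relative to $g_P$). So the care needed is in showing the $O(1)$ remainder in Lemma \ref{PTM}(1) can be taken with controlled (leafwise-$g_P$-bounded) derivative — this rests on the fact that, after unwinding the proof of Lemma \ref{PTM}, every $O(1)$ is a smooth function of the linearizing coordinates on the compact closure of $B_p$, hence Lipschitz for $g_s$, hence a fortiori $O(1)$ for the larger metric $g_P$. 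The first item is essentially the explicit computation on the angular domain and should be routine once the dictionary of Lemma \ref{l: inthedomain} is in place.
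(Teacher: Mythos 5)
Your item (1) is correct and is essentially the paper's argument: in the $u$-coordinate of the angular domain the generating vector field $V$ is $\partial_u$ and $g_s(V)=1$, so bounding $d_\FF\rho(V)$ and converting with $g_P\simeq\rho^{-1}g_s$ (Lemma \ref{PTM}) gives $\left|d_\FF\rho\right|_{g_P}=O(\rho)$; your explicit formula is just the paper's computation of $d\rho$ evaluated on $V$. (The appeals to the Poincar\'e metric $d_P^A$ of $A$ and to the ``translation structure of $\omega$'' are unnecessary -- $g_P$ is the Poincar\'e metric of the whole leaf, not of $A$, and $g_s$ restricts to $|du|$ simply because $g_s(V)=1$ -- but your actual chain of estimates only uses Lemma \ref{PTM}, so this is harmless.)

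Item (2) has a genuine gap, exactly at the point you flag as the main obstacle. You resolve it by asserting that the $O(1)$ in $\xi=\log\rho+O(1)$ (equivalently in the estimate $g_S=r\rho\,e^{O(1)}g_P$ of \cite[Proposition 3.3]{DNS_Entropy II}) is a smooth function of the linearizing coordinates on the closed bidisc, hence can be differentiated with control. This is not available: the Dinh-Nguyen-Sibony statement is a two-sided bound with no derivative information, and the error term $\xi-\log\rho$ involves the leafwise Poincar\'e metric, which depends on the global uniformization of each (transcendental, dense) leaf; it is only known to be leafwise $C^1$ and has no transverse smoothness in $(x,y)$, so ``differentiating the comparison'' is not legitimate, and the claim that its $g_P$-differential is bounded is essentially the statement being proved. (Your intermediate assertion that $g_s$-bounded gradients give $g_P$-gradients $O(\rho^{-1/2})=O(1)$ is also off: on $1$-forms the $g_P$-norm is $\simeq\rho$ times the $g_s$-norm -- that is precisely why item (1) only yields $O(\rho)$; one would get $O(1)$ only for functions smooth in the ambient coordinates, thanks to the extra factor $r$ from $|V|=O(r)$, and $\xi-\log\rho$ is not such a function.) The paper avoids differentiating the $O(1)$ altogether: since $\Theta_{g_s}$ vanishes on $B_p$, the function $\xi$ satisfies $\Delta_{g_P}\xi=-1$ there; the bound $\xi=\log\rho+O(1)$ together with Lemma \ref{l: inthedomain} shows that $\xi-\xi(a)$ is bounded on the unit Poincar\'e ball around $a$ by a constant independent of $a$; after uniformizing the leaf and adding the explicit function $L=-\log(1-|z|^2)$ one obtains a harmonic function bounded on a fixed disc, and Cauchy interior gradient estimates then bound $\left|d_\FF\xi(a)\right|_{g_P}$ uniformly. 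Some elliptic-estimate step of this kind is needed, and your proposal does not supply a substitute for it.
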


\begin{proof} 
In $B_p$, a computation gives 
\[ d \rho =  - \frac{ \Re  \left( xd\overline{x}+ y d\overline{y} \right) }{|x|^2+|y|^2} .\]
We get \(\left|  d_\FF \rho (V) \right| =O(1) \) where $V = a x \partial _x + b y \partial _y$ is the section of \( T_\FF\) defined by Equation \eqref{eq: V}. Since $g_s(V) = 1$, we have  \( \left| d_\FF \rho \right|_{g_{s}}  =O(1)\). The first item then follows from the second item of Lemma \ref{PTM}. 

For the second item, Equation \eqref{eq: def phi} and the definition of $g_P$ provide  
$$ - {1 \over 2\pi} \text{vol}_{g_P} = \Theta_{g_P} = \frac{i}{\pi}\partial \overline{\partial}_\FF \xi + \Theta_{g_s} . $$
By Remark \ref{vanishcurv}, the curvature $\Theta_{g_s}$ is vanishing on $B_p$, hence $2i  \partial \overline{\partial}_\FF \xi = - \text{vol}_{g_P}$ there. We deduce from  $2i  \, \partial \overline{\partial}_\FF \xi  = \Delta_{g_P} \xi \cdot \text{vol}_{g_P}$ that 
\begin{equation}\label{xiharm}
 \Delta_{g_P} \xi  = -1 \textrm{ on } B_p. 
 \end{equation}
Now let $a \in B_p$ such that the leafwise ball $\mathcal B_a(1)$  of radius \(1\) and center \(a \) for the Poincar\'e metric is included in $B_p$. We use notions introduced in Section \ref{sub:singpoints}, in particular $d_e^A$ stands for the euclidian distance in the angular domain $A \subset \C$. By definition of the Poincar\'e metric, there exists $c \geq 1$ independent from $a$ such that for every $q \in \mathcal B_a(1)$, we get (denoting $u(s)$ the parameter $u \in A$ corresponding to $s \in S^*$):
\begin{equation}\label{disdist}
c^{-1} \,  d_e^A (u(a), \partial A) \leq d_e^A (u(q), \partial A) \leq c \, d_e^A (u(a), \partial A) . 
   \end{equation}
Lemma \ref{l: inthedomain} and the first item of Lemma \ref{PTM}  respectively provide $\rho (q) \sim d_e^A ( u(q) , \partial A)$ and $\xi = \log \rho + O(1)$. Hence, by Equation \eqref{disdist}, the function \( \xi - \xi (a)\) is bounded on $\mathcal B_a(1)$ by a constant which does not depend on $a$.  
 
To finish we use classical Cauchy's estimates for harmonic functions, see \cite[Chapter 2]{ABW}.
Let $\phi : \D \to \mathcal L_a$ be a uniformization such that $\phi(0)=a$. 
Let $r_0  \in ]0,1[$ be such that $\D(r_0) = \phi^{-1} \mathcal B_a(1)$. Equation \eqref{xiharm} implies 
$$\Delta_{g_\D} \xi \circ \phi  = -1 \textrm{ on } \D(r_0) .$$
 Let $L$ be the function $- \log (1 - \vert z \vert^2)$ on $\D$, it is bounded  on $\D(r_0)$,  satisfies $\Delta_{g_\D} L  = 1$ on $\D$, and $\vert dL(0) \vert_{g_\D} = 1$ (up to multiplicative constants). Let us set $h:= \xi \circ \phi - \xi \circ \phi(0) + L$. Then $h$ is harmonic on $\D$ and (by the previous paragraph) is bounded on $\D(r_0)$ by a constant which does not depend on $a$. Cauchy's estimates imply that $\vert dh(0) \vert_{e}$ is bounded  by a constant which does not depend on $a$, where $\vert \cdot \vert_e$ denotes the norm with respect to the euclidian metric on $\D$. To conclude, we observe that 
$$ \vert d_\FF \xi (a) \vert_ {g_P} =\vert d (\xi\circ \phi) (0) \vert_ {g_\D} \leq \vert  d h(0) \vert _{g_\D} + \vert dL(0) \vert_{g_\D} = \vert dh(0) \vert_{e} + 1 , $$
 where the last equality uses the fact $g_\D(0) = g_e(0)$.   \end{proof}

\section{Harmonic currents}

\subsection{Directed harmonic currents} \label{ss: harmonic} A \emph{current} $T$ on \(S\) is a continuous linear form on the space \(\Omega^{1,1}(S)\) of smooth \((1,1)\)-forms. It is \emph{closed} if \( T (d\xi) = 0 \) for every 1-form $\xi$ on $S$, it is \emph{harmonic} if 
 \(T ( i \partial \overline{\partial} f ) = 0 \) for every smooth function on $S$. 
Following \cite{Ghys},  the intersection of a harmonic current $T$ with a line bundle \( E\) over $S$ is defined as follows,  where \(\Theta_h = - dd^c_\FF \log h  \) is the curvature form of a hermitian metric \(h  \) on \(E\) (see Section \ref{clanotion}):
\begin{equation} \label{intercurrents}
 T \cdot E := \int _S \Theta_h \wedge T  .
  \end{equation}
 
Let us  define \emph{directed currents}. Let  \( \mathcal P_\FF \) denote the closed subset of $TS$ given by the union of the tangent spaces of $\FF$ over the regular part $S^*$ and of the tangent spaces of $S$ over the singular part. A $(1,1)$-form $\eta$ is \emph{$\mathcal P_\FF$-positive} if $\eta(u, iu) \geq 0$ for every $u \in \mathcal P_\FF$. Then a current  $T$ is \emph{directed} if $T(\eta) \geq 0$ for every $\mathcal P_\FF$-positive $(1,1)$-form $\eta$, see for instance \cite[Section 2.4]{DDK}.

Skoda \cite{Skoda} proved that if \( T\) is a positive (in particular directed) harmonic current defined at the neighborhood of the origin in \({\bf C}^2\), then 
\begin{equation} \label{eq: monotonicity trace measure} r\mapsto \frac{1}{r^2}  \int_{|x|^2 + |y|^2 \leq r^2}  T \wedge i (dx \wedge d\overline{x} + dy \wedge d\overline{y} ) \end{equation}
is non decreasing. The limit when \(r\) tends to \(0\) therefore exists (it is the Lelong number  of \(T\) at the origin, it does not depend on the coordinate $(x,y)$). In particular, a directed harmonic current does not put any mass on points, and from Equation \eqref{intercurrents}, and  we get for every line bundle $E$ on $S$:
\begin{equation} \label{intercurrents2}
 T \cdot E := \int _S \Theta_h \wedge T = \int _{S^*} \Theta_h \wedge T .
 \end{equation}

Berndtsson-Sibony \cite[Theorem 1.2]{BS} established the existence of directed harmonic currents for singular foliations. Dinh-Nguyen-Sibony  proved in \cite[Theorem 1.1]{DNS_unique ergodicity} that any foliated compact K\"ahler surface with hyperbolic singularities and no foliated cycle supports a unique directed harmonic current, up to a multiplicative constant. Fornaess-Sibony \cite[Theorem 4]{FS}  proved that property on $\mathbb P^2$.  A \emph{foliated cycle} is a closed directed current  \cite{Sullivan}. By Ahlfors' lemma, entire leaves produce foliated cycles \cite{Brunella courbes entieres}, hence every leaf is hyperbolic if there is no foliated cycle. 

\subsection{The measure $\mu = T \wedge \text{vol}_{g_P}$} \label{ss: harmonicm} Let $T$ be a directed harmonic current on $S$. We define on $S^*$ the Radon measure 
\begin{equation}\label{eq: harmonic measure} \mu := T \wedge \text{vol}_{g_P}  \end{equation}
by \( \mu(f) := T ( f \text{vol}_{g_P}) \) for every \( f  \in C^\infty_c (S^*) \).  
The following integrability property, due to Nguyen, is crucial in our work. 

\begin{theorem} \cite{Nguyen} \label{th: Nguyen estimates} 
Let $(S,\FF)$ be a foliated algebraic complex surface with hyperbolic singularities and no foliated cycle. Let $T$ be a directed harmonic current, it is unique up to a multiplicative constant by \cite{DNS_unique ergodicity}. Let $\mu$ be the Radon measure on $S^*$ defined by Equation \eqref{eq: harmonic measure} and let \(\rho \) be a smooth positive function on $S^*$ with a logarithmic pole at every singular point, as in Equation \eqref{eq: rho}. Then $\int_{S^*} \rho \ d\mu$ is finite.
\end{theorem}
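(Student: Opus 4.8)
\textbf{Plan of proof of Theorem \ref{th: Nguyen estimates}.} The goal is to show that $\int_{S^*} \rho \, d\mu < \infty$, where $\mu = T \wedge \text{vol}_{g_P}$ and $\rho \simeq -\log r$ near the singular set. Since $S$ is compact and $\rho$ is bounded away from the finitely many singular points $p \in \text{sing}(\FF)$, it suffices to prove the integrability of $\rho$ in a small ball $B_p$ around each hyperbolic singularity, working in linearizing coordinates $(x,y)$ where $V = ax\partial_x + by\partial_y$. The natural strategy is to replace the Poincar\'e volume form by the flat volume form $\text{vol}_{g_s}$ coming from the smooth metric $g_s$ with $g_s(V) = 1$: by Lemma \ref{PTM} we have $g_P \simeq \rho^{-1} g_s$, hence $\text{vol}_{g_P} \simeq \rho^{-2}\, \text{vol}_{g_s}$, so that $\rho \, \text{vol}_{g_P} \simeq \rho^{-1}\, \text{vol}_{g_s}$. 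Thus the claim reduces to showing that $\int_{B_p} \rho^{-1} \, T \wedge \text{vol}_{g_s}$ is finite, i.e. that $T \wedge \text{vol}_{g_s}$ has mass of order at most $\rho$ (logarithmic growth) near $p$, which is a much milder statement than finiteness of $T \wedge \text{vol}_{g_s}$ itself.

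First I would make the product structure explicit. In $B_p$ the leaves are parametrized by $\Gamma_{(x_0,y_0)}(u) = (x_0 e^{au}, y_0 e^{bu})$ over the angular domain $A \subset \C$, and $g_s$ restricts on each leaf to the flat metric $|du|^2$ (since $g_s(V)=1$ and $V$ corresponds to $\partial_u$). Slicing $T$ by a transversal, $T = \tau(z,t)\,\nu(dt\,d\bar t)$ with $\tau$ leafwise harmonic and $\geq 0$; so $\int_{B_p} \rho^{-1}\, T \wedge \text{vol}_{g_s}$ becomes $\int \bigl(\int_{A \cap \{\text{leaf } t\}} \rho^{-1}(\Gamma_t(u))\, \tau(\Gamma_t(u))\, |du|^2\bigr)\,\nu(dt)$. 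By Lemma \ref{l: inthedomain}, on the leaf through parameter $u$ one has $\rho(\Gamma_t(u)) \sim d_e^A(u,\partial A)$, and uniformizing $A$ by the upper half-plane $\HH$ this is comparable to $\Im w$ (up to the conformal factor). So the inner integral is controlled by an integral over $\HH$ of $(\Im w)^{-1}$ against a positive harmonic function, with the flat area element pushed through the uniformization. The key analytic input is then a bound, uniform over the leaves, on $\int_{\HH} \frac{h(w)}{\Im w}\,|\text{Jac}|\,dw$ where $h$ is the positive harmonic function obtained from $\tau$; the monotonicity of the trace (Skoda, Equation \eqref{eq: monotonicity trace measure}) together with the Harnack inequality should give that the mass of $T$ on dyadic annuli around $p$ is bounded, which transplants to an $L^\infty$-type control of $h$ on vertical strips in $\HH$, and then $\int (\Im w)^{-1}$ over the relevant region diverges only logarithmically — matching the extra $\rho$ we can afford.

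The step I expect to be the main obstacle is the uniformity over transversals: the angular domain $A$ has a genuine cusp/sector geometry, and the estimate $\rho \sim d_e^A(u,\partial A)$ degenerates near the vertex of $A$ (which corresponds to the leaf approaching the separatrices and wrapping many times), so one must be careful that the area contribution from the region of $A$ near its vertex — where leaves spiral infinitely — does not accumulate mass. This is precisely where the harmonicity and positivity of $\tau$ (via Harnack / the maximum principle for positive harmonic functions on $A$) and the Lelong-number finiteness must be combined quantitatively: the positive harmonic function on $A$ vanishing appropriately at the two boundary rays grows at most linearly in the "angular coordinate", and integrating $\rho^{-1}$ against it over $A$ converges. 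Once this uniform-in-$t$ bound on the inner integral is established, integrating against the finite transverse measure $\nu$ (finite because $T$ has locally finite mass, again by Skoda) closes the argument. I would also double-check the borderline: the point of the statement is that $\rho$ (one power of the log) is integrable while higher powers or $\text{vol}_{g_P}$ alone without the $\rho$ weight might not be, so the logarithmic divergence of $\int(\Im w)^{-1}$ has to be matched exactly against the linear growth of the harmonic function, leaving no room to spare.
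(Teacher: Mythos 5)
Your plan cannot be checked against the paper itself: the paper does not prove Theorem \ref{th: Nguyen estimates}, it quotes it from Nguyen's Invent.\ Math.\ paper \cite{Nguyen}, where this weighted estimate is a central result obtained by a long argument. So your sketch has to stand alone, and while its first step is correct, the decisive step has a genuine gap. The reduction is fine: by Lemma \ref{PTM}, $\rho\,\mathrm{vol}_{g_P}\simeq \rho^{-1}\mathrm{vol}_{g_s}\simeq \rho^{-1}r^{-2}\,\omega_{\mathrm{amb}}|_{\mathcal F}$ near $p$, so the claim is equivalent to a $\rho^{-1}r^{-2}$-weighted bound on the trace measure $\sigma=T\wedge\omega_{\mathrm{amb}}$ near the singularity.

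The gap is in the mechanism you propose to bound this. Skoda's monotonicity gives $\sigma(B(s))=O(s^2)$; on the dyadic annulus $\{r\simeq 2^{-k}\}$ this yields a contribution $\lesssim k^{-2}$ to $\mu$ but only $\lesssim k^{-1}$ to $\int\rho\,d\mu$. Thus ``Skoda plus Harnack'' proves finiteness of $\mu$ itself ($\sum k^{-2}<\infty$) but misses the weighted statement by exactly one logarithm, and that logarithm is the entire content of the theorem. Moreover the ``uniform over the leaves'' bound you posit for the inner integral is false: a non-separatrix leaf piece in $B_p$ is parametrized by the unbounded sector $A$, and $\int_{A\cap\{d_e(u,\partial A)\geq 1\}} d_e(u,\partial A)^{-1}\,d\mathrm{Leb}(u)=\infty$ (it grows like $R\log R$ on $\{|u|\leq R\}$); so already for the constant density $\tau\equiv 1$ — a perfectly admissible positive harmonic function, and Harnack can never force decay — the per-leaf weighted integral diverges. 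Hence finiteness of $\int\rho\,d\mu$ is not a leafwise statement at all: it expresses a quantitative transverse decay of $\tau\,\nu$ on the set of leaves penetrating deep into $B_p$, which your outline never extracts. Finally, no argument using only the local ingredients you invoke (positivity, directedness, local $dd^c$-closedness, finite local mass, leafwise harmonic densities, Harnack) can possibly suffice: the current of integration on a separatrix satisfies all of them near $p$, has finite Poincar\'e mass there, $\int |x|^{-2}\log^{-2}(1/|x|)\,d\mathrm{Leb}(x)<\infty$, and yet $\int\rho\,d\mu\simeq\int |x|^{-2}\log^{-1}(1/|x|)\,d\mathrm{Leb}(x)=\infty$. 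Any correct proof must inject global information — the $dd^c$-closedness of $T$ on all of $S$ and the absence of foliated cycles — typically via Stokes-type arguments with carefully chosen weight functions near the singularities; that is what Nguyen's paper does, and it is precisely the step your proposal leaves open.
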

As a consequence, the measure \(\mu\) is finite on $S^*$. We extend it to $S$, keeping the same notation $\mu$, by putting no mass on the singular set. We shall see in Section \ref{ss: harmonicm2} that the measure $\mu$ is harmonic. We get the following Lemma from $\mu = T \wedge \mathrm{vol}_{g_P}$, $\Delta_{g_P} \cdot \text{vol}_{g_P}  = 2\pi \, dd^c$ and  Equation \eqref{intercurrents}.

\begin{lemma}\label{forumlaTNF} Let $m$ be a metric on $N_\FF$ as in Remark \ref{vanishcurvNF}, we locally have $m = e^{\varphi_m}  | dt|$ as in Equation \ref{eq: varphi}. Then $\int_{S} \Delta_{g_P} \varphi_m \, d\mu = -2 \pi \, T\cdot N_{\mathcal F}$. 
\end{lemma}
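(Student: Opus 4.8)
The statement to prove is that $\int_S \Delta_{g_P} \varphi_m \, d\mu = -2\pi\, T\cdot N_\FF$, where locally $m = e^{\varphi_m}|dt|$ and $\mu = T\wedge \mathrm{vol}_{g_P}$. The plan is to unwind the definition of the intersection number $T\cdot N_\FF$ given in Equation \eqref{intercurrents} in terms of the curvature of $m$, and then translate the curvature form into the leafwise Laplacian of $\varphi_m$ via the dictionary set up in Section \ref{sub:DOC}. Concretely, $T\cdot N_\FF = \int_S \Theta_m \wedge T$, where $\Theta_m = -dd^c_\FF \log m(s)$ for a non-vanishing holomorphic section $s$ of $N_\FF$. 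In a foliation box with transverse coordinate $t$, one may take $s = \partial/\partial t$ (or rather the section dual to $dt$), so that $\log m(s) = \varphi_m$ up to a leafwise-constant function (the failure of $t$ to be a global first integral changes $\varphi_m$ only by a leafwise constant, as noted after Equation \eqref{eq: holodi}), and hence $dd^c_\FF \log m(s) = dd^c_\FF \varphi_m$ — this is well defined globally even though $\varphi_m$ itself is only locally defined. Therefore $\Theta_m = -dd^c_\FF \varphi_m$ as a global leafwise $(1,1)$-form.

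Next I would invoke the identity $\Delta_{g_P} f \cdot \mathrm{vol}_{g_P} = 2\pi\, dd^c_\FF f$ from Section \ref{sub:DOC} (applied leafwise on $S^*$), giving $dd^c_\FF \varphi_m = \tfrac{1}{2\pi}(\Delta_{g_P}\varphi_m)\,\mathrm{vol}_{g_P}$. Substituting,
\[
T\cdot N_\FF = \int_S \Theta_m \wedge T = -\int_S dd^c_\FF\varphi_m \wedge T = -\frac{1}{2\pi}\int_S (\Delta_{g_P}\varphi_m)\,\mathrm{vol}_{g_P}\wedge T = -\frac{1}{2\pi}\int_S \Delta_{g_P}\varphi_m\, d\mu,
\]
using $\mu = T\wedge\mathrm{vol}_{g_P}$ in the last step. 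Rearranging gives exactly $\int_S \Delta_{g_P}\varphi_m\, d\mu = -2\pi\, T\cdot N_\FF$.

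The main point requiring care — and the only genuine obstacle — is the behaviour near the singular set, i.e. justifying that the integral over $S$ really equals the integral over $S^*$ and that no contribution is lost at $\mathrm{sing}(\FF)$. Here I would use Equation \eqref{intercurrents2}: since $T$ is a directed harmonic current it puts no mass on points (Skoda's monotonicity, Equation \eqref{eq: monotonicity trace measure}), so $T\cdot N_\FF = \int_{S^*}\Theta_m\wedge T$. Moreover, by Remark \ref{vanishcurvNF} the metric $m$ is chosen so that $m = |\omega|$ near every singular point, which forces $\Theta_m = 0$ there; equivalently $\Delta_{g_P}\varphi_m = 0$ on each $B_p$, so there is no ambiguity in integrating near the singularities. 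Finally, the measure $\mu$ is finite on $S^*$ (Theorem \ref{th: Nguyen estimates}) and by definition puts no mass on $\mathrm{sing}(\FF)$, so all integrals converge and the chain of equalities above is legitimate. This is precisely why the lemma is stated for the specific metric $m$ of Remark \ref{vanishcurvNF} rather than an arbitrary metric on $N_\FF$.
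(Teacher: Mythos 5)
Your proposal is correct and follows essentially the same route the paper indicates: unwinding $T\cdot N_\FF=\int_S\Theta_m\wedge T$ with $\Theta_m=-dd^c_\FF\varphi_m$, converting $dd^c_\FF$ into $\frac{1}{2\pi}\Delta_{g_P}\cdot\mathrm{vol}_{g_P}$, and using $\mu=T\wedge\mathrm{vol}_{g_P}$ together with Equation \eqref{intercurrents2}, the flatness of $m$ near $\mathrm{sing}(\FF)$ and the finiteness of $\mu$. The extra care you take at the singular set is exactly the justification the paper leaves implicit, so nothing is missing.
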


\subsection{Local expression of directed harmonic currents} \label{ss: local harmonic}

A harmonic current is directed if and only if $T \wedge \Omega = 0$ for any holomorphic $1$-form locally defining $\FF$, see for instance \cite[Lemma 2.5]{DDK}. Let $T$ be a directed harmonic current and let \( (z,t) \in {\bf D} \times {\bf D} \) be coordinates of a foliation box. By \cite{Ghys}, there exist a Radon measure $\nu$ on \({\bf D}\) and a non negative function \(\tau \in L^1_{loc} (Leb_{\bf D} \otimes \nu )\), harmonic on \(\nu\)-almost every plaque, such that 
 \begin{equation} \label{eq: local expression harmonic current} \forall \,  (1,1)-\textrm{form} \,  \omega  \ , \ T (\omega )=   \int_{\bf D} \left( \int _{{\bf D} \times \{t\} } \tau \, \omega \right) \nu (dt) . \end{equation}
 
Given a $1$-form $\eta$ on $S$, an integration by parts along the leaves yields $T(d\eta) = T( d_{\mathcal F} \log \tau \wedge \eta)$ and thus the following lemma.

\begin{lemma} \label{dtdt} 
 We have \( d T =   T  \wedge d_{\mathcal F} \log \tau  \) and  \( d ^c T =  T \wedge d_{\mathcal F}^c  \log \tau  \). 
 \end{lemma}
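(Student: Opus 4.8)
The plan is to prove Lemma~\ref{dtdt} by the integration-by-parts formula along leaves that was announced just before the statement, applied to the local expression \eqref{eq: local expression harmonic current}. First I would fix a foliation box with coordinates $(z,t) \in \D \times \D$ and write, for a test $1$-form $\eta$ supported in that box, $T(d\eta) = \int_\D \left( \int_{\D \times \{t\}} \tau \, d\eta \right) \nu(dt)$. Since $\tau(\cdot,t)$ is harmonic on $\nu$-almost every plaque, it is in particular smooth there, so on each plaque one may integrate by parts: writing $\tau \, d\eta = d(\tau \eta) - d_\FF \tau \wedge \eta$ and using that $\int_{\D\times\{t\}} d(\tau \eta) = 0$ (the plaque has no boundary, or $\eta$ has compact support), one gets $\int_{\D\times\{t\}} \tau \, d\eta = - \int_{\D\times\{t\}} d_\FF \tau \wedge \eta$.

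Next I would rewrite $d_\FF \tau \wedge \eta = \tau \, d_\FF(\log \tau) \wedge \eta$, valid on the set $\{\tau > 0\}$, and observe that the locus $\{\tau = 0\}$ on a plaque where $\tau$ is harmonic and not identically zero is discrete, hence $\nu \otimes Leb$-negligible, so it does not affect the leafwise integrals. This yields $\int_{\D\times\{t\}} \tau\, d\eta = -\int_{\D\times\{t\}} \tau\, d_\FF(\log\tau)\wedge\eta$, and reassembling via $\nu$ gives $T(d\eta) = -\int_\D \left(\int_{\D\times\{t\}} \tau \, (d_\FF \log\tau \wedge \eta)\right)\nu(dt) = -T(d_\FF \log \tau \wedge \eta)$. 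Wait — I should be careful with the sign: $d_\FF(\log\tau)\wedge\eta = -\eta\wedge d_\FF\log\tau$ as $1$-forms, so in fact $T(d\eta) = T(d_\FF\log\tau \wedge \eta)$ up to the correct orientation; I would track the sign conventions so that the final identity reads $dT = T \wedge d_\FF \log\tau$ in the sense of currents, i.e. $(dT)(\eta) := (-1)^{\deg}T(d\eta)$ matches $T(\cdot \wedge d_\FF\log\tau)$ when paired against $\eta$. Then a partition of unity subordinate to a covering of $S^*$ by foliation boxes extends the local identity to all of $S^*$; since $T$ puts no mass on the singular set (by the Skoda monotonicity consequence \eqref{intercurrents2}), this suffices. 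Finally, the $d^c$ statement follows by the identical argument with $d^c_\FF = \frac{1}{2i\pi}(\partial_\FF - \overline\partial_\FF)$ in place of $d_\FF$, using that $\tau$ harmonic means $\partial_\FF\overline\partial_\FF\tau = 0$ so the integration by parts still closes up, together with the fact that a holomorphic transversal has no boundary contribution.

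The step I expect to require the most care is the integration by parts itself, specifically justifying that the leafwise boundary terms vanish and that $\log\tau$ is integrable enough against $T\wedge(\text{stuff})$ near the zero set of $\tau$ — $\log\tau$ blows up where $\tau$ vanishes, so one needs that $d_\FF\log\tau$, weighted by $\tau$, stays integrable; but $\tau\, d_\FF\log\tau = d_\FF\tau$ is smooth, so the product is harmless and the apparent singularity is fictitious. The genuinely delicate point is thus bookkeeping rather than analysis: one is really just writing $\tau\,d\eta = d(\tau\eta) - d_\FF\tau\wedge\eta$ and discarding an exact leafwise form, so I do not anticipate a serious obstacle — the lemma is essentially a restatement of the displayed identity $T(d\eta) = T(d_\FF\log\tau\wedge\eta)$ with the sign and the $d^c$ version spelled out.
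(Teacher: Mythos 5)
Your proof is correct and follows essentially the paper's own route: the paper obtains the lemma precisely by leafwise integration by parts against the local expression \eqref{eq: local expression harmonic current}, exactly as you do, and your sign bookkeeping is consistent with the identity $dT = T \wedge d_\FF \log \tau$ under the standard conventions for $dT$ and $T\wedge\alpha$. One small repair: the zero set of $\tau(\cdot,t)$ on a plaque is not discrete — a nonnegative harmonic function vanishing at a point is identically zero by the minimum principle — so on $\nu$-a.e.\ plaque $\tau$ is either strictly positive or identically zero, which makes that step of your argument even simpler.
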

 
Following  \cite[Section 5]{Candel}, we put the next definition.  
\begin{definition}
We define on $S^*$ the leafwise $1$-form
\begin{equation}\label{eq: beta}  \beta := d_{\FF} \log \tau  . \end{equation}
\end{definition}
 
Harnack inequality for positive harmonic functions implies for every $(z,w) \in \bf D \times \bf D$ and for \(\nu\)-almost every $t \in \bf D$:
\begin{equation}\label{eq: harnack}
e^{-d_P(z,w)} \, \tau(w,t) \leq \tau(z,t) \leq e^{d_P(z,w)}  \tau(w,t) ,  
\end{equation}
where $d_P$ is the distance induced by the Poincar\'e metric (up to a multiplicative constant), see Section \ref{ss: Poincare metric}. 

\begin{lemma} \label{lemmaharnack} Let $\tau$ be the density function of $T$ in a foliation box as in Equation \eqref{eq: local expression harmonic current}. Then the forms \(  d_{\mathcal F}  \log \tau\), \(  d^c_{\mathcal F}  \log \tau\) and \(dd^c_{\mathcal F} \log \tau \) are bounded with respect to the Poincar\'e metric $g_P$. 
 \end{lemma}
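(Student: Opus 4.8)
Lemma \ref{lemmaharnack} asserts that $d_{\mathcal F}\log\tau$, $d^c_{\mathcal F}\log\tau$, and $dd^c_{\mathcal F}\log\tau$ are bounded with respect to the Poincaré metric $g_P$. Let me plan a proof.

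\textbf{Plan.} The intention is to derive all three bounds from the Harnack inequality \eqref{eq: harnack}, which is the only genuine ingredient. Work in a foliation box with coordinates $(z,t)\in\D\times\D$. For $\nu$-a.e. $t$ the function $\tau(\cdot,t)$ is a non-negative harmonic function on the plaque $\D\times\{t\}$; on plaques where $\tau(\cdot,t)\equiv 0$ the current $T$ carries no mass there and there is nothing to prove, so one may assume $\tau(\cdot,t)\not\equiv 0$, in which case $\tau(\cdot,t)>0$ on the connected plaque by the minimum principle. Taking logarithms, \eqref{eq: harnack} then reads $\bigl|\log\tau(z,t)-\log\tau(w,t)\bigr|\le d_P(z,w)$ for all $z,w\in\D$, i.e. $\log\tau(\cdot,t)$ is Lipschitz with a universal constant for the leafwise Poincar\'e distance. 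Since $\tau(\cdot,t)$ is harmonic it is smooth, hence so is $\log\tau(\cdot,t)$; and a smooth function whose oscillation is dominated by a Riemannian distance has gradient of norm at most the Lipschitz constant for the corresponding metric. This yields $|d_{\mathcal F}\log\tau|_{g_P}=O(1)$ on the support of $T$, which is the assertion for $d_{\mathcal F}\log\tau$.

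For the remaining two forms I would use two pointwise identities, valid on any Riemann surface with a Hermitian metric $g$ and obtained at once in a local conformal coordinate from $d^c=\frac{1}{2\pi}(\partial_x\,dy-\partial_y\,dx)$: for every real smooth $f$ one has $|d^c_{\mathcal F}f|_g=\frac{1}{2\pi}|d_{\mathcal F}f|_g$ and $d_{\mathcal F}f\wedge d^c_{\mathcal F}f=\frac{1}{2\pi}|d_{\mathcal F}f|_g^2\,\text{vol}_g$. Applying the first with $f=\log\tau$ transfers the previous bound to $d^c_{\mathcal F}\log\tau$. For $dd^c_{\mathcal F}\log\tau$, I would use that $\tau(\cdot,t)$ is harmonic, so $dd^c_{\mathcal F}\tau=0$, and the chain rule gives the pointwise identity
\[
 dd^c_{\mathcal F}\log\tau \;=\; d_{\mathcal F}\!\Bigl(\tfrac{d^c_{\mathcal F}\tau}{\tau}\Bigr)\;=\;-\,d_{\mathcal F}\log\tau\wedge d^c_{\mathcal F}\log\tau\;=\;-\tfrac{1}{2\pi}\,|d_{\mathcal F}\log\tau|^2_{g_P}\,\text{vol}_{g_P};
\]
its coefficient relative to $\text{vol}_{g_P}$ is therefore bounded, by the first paragraph. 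Equivalently, one may combine $dd^c_{\mathcal F}f=\frac{1}{2\pi}(\Delta_{g_P}f)\,\text{vol}_{g_P}$ with the identity $\Delta_{g_P}\log\tau=-|d_{\mathcal F}\log\tau|^2_{g_P}$, valid since $\tau$ is harmonic.

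\textbf{Main obstacle.} There is no serious obstacle here: the entire content sits in the Harnack inequality already recorded as \eqref{eq: harnack}. The two points requiring a little care are the passage from a Lipschitz estimate to a pointwise bound on the gradient — legitimate precisely because $\tau$, being harmonic, is smooth — and the observation that the degenerate plaques $\{\tau\equiv 0\}$ carry no mass and may be discarded; I expect the identity $dd^c_{\mathcal F}\log\tau=-\,d_{\mathcal F}\log\tau\wedge d^c_{\mathcal F}\log\tau$, which holds exactly because $\tau$ is harmonic, to be the computational heart of the third bound.
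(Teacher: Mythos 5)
Your proposal is correct and follows essentially the same route as the paper: Harnack's inequality \eqref{eq: harnack} gives the Lipschitz bound $\vert\log\tau(z,t)-\log\tau(w,t)\vert\le d_P(z,w)$, hence the $g_P$-bounds on $d_{\mathcal F}\log\tau$ and $d^c_{\mathcal F}\log\tau$, and leafwise harmonicity of $\tau$ reduces $dd^c_{\mathcal F}\log\tau$ to $-\,d_{\mathcal F}\log\tau\wedge d^c_{\mathcal F}\log\tau$, i.e.\ to $\vert d_{\mathcal F}\log\tau\vert_{g_P}^2\,\mathrm{vol}_{g_P}$ up to sign and constant, exactly as in the paper's computation (your sign is in fact the correct one; the paper's displayed formula has a harmless sign slip). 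Your added remarks on discarding plaques where $\tau\equiv 0$ and on passing from the Lipschitz estimate to a pointwise gradient bound via smoothness are fine and only make explicit what the paper leaves implicit.
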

 
 \begin{proof}
Harnack inequality \eqref{eq: harnack} implies $$ \vert \log \tau(z,t) - \log \tau(w,t)\vert \leq  d_P(z,w) ,$$
which proves the statement for \(  d_{\mathcal F}  \log \tau\) and \(  d^c_{\mathcal F}  \log \tau\).
Now observe that \(dd^c_{\mathcal F} \log \tau = ({\Delta \tau \over \tau} + \vert{ \nabla \tau \over \tau} \vert^2) {i \over 2} dz \wedge d\bar z \), which is equal to $\vert{ \nabla \tau \over \tau} \vert^2 {i \over 2} dz \wedge d\bar z \) because $\tau$ is leafwise harmonic.  Since  \(  d_{\mathcal F}  \log \tau\) is bounded with respect to the Poincar\'e metric, we deduce that the same property holds for \(dd^c_{\mathcal F} \log \tau \).
\end{proof}
 
In Section \ref{ss: restriction of T}, we shall need to integrate $d^cT$ on certain real 3-manifolds. For that purpose, using \( d^c T =   T  \wedge d^c_{\mathcal F} \log \tau  \) provided by Lemma \ref{dtdt} and Equation \eqref{eq: local expression harmonic current2}, we can write
 \begin{equation} \label{eq: local expression harmonic current3}   d^cT = \int_{\bf D} \left( \int _{{\bf D} \times \{t\} } \tau (z, t) \,   ( d^c_{\mathcal F} \log \tau \wedge dt \wedge d\overline{t}) \, Leb_{\bf D} (dz)  \right) \nu (dt) . \end{equation}
If $M$ is a real \(3\)-dimensional manifold \( M\subset S^* \) transverse to $\FF$, we define \(\int _M d^cT\) by integrating on $M$ the measurable $3$-form given by Equation \eqref{eq: local expression harmonic current3}. This definition is extended to $3$-forms of the type $T \wedge \eta$ instead of $d^c T$. 

\subsection{Computation of the intersection $T \cdot T_{\mathcal F}$} \label{ss: restriction of T}

Our aim is to prove the following result, which will be used to show Proposition \ref{d: lyapunov exponent}.

\begin{proposition} \label{p: cohomological interpretation of the mass of the harmonic measure}
Let $(S,\FF)$ be a foliated algebraic complex surface with hyperbolic singularities and no foliated cycle. Let $T$ be a directed harmonic current. Then 
 $$ T \cdot T_{\mathcal F} = - {1\over 2\pi} \int _S T \wedge \text{vol}_{g_P}  .$$
 In particular, $T \cdot T_{\mathcal F}  = -{1 \over 2\pi} \mu(S)$ by Section \ref{ss: harmonicm}.
\end{proposition}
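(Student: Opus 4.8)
The plan is to compute the intersection number $T\cdot T_{\FF}$ by means of the curvature of the Poincar\'e metric $g_P$ on $T_\FF$, exploiting the identity $\mathrm{vol}_{g_P}=-2\pi\,\Theta_{g_P}$ recorded in Section~\ref{ss: Poincare metric}. If $g_P$ were a genuine smooth metric on the line bundle $T_\FF$ over all of $S$, the formula $T\cdot T_\FF=\int_S \Theta_{g_P}\wedge T=-\frac{1}{2\pi}\int_S T\wedge\mathrm{vol}_{g_P}$ would be immediate from the definition \eqref{intercurrents}. The whole difficulty is that $g_P$ is \emph{not} a smooth metric near the singularities: by Lemma~\ref{PTM} it degenerates like $\rho^{-1}g_s$, i.e.\ $g_P=e^{-\xi}g_s$ with $\xi=\log\rho+O(1)$ blowing up at $\mathrm{sing}(\FF)$. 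So the strategy is to use the smooth metric $g_s$ (whose curvature vanishes near the singular set, Remark~\ref{vanishcurv}) as a bridge, and to control the error coming from $\xi$ using Nguyen's integrability theorem (Theorem~\ref{th: Nguyen estimates}).

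Concretely, I would first write, on $S^*$,
\[
\Theta_{g_P}=\Theta_{g_s}-dd^c_\FF\log(e^{-\xi})=\Theta_{g_s}+dd^c_\FF\xi,
\]
so that formally
\[
T\cdot T_\FF=\int_{S^*}\Theta_{g_s}\wedge T+\int_{S^*}dd^c_\FF\xi\wedge T .
\]
The first integral equals $\int_S\Theta_{g_s}\wedge T$ by \eqref{intercurrents2} (no mass on points), and since $\Theta_{g_s}$ is a bona fide smooth curvature form on $S$ this is a legitimate intersection number; I would then re-expand it the other way, $\Theta_{g_s}=\Theta_{g_P}-dd^c_\FF\xi=-\frac{1}{2\pi}\mathrm{vol}_{g_P}-dd^c_\FF\xi$, to get $\int_S\Theta_{g_s}\wedge T=-\frac{1}{2\pi}\mu(S)-\int_{S^*}dd^c_\FF\xi\wedge T$, which cancels the correction term \emph{provided} the improper integral $\int_{S^*}dd^c_\FF\xi\wedge T$ is well-defined and the manipulations (integration by parts / Stokes on $S$ minus small balls around the singularities) produce no boundary contribution in the limit. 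Thus the crux reduces to: \textbf{the boundary term $\int_{\partial B_p(\eps)}d^c_\FF\xi\wedge T$ tends to $0$ as $\eps\to 0$} for each singular point $p$, and $dd^c_\FF\xi\wedge T$ is integrable near $p$.

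For the integrability near $p$: on $B_p$ one has $2i\,\partial\overline\partial_\FF\xi=-\mathrm{vol}_{g_P}$ (Equation~\eqref{xiharm} derivation, using $\Theta_{g_s}=0$ on $B_p$), so $dd^c_\FF\xi=-\frac{1}{2\pi}\mathrm{vol}_{g_P}$ there, and $\int_{B_p}dd^c_\FF\xi\wedge T=-\frac{1}{2\pi}\mu(B_p)$ is finite because $\mu$ is finite (Theorem~\ref{th: Nguyen estimates} — indeed even $\int\rho\,d\mu<\infty$). For the vanishing of the boundary term I would use the local structure of $T$ near $p$. Writing $T$ in the leafwise angular coordinate $u\in A$ via the parametrizations $\Gamma_{x_0,y_0}$, Lemma~\ref{l: inthedomain} translates $\rho$ into the Euclidean distance $d_e^A(u,\partial A)$ and the Poincar\'e distance into its logarithm; Lemma~\ref{l: estimates on drho}(2) gives $|d_\FF\xi|_{g_P}=O(1)$. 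The sphere $\partial B_p(\eps)$ corresponds to the locus $\{d_e^A(u,\partial A)\approx\eps\}$, and one estimates $\int_{\partial B_p(\eps)}d^c_\FF\xi\wedge T$ by bounding $|d^c_\FF\xi|$ against the Poincar\'e metric and controlling the trace mass of $T$ on a shrinking neighborhood — here Skoda's monotonicity \eqref{eq: monotonicity trace measure} (so the Lelong number is finite, trace mass is $O(\eps^2)$) together with the fact that on $\partial B_p(\eps)$ the relevant Poincar\'e-length factor grows only like $\rho\sim|\log\eps|$, so the product $\eps^2|\log\eps|\to 0$.

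\textbf{The main obstacle} I anticipate is making this boundary estimate rigorous: $d^c_\FF\xi\wedge T$ is only a \emph{measurable} $3$-form (through the $L^1$ density $\tau$ of $T$, cf.\ the convention after Equation~\eqref{eq: local expression harmonic current3}), so "Stokes on $S\setminus\bigcup B_p(\eps)$" must be justified by an approximation/regularization argument or by a direct Fubini computation in the $(z,t)$ plaque coordinates, and one must check the exchange of limits. A cleaner route, which I would pursue if the direct one gets technical, is to avoid Stokes entirely: define $\xi_\delta:=\min(\xi,\delta^{-1})$ or a smooth truncation, note $\xi_\delta$ is a legitimate (Lipschitz, then smoothed) weight so that $g_{s}e^{-\xi_\delta}$ is an honest metric on $T_\FF$ over $S$ with curvature $\Theta_{g_s}+dd^c_\FF\xi_\delta$, whence $T\cdot T_\FF=\int_S(\Theta_{g_s}+dd^c_\FF\xi_\delta)\wedge T$ independently of $\delta$; then let $\delta\to0$ and use dominated convergence — the domination being exactly Nguyen's bound $\int\rho\,d\mu<\infty$ since $dd^c_\FF\xi_\delta\wedge T\to-\frac{1}{2\pi}\mathrm{vol}_{g_P}\wedge T=-\frac{1}{2\pi}d\mu$ near the singularities with $|dd^c_\FF\xi_\delta|\lesssim\rho$ pointwise. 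This yields $T\cdot T_\FF=\int_S\Theta_{g_s}\wedge T-\frac{1}{2\pi}\mu(S)$, and a symmetric truncation argument (or simply running the same computation with $\delta\to0$ replaced by recognizing $\Theta_{g_s}\wedge T$ as itself $(-\frac1{2\pi}\mathrm{vol}_{g_P}-dd^c_\FF\xi)\wedge T$ away from the singular set, where $\Theta_{g_s}=0$) forces $\int_S\Theta_{g_s}\wedge T=0$ on $B_p$'s and the claimed formula $T\cdot T_\FF=-\frac{1}{2\pi}\mu(S)$ follows.
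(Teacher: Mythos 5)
Your reduction is the same as the paper's: with $g_P=e^{-\xi}g_s$ and $\Theta_{g_s}$ flat near the singular set, everything comes down to proving $\int_{S^*}dd^c_\FF\xi\wedge T=0$, and Nguyen's theorem gives the finiteness of this integral. The gap is in how you dispose of it. First, in your Stokes-on-$S\setminus\bigcup B_p(\eps)$ route you only record the boundary term $\int_{\partial B_p(\eps)}d^c_\FF\xi\wedge T$. But $T$ is only $dd^c$-closed, not closed ($dT=T\wedge d_\FF\log\tau$, Lemma \ref{dtdt}), so one integration by parts leaves a bulk term involving $d^c_\FF\xi\wedge dT$, and the correct Green identity on $B_{r_0}^c$ is Equation \eqref{eq:STO}, which carries a second boundary term $\int_{\partial B_{r_0}^c}\xi\,d^c_\FF T$ weighted by the unbounded factor $\xi\simeq\log\rho$; controlling it requires the Harnack bound $\vert d^c_\FF\log\tau\vert_{g_P}=O(1)$ (Lemma \ref{lemmaharnack}), which never enters your argument. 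Second, your quantitative justification of the vanishing does not hold up: Skoda's monotonicity bounds the trace mass of $T$ on the solid ball $B_p(\eps)$, not the integral of $T$ against a length element over the sphere $\partial B_p(\eps)$; passing from one to the other needs a coarea/Chebyshev argument and only yields a $\liminf$ along a subsequence (which suffices, but is not the full limit you claim). Moreover the conversion from euclidean to Poincar\'e length on $\partial B_p(\eps)$ is of order $(\eps\,\vert\log\eps\vert)^{-1}$ (Lemma \ref{PTM} together with $g_s(V)=1$), not $\vert\log\eps\vert$, so the bookkeeping ``$\eps^2\vert\log\eps\vert\to0$'' is not the right estimate. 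The paper's actual mechanism is different in exactly this spot: since $\vert d_\FF\xi\vert_{g_P}$ and $\vert d_\FF\log\rho\vert_{g_P}$ are $O(1)$ (Lemma \ref{l: estimates on drho}), the coarea formula gives $\int^{\infty}\bigl(\int_{\{\log\rho=u\}}dl_P\wedge T\bigr)du\leq c\,\mu(S)<\infty$, whence $\liminf_{u\to\infty}u\int_{\{\log\rho=u\}}dl_P\wedge T=0$, and this single fact kills both boundary terms in \eqref{eq: estimates on the integral of ddc varphi}.

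Your ``cleaner route'' does not repair this. For the truncation $\xi_\delta=\min(\xi,\delta^{-1})$, the current $dd^c_\FF\xi_\delta$ is not dominated pointwise by $\rho$: besides the absolutely continuous part $-\frac1{2\pi}\mathrm{vol}_{g_P}$ on $\{\xi<\delta^{-1}\}$ it contains a singular surface term carried by the level set $\{\xi=\delta^{-1}\}$ (and smoothing the min only spreads it out), whose total mass against $T$ is precisely the flux $\int_{\{\xi=\delta^{-1}\}}d^c_\FF\xi\wedge T$. Dominated convergence cannot see this term, and proving that it tends to $0$ along a sequence of levels is exactly the boundary estimate above; the truncation argument is therefore a repackaging of, not a substitute for, the coarea step. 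So the proposal has the right skeleton (the same as the paper's), but the decisive step --- the vanishing of the boundary flux --- is not established.
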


\begin{proof} By Equation \eqref{intercurrents2}, we have $T \cdot T_{\mathcal F}   =  \int_{S^*} T \wedge \Theta_{g_s}$, where $g_s$ is the smooth metric on $T_\FF$ fixed in Section \ref{smoothmetric}.
With the notations of Section \ref{ss: Poincare metric2}, we have $g_P= e^{-\xi} g_s$ and  then (as in the proof of Lemma \ref{l: estimates on drho})
$$ -{1 \over 2\pi} \text{vol} _{g_P} = \Theta_{g_P} = \frac{i}{\pi}\partial \overline{\partial}_\FF \xi + \Theta_{g_s} . $$ 
The curvature  \(\Theta_{g_s} \) vanishes near the singular set and $T \wedge \text{vol}_{g_P}$ is finite on $S^*$ by Nguyen's theorem (see Section \ref{ss: harmonicm}). We get that
 \( i\partial \overline{\partial}_\FF \xi \wedge T\) has finite mass on $S^*$. It thus suffices to establish
\begin{equation} \label{eq: vanishing of integral} \int _{S^*} i\partial \overline{\partial}_\FF  \xi \wedge T= 0.\end{equation} 
Let us write Green's formula on the complementary of the union of balls of radius \(r_0\) around the singular points of \(\mathcal F\) in the \((x,y)\)-coordinates, namely on the domain \(B_{r_0}^c:= \{  \rho \leq - \log r_0  \} \). If \(r_0\) is sufficiently small, the boundary of \(B_{r_0}^c\) is transverse to the foliation. Since $T$ is a directed harmonic current, 
\begin{equation}\label{eq:STO}
  \int_{B_{r_0}^c} dd_\FF^c \xi \wedge T = \int _{\partial B_{r_0}^c} \xi d_\FF^c T - d_\FF^c \xi \wedge T  . 
  \end{equation} 
This formula is obtained by introducing a covering of $B_{r_0}^c$ with a partition of unity, and applying Green's formula on the plaques of the foliation boxes.  
 
Let us estimate the two terms of the right hand side of Equation \eqref{eq:STO}. Let \( dl _P\) denote the length element of the trajectories of \( \mathcal F \cap  \partial B_{r_0}^c \) with respect to the Poincar\'e metric.
On one hand, Item 2 of Lemma \ref{l: estimates on drho} implies 
$$ \left| \int_{\partial B_{r_0}^c}  d_\FF^c \xi \wedge T \right| \leq c \int_{\partial B_{r_0}^c}  dl_P \wedge T . $$
On the other hand, we first have (by $\xi = \log \rho + O(1)$, see Lemma  \ref{PTM})
$$ \left| \int_{\partial B_{r_0}^c} \xi d_\FF^c T \right|  \leq  \log \left( -  \log r_0 \right) \left| \int_{\partial B_{r_0}^c} d_\FF^c T \right| .$$
Lemmas \ref{dtdt} and \ref{lemmaharnack} then yield $\left| \int _M d_\FF^c T \right| \leq  c \, \int_M  T  \wedge dl_P$, hence
$$  \left| \int_{\partial B_{r_0}^c} \xi d_\FF^c T \right|  \leq  c  \log \left( -  \log r_0 \right)  \int_{\partial B_{r_0}^c}  dl_P \wedge T . $$
Taking into account these two upper estimates, we get 
\begin{equation} \label{eq: estimates on the integral of ddc varphi} \left| \int_{B_{r_0}^c} dd_\FF^c \xi \wedge T \right| \leq c \log \left( -  \log r_0 \right) \ \int_{\partial B_{r_0}^c}  dl_P \wedge T .
\end{equation}
Since \( i\partial \overline{\partial}_\FF \xi \wedge T\) has finite mass on $S^*$, the left hand side of Equation \eqref{eq: estimates on the integral of ddc varphi} tends to the modulus of $\int _{S^*} dd_\FF^c \xi \wedge T$ when $r_0 \to 0$. Let us verify that the $\liminf_{r_0 \to 0}$ of the right hand side of Equation \eqref{eq: estimates on the integral of ddc varphi} tends to zero, that will establish Equation \eqref{eq: vanishing of integral}. 

Let us denote \( f:= \log \rho\). It suffices to prove
\begin{equation}\label{gogoal}
\liminf_{u\rightarrow +\infty} u  \int_{f=u } dl_P \wedge T = 0 .
\end{equation}
Indeed, by setting \( u = \log (- \log  r_0) \) and noting that $\{ f = u\} = \{ \rho = - \log r_0 \} = \partial B_{r_0}^c$, we obtain $$\liminf_{r_0 \rightarrow 0}   \log \left( -  \log r_0 \right)  \int_{\partial B_{r_0}^c} dl_P \wedge T = 0 . $$ 
Now we prove  Equation \eqref{gogoal}. For  \(u_0\) large enough so that \(\{f=u\}\) is transverse to \(\mathcal F\) for any \(u\geq u_0\), the coarea formula \cite[Theorem 3.2.22]{Federer} yields  
\[ \int _{u_0} ^\infty \left( \int_{f=u } dl_P \wedge T  \right) du =\int _{f \geq u_0 } \vert d_\FF f \vert_{g_P} \text{vol}_{g_P} \wedge T . \]
Now  $\vert d_\FF f  \vert _{g_P} = O(1)$ by the first Item of Lemma \ref{l: estimates on drho}. We deduce 
\[ \int _{u_0} ^\infty \left( \int_{f=u } dl_P \wedge T  \right) du \leq cst  \int _{f\geq u_0} \text{vol}_{g_P} \wedge T  <\infty.  \]
This implies Equation \eqref{gogoal}, as desired. 
\end{proof}

\part{A Limit Theorem for leafwise closed $1$-forms} \label{ss: action holo}

\section{Leafwise continuous paths} 

\subsection{Holonomy associated with first integrals} \label{ss: holonomy} 

Given a leafwise continuous path  \(\gamma: [a,b] \rightarrow L\) and two germs of first integrals \(t : (S, \gamma (a) ) \rightarrow ({\bf C},  t(\gamma(a)) ) \) and \(t' : (S,\gamma(b)) \rightarrow ({\bf C},  t'(\gamma(b)) )\), one can associate a germ of biholomorphism 
$$ h_{\gamma, t, t'} : ({\bf C}, t(\gamma(a))) \rightarrow ({\bf C},  t'(\gamma(b))) $$  called the  \emph{holonomy} and defined as follows. There exists a subdivision \( c_0= a <c_1 <\ldots < c_r = b\) and open subsets $  ( U_i )_{i=0, \ldots, r-1} $ on which a first integral \( t_i : U_i \rightarrow {\bf C}\) exists, and such that \( \gamma([c_i, c_{i+1} ]) \subset U_i\). At the neighborhood of \( \gamma(c_{i})\), the first integral \( t_i\) is a function of \(t_{i-1}\), namely, there exists a germ of biholomorphism \( h_i : ({\bf C} , t_{i-1} (\gamma(c_i))) \rightarrow ({\bf C}, t_i (\gamma(c_i))) \) such that \( t_i = h_i \circ t_{i-1}\). The holonomy map is defined by 
\[ h_{\gamma, t, t'} := h_{r-1} \circ \ldots \circ h_1 .\]
It does not depend on the choices, and only depends on the homotopy class with fixed extremities of \(\gamma\) in its leaf, see \cite[Section 2.3]{Candel-Conlon}.

\subsection{Holonomy associated with transversals} \label{ss: holonomy2} 

Let \( \Sigma \) and \( \Sigma' \) be local transversals at \(\gamma(a)\) and \(\gamma(b)\) respectively. The first integrals induces germs of biholomorphisms \( t : ( \Sigma , \gamma(a) ) \rightarrow ({\bf C}, t(\gamma(a)))\) and  \( t' : ( \Sigma' , \gamma(b) ) \rightarrow ({\bf C}, t'(\gamma(b)))\). The composition \( h_{\gamma, \Sigma,\Sigma'} := ( t')^{-1} \circ h_{\gamma, t, t'} \circ t \) defines a germ of holomorphic map \( (\Sigma, \gamma(a)) \rightarrow (\Sigma', \gamma(b))\) that does not depend on the first integrals \(t,t'\). 

\subsection{The dynamical system \( ( \OO, \sigma, W_{g_{P}}^\mu) \)}\label{ss: harmonicm2}

Let $p(t,x,y)$ be the leafwise heat kernel for the Poincar\'e metric $g_P$. It is the fundamental solution of the heat equation $\partial_t = \Delta_{g_P}$, with $\lim_{t \to 0} p(t,x,y) = \delta_x(y)$ in the sense of distributions, where $\delta_x$ is the Dirac mass at $x$. Let \(\OO \) be the set of leafwise continuous paths \(\oo : [0,+\infty) \rightarrow S^*\) and \( \{\sigma_{\Timet} \}_{\Timet \geq 0} \) be the shift semi-group acting on \(\OO\) by 
$$ \sigma_\Timet (\oo) (\Times) = \oo(\Timet+\Times) , \text{ for all } \Times,\Timet \geq 0. $$
For every \( x \in S^*\), let \(\OO^x \) be the subset of leafwise continuous paths starting at \(\oo(0)= x \). Let \( W^x_{g_{P}} \) be the Wiener measure on \(\OO ^x\) so that $$ \forall \Timet > 0 \ , \ \int_{\Gamma^x} f(\gamma(\Timet)) dW^x_{g_P} (\gamma) = \int_{L_x} f(y) p(x,y,\Timet) \text{vol}_{g_P}(dy) .  $$
The measure \(\mu = \text{vol}_{g_P} \wedge T \) defined in Equation \eqref{eq: harmonic measure} is  \emph{harmonic}, meaning that \(\int \Delta_{g_P} f d\mu =0\) for every \( f \in C^\infty_c (S^*) \), this is due to the formula \(\Delta_{g_P} \cdot \text{vol}_{g_P} = 2i \,  \partial \overline{\partial}_\FF\). 
This implies that the following probability measure on $\Gamma$ 
$$ W_{g_{P}}^{\mu} := \int_{S^*} W_{g_{P}}^x d\mu (x)  $$ 
is \(\sigma\)-invariant. Observe that if \(T\) is extremal in the compact convex cone of directed harmonic currents, then  \(\mu\)  is also extremal in the compact convex set of harmonic measures. Moreover, Garnett's random ergodic theorem \cite{Garnett} states that if \(\mu \) is extremal then \( ( \OO, \sigma, W_{g_{P}}^\mu) \) is ergodic. 

The foliated space \(S^* \) is actually not compact, but Garnett's arguments extend to this case, let us outline the proof. Assume that \( B \subset \OO\) is a $\sigma$-invariant measurable subset, this is a tail event. Let $\Psi(x) := W_{g_{P}}^x(B)$ so that \( W^\mu_{g_P}(B) = \int_{S^*} \Psi(x) d\mu(x)\). Since \(B\) is a tail event, the restriction of \( \Psi \) to each leaf \( \LL \) is a harmonic function. Now, given any \(c \in \mathbb Q\), the function \( \Psi_c : = \sup \{ \Psi, c \} \) is a measurable subharmonic function, namely  \( \Psi_c \leq D^\Timet (\Psi_c) \) for any \(\Timet \geq 0\), where 
$$ D^\Timet (\Psi_c) = \int_\LL p(x,y,\Timet) \Psi_c(y) \text{vol}_{P} (dy) . $$
But $\mu$ is harmonic, hence \( \int_{S^*} D^\Timet(\Psi_c) \, d\mu = \int_{S^*} \Psi_c \, d\mu\), and so \( D^\Timet (\Psi_c)=\Psi_c\) for any \( (\Timet,c) \in \mathbb Q^2\) in restriction to $\mu$-a.e. leaf $\LL$. This implies that \( \Psi\) is constant on $\mu$-a.e. leaf \( \LL\), and since \(\mu\) is extremal, \( \Psi \) is constant \(\mu\)-a.e. Now consider the ordered family \(\{\mathcal M_\Timet\} _{\Timet \geq 0}\) of \(\sigma\)-algebras on \( \OO\) defined by the map that sends  \(\oo \in  \OO\) to its restriction to \([0,\Timet]\).  This is a general fact that for any measurable bounded function \(f\) on \(\OO\), the conditionals \( \mathbb E (f \, | \, \mathcal M_\Timet)\) converges $W^\mu_{g_P}$-a.e. to \(0\) or \(1\). If \( f\) is the characteristic function of \(B\), we have  \( \mathbb E (f \, | \, \mathcal M_\Timet)= \Psi (\gamma(\Timet)) \) by the tail property, so that \(\Psi(\gamma(\Timet))\) converges to \(0\) or \(1\)  for $W^\mu_{g_P}$-a.e. \(\gamma \in \Gamma\). We have thus proved that \(\Psi\) is $\mu$-a.e. equal to $0$ or $\mu$-a.e. equal to $1$. Finally \(W^\mu_{g_P}(B) = \int_{S^*} \Psi(x) d\mu(x) \in \{ 0,1 \} \), as desired.

\subsection{Integrability of the $g_s$-length of leafwise continuous paths}\label{techtech}  Let \(g\) be a metric on \( T_\FF\) over \( S^* \) (namely a smooth metric $g_s$ or the Poincar\'e metric $g_P$). For each \(\Timet\geq 0\), we define    
\begin{equation} \label{eq: cocycle smooth distance} 
 D_\Timet^g : \Gamma \rightarrow {\bf R} \ , \ D_\Timet^g (\gamma ) := \inf l _{g} (\overline{\gamma}) \end{equation}
where the infimum runs over the piecewise smooth paths \(\overline{\gamma}:[0,\Timet] \rightarrow \LL_{\gamma(0)}\) homotopic to \(\gamma_{[0,\Timet]}\) with fixed extremities, $l_g$ denoting the $g$-length. Our aim is to prove Proposition \ref{p: integrability} below. Let us begin with the following result.
 
\begin{lemma}\label{lemma: rough bound for Ht}
There exist \( c, c'>0\) such that for every \(\oo \in \OO\) and \(\Timet\geq 0\), 
$$  D_\Timet^{g_s} (\oo)   \leq c' \rho (\oo(0)) \exp ( c D_\Timet^{g_P} (\oo)) .$$
\end{lemma}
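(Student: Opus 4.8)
The plan is to compare the $g_s$-length of a leafwise path with its $g_P$-length using the relation $g_P \simeq \rho^{-1} g_s$ from Lemma~\ref{PTM}, the only subtlety being that $\rho$ is unbounded near the singular set, so one must control how large $\rho$ can become along a Poincar\'e-short path. First I would fix notation: given $\gamma \in \Gamma$ and $\Timet \geq 0$, let $\overline{\gamma}:[0,\Timet]\to \LL_{\gamma(0)}$ be a piecewise smooth representative of the homotopy class of $\gamma_{[0,\Timet]}$ whose $g_P$-length is close to $D_\Timet^{g_P}(\gamma)$. By Lemma~\ref{PTM}(2) there is a constant $c_0$ with $g_s \leq c_0\, \rho\, g_P$ on $S^*$, hence
\[
D_\Timet^{g_s}(\gamma) \leq l_{g_s}(\overline{\gamma}) \leq c_0 \int_0^{\Timet} \rho(\overline{\gamma}(u))\, |\overline{\gamma}'(u)|_{g_P}\, du \leq c_0 \Big(\sup_{u\in[0,\Timet]} \rho(\overline{\gamma}(u))\Big)\, l_{g_P}(\overline{\gamma}).
\]
So it remains to bound $\sup_u \rho(\overline{\gamma}(u))$ by $c''\,\rho(\gamma(0))\exp(c\, D_\Timet^{g_P}(\gamma))$ for suitable constants, and then absorb the (harmless, bounded) factor $l_{g_P}(\overline{\gamma}) \approx D_\Timet^{g_P}(\gamma) \leq \exp(D_\Timet^{g_P}(\gamma))$ into the exponential.

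The core estimate is therefore: along a $g_P$-rectifiable leafwise path, $\log \rho$ grows at most linearly in $g_P$-length, i.e. $\rho(y) \leq c''\,\rho(x)\,e^{c\, d_P(x,y)}$ for $x,y$ on the same leaf. Away from a fixed neighborhood of $\mathrm{sing}(\FF)$ the function $\rho$ is bounded above and below, so there is nothing to prove there; the issue is entirely local near a singular point $p$. Inside $B_p^*$ I would use the angular-domain picture of Section~\ref{sub:singpoints}: the leaf through $q$ is parametrized by $\Gamma_{x_0,y_0}$ on the angular domain $A\subset\C$, and by Lemma~\ref{l: inthedomain}(2) we have $\rho(q) \sim d_e^A(u(q),\partial A)$, while Lemma~\ref{l: inthedomain}(1) gives $\log d_e^A(u,\partial A) \sim d_P^A(u,\partial A)$. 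Since the Poincar\'e metric of $A$ dominates (up to a constant) the one induced by $g_P$ on the plaque (the plaque sits inside the leaf, hence its intrinsic Poincar\'e metric is larger, and the comparison with $g_P$ restricted to $A$ needs the standard distance-decreasing property of the inclusion $A\hookrightarrow \LL$), a $g_P$-path of length $\ell$ moves the point $u$ by at most $C\ell$ in $d_P^A$, hence changes $d_P^A(u,\partial A)$ by at most $C\ell + O(1)$, hence changes $\log d_e^A(u,\partial A)$, and therefore $\log\rho$, by at most $c\,\ell + O(1)$. Exponentiating yields $\rho(y)\leq c''\,\rho(x)\,e^{c\,\ell}$ on $B_p^*$.

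The only remaining point is to patch the local estimate near each singular point with the trivial estimate on the compact complement, which is a routine compactness/covering argument: a path from $\gamma(0)$ to $\overline{\gamma}(u)$ crosses at most finitely many singular neighborhoods, and each crossing contributes multiplicatively a factor of the form $c''\,e^{c\,(\text{length of that piece})}$, with the values of $\rho$ at the entry/exit points of each neighborhood being comparable to the boundary value, which is a fixed constant. Concatenating these finitely many factors, using $\sum(\text{lengths of pieces}) = l_{g_P}(\overline{\gamma})$, and finally taking the infimum over $\overline{\gamma}$ homotopic to $\gamma_{[0,\Timet]}$ gives the claimed inequality with $D_\Timet^{g_P}(\gamma)$ on the right. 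I expect the main obstacle to be the careful justification of the metric comparison "$d_P^A \gtrsim$ (Poincar\'e metric on the plaque) $\gtrsim$ restriction of $g_P$ to the plaque up to a uniform constant near $p$": this is where the hyperbolicity of the singularity and the Dinh--Nguyen--Sibony estimate $g_S = r\rho\, e^{O(1)} g_P$ (used in Lemma~\ref{PTM}) really enter, and one must make sure the constants are uniform over all leaves through $B_p^*$, not just over a single leaf.
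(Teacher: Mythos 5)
Your overall reduction is the same as the paper's: bound the $g_s$-length by $\bigl(\sup_u \rho(\overline{\gamma}(u))\bigr)\cdot l_{g_P}(\overline{\gamma})$ via Lemma~\ref{PTM}(2), and then show that $\log\rho$ grows at most linearly in $g_P$-length along leafwise paths. The gap is in how you establish that growth estimate. Inside $B_p^*$ you need: a $g_P$-path of length $\ell$ moves $u$ by at most $C\ell$ in $d_P^A$, i.e.\ the Poincar\'e metric of the angular domain is \emph{dominated} by a constant times the restriction of $g_P$ to the plaque, $g_{P,A}\leq C\, g_P|_{\mathrm{plaque}}$. But the tool you cite — the distance-decreasing property of the holomorphic inclusion $A\hookrightarrow \LL$ — gives exactly the opposite inequality, $g_P|_{\mathrm{plaque}}\leq g_{P,A}$ (Schwarz--Pick), and your own phrasing ("the Poincar\'e metric of $A$ dominates the one induced by $g_P$") is the direction that does \emph{not} yield your conclusion. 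The inequality you actually need is the nontrivial one, and you leave it as "the main obstacle" rather than proving it; as written the core step is therefore unjustified. (It is true, but the clean way to see it is not through monotonicity: on the plaque $g_s$ corresponds to $|du|$ since $g_s(V)=1$ and $V\mapsto\partial_u$, so Lemma~\ref{PTM}(2) together with Lemma~\ref{l: inthedomain}(2) gives $g_P|_{\mathrm{plaque}}\simeq |du|/d_e^A(u,\partial A)\simeq g_{P,A}$ two-sidedly.)

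Note also that the estimate you are after is already available in the paper in differential form: Lemma~\ref{l: estimates on drho}(1) says $|d_\FF\rho|_{g_P}=O(\rho)$, so along a $g_P$-arclength parametrized geodesic $\overline{\gamma}$ one has $(\rho\circ\overline{\gamma})'\leq b\,\rho\circ\overline{\gamma}$ and Gronwall gives $\rho(\overline{\gamma}(u))\leq \rho(\gamma(0))e^{bu}$ directly; integrating $dD^{g_s}_u/du\leq a\,\rho(\overline{\gamma}(u))$ then yields the lemma with $c=b$, $c'=a/b$. This is the paper's proof, and it makes your angular-domain detour and the patching across singular neighborhoods unnecessary. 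On that patching: "concatenating these finitely many factors" would produce $(c'')^N$ with $N$ (the number of entries into $B$) not controlled by the $g_P$-length; the correct bookkeeping is that $\rho$ is a fixed constant at each entry point into $B_p$, so only the final excursion inside $B$ matters — you allude to this but the multiplicative accounting as stated does not work.
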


\begin{proof} Let \(\oo\in \OO ^p \) and \(\overline{\oo}: [0, D_\Timet^{g_P}(\oo)]\rightarrow \LL\) be a $g_P$-geodesic parametrized by arc length, which is homotopic with fixed endpoints to the curve \(u\in [0, D_\Timet^{g_P}(\oo) ] \mapsto \oo(\Timet u/D_\Timet^{g_P}(\oo)) \in L\). In particular, 
\[ \overline{\oo} (0)= \oo (0), \ \overline{\oo} ( D_\Timet^{g_P}(\oo) ) = \oo(\Timet)\text{ and } D_\Timet ^{g_P} (\oo) = D^{g_P} _{D_\Timet^{g_P}(\oo)} (\overline{\oo}) .\]
Since $\overline{\oo}$ is parametrized by $g_P$-arc length, we have $g_P((\overline{\oo})'(u)) = 1$ for every $\Timeu \in  [0, D_\Timet^{g_P}(\oo) ]$. Using $g_s \simeq \rho \, g_P$ given the second item of Lemma \ref{PTM}, there exists \( a >0\) such that  
$$ g_s ((\overline{\oo})'(\Timeu)) \leq a \rho (\oo (\Timeu)) g_P((\overline{\oo})'(\Timeu)) = a \rho (\oo (\Timeu)) ,$$
which implies
 \begin{equation}\label{eq: estimates on holonomy} \frac{d D_\Timeu^{g_s}(\overline{\oo}) }{d\Timeu} \leq  a \rho (\overline{\oo} (\Timeu) ) . \end{equation}
We deduce, using the first item of  Lemma \ref{l: estimates on drho} for the second inequality:
\[  \forall  \Timeu \in [0, D_\Timet^{g_P}(\oo) ] \ , \  (\rho \circ \overline{\oo})'(\Timeu)  \leq \vert (\rho \circ \overline{\oo})' (\Timeu)) \vert =  \frac{\vert d (\rho \circ \overline{\oo}) (\Timeu)) \vert}{g_P((\overline{\oo})'(\Timeu))} \leq  b \rho(  \overline{\oo}(\Timeu) )  . \]
Gronwall's inequality then implies  
\begin{equation}\label{eq: estimates on rho} 
 \forall  \Timeu \in [0, D_\Timet^{g_P}(\oo) ] \ , \  \rho ( \overline{\gamma}(\Timeu) ) \leq \rho (\overline {\oo}(0)) \, \exp (b \Timeu ) =  \rho ( {\oo}(0)) \, \exp (b \Timeu ) .
  \end{equation}
Estimates \eqref{eq: estimates on holonomy} and \eqref{eq: estimates on rho} yield $d D_\Timeu^{g_s}(\overline{\oo}) / d\Timeu \leq a \rho (\oo(0)) \, \exp (b \Timeu)$. We finish by integrating over  \( \Timeu \in [ 0 ,  D_\Timet^{g_P}(\oo) ] \) and setting $c:=b$, $c':=a/b$. \end{proof}

\begin{proposition}\label{p: integrability}
Let $S$ be an algebraic surface endowed with a foliation $\FF$ with hyperbolic singularities and no foliated cycle. Let \( T\) be the unique directed harmonic current satisfying \( \int T\wedge \text{vol}_{g_P} = 1\). Let $g_s$ be a smooth metric on \(T_\FF\). For every \(\Timet\geq 0\), the function  \( \sup _{\Timeu \in [0,\Timet]} D^{g_{s}}_\Timeu\) is \(W^\mu _{g_{P}}\)-integrable. 
\end{proposition}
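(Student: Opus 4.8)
The strategy is to combine Lemma \ref{lemma: rough bound for Ht}, which controls $D^{g_s}_\Timeu$ in terms of $D^{g_P}_\Timeu$ and $\rho(\oo(0))$, with two integrability inputs: the exponential integrability of the maximal $g_P$-displacement of Brownian paths (a general fact for the hyperbolic disc, transferred to the leaves via the covering $\phi:\D\to\LL$), and Nguyen's integrability $\int_{S^*}\rho\,d\mu<\infty$ from Theorem \ref{th: Nguyen estimates}. First I would fix $\Timet\geq 0$ and, using Lemma \ref{lemma: rough bound for Ht}, write
\[
\sup_{\Timeu\in[0,\Timet]} D^{g_s}_\Timeu(\oo)\;\leq\; c'\,\rho(\oo(0))\,\exp\!\left(c\,\sup_{\Timeu\in[0,\Timet]}D^{g_P}_\Timeu(\oo)\right).
\]
Integrating against $W^\mu_{g_P}=\int_{S^*}W^x_{g_P}\,d\mu(x)$ and applying the Cauchy--Schwarz inequality (or Hölder), it suffices to show that $\rho(\oo(0))$ is $W^\mu_{g_P}$-square-integrable — equivalently that $\rho$ is $\mu$-square-integrable, or one can avoid squaring by noting that $\exp(c\sup_\Timeu D^{g_P}_\Timeu)$ is in every $L^q$ and use Hölder with $\rho\in L^1(\mu)$ directly — and that $\exp\!\left(c\sup_{\Timeu\in[0,\Timet]}D^{g_P}_\Timeu(\oo)\right)$ is $W^\mu_{g_P}$-integrable, in fact with a bound uniform over starting points $x$.

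For the second, leafwise point, I would reduce to Brownian motion on $\D$ with its Poincaré metric $g_\D$: lifting $\oo$ through a uniformization $\phi:\D\to\LL_{\oo(0)}$ with $\phi(0)=\oo(0)$, one has $D^{g_P}_\Timeu(\oo)\leq d_\D(0,\widetilde\oo(\Timeu))$ where $\widetilde\oo$ is the lifted path, and for Brownian motion $\widetilde\oo$ the maximal hyperbolic displacement $\sup_{\Timeu\leq\Timet}d_\D(0,\widetilde\oo(\Timeu))$ has Gaussian-type tails — for instance because $d_\D(0,\widetilde\oo(\cdot))$ is stochastically dominated by $|B_\cdot|+\Timet/2$ for a standard Brownian motion $B$ in $\R$ via Itô's formula applied to the radial part (the radial process satisfies $dr=dB+\frac12\coth(r/2)\,d\Timeu\leq dB+\frac12\,d\Timeu$ roughly, giving linear drift $1$), and the running maximum of Brownian motion with bounded drift has exponential moments of all orders. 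Hence $\Expect^x_{g_P}\!\left[\exp(c\sup_{\Timeu\leq\Timet}D^{g_P}_\Timeu)\right]\leq M(c,\Timet)<\infty$ independently of $x$. A clean alternative is to invoke that this is precisely the kind of estimate underlying the finiteness of the heat kernel moments and is standard for Cartan--Hadamard manifolds of curvature bounded below, here $-1$.

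Putting the pieces together: $\int_\Gamma \sup_{\Timeu\leq\Timet}D^{g_s}_\Timeu\,dW^\mu_{g_P}\leq c'\,M(c,\Timet)\int_{S^*}\rho\,d\mu = c'\,M(c,\Timet)\cdot\|\rho\|_{L^1(\mu)}<\infty$ by Theorem \ref{th: Nguyen estimates}, since $\mu$ is normalized to have mass one. (If one prefers to use Cauchy--Schwarz and thus needs $\rho\in L^2(\mu)$, one can instead keep Hölder with the conjugate exponent large and absorb the resulting higher power of $\sup D^{g_P}$ into $M$, avoiding any need for $L^2$-integrability of $\rho$.) I expect the main obstacle to be the uniform-in-$x$ exponential moment bound for the running maximum of the leafwise Brownian displacement; everything else is a direct assembly of Lemma \ref{lemma: rough bound for Ht} and Nguyen's theorem. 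The uniformity over $x$ is what makes the final integration against $\mu$ legitimate without further measurability gymnastics, and it follows from the fact that the comparison map $\phi$ is an isometry onto its leaf, so the lifted process is always a Brownian motion on the \emph{same} model space $(\D,g_\D)$ started at the \emph{same} point $0$.
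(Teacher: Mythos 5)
Your proposal is correct and follows essentially the same route as the paper: the pointwise bound of Lemma \ref{lemma: rough bound for Ht}, a uniform-in-$x$ exponential moment bound for $\sup_{\Timeu\in[0,\Timet]}D^{g_P}_\Timeu$ obtained by lifting to Brownian motion on $(\D,g_\D)$ started at $0$ (the paper simply cites Stroock's Gaussian tail estimate, Equation (8.65), where you sketch the radial comparison), followed by integrating $\rho$ against $\mu$ via Theorem \ref{th: Nguyen estimates} and the disintegration $W^\mu_{g_P}=\int_{S^*}W^x_{g_P}\,d\mu(x)$. The Cauchy--Schwarz/H\"older detour is unnecessary, exactly as you conclude, since $\rho(\gamma(0))$ is constant on each $\Gamma^x$ and the exponential moment bound is uniform in the starting point.
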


\begin{proof} 
By Lemma \ref{lemma: rough bound for Ht}, we have
$$ \forall \gamma \in \Gamma^x \ , \  \sup _{\Timeu \in [0,\Timet]} D^{g_{s}}_\Timeu (\gamma) \leq c' \rho(x) \sup _{\Timeu\in [0,\Timet]} \exp ( c D_\Timeu^{g_P} (\oo)) .$$
We shall prove below that there exists $c''$ such that  
\begin{equation}\label{eq: heher}
\forall x \in S^* \ , \ \int_{\Gamma^x} \sup _{\Timeu\in [0,\Timet]} \exp (c D^{g_P}_\Timeu (\gamma)) d W^x_{g_P}(\gamma)  \leq c'' .
 \end{equation}
This yields $\varphi(x) := \int_{\Gamma^x} \sup _{\Timeu \in [0,\Timet]} D^{g_{s}}_\Timeu (\gamma) \leq c' c'' \rho(x)$. Hence $\varphi$ is $\mu$-integrable by Theorem \ref{th: Nguyen estimates}, and finally  \( \sup _{\Timeu \in [0,\Timet]} D^{g_{s}}_\Timeu\) is \(W^\mu _{g_{P}}\)-integrable as desired. 

Now let us prove Equation \eqref{eq: heher}. 
The hyperbolic disc is complete with constant curvature. Hence we can use Equation (8.65) in \cite{Stroock} to deduce that for every \( \Timet\geq 0\), there exists a constant \(\delta(\Timet)\) such that  
$$  \forall x \in S^* \ , \ W^x_{g_P} ( \sup _{\Timeu\in [0,\Timet] } \exp (c D^{g_P} _\Timeu(\gamma) )  \geq r ) \leq \sqrt{2} \exp \left(-\frac{\log ^2 r}{4\Timet c^2} + \delta(\Timet)\right) . $$
To conclude note that the right hand side is integrable as a function of \(r\).\end{proof}

\section{Lyapunov exponent and Dynamical entropy}

\subsection{A general Limit Theorem}\label{ss: integral of leafwise closed one forms}  In this section we generalize limit theorems of \cite{Candel, Deroin, NguyenAMS} to our context with singularities. We shall use the integrability property provided by Proposition \ref{p: integrability}. 

Let $T$ be a harmonic current and let $\mu$ be the probability measure $\text{vol}_{g_P} \wedge T$  
as in Equation \eqref{eq: harmonic measure}. Let \(E\subset S^* \) be a saturated measurable subset of full \(\mu\)-measure. For every $\LL \subset E$, let $\alpha_\LL$ be a smooth closed \(1\)-form on $\LL$. We assume that $\alpha_\LL$ depends measurably on $\LL$, that is on the $\nu$-generic transverse parameter (in each foliation box) in the smooth topology.
Let  \(\varphi := \int \alpha_\LL \) denote a  local integral and let \( \theta : E\rightarrow {\bf R}\) be the leafwise continuous function  
$$ \theta   := \Delta_{g_P} \varphi  =d^* _{g_P} \alpha_\LL . $$
Let us introduce the following properties : 
\begin{enumerate}
 \item[(i)] for every leaf \(\LL\subset E\), the norm of \(\alpha_\LL \) with respect to $g_s$  is bounded by a constant $M_1$ independant of the leaf  \(\LL\), 
\item[(ii)] the function \(\theta\) is bounded on $E$ by a constant $M_2$. 
\end{enumerate}

Let \(\Gamma^E \subset \Gamma\) be the subset of leafwise paths \(\oo \in \Gamma\) starting at a point of \(E\), and define  
\begin{equation}\label{def: Ht}
 H_\Timet ^\alpha : \Gamma^E \rightarrow {\bf R} \ , \  H_\Timet ^\alpha (\oo) := \int _{\overline{\oo}_{[0,\Timet]}} \alpha , 
 \end{equation}
where \( \overline{\oo}: [0,\Timet]\rightarrow L_{\oo(0)}\) is any smooth path  homotopic to \(\gamma\) with fixed extremities. 
 We have \( H_{\Timet+\Times} ^\alpha = H_\Timet^\alpha+ H_\Times^\alpha \circ \sigma_\Timet\) for every \(\Times,\Timet\geq 0\). 

\begin{theorem}\label{p: expression of a}
Let $(S, \FF)$ be a foliated algebraic surface with hyperbolic singularities and no foliated cycle. Let \( T\) be the unique directed harmonic current satisfying \( \int T\wedge \text{vol}_{g_P} = 1\). For every $\LL \subset E$, let $\alpha_\LL$ be a smooth closed \(1\)-form on $\LL$. Let 
$$ A := \int_{S^*} \int_{\Gamma^x}  H_1^\alpha(\gamma) dW^x_{g_P} (\gamma) \, d\mu(x) .$$ 
\begin{enumerate}
\item if Property $(i)$ is satisfied, then $A \in \R$ and  
$$ \textrm{for } W_{g_P}^\mu-a.e. \  \gamma \in \OO  \ , \  \lim _{t\rightarrow +\infty} {1 \over t}  H_t^\alpha(\gamma) = A . $$
\item for every $\Timet >0$, $\Timet A = \int_{S^*} \int_{\Gamma^x}  H_\Timet^\alpha(\gamma) dW^x_{g_P} (\gamma) \, d\mu(x)$. 
\item if moreover Property $(ii)$ is satisfied,  then $A = \int _{S^*} \theta \, d\mu$.
\end{enumerate}
\end{theorem}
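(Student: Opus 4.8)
The three items are all driven by the cocycle relation $H^\alpha_{\Timet+\Times} = H^\alpha_\Timet + H^\alpha_\Times\circ\sigma_\Timet$ together with the $\sigma$-invariance and ergodicity of $(\Gamma,\sigma,W^\mu_{g_P})$ established in Section~\ref{ss: harmonicm2}. My first step is to control integrability: by Property~$(i)$ the form $\alpha_\LL$ has $g_s$-norm $\leq M_1$, so pointwise $|H^\alpha_1(\gamma)| \leq M_1\, l_{g_s}(\overline\gamma) = M_1 D^{g_s}_1(\gamma)$ after choosing $\overline\gamma$ to realize the infimum in \eqref{eq: cocycle smooth distance}; Proposition~\ref{p: integrability} then gives $H^\alpha_1 \in L^1(W^\mu_{g_P})$, so the constant $A$ is well defined and finite. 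For item~(1), I would apply Kingman's subadditive ergodic theorem — or rather its additive corollary, Birkhoff's theorem applied to the $\sigma$-invariant integrable function $H^\alpha_1$ in discrete time, upgraded to the continuous parameter $\Timet$ by the standard trick of bounding $\sup_{\Timeu\in[n,n+1]}|H^\alpha_\Timeu - H^\alpha_n|$ by $\sup_{\Timeu\in[0,1]}D^{g_s}_\Timeu\circ\sigma_n$, which is integrable again by Proposition~\ref{p: integrability} and hence $o(n)$ along the orbit. Ergodicity identifies the limit with $\int_{\Gamma} H^\alpha_1\, dW^\mu_{g_P} = \int_{S^*}\int_{\Gamma^x} H^\alpha_1(\gamma)\,dW^x_{g_P}(\gamma)\,d\mu(x) = A$.

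Item~(2) is the "the time-$\Timet$ average equals $\Timet$ times the time-$1$ average" identity. Here I would write $H^\alpha_\Timet = \sum_{k=0}^{n-1} H^\alpha_1\circ\sigma_k$ when $\Timet = n$ is an integer and use $\sigma$-invariance of $W^\mu_{g_P}$ to get $\int H^\alpha_n\, dW^\mu_{g_P} = n\int H^\alpha_1\, dW^\mu_{g_P} = nA$; for non-integer $\Timet$ one uses the cocycle relation $H^\alpha_\Timet = H^\alpha_\Times + H^\alpha_{\Timet-\Times}\circ\sigma_\Times$ and the invariance of $W^\mu_{g_P}$ under $\sigma_\Times$ to show $\Timet\mapsto \int H^\alpha_\Timet\, dW^\mu_{g_P}$ is additive and (by the integrability bound, which is locally bounded in $\Timet$) locally bounded, hence linear: $\int H^\alpha_\Timet\, dW^\mu_{g_P} = \Timet A$. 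Equivalently, one may simply note that the limit in item~(1) combined with dominated convergence (using $\sup_{\Timeu\le\Timet}D^{g_s}_\Timeu \in L^1$) forces the average to be exactly $\Timet A$.

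For item~(3), the point is to identify $A$ with $\int_{S^*}\theta\,d\mu$ using the heat-kernel structure of the Wiener measure and Dynkin's formula. Since $\varphi$ is a local primitive of $\alpha_\LL$ along leaves, for a path $\gamma$ starting at $x$ one has $H^\alpha_\Timet(\gamma) = \varphi(\gamma(\Timet)) - \varphi(x)$ read in a fixed leafwise chart, but $\varphi$ is only locally defined; the clean way is Dynkin's formula for the leafwise Brownian motion, which gives $\int_{\Gamma^x} H^\alpha_\Timet(\gamma)\,dW^x_{g_P}(\gamma) = \int_0^\Timet \int_{\Gamma^x}(\Delta_{g_P}\varphi)(\gamma(\Times))\,dW^x_{g_P}(\gamma)\,d\Times = \int_0^\Timet\int_{L_x}\theta(y)\,p(x,y,\Times)\,\mathrm{vol}_{g_P}(dy)\,d\Times$, where Property~$(ii)$ ($\theta$ bounded) justifies Fubini and the application of Dynkin. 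Integrating in $x$ against the harmonic (hence $D^\Times$-stationary) measure $\mu$ gives $\int_{S^*}\int_{\Gamma^x}H^\alpha_\Timet\,dW^x_{g_P}\,d\mu(x) = \int_0^\Timet\left(\int_{S^*}\theta\,d\mu\right)d\Times = \Timet\int_{S^*}\theta\,d\mu$; comparing with item~(2) yields $A = \int_{S^*}\theta\,d\mu$.

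The main obstacle is the passage from discrete to continuous time in item~(1) and the justification of Dynkin's formula in item~(3): both require that the $g_s$-oscillation of $\gamma_{[0,1]}$ (equivalently $\sup_{\Timeu\le 1}D^{g_s}_\Timeu$) be $W^\mu_{g_P}$-integrable near the singular points, which is exactly where Nguyen's logarithmic integrability (Theorem~\ref{th: Nguyen estimates}) enters through Proposition~\ref{p: integrability}; without it the forms $\alpha_\LL$, though $g_s$-bounded, could have non-integrable integrals along Brownian paths that linger near $\mathrm{sing}(\FF)$. A secondary technical point is that $\varphi$ is only a local primitive, so in item~(3) one must either work in the universal cover of the leaf or, more efficiently, apply Dynkin directly to the $1$-form $\alpha_\LL$ (the stochastic line integral), for which the bounds $(i)$ and $(ii)$ on $|\alpha_\LL|_{g_s}$ and $|d^*_{g_P}\alpha_\LL|$ are precisely what is needed.
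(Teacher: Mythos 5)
Your proposal is correct and follows essentially the same route as the paper's proof: $W^\mu_{g_P}$-integrability of $H^\alpha_t$ via Property $(i)$ and Proposition \ref{p: integrability}, the ergodic theorem for the additive cocycle for items (1)--(2), and Dynkin's formula (localized by a stopping time, with domination coming from Property $(i)$ plus Proposition \ref{p: integrability} on one side and $|\theta|\leq M_2$ on the other) for item (3). The only cosmetic differences are that the paper applies Kingman directly in continuous time where you use Birkhoff in discrete time plus an interpolation bound, and that in item (3) the paper concludes by letting $t\to 0$ and using leafwise continuity of $\theta$, whereas you integrate the heat-kernel identity against $\mu$ and invoke its stationarity under the leafwise heat semigroup --- both valid.
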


\begin{proof}
To simplify we denote  $H^\alpha_\Timet$ by $H_\Timet$. By $(i)$,  \( H_\Timet \leq M_1 D^{g_s}_\Timet \) on $\Gamma$ for every \(\Timet > 0\), implying by Proposition \ref{p: integrability} that  \( H_\Timet  \) is \( W^\mu_{g_P}\)-integrable.  Taking into account that  \( \{H_\Timet\}_{\Timet\geq 0} \) is a cocycle and that \( (\Gamma, \sigma, W^\mu_{g_P})\) is ergodic, Kingman's ergodic theorem implies that there exists \( A\in \R \cup \{ - \infty \} \) such that $\lim_{\Timet \to + \infty} {1 \over \Timet}  H_\Timet (\oo)=A$ for \(W^\mu_{g_P}\)-a.e. \(\oo \in \OO\) and $\lim_{\Timet \to + \infty} \int_\Gamma {1 \over \Timet} H_ \Timet \, dW^\mu_{g_P} = A$.

Now we moreover assume $(ii)$ and prove that $A = \int _{S^*} \theta \, d\mu$. Since  \( \{H_\Timet\}_{\Timet\geq 0} \) is an additive cocycle and since $\Timet \mapsto \int_\Gamma H_ \Timet \, dW^\mu_{g_P}$ is continous, we have $\int_\Gamma {1 \over \Timet} H_ \Timet \, dW^\mu_{g_P} = A'$ for every $\Timet > 0$, where $A' := \int_\Gamma H_ 1 \, dW^\mu_{g_P} \in \R$. Taking limits when $\Timet$ tends to infinity, we get $A=A'$.  We thus have 
\begin{equation}\label{aaa}
  \forall \Timet  > 0 \ , \ \int_\Gamma H_ \Timet \, dW^\mu_{g_P} = \Timet A .
  \end{equation} 
By \( W_{g_{P}}^{\mu} = \int_{S^*} W_{g_{P}}^p d\mu (p) \), see Section \ref{ss: harmonicm2}, the left hand side of \eqref{aaa} satisfies  
\begin{equation}\label{bbb}
\int_\Gamma H_ \Timet \, dW^\mu_{g_P} = \int_{S^*} \left( \int_{\Gamma^x} H_\Timet \,  dW _{g_P} ^x \, \right)  d\mu (x) =:  \int_{S^*}  \mathbb E^x (H_\Timet)  d\mu (x), 
 \end{equation}
 where $\mathbb E^x$ denotes the integral on $\Gamma^x$ with respect to $W _{g_P} ^x$ (since \( H_\Timet  \) is \( W^\mu_{g_P}\)-integrable, \( H_\Timet  \) is also \( W^x_{g_P}\)-integrable  for $\mu$-a.e. $x \in S^*$). This proves Items (1) and (2). Now, since \( H_\Timet (\oo) \) only depends on the homotopy class of \(\oo_{[0,\Timet]} \) with fixed extremities, the local integral $\int \alpha_\LL$ induces a smooth function 
$$ \tilde \varphi : \widetilde{\LL_x} \simeq \D \rightarrow \mathbb R $$
on the universal cover  of $\LL_x$ as follows. Let $\tilde x$ be a lift of $x$, and for every $\oo \in \Gamma^x$, let $\tilde{\oo}$ be the lift of $\oo$ starting at $\tilde x$. We then define \(  \tilde \varphi( \tilde{\oo}_\Timet ) := H_\Timet (\oo) \) where $\tilde \gamma_\Timet := \tilde \gamma (\Timet)$. The function \(\oo\mapsto \tilde \varphi (\tilde{\oo}_\Timet) \) remains \(W _{g_P} ^x \)-integrable for \(\mu\)-a.e. \(x \in S^*\) and for every \( \Timet > 0 \). Let us fix $\Timet$ and apply Dynkin's formula  with a stopping time that we now define. Let $B_{\tilde x}(R)$ denote the leafwise ball centered at $\tilde x$ and of radius $R$ for the metric $g_\D$, and let 
$$u_R(\tilde \gamma):= \inf\{ \Times > 0 \, , \, \tilde \gamma_\Times \notin B_{\tilde x}(R) \} \ \ , \ \  v_R(\tilde \gamma) := \inf \{ \Timet \, , \, u_R(\tilde \gamma) \} . $$
The function $v_R$ is a stopping time satisfying $\mathbb E^x (v_R) \leq \Timet$. Dynkin's formula \cite[Lemma 17.21]{Kallenberg} then asserts (recalling that $\theta =  \Delta_{g_P} \varphi$): 
\begin{equation}\label{uuu}
 \mathbb E^x (\tilde \varphi (\tilde{\oo}_{v_R(\tilde \oo)} ) ) = \frac{1}{2} \mathbb E^x \left(\int _0^{v_R(\tilde \oo)} \theta (\oo(\Times)) d\Times  \right)  .  
\end{equation}
Our goal is now to prove 
\begin{equation}\label{vvv}
 \forall \Timet > 0 \ , \ \mathbb E^x (H_\Timet) = \mathbb E^x (\tilde \varphi (\tilde{\oo}_\Timet ) ) = \frac{1}{2} \mathbb E^x \left(\int _0^\Timet \theta (\oo(\Times)) d\Times  \right)  . 
 \end{equation}
We shall use Lebesgue convergence theorem when $R$ tends to infinity. First observe that $\lim_{R} v_R(\gamma)=\Timet$ for \(W _{g_P} ^p \)-almost every $\gamma$. Using that the $g_s$-norm of $\alpha_L$ is bounded by $M_1$ (Property $(i)$), the fact that $v_R \leq \Timet$ and the function $D_\Timeu^{g_s} (\gamma )$ defined in Equation \eqref{eq: cocycle smooth distance}, we get  for \(W _{g_P} ^p \)-almost every $\gamma$:
$$\vert \tilde \varphi (\tilde \gamma_{v_R(\tilde \gamma)}) \vert = H_{v_R(\tilde \gamma)}(\tilde \gamma) \leq M_1 \sup \{ D_\Timeu^{g_s} (\gamma ) , \Timeu \in [0,\Timet] \} . $$
By Proposition \ref{p: integrability}, for $\mu$-almost every $x$, the right hand side is a \(W^x _{g_{P}}\)-integrable function. Hence, $\tilde \varphi$ being continuous, the left hand side of Equation \eqref{uuu} tends to $\mathbb E^x (\tilde \varphi (\tilde{\oo}_\Timet ) )$ when $R$ tends to infinity. The right hand side of Equation \eqref{uuu} tends to $\frac{1}{2} \mathbb E^x (\int _0^\Timet \theta (\oo(s)) ds $: indeed, using that $\theta =  \Delta_{g_P} \varphi$ is bounded by $M_2$  (Property $(ii)$) and that $v_R \leq \Timet$, we get the domination  
$$ \Big \vert \int _0^{v_R(\tilde \oo)} \theta (\oo(\Times)) d\Times \Big \vert \leq  M_2 \, \Timet  \in L^1 (W^x _{g_{P}}) . $$
This proves Equation \eqref{vvv}. Integrating this Equation with respect to $\mu$ and then using Equations \eqref{aaa} and \eqref{bbb}, we get
\begin{equation} \label{eq: lyapunov exponent with Dynkin formula} \forall \Timet > 0 \ , \  A = \frac{1}{2} \int_{S^*} \mathbb E^x \left( \frac{1}{\Timet} \int _0^\Timet \theta (\oo(\Times)) d\Times   \right) d\mu(x) . \end{equation} 
Since $\theta$ is bounded by $M_2$ and leafwise continuous,  $x \mapsto \mathbb E^x ( \frac{1}{\Timet} \int _0^\Timet \theta (\oo(\Times)) d\Times )$ is also  bounded by $M_2$ and converges to $\theta(x)$ when $\Timet \to 0$ for $\mu$-almost every $x \in S^*$. This implies $A = \frac{1}{2} \int_{S^*} \theta(p) d\mu(p)$ as desired.\end{proof}

\subsection{Lyapunov exponent $\lambda$} \label{ss: LE}

Let $m$ be a metric on $N_\FF$, locally defined by $m = e^{\varphi_m}  | dt|$ as in Equation \eqref{eq: varphi}. We recall that  $\eta_m$ is the leafwise \(1\)-form on $S^*$ locally defined by $d_{\FF} \varphi_m$. Given a leafwise smooth path \( \gamma \in \Gamma \) and transversals \( \Sigma_0, \Sigma_\Timet \) at $\gamma(0)$, $\gamma(\Timet)$, we consider $h_{ \gamma_{[0,\Timet]}, \Sigma_0, \Sigma_\Timet}$ as in Section \ref{ss: holonomy2}. We have 
\begin{equation} \label{eq: norm of derivative of holonomy}  |  Dh_{ \gamma_{[0,\Timet]}, \Sigma_0, \Sigma_\Timet} ( \gamma(0)) |_m = \exp \int_{ \gamma_{[0,\Timet]}} \eta_m .\end{equation}
The integral of $\eta_m$ can also be defined along continuous leafwise paths, for instance Brownian trajectories (by homotoping such paths with fixed extremities to smooth leafwise ones).
The Proposition below states that  ${1 \over \Timet} \int _{\gamma_{[0,\Timet]}}  \eta_m$ converges when $\Timet \to \infty$ for \(W_{g_P}^\mu\)-a.e. \(\gamma \in \OO\) to a universal number.

\begin{proposition}\label{d: lyapunov exponent}
Let $(S,\FF)$ be a foliated algebraic surface with hyperbolic singularities and no foliated cycle. Let \( T\) be the unique directed harmonic current satisfying \( \int T\wedge \text{vol}_{g_P} = 1\). Let \(m \) be the metric on $N_\FF$ defined by Remark \ref{vanishcurvNF}. There exists $\lambda \in \R$ such that for \(W_{g_P}^\mu\)-a.e. \(\gamma \in \OO\): 
\[ \lim _{\Timet\rightarrow +\infty} {1 \over \Timet} \log \vert Dh_{\oo_{[0,\Timet]}, \Sigma_0, \Sigma_\Timet}(\oo(0)) \vert_{m}  = \lambda  .\]
This number is called the  Lyapunov exponent and satisfies  the formula  
$$ \lambda  =  \frac{T\cdot N_{\FF} }{T\cdot T_{\FF}} . $$
\end{proposition}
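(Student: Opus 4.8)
\textbf{Plan of proof of Proposition~\ref{d: lyapunov exponent}.}
The plan is to apply the Limit Theorem (Theorem~\ref{p: expression of a}) to the measurable family of leafwise $1$-forms $\alpha_\LL := \eta_m|_\LL$, where $\eta_m = d_\FF\varphi_m$ is the leafwise closed $1$-form attached to the metric $m$ on $N_\FF$ fixed in Remark~\ref{vanishcurvNF}. First I would check that $\alpha_\LL$ is indeed leafwise smooth and closed: it is exact, being the leafwise differential of the local potential $\varphi_m$, and it depends measurably (in the smooth leafwise topology) on the transverse parameter because $\varphi_m$ is smooth on $S$. Next I would verify Property $(i)$: by Lemma~\ref{eq: bound eta smooth}, $|\eta_m|_{g_s}$ is bounded above on all of $S$, with bound independent of the leaf, so we may take $E = S^*$. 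Property $(ii)$ amounts to bounding $\theta := \Delta_{g_P}\varphi_m = d^*_{g_P}\eta_m$; here I would combine $dd^c_\FF\varphi_m = -\Theta_m$ with Remark~\ref{vanishcurvNF} (curvature of $m$ vanishes near $\mathrm{sing}(\FF)$, so $\theta = 0$ there) and leafwise continuity on the compact regular core to conclude $\theta$ is bounded. With $(i)$ and $(ii)$ in hand, Theorem~\ref{p: expression of a} produces a number $\lambda := A = \tfrac12\int_{S^*}\theta\,d\mu$ such that for $W^\mu_{g_P}$-a.e.\ $\gamma$,
\[
\lim_{\Timet\to+\infty}\frac{1}{\Timet} H^{\eta_m}_\Timet(\gamma) = \lambda,
\]
and since Equation~\eqref{eq: norm of derivative of holonomy} reads $\log|Dh_{\gamma_{[0,\Timet]},\Sigma_0,\Sigma_\Timet}(\gamma(0))|_m = H^{\eta_m}_\Timet(\gamma)$, this is exactly the stated almost sure convergence.

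It remains to identify $\lambda$ with $(T\cdot N_\FF)/(T\cdot T_\FF)$. For the numerator I would rewrite $\theta = \Delta_{g_P}\varphi_m$ and invoke Lemma~\ref{forumlaTNF}, which gives $\int_S \Delta_{g_P}\varphi_m\,d\mu = -2\pi\,T\cdot N_\FF$; hence $\lambda = \tfrac12\int_{S^*}\theta\,d\mu = -\pi\,T\cdot N_\FF$. Wait --- I should be careful with the factor of $2$: the cocycle $H_\Timet$ is generated by Brownian motion with generator $\tfrac12\Delta_{g_P}$ in Dynkin's formula as written in Equation~\eqref{uuu}, which is why $A = \tfrac12\int\theta\,d\mu$, but the Wiener measure $W^x_{g_P}$ here is normalized so that its transition kernel is $p(x,y,\Timet)$ solving $\partial_t = \Delta_{g_P}$ (Section~\ref{ss: harmonicm2}); reconciling these two normalizations is a bookkeeping point I would nail down, and with the paper's conventions one gets $\lambda = -2\pi\,T\cdot N_\FF$ (this matches the sentence ``$\lambda = -2\pi\,T\cdot N_\FF$'' in the Notes). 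For the denominator I would cite Proposition~\ref{p: cohomological interpretation of the mass of the harmonic measure}: $T\cdot T_\FF = -\tfrac1{2\pi}\int_S T\wedge\mathrm{vol}_{g_P} = -\tfrac1{2\pi}$ under the normalization $\int T\wedge\mathrm{vol}_{g_P} = 1$. Dividing, $(T\cdot N_\FF)/(T\cdot T_\FF) = (-\lambda/2\pi)/(-1/2\pi) = \lambda$, as claimed. The fact that $\lambda$ does not depend on the choices of transversals $\Sigma_0,\Sigma_\Timet$ nor of the metric $m$ follows from the cohomological formula, since $T\cdot N_\FF$ and $T\cdot T_\FF$ are metric-independent by the definition of intersection with a line bundle in Equation~\eqref{intercurrents}.

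The main obstacle I anticipate is the verification of Property $(ii)$ — boundedness of $\theta = \Delta_{g_P}\varphi_m$ — near the singular points. The quantity $\Delta_{g_P}\varphi_m$ equals $-\tfrac12 \mathrm{tr}_{g_P}\Theta_m \cdot(\text{something})$; away from $\mathrm{sing}(\FF)$ this is harmless, but a priori one must control the leafwise Laplacian against a metric $g_P$ that degenerates like $\rho^{-1}g_s$ (Lemma~\ref{PTM}) as one approaches the singularity. The saving grace is Remark~\ref{vanishcurvNF}: with the chosen metric $m = |\omega|$ exactly near the singular set, the curvature $\Theta_m$ vanishes identically on a neighborhood of $\mathrm{sing}(\FF)$, so $\theta$ vanishes there and the degeneration of $g_P$ is never seen. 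Thus the whole argument hinges on having pre-arranged the metrics $m$ and $g_s$ to be curvature-flat near the singularities, which is precisely why those normalizations were set up in Remarks~\ref{vanishcurvNF} and~\ref{vanishcurv}. A secondary subtlety is checking that the integral of $\eta_m$ along continuous (e.g.\ Brownian) leafwise paths is well-defined and agrees with $H^{\eta_m}_\Timet$ — this is handled by homotoping with fixed endpoints to smooth leafwise paths, using that $\eta_m$ is leafwise closed.
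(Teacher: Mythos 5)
Your proposal is correct and follows essentially the same route as the paper: apply Theorem~\ref{p: expression of a} to $\alpha=\eta_m$, verify Property $(i)$ via Lemma~\ref{eq: bound eta smooth} and Property $(ii)$ via the vanishing of $\Theta_m$ near the singular set (Remark~\ref{vanishcurvNF}), then identify $\lambda$ through Lemma~\ref{forumlaTNF} and Proposition~\ref{p: cohomological interpretation of the mass of the harmonic measure}. Your remark on the factor of $2$ is a fair bookkeeping point (the statement of Theorem~\ref{p: expression of a} and the last line of its proof differ by this normalization), and you resolve it consistently with the paper's convention $\lambda=-2\pi\,T\cdot N_\FF$.
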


\begin{proof}   
Equation \eqref{eq: norm of derivative of holonomy} yields the first equality of the following formula, the second one is given by Equation \eqref{def: Ht} with $\alpha = \eta_m$:
\[ \forall \gamma \in \Gamma \ , \ \log \vert Dh_{\oo_{[0,\Timet]}, \Sigma_0, \Sigma_\Timet}(\oo(0)) \vert_{m} = \int_{{\oo_{[0,\Timet]}}} \eta_m = H_\Timet ^{\eta_m} (\oo) . \]
By Lemma \ref{eq: bound eta smooth},  \( \eta _m\) is bounded with respect to $g_s$, hence satisfies Property $(i)$  of Section \ref{ss: integral of leafwise closed one forms}.  Property $(ii)$ comes from the fact that \(\theta := d^*_{g_P} \eta_m  = \Delta_{g_P} \varphi_m \) is smooth on \(S^*\) and vanishes near the singular set. Theorem \ref{p: expression of a} implies 
\[  \textrm{for } W_{g_P}^\mu-a.e. \  \gamma \in \OO  \ , \   \lim _{\Timet\rightarrow +\infty} {1 \over \Timet} \log \vert Dh_{\oo_{[0,\Timet]}, \Sigma_0, \Sigma_\Timet}(\oo(0)) \vert_{m}  = \int_{S^*}  \Delta_{g_P} \varphi_m \, d\mu   .\]
Setting $\lambda  :=  \int_{S^*}  \Delta_{g_P} \varphi_m \, d\mu$, Lemma \ref{forumlaTNF} asserts $\lambda  = -2 \pi \ T\cdot N_{\mathcal F}$.  Finally Proposition \ref{p: cohomological interpretation of the mass of the harmonic measure} provides $ T  \cdot T_{\mathcal F}  = -{1 \over 2\pi} \int _S \text{vol}_{g_P} \wedge T = -{1 \over 2\pi}$. \end{proof}

\begin{remark} In \cite{Nguyen}, Nguyen computed  the Lyapunov exponent with respect to the metric on $S$. Actually, the result does not depend on the chosen transverse metric (smooth, product or ambient), see Proposition \ref{p: lyapunov product}. \end{remark}

\subsection{Dynamical entropy $h_D$}\label{ss: DE}

As in Equation \eqref{eq: local expression harmonic current}, let $\tau(z,t)$ be leafwise positive harmonic functions defining $T$ in a foliation box. For every transverse section \(\Sigma\) given by \( \{ (z(t), t) \} \), we define the Radon measure 
$$ T_{|\Sigma} :=  \tau (z(t), t ) \nu (dt) . $$ 
Let us recall that $\beta = d_{\FF} \log \tau$, see Equation \eqref{eq: beta}.
Given a holonomy map as in Section \ref{ss: LE}, we have for the Radon-Nikodym derivative:
\begin{equation}\label{eq: Radon Nikodym derivative} 
\frac{D (h_{\gamma_{[0,\Timet]}, \Sigma_0, \Sigma_\Timet})_*^{-1} T_{|\Sigma_\Timet} }{D T_{|\Sigma_0}}(\gamma(0)) = \exp   \int _{\gamma_{[0,\Timet]}}  \beta .  \end{equation} 
The integral of \(\beta\) can also be defined along continuous  paths by homotoping.  

We prove below that $- {1 \over \Timet} \int _{\gamma_{[0,\Timet]}}  \beta$ converges  for \(W_{g_P}^\mu\)-a.e. \(\gamma \in \OO\) to a universal number that we now define. By Lemma \ref{lemmaharnack}, $d d^c_\FF \log \tau$ is bounded with respect to the Poincar\'e metric \(g_P\). Hence the \emph{dynamical entropy} of \(T\)  
\begin{equation}\label{ccc}
h_D  :=  - \int_{S^*} d d_\FF ^c \log \tau  \wedge T  = - \int_{S^*} \Delta_{g_{P}} \log \tau \  T \wedge \text{vol}_{g_P} 
\end{equation}
is well defined. It  was introduced by Kaimanovich \cite{Kaimanovich} and by Frankel \cite{Frankel}. Because \(\tau\) is leafwise harmonic, we also have the formula
 $$h_D  = \int_{S^*} d_\FF \log \tau \wedge d_\FF^c \log \tau \wedge T ,$$ which yields $h_D \geq 0$. 

\begin{proposition} \label{p: integral of beta}
Let $S$ be an algebraic surface endowed with a foliation $\FF$ with hyperbolic singularities and no foliated cycle. Let \( T\) be the unique directed harmonic current satisfying \( \int T\wedge \text{vol}_{g_P} = 1\). Then 
\[ \textrm{for } W_{g_P}^\mu-a.e. \  \gamma \in \OO  \ , \   \lim_{t \to + \infty} \frac{1}{\Timet} \log \frac{D (h_{\gamma_{[0,\Timet]}, \Sigma_0, \Sigma_\Timet})_*^{-1} T_{|\Sigma_\Timet} }{D T_{|\Sigma_0}} (\gamma(0)) = - h_D . \] 
\end{proposition}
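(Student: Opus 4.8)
The plan is to apply the general Limit Theorem \ref{p: expression of a} with the measurable family of leafwise closed $1$-forms $\alpha_\LL := \beta_{\vert \LL} = (d_\FF \log \tau)_{\vert \LL}$. First I would check that $\beta$ is indeed a well-defined measurable family of \emph{closed} leafwise $1$-forms: on each $\nu$-generic plaque $\mathbb D\times\{t\}$ the function $\tau(\cdot,t)$ is positive harmonic, so $\log\tau(\cdot,t)$ is harmonic and $d_\FF\log\tau$ is a closed (indeed leafwise harmonic) $1$-form; when changing foliation box the densities $\tau$ transform by multiplication by a leafwise constant (the Radon--Nikodym cocycle of $\nu$), so $\beta$ is globally defined on $S^*$, and it depends measurably on the transverse parameter since $\tau$ does. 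By Equation \eqref{eq: Radon Nikodym derivative}, $\int_{\gamma_{[0,\Timet]}}\beta = H_\Timet^\beta(\gamma)$ is exactly the logarithm of the Radon--Nikodym derivative in the statement, so the Proposition is equivalent to $\lim_{\Timet\to\infty}\frac1\Timet H_\Timet^\beta(\gamma) = -h_D$ for $W_{g_P}^\mu$-a.e.\ $\gamma$.

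Next I would verify the two hypotheses of Theorem \ref{p: expression of a}. Property $(i)$, the $g_s$-boundedness of $\beta$, follows from Lemma \ref{lemmaharnack} (which gives $\vert d_\FF\log\tau\vert_{g_P}$ bounded, via Harnack's inequality \eqref{eq: harnack}) together with Lemma \ref{PTM}, since $g_P \simeq \rho^{-1} g_s$ and $\rho$ is bounded below away from zero, giving $\vert\beta\vert_{g_s} \le \vert\beta\vert_{g_P} \cdot \sup \rho^{1/2}$... more carefully, $\vert\beta\vert_{g_s} = \rho^{1/2} e^{O(1)}\vert\beta\vert_{g_P}$, and $\rho$ is bounded on $S^*$? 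No --- $\rho\to+\infty$ near the singular points, so I must be slightly more careful: one needs $\vert\beta\vert_{g_P}$ to decay like $\rho^{-1/2}$ near singularities, or one invokes the integrability route directly. Actually the cleanest path is: $\vert\beta\vert_{g_s} \le \rho^{1/2}e^{O(1)}\vert\beta\vert_{g_P} \le C\rho^{1/2}$ near the singularities (Lemma \ref{lemmaharnack}), which is not uniformly bounded, so Property $(i)$ in its stated form may fail at singular points. The fix is to observe that the proof of Theorem \ref{p: expression of a} only really uses the $W^\mu_{g_P}$-integrability of $\sup_{\Timeu\in[0,\Timet]} D^{g_s}_\Timeu$ together with a pointwise domination $\vert H_\Timet\vert \le \text{(integrable)}$; alternatively one replaces $g_s$ by the metric $g_S$ from the ambient surface as in Nguyen's work, for which $\vert\beta\vert_{g_S}$ is genuinely bounded. \textbf{This is the step I expect to be the main obstacle}: reconciling the logarithmic blow-up of $\beta$ with respect to $g_s$ near singular points with the clean boundedness hypothesis of the Limit Theorem --- but it is resolved because $\mu$ gives no mass near singularities in a quantitatively controlled way (Nguyen's integrability, Theorem \ref{th: Nguyen estimates}) and $\beta$ extends across them as an honest bounded form for a suitable metric.

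Property $(ii)$ is immediate: the associated function is $\theta = \Delta_{g_P}\log\tau$, and by Lemma \ref{lemmaharnack} this is bounded with respect to $g_P$; moreover since $\tau$ is leafwise harmonic, $\Delta_{g_P}\log\tau = -\vert d_\FF\log\tau\vert^2_{g_P}$ (the computation in the proof of Lemma \ref{lemmaharnack}, noting the sign), hence $\theta \le 0$ and $\theta$ is bounded. Then Theorem \ref{p: expression of a} item (1) gives the almost-sure convergence of $\frac1\Timet H_\Timet^\beta$ to the constant $A := \int_{S^*}\int_{\Gamma^x} H_1^\beta(\gamma)\,dW^x_{g_P}(\gamma)\,d\mu(x)$, and item (3) identifies $A = \int_{S^*}\theta\,d\mu = \int_{S^*}\Delta_{g_P}\log\tau\,d\mu = \int_{S^*}\Delta_{g_P}\log\tau\; T\wedge\mathrm{vol}_{g_P}$. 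Comparing with the definition \eqref{ccc} of $h_D = -\int_{S^*}\Delta_{g_P}\log\tau\; T\wedge\mathrm{vol}_{g_P}$, we conclude $A = -h_D$. Feeding this into Equation \eqref{eq: Radon Nikodym derivative} yields exactly the claimed limit $\lim_{\Timet\to+\infty}\frac1\Timet\log\frac{D(h_{\gamma_{[0,\Timet]},\Sigma_0,\Sigma_\Timet})_*^{-1}T_{\vert\Sigma_\Timet}}{DT_{\vert\Sigma_0}}(\gamma(0)) = -h_D$ for $W^\mu_{g_P}$-a.e.\ $\gamma$, completing the proof.
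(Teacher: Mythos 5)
Your overall strategy is exactly the paper's: apply Theorem \ref{p: expression of a} to $\alpha=\beta=d_\FF\log\tau$, identify $A=\int_{S^*}\Delta_{g_P}\log\tau\,d\mu=-h_D$ via \eqref{ccc}, and conclude through \eqref{eq: Radon Nikodym derivative}. But the step you single out as ``the main obstacle'' rests on an inverted norm comparison, and your proposed repairs do not hold up. Since $g_P=e^{-\xi}g_s$ as metrics on \emph{tangent} vectors, with $e^{-\xi}\simeq\rho^{-1}$ (Lemma \ref{PTM}), the dual norms on $1$-forms go the other way: $\vert\beta\vert_{g_s}=e^{-\xi/2}\vert\beta\vert_{g_P}$ (in the paper's convention $\vert\beta\vert^2_{g_s}=e^{-\xi}\vert\beta\vert^2_{g_P}$), and $e^{-\xi}$ is \emph{bounded} near the singular set because $\xi=\log\rho+O(1)\to+\infty$ there. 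Hence Lemma \ref{lemmaharnack} (Harnack) plus this one-line duality remark already gives $\vert\beta\vert_{g_s}$ bounded on $S^*$, so Property $(i)$ holds exactly as stated and there is no obstacle to resolve; this is precisely how the paper's proof verifies $(i)$.

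Your claimed inequality $\vert\beta\vert_{g_s}\le\rho^{1/2}e^{O(1)}\vert\beta\vert_{g_P}$ has the exponent with the wrong sign, and the two fixes you sketch are not valid as written: invoking Nguyen's integrability of $\rho$ does not by itself re-prove the Limit Theorem under a weaker hypothesis (you would have to redo the Kingman and Dynkin dominations with an unbounded form), and switching to the ambient metric $g_S$ makes matters strictly worse, since $g_S\simeq r\rho\, g_P$ on tangent vectors implies $\vert\beta\vert_{g_S}\simeq (r\rho)^{-1}\vert\beta\vert_{g_P}$, which blows up at the singularities rather than being ``genuinely bounded''. With the comparison corrected, the rest of your argument (Property $(ii)$ from Lemma \ref{lemmaharnack}, the sign $\theta=\Delta_{g_P}\log\tau=-\vert d_\FF\log\tau\vert^2_{g_P}\le 0$, and the identification $A=-h_D$) coincides with the paper's proof.
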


\begin{proof} Let us verify that  \( \beta = d_\FF \log \tau  \) satisfies Properties $(i)$ and $(ii)$ of Section \ref{ss: integral of leafwise closed one forms} with \(E= S^*\). Lemma  \ref{lemmaharnack} asserts that the $g_P$-norm of  $\beta$ is bounded. The formula $g_P= e^{-\xi} g_s$ given by Equation \eqref{eq: def phi} implies that $\vert \beta \vert^2_{g_s} = e^{-\xi} \vert \beta \vert^2_{g_P}$. Since $e^{-\xi} \sim (- \log \left(|x|^2+|y|^2 \right) )^{-1}$ is bounded near the singular set, the $g_s$-norm of $\beta$  is also bounded and Property $(i)$ follows. Lemma  \ref{lemmaharnack} gives that $\theta := \Delta_{g_P} \log \tau$ is bounded on $S^*$, hence Property $(ii)$ is satisfied. Finally, Theorem \ref{p: expression of a} applied with $H_\Timet (\gamma) = \int_{\overline \gamma_{[0,\Timet]}} \beta$ yields 
$$  \textrm{for } W_{g_P}^\mu-a.e. \  \gamma \in \OO  \ , \   \lim_{\Timet \to + \infty} \frac{1}{\Timet} \int _{\gamma_{[0,\Timet]}} \beta = A , $$
where $A := \int_{S^*} \theta \, d\mu$. We get $A = - h_D$ from $\mu = T \wedge \text{vol}_{g_P}$ and Equation \eqref{ccc}. We conclude by using Equation \eqref{eq: Radon Nikodym derivative}.
\end{proof}

We complete this section with Proposition \ref{p: cochd}, which will be used for the proof of Kaimanovich inequality, see Section \ref{appendix}. Let us first note that \(\mu \)-a.e. leaf \(L\) has no holonomy. This is a general fact, but in our situation this can be easily seen by noticing that analyticity implies that  there is merely a countable number of leaves with holonomy.  So the desintegration of the harmonic current \(T\) along  \(\mu \)-a.e. leaf \(L\) is a single-valued positive harmonic function \( \tilde \tau \), well defined up to multiplication by a positive constant.

\begin{proposition}\label{p: cochd}
For every $\Timet > 0$, we have 
$$ - h_D = {1 \over \Timet} \int_{S^*} \int_{L_x} \log \left( {{\tilde \tau}(y) \over {\tilde \tau} (x) } \right) p(x,y,\Timet) \text{vol}_{g_P} (dy) d\mu(x) . $$
\end{proposition}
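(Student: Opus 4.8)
The plan is to realize the right-hand side as a limit of quantities that Proposition~\ref{p: integral of beta} already controls, by passing from the multivalued density $\tau$ in a foliation box to the single-valued lift $\tilde\tau$ on the (holonomy-free) leaf $L_x$. For $\mu$-a.e. $x$, the leaf $L_x$ carries no holonomy, so the germs $\tau$ glue into a genuine positive harmonic function $\tilde\tau$ on $L_x$, well defined up to a positive multiplicative constant. For such $x$ the ratio $\tilde\tau(y)/\tilde\tau(x)$ is unambiguous, and along any leafwise path $\gamma$ from $x$ to $y$ one has $\log(\tilde\tau(\gamma(t))/\tilde\tau(\gamma(0))) = \int_{\gamma_{[0,t]}}\beta$, where $\beta = d_\FF\log\tau$ as in Equation~\eqref{eq: beta}. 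In particular, fixing $\Timet>0$ and integrating against the Wiener measure $W^x_{g_P}$, the definition of that measure (Section~\ref{ss: harmonicm2}) gives
$$ \int_{L_x}\log\!\left(\frac{\tilde\tau(y)}{\tilde\tau(x)}\right)p(x,y,\Timet)\,\text{vol}_{g_P}(dy) = \int_{\Gamma^x} H_\Timet^{\beta}(\gamma)\,dW^x_{g_P}(\gamma), $$
with $H_\Timet^\beta(\gamma) = \int_{\overline\gamma_{[0,\Timet]}}\beta$ as in Equation~\eqref{def: Ht}. Here one must check that both sides are finite and that the interchange of integrals is legitimate; this is exactly the $g_s$-integrability input: Proposition~\ref{p: integral of beta}'s proof shows $\beta$ satisfies Property~$(i)$, so $|H_\Timet^\beta|\le M_1 D^{g_s}_\Timet$, which is $W^\mu_{g_P}$-integrable by Proposition~\ref{p: integrability}; hence $\gamma\mapsto H^\beta_\Timet(\gamma)$ is $W^x_{g_P}$-integrable for $\mu$-a.e.\ $x$ and Fubini applies.

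Next I integrate the displayed identity over $x$ against $\mu$ and apply Theorem~\ref{p: expression of a} with $\alpha = \beta$. By Item~(2) of that theorem, $\int_{S^*}\int_{\Gamma^x}H_\Timet^\beta(\gamma)\,dW^x_{g_P}(\gamma)\,d\mu(x) = \Timet A$ for every $\Timet>0$, where $A = \int_{S^*}\int_{\Gamma^x}H_1^\beta\,dW^x_{g_P}\,d\mu$. Since $\beta$ also satisfies Property~$(ii)$ (again as established in the proof of Proposition~\ref{p: integral of beta}, using Lemma~\ref{lemmaharnack}), Item~(3) identifies $A = \int_{S^*}\theta\,d\mu$ with $\theta = \Delta_{g_P}\log\tau$, and the computation $\int_{S^*}\Delta_{g_P}\log\tau\,d\mu = -h_D$ from Equation~\eqref{ccc} gives $A = -h_D$. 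Combining, for every $\Timet>0$:
$$ \int_{S^*}\int_{L_x}\log\!\left(\frac{\tilde\tau(y)}{\tilde\tau(x)}\right)p(x,y,\Timet)\,\text{vol}_{g_P}(dy)\,d\mu(x) = \Timet A = -\Timet\, h_D, $$
which is the asserted formula after dividing by $\Timet$.

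The only genuine subtlety — and the step I expect to require the most care — is the passage from the boxwise multivalued $\tau$ to the globally defined $\tilde\tau$ on the leaf, and the justification that the resulting path integral $\int_{\gamma_{[0,\Timet]}}\beta$ agrees with $H_\Timet^\beta(\gamma)$ for $W^x_{g_P}$-a.e.\ $\gamma$ and for $\mu$-a.e.\ $x$: one needs that $\mu$-a.e.\ leaf is holonomy-free (noted in the text, since only countably many leaves have holonomy by analyticity), that $\tilde\tau$ is single-valued there, and that homotoping a Brownian path to a piecewise-smooth one does not change $\int\beta$ because $\beta$ is leafwise closed (indeed exact on $L_x$, $\beta = d_\FF\log\tilde\tau$). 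Everything else is a bookkeeping application of Theorem~\ref{p: expression of a} together with the integrability from Proposition~\ref{p: integrability}.
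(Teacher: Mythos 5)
Your proposal is correct and follows essentially the same route as the paper: apply Theorem~\ref{p: expression of a} (Items (2)--(3)) with $\alpha=\beta$, use the identification $A=-h_D$ from Proposition~\ref{p: integral of beta}, identify $H_\Timet^\beta(\gamma)=\log\bigl(\tilde\tau(\gamma(\Timet))/\tilde\tau(\gamma(0))\bigr)$ on holonomy-free leaves, and convert the $W^x_{g_P}$-integral into the heat-kernel integral via the definition of the Wiener measure. The extra care you give to Fubini and to the single-valuedness of $\tilde\tau$ merely spells out details the paper leaves implicit.
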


\begin{proof}
We continue the proof of Proposition \ref{p: integral of beta}.
By Theorem \ref{p: expression of a}, $$\forall \Timet > 0 \ , \ A = {1\over \Timet} \int_{S^*} \int_{\Gamma^x}  H_\Timet(\gamma) dW^x_{g_P} (\gamma) \, d\mu(x) .$$
 The desired formula follows from $H_\Timet(\gamma) = \int_{\overline \gamma_{[0,\Timet]}} d_\FF \log {\tilde \tau} = \log \left( {{\tilde \tau} (\gamma(\Timet)) \over {\tilde \tau}(\gamma(0)) } \right)$. \end{proof}

\begin{remark} Proposition \ref{p: integral of beta} and Equation  \eqref{eq: Radon Nikodym derivative}  seem to show that the logarithm of the Radon-Nikodym derivative of a holonomy map along a Brownian trajectory  decreases linearly at the speed $-h_D$. This is not completely rigourous, the problem being that we need to consider an uncountable number of transversals along the Brownian trajectory, each leading to a set of full transversal measure but not for all simultaneously. We will overcome this problem with the discretization process of Section \ref{ss: discretization}. 
\end{remark}

\part{Dimension of transversal measures}

\section{Projection $\pi$ and product metric}\label{ss: discretization}

In this section, we use the fact that the singularities are linearizable to introduce projections $\pi : S^* \to S \setminus B$, where $B := \cup_p B_p$ is the union of the neighborhoods  of singular points $p$ defined in Section \ref{sub:singpoints}. This will be useful in Section \ref{sss: cov} to define a \emph{finite} open covering of $S^*$ by foliation boxes. 

\subsubsection{The projection $\pi$}\label{s: angdom}  
Recall that  $\FF$ is defined on each $B_p$ by a vector field $V = ax \partial_x + by \partial_y$, normalized so that the real part of $V$ is a source ($a$ and $b$ have positive real parts), see Section \ref{sub:singpoints}.

 \begin{definition}
 The projection $\pi$ is the identity mapping on $S \setminus B$ and is defined on $B^*$ as the first exit point in  $\partial B$ of the real part  of $V$. 
 \end{definition}
 
 We will need the expression of $\pi$. Recall that $\Gamma_{(x_0,y_0)} :  u \in A \mapsto (x_0 \exp (a u) , y_0 \exp (b u))$,  given by (\ref{eq: parametrization}), parametrizes the leaves in $B_p$, where $A$ is the angular
 domain $\{ u \in \C \, , \, \Gamma_{x_0, y_0} (u) \in B_p \}$.  Let us fix
$$(x_0,y_0) \in \mathbb S^1 \times \mathbb S^1 \ \textrm{ and } \ u = v+i w \in A . $$
The function  $\tilde v \mapsto \vert \Gamma_{x_0, y_0} (\tilde v + i w)\vert_\infty$ is strictly increasing as $\tilde v$ increases. Let $v_0$ be the unique real number\footnote{Recall that $\partial A = \{ u \in \C \, ,  \, \vert \Gamma_{x_0,y_0}(u) \vert_\infty = 1 \}$.} such that $\vert \Gamma_{x_0, y_0} (v_0 + i w) \vert_\infty =1$. 
We denote 
$$ \Lambda : [0,1] \to A \ , \ \Lambda(\zeta) := (1-\zeta) (v+iw) + \zeta (v_0 + iw) . $$
Now, given $q \in B_p^*$, there exist $(x_0,y_0) \in \mathbb S^1 \times \mathbb S^1$ and $u \in A$ (both unique) such that $q = \Gamma_{x_0, y_0}(u)$. We define 
$$ \Lambda_q :=  \Gamma_{x_0, y_0} \circ \Lambda : [0,1] \to L_q \cap B_p  . $$
The projection $\pi : S^*  \to S \setminus B$ is precisely defined by 
$$ \forall q \in B_p^* \ , \ \pi(q) :=  \Lambda_q(1) \in \partial B_p   . $$
 Now let $g_s$ be a smooth metric on  \( T_\FF \) as defined in Section \ref{smoothmetric}. Recall that $l_g$ stands for the $g$-length of leafwise paths. 

\begin{lemma}\label{lemma: angularproj} $ $
\begin{enumerate}
\item  There exists $C \geq 1$ such that $l_{g_s}(\pi \circ \tau) \leq C l_{g_s}(\tau)$ for every leafwise smooth path $\tau: [0,1] \to B_p^*$. 
\item Let $\epsilon > 0$. For $W^\mu_{g_P}$-a.e. $\gamma \in \Gamma$, if $\gamma(n) \in B_p$, then\footnote{The upper estimate on $l_{g_P}(\Lambda_{\gamma(n)})$ will be used in Section \ref{s: chain}.}
 $d_P(\gamma(n) , \partial B_p) \leq l_{g_P}(\Lambda_{\gamma(n)}) \leq \log (\epsilon n)$ for every $n \geq m_0(\gamma)$. 
\end{enumerate}
\end{lemma}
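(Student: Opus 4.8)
\textbf{Plan for the proof of Lemma \ref{lemma: angularproj}.}

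\emph{Item (1).} The plan is to work entirely in the angular domain $A \subset \C$ attached to the singular point $p$, using the parametrization $\Gamma_{x_0,y_0}$ of~\eqref{eq: parametrization}. The key point is that for a fixed imaginary part $w$, the projection $\pi$ moves the base point $u=v+iw$ horizontally to the boundary point $v_0+iw \in \partial A$, via the straight segment $\Lambda$. Since the metric $g_s$ equals $|\omega|$ near $p$ (Remark \ref{vanishcurvNF}) and $\omega = ax\,dy-by\,dx$, pulling $g_s$ back by $\Gamma_{x_0,y_0}$ gives a metric on $A$ that is comparable, up to a constant depending only on $a,b$, to a multiple of the Euclidean metric $d_e^A$ on each leaf (indeed $\Gamma_{x_0,y_0}^*(dx) = a x_0 e^{au}\,du$, so $|\omega|$ pulls back to $|ab|\,|x_0 y_0|\,|e^{(a+b)u}|\,|du|$, which only varies by a bounded factor once one uses Lemma \ref{l: inthedomain} to control $\rho$). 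Then I would decompose a leafwise path $\tau:[0,1]\to B_p^*$, lifted to $A$ as $u(s)=v(s)+iw(s)$, and compare $l_{g_s}(\pi\circ\tau)$, whose $A$-lift is $s\mapsto v_0(s)+iw(s)$ where $v_0(s)$ is the (leaf-dependent) exit abscissa, with $l_{g_s}(\tau)$. The horizontal displacement of $\pi\circ\tau$ is controlled by $|v_0'(s)| + |w'(s)|$; the term $|w'(s)|$ is dominated by $l_{g_e}(\tau)$ directly, and $|v_0'(s)|$ is controlled because the boundary $\partial A$ is a union of two half-lines, so $v_0$ is a Lipschitz (in fact affine-in-$w$) function of $w$, giving $|v_0'(s)| \le C|w'(s)|$. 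Combining, $l_{g_s}(\pi\circ\tau)\le C' l_{g_s}(\tau)$ with $C'$ depending only on the opening of $A$, hence only on $a,b$; taking the maximum over the finitely many singular points gives the uniform constant $C$.

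\emph{Item (2).} Here the plan is to combine the integrability of the $g_s$-length of leafwise paths (Proposition \ref{p: integrability}) with a Borel--Cantelli argument, then translate the $g_s$-estimate into the claimed $g_P$- and $d_P$-estimates via Lemma \ref{PTM} and Lemma \ref{l: inthedomain}. First, by Proposition \ref{p: integrability} the function $\gamma\mapsto \sup_{\Timeu\in[0,1]}D^{g_s}_\Timeu$ is $W^\mu_{g_P}$-integrable; applying this to the shifted dynamical system, $\gamma\mapsto \sup_{\Timeu\in[n,n+1]}D^{g_s}_\Timeu(\gamma)$ has the same integral, which is finite, so by Borel--Cantelli (or simply because an integrable function of $n$ summed must have $n$-th term $o(n)$ along a.e. orbit, using $\sigma$-invariance and the fact that $\frac1n\sum$ of an integrable shift-cocycle-bounded quantity converges) we get that for $W^\mu_{g_P}$-a.e.\ $\gamma$, the $g_s$-length of $\gamma_{[n,n+1]}$ is $\le \epsilon' n$ for all large $n$; a cleaner route is Birkhoff's theorem applied to $f(\gamma):=\sup_{\Timeu\in[0,1]}D^{g_s}_\Timeu(\gamma)$, giving $\frac1n f(\sigma_n\gamma)\to 0$, hence $f(\sigma_n\gamma)\le \epsilon' n$ eventually. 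Now suppose $\gamma(n)\in B_p$. The leafwise Brownian path from $\gamma(n)$ has, within the time interval $[n,n+1]$, $g_s$-length at least as large (up to the constant $C$ from Item (1), applied to the subpath joining $\gamma(n)$ to $\partial B_p$) as $l_{g_s}(\Lambda_{\gamma(n)})$ \emph{provided} $\gamma_{[n,n+1]}$ actually reaches $\partial B_p$; since a Brownian path a.s.\ is not confined to any $B_p$ over a unit time interval after some $m_0(\gamma)$ (this needs a separate elementary argument: the Wiener measure of paths staying in $B_p$ for a full unit of time, started from a point at $g_P$-distance $d$ from $\partial B_p$, decays like $e^{-cd^2}$, and summing over $n$ these probabilities against $\mu$ and using Borel--Cantelli), we conclude $l_{g_s}(\Lambda_{\gamma(n)}) \le C\epsilon' n$ for $n\ge m_0(\gamma)$.

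\emph{From $g_s$ to $g_P$ and $d_P$.} By Lemma \ref{PTM}(2), $g_P \simeq \rho^{-1} g_s$, so along $\Lambda_{\gamma(n)}$, which stays at $\rho$-values that by Lemma \ref{l: inthedomain}(2) and~\eqref{eq: estimates on rho}-type monotonicity range between $\rho(\gamma(n))$ and $O(1)$, we get $l_{g_P}(\Lambda_{\gamma(n)}) \simeq \int \rho^{-1/2}\,dl_{g_s}$. Using Lemma \ref{l: inthedomain} again, in the angular domain $\Lambda_{\gamma(n)}$ lifts to the horizontal segment from $u(\gamma(n))=v+iw$ to $v_0+iw$, whose $d_e^A$-length is $|v-v_0| \simeq \rho(\gamma(n))$, and its $g_P^A$-length (Poincaré metric of $A$) is, by Lemma \ref{l: inthedomain}(1), $\simeq \log d_e^A(u(\gamma(n)),\partial A) \simeq \log \rho(\gamma(n))$. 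Thus $l_{g_P}(\Lambda_{\gamma(n)}) \simeq \log\rho(\gamma(n))$ and $d_P(\gamma(n),\partial B_p) \le l_{g_P}(\Lambda_{\gamma(n)})$ is immediate. It then remains to bound $\rho(\gamma(n))$: from the $g_s$-estimate $l_{g_s}(\Lambda_{\gamma(n)})\le C\epsilon' n$ and $l_{g_s}(\Lambda_{\gamma(n)}) \simeq d_e^A(u(\gamma(n)),\partial A) \simeq \rho(\gamma(n))$ (Lemma \ref{l: inthedomain}(2), since $\Lambda$ is the Euclidean segment), we get $\rho(\gamma(n)) \le C''\epsilon' n$, hence $l_{g_P}(\Lambda_{\gamma(n)}) \simeq \log\rho(\gamma(n)) \le \log(C''\epsilon' n) \le \log(\epsilon n)$ after absorbing constants by shrinking $\epsilon'$ and enlarging $m_0(\gamma)$.

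\textbf{Main obstacle.} The routine Euclidean/hyperbolic comparisons in the angular domain are straightforward once set up, and Item (1) is essentially a Lipschitz estimate for the exit-abscissa function. The genuine difficulty is the probabilistic input in Item (2): one must show that a $W^\mu_{g_P}$-generic Brownian path, whenever it sits in $B_p$ at an integer time $n$, cannot have spent the entire preceding (or following) unit time interval deep inside $B_p$ — equivalently, one needs the quantitative escape estimate for Brownian motion in the hyperbolic geometry of $B_p$ combined with a Borel--Cantelli summability against $\mu$, which is where Nguyen's integrability $\int\rho\,d\mu<\infty$ (Theorem \ref{th: Nguyen estimates}), propagated through Proposition \ref{p: integrability}, does the essential work. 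Getting the right form of the tail bound (Gaussian in the hyperbolic distance, uniform in the starting point, as in~\cite{Stroock}) so that it survives integration against $\mu$ and summation over $n$ is the technical heart of the argument.
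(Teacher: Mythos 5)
Your treatment of Item (2) contains a genuine gap, and the step at fault is the probabilistic one you yourself single out as the heart of the argument. Your reduction needs that, for a.e.\ $\gamma$ and all large $n$ with $\gamma(n)\in B_p$, the segment $\gamma_{[n,n+1]}$ actually reaches $\partial B_p$, and you justify this by asserting that the Wiener measure of paths that \emph{stay} in $B_p$ for a unit of time, started at $g_P$-distance $d$ from $\partial B_p$, decays like $e^{-cd^2}$. This is backwards: the Gaussian tail (the Stroock estimate quoted in Proposition \ref{p: integrability}) bounds the probability of travelling hyperbolic distance $d$ in unit time, i.e.\ of \emph{exiting}; the staying probability tends to $1$ as $d\to\infty$. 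Worse, since here $d_P(\gamma(n),\partial B_p)\lesssim\log(\epsilon n)$, the exit probabilities are summable in $n$, so Borel--Cantelli gives the opposite of what you need: for large $n$ a generic path sitting deep inside $B_p$ at time $n$ does \emph{not} leave $B_p$ during $[n,n+1]$. Hence the lower bound of $l_{g_s}(\gamma_{[n,n+1]})$ by $l_{g_s}(\Lambda_{\gamma(n)})$ fails, and your whole route to $\rho(\gamma(n))\lesssim\epsilon n$ collapses. The correct input is much simpler and bypasses Proposition \ref{p: integrability} entirely: the law of $\gamma(0)$ under $W^\mu_{g_P}$ is $\mu$, and $\int\rho\,d\mu<\infty$ by Theorem \ref{th: Nguyen estimates}, so $\gamma\mapsto\rho(\gamma(0))$ is $W^\mu_{g_P}$-integrable and Birkhoff's theorem for the shift gives $\rho(\gamma(n))<\epsilon n$ for all large $n$; Lemma \ref{l: inthedomain} then converts this directly into $d_P(\gamma(n),\partial B_p)\le l_{g_P}(\Lambda_{\gamma(n)})\lesssim\log\rho(\gamma(n))<\log(\epsilon n)$. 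This is exactly your final conversion step, which is the only part of your Item (2) that is actually needed.

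For Item (1) your geometric scheme (the exit abscissa $v_0(w)$ is piecewise affine, hence Lipschitz, so the horizontal projection in $A$ increases Euclidean length by at most a bounded factor) is the right fleshing-out of the paper's one-line argument, but the justification of the key metric comparison is wrong: $g_s$ is the metric on $T_\FF$ normalized by $g_s(V)=1$ (Remark \ref{vanishcurv}), not $|\omega|$, which is a metric on the normal bundle $N_\FF$ (Remark \ref{vanishcurvNF}); the pullback of $\omega=ax\,dy-by\,dx$ to a leaf is identically zero, since $\omega$ defines $\FF$, so your formula $|ab|\,|x_0y_0|\,|e^{(a+b)u}|\,|du|$ cannot be the leafwise metric. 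The correct (and simpler) fact is that $\Gamma_{x_0,y_0}'(u)=V(\Gamma_{x_0,y_0}(u))$ by \eqref{eq: V} and \eqref{eq: parametrization}, so $\Gamma_{x_0,y_0}^*g_s=|du|$ exactly; that is, $g_s$ is the Euclidean metric of $A$ in the leaf coordinate, which is the comparison the paper invokes and which makes the rest of your Item (1) argument go through.
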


\begin{proof}
The first Item follows from the fact that $g_s$ is equivalent to the euclidian metric in $A$.
 For the second Item, recall that $\gamma \in \Gamma \to \rho(\gamma(0)) \in \R$ is $W^\mu_{g_P}$-integrable, see Section \ref{ss: harmonicm}. Hence, by Birkhoff ergodic theorem, we have $\rho (\gamma(n)) < \epsilon n$ for  $W^\mu_{g_P}$-a.e. $\gamma \in \Gamma$ and  $n$ large. Lemma \ref{l: inthedomain} then implies  
\begin{equation*}
 d_P^A (\gamma(n) , \partial A) \sim  \log d_{e}^A (\gamma(n) , \partial A )  \leq \log \rho (\gamma(n)) <  \log (\epsilon n) ,
 \end{equation*}
which proves the second Item. 
\end{proof}

\subsubsection{Product metric}\label{s:PM}
 Let $m$ be the metric on $N_\FF$ defined as in Section \ref{normalbundle} by $m = \vert \omega \vert$ near the singular set.  In each $B_p^*$, the foliation $\FF$ is diffeomorphic to the product of the semi-line \((-\infty, 0]\) by a real one dimensional foliation of the \(3\)-sphere $\partial B_p^*$, given by the opposite of the first exit time of $B_p^*$ and by the first exit point in $\partial B_p$. This motivates the following terminology.

 \begin{definition} We define the \emph{product metric} $m_{pr}$ on $N_\FF$ over $S^*$ by 
 $$m_{pr} := m \textrm{ on } S \setminus B \ \ , \ \ m_{pr} := \pi^* (m_{\vert \partial B}) \textrm{ on } B^*, $$
 where $\pi : S^* \to S \setminus B$ is the projection defined in Section \ref{s: angdom}.
 \end{definition}
 
 \begin{proposition}\label{p: lyapunov product}
Let $(S,\FF)$ be a foliated algebraic surface with hyperbolic singularities and no foliated cycle. Let \( T\) be the unique directed harmonic current satisfying \( \int T\wedge \text{vol}_{g_P} = 1\). Let $\lambda $ be the Lyapunov exponent defined in Proposition \ref{d: lyapunov exponent}. Then for \(W_{g_P}^\mu\)-a.e. \(\gamma \in \OO\): 
\[ \lim _{\Timet\rightarrow +\infty} {1 \over \Timet} \log \vert Dh_{\oo_{[0,\Timet]},\Sigma_0,\Sigma_\Timet}(\oo(0)) \vert_{m_{pr}}  = \lambda  .\]
\end{proposition}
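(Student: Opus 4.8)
The strategy is to compare the holonomy derivatives in the metric $m_{pr}$ with those in the metric $m$, and to show that the logarithmic difference grows sublinearly along $W^\mu_{g_P}$-a.e. Brownian trajectory. First I would write, for a leafwise path $\gamma$ and transversals $\Sigma_0,\Sigma_\Timet$,
$$ \log \vert Dh_{\gamma_{[0,\Timet]},\Sigma_0,\Sigma_\Timet}(\gamma(0))\vert_{m_{pr}} = \log \vert Dh_{\gamma_{[0,\Timet]},\Sigma_0,\Sigma_\Timet}(\gamma(0))\vert_{m} + \big( \psi_{pr}(\gamma(\Timet)) - \psi_{pr}(\gamma(0)) \big), $$
where $\psi_{pr}$ is the leafwise function measuring the ratio $m_{pr}/m$, i.e. locally $m_{pr} = e^{\psi_{pr}} m$. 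Since $m_{pr} = m$ on $S\setminus B$ and $m_{pr} = \pi^*(m_{\vert \partial B})$ on $B^*$, the function $\psi_{pr}$ is supported in $B$ and, using Lemma \ref{l: dvarphi} (so that $\eta_m = \Re \Omega_{\vert\FF}$ has constant coefficients in linearizing coordinates) together with the explicit parametrization \eqref{eq: parametrization}, one checks that $\vert \psi_{pr}(q) \vert = O( \rho(q) )$ for $q \in B_p^*$: indeed $\psi_{pr}(q)$ is essentially $\Re \int_{\Lambda_q} \Omega_{\vert\FF}$, and $\Lambda_q$ has $g_s$-length comparable to $d_e^A(u(q),\partial A) \sim \rho(q)$ by Lemma \ref{l: inthedomain}, while $\Omega_{\vert\FF}$ is $g_s$-bounded near the singular set by Lemma \ref{eq: bound eta smooth}.

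Then the proof reduces to showing that $\frac{1}{\Timet}\big( \psi_{pr}(\gamma(\Timet)) - \psi_{pr}(\gamma(0)) \big) \to 0$ for $W^\mu_{g_P}$-a.e. $\gamma$. Since $\Timet \mapsto \gamma(\Timet)$ can be replaced by $n \in \N$ (the increments between integer times being controlled by Proposition \ref{p: integrability}), it suffices to prove $\frac{1}{n}\psi_{pr}(\gamma(n)) \to 0$. By the bound $\vert \psi_{pr}(\gamma(n))\vert = O(\rho(\gamma(n)))$, this follows from $\frac{1}{n}\rho(\gamma(n)) \to 0$, which in turn is a consequence of Birkhoff's ergodic theorem applied to the $W^\mu_{g_P}$-integrable function $\gamma \mapsto \rho(\gamma(0))$ (integrability is Theorem \ref{th: Nguyen estimates}): indeed $\sum_n \frac{1}{n}\big(\rho(\gamma(n)) - \rho(\gamma(n-1))\big)$ Cesàro-converges, forcing $\rho(\gamma(n))/n \to 0$. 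This is exactly the type of argument already used in the second item of Lemma \ref{lemma: angularproj}, so I would invoke it directly: for $W^\mu_{g_P}$-a.e. $\gamma$ and every $\epsilon>0$ one has $\rho(\gamma(n)) < \epsilon n$ for $n$ large, hence $\psi_{pr}(\gamma(n)) = o(n)$.

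Combining these two displays with Proposition \ref{d: lyapunov exponent}, which gives $\frac{1}{\Timet}\log \vert Dh_{\gamma_{[0,\Timet]},\Sigma_0,\Sigma_\Timet}(\gamma(0))\vert_m \to \lambda$ for $W^\mu_{g_P}$-a.e. $\gamma$, yields $\frac{1}{\Timet}\log \vert Dh_{\gamma_{[0,\Timet]},\Sigma_0,\Sigma_\Timet}(\gamma(0))\vert_{m_{pr}} \to \lambda$, as desired. The main obstacle I anticipate is the first step: carefully justifying the estimate $\vert \psi_{pr}(q)\vert = O(\rho(q))$ near the singular points, since $m_{pr}$ is defined by pulling back the boundary metric along the flow $\pi$, and one must control the integral of $\Omega_{\vert\FF}$ along the flow line $\Lambda_q$ uniformly. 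Once this geometric estimate near $\mathrm{sing}(\FF)$ is in place, the rest is a routine application of ergodicity and the already-established limit theorem; in fact the whole argument is morally "changing the transverse metric by a bounded-at-scale-$\rho$ factor does not change the Lyapunov exponent," and one could alternatively phrase it by re-running Theorem \ref{p: expression of a} with $\alpha = \eta_{m_{pr}} = \eta_m + d_\FF \psi_{pr}$, noting that $d_\FF \psi_{pr}$ contributes a leafwise-exact form whose potential is $o(n)$ along generic trajectories.
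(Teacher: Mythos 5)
Your proposal is correct, but it follows a different route from the paper. The paper's own proof applies the general Limit Theorem (Theorem \ref{p: expression of a}, item (1)) directly to the $1$-form $\eta_{m_{pr}}=d_\FF\varphi_{pr}$ (Property $(i)$ holds because $\eta_{m_{pr}}$ is determined by $m$ restricted to the compact set $\partial B$, hence $g_s$-bounded), which yields the existence of some limit $\lambda_{pr}$; it then identifies $\lambda_{pr}=\lambda$ by the soft remark that $W^\mu_{g_P}$-a.e.\ trajectory leaves $B$ at arbitrarily large times, and $m_{pr}=m$ on $S\setminus B$, so along those return times the two logarithmic derivatives differ by a constant. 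You instead compare the two metrics globally, writing $\log\vert Dh\vert_{m_{pr}}-\log\vert Dh\vert_m=\psi_{pr}(\gamma(\Timet))-\psi_{pr}(\gamma(0))$ with $m_{pr}=e^{\psi_{pr}}m$, prove the quantitative estimate $\vert\psi_{pr}\vert=O(\rho)$ near the singular points (using that $D\pi$ acts on $N_\FF$ as the holonomy along the flow segment $\Lambda_q$, that $\eta_m=\Re\,\Omega_{\vert\FF}$ there, and that the $g_s$-length of $\Lambda_q$ is $O(\rho(q))$ because $\Re a,\Re b>0$), and conclude by sublinearity $\rho(\gamma(\Timet))=o(\Timet)$ via Nguyen's integrability and Birkhoff, exactly as in Lemma \ref{lemma: angularproj}(2). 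What your route buys: you never need to re-run Theorem \ref{p: expression of a} for $\eta_{m_{pr}}$ (so no discussion of its measurability/boundedness), and you get an explicit pointwise comparison of the two metrics, at the cost of the extra geometric computation in the linearizing coordinates; the paper's recurrence argument is softer and avoids any estimate on $m_{pr}/m$. The only points to write out carefully in your version are the identification $\psi_{pr}(q)=\log\vert Dh_{\Lambda_q}(q)\vert_m=\Re\int_{\Lambda_q}\Omega_{\vert\FF}$, the comparability of the horizontal-segment length with $d_e^A(u(q),\partial A)$ (which uses $\Re a,\Re b>0$ rather than Lemma \ref{l: inthedomain} alone), and the passage from integer to continuous times, where besides Proposition \ref{p: integrability} you also use $\vert d_\FF\rho\vert_{g_s}=O(1)$ from the proof of Lemma \ref{l: estimates on drho}; none of these is an obstacle.
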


\begin{proof}
We write locally $m_{pr} = e^ {\varphi_{pr}}   | dt |$ and define the leafwise \(1\)-form $\eta_{m_{pr}} := d_\FF \varphi_{pr} $.
As in the proof of Proposition \ref{d: lyapunov exponent}, we have  
\[ \forall \gamma \in \Gamma \ , \  \log \vert Dh_{\oo_{[0,\Timet]},\Sigma_0,\Sigma_\Timet}(\oo(0)) \vert_{m_{pr}} = \int_{\oo_{[0,\Timet]}} \eta_{m_{pr}} = H_\Timet ^{\eta_{m_{pr}}} (\oo) . \]
 The form $\eta_{m_{pr}}$ satisfies Property $(i)$, since it is determined by the restriction of $m$ on the compact set $\partial B$, hence is bounded with respect to the leafwise smooth metric $g_s$. By Theorem \ref{p: expression of a}, there exists $\lambda_{pr} \in \R \cup \{ - \infty  \}$ such that 
$$  \textrm{for } W_{g_P}^\mu-a.e. \  \gamma \in \OO  \ , \  \lim _{\Timet\rightarrow +\infty} {1 \over \Timet} \log \vert Dh_{\oo_{[0,\Timet]},\Sigma_0,\Sigma_\Timet}(\oo(0)) \vert_{m_{pr}} = \lambda_{pr}. $$
To conclude it remains to observe that $\lambda_{pr} = \lambda$. This follows from Proposition \ref{d: lyapunov exponent}, from the fact that \(W_{g_P}^\mu\)-a.e. \(\gamma \in \OO\) leaves $B$ for arbitrary large times and from the definition $m_{pr} = m$ on $S\setminus B$. \end{proof} 

\section{Covering, crossings, discretization}\label{s: ccd}

\subsection{A finite covering by foliation boxes} \label{sss: cov} In this section, we assume for simplicity that there is only one singular point, the general case being worked out similarly. Let $B$ be a linearization domain around it, biholomorphic to the bidisc, and let $\partial B$ denote its boundary. Let $(U^+_l , t_l)_{l}$ be a countable family of foliation boxes $U^+_l \simeq \D(2) \times \D(2)$ for $\FF$ which covers $S^*$, together with holomorphic first integral $t_l : U^+_l \to \D(2)$. 

The covering $(U^+_l)_{l}$ can be chosen regular, see \cite[Section 1.2.A]{Candel-Conlon}, which means that if $U^+_l \cap U^+_{l'} \neq \emptyset$, then there exists on this intersection a holomorphic change of coordinates of the form 
\begin{equation}\label{def:change}
 (z , t ) \in U^+_l \mapsto (z_{l ,l'} (z ,t) , t_{l ,l'}(t)) \in U^+_{l'} . 
\end{equation}

Let $U_l^- \subset U_l$  denote the foliation boxes $\subset U^+_l$  isomorphic to $\D({1 \over 2}) \times \D({1 \over 2})$ and $\D \times \D$.  One can assume that $(U_l^-)_l$ covers $S^*$. Let $I$ be a finite subset of $l$'s such that $(U^-_i)_{i \in I}$ covers the compact set $S \setminus B$. Let $$\Sigma_i := \{0 \} \times \D .$$ Let $J \subset I$ be the subset of indices $j \in I$ such that $U_j^-$ intersects $\partial B$. 
Let $$(U_k, t_k)_{k \in K} := (U_i , t_i)_{i \in I \setminus J} \ \cup \ (\pi^{-1}(U_j) , t_j \circ \pi)_{j \in J} ,$$
 where $\pi : S^* \to S \setminus B$ is the leafwise projection map defined in Section \ref{s: angdom}. 
We set the following: the transversal of the foliation box $\pi^{-1}(U_j)$ is $\Sigma_j$ (we can make this choice since $\Sigma_j \subset U_j \subset \pi^{-1}(U_j)$). 

All these yield a finite open covering of $S^*$ by foliation boxes, together with transversals $\Sigma_k$ and first integrals $t_k : U_k \to \D$, where $\D$ is the second coordinate of the foliation box $U_k$.  
 
\subsubsection{Definition of  $\rho_0$} \label{sss: cov2} 
Let us fix a smooth metric \(g_s\) on  \(T_\FF\). By the above construction, there exists $\rho_0 >0$ such that for every $i \in I$ and for every $q \in U^-_i$, the $g_s$-leafwise ball of center $q$ and radius $\rho_0$ is included in $U_i$. In particular, since $(U^-_i)_{i \in I}$ covers $S \setminus B$, we get the following property, used to prove Proposition \ref{p: estimq}: for every $q \in S \setminus B$, there exists $i \in I$ such that the $g_s$-leafwise ball of center $q$ and radius $\rho_0$ is included in $U_i$.

\subsubsection{Definition of $\delta_0$} \label{sss: box} If $U_k \cap U_{k'} \neq \emptyset$, we get from Equation (\ref{def:change}) with $(l, l') = (k,k')$ a map $$t_{k,k'} : t_k(U^+_k \cap U^+_{k'}) \subset \D(2) \to t_{k'}(U^+_k \cap U^+_{k'}) \subset \D(2) . $$
Since $U_l \Subset U_l^+$, there exists $\delta_{k,k'} > 0$ such that for every  $t \in t_k (U_k  \cap U_{k'})$, 
$$\D_t(\delta_{k,k'}) \subset t_k (U^+_k \cap U^+_{k'}) \textrm{ and } t_{k,k'} (\D_t(\delta_{k,k'}))  \subset t_{k'}(U^+_k \cap U^+_{k'}) . $$
 Let $\delta_0$ be the minimum of those numbers when $k,k'$ vary in $K$. In particular, if $U_k \cap U_{k'} \neq \emptyset$ then for every $t \in t_k (U_k  \cap U_{k'})$, 
 \begin{equation}\label{htkk}
   h_{t, k,k'} := t_{k,k'} :  \D_t(\delta_0) \subset t_k (U^+_k) \to t_{k'}(U^+_{k'})  \subset \D(2)
 \end{equation}
 is well defined. Cauchy's inequality then yields $\theta >0$ such that
 \begin{equation}\label{cauch}
  \forall t \in t_k (U_k  \cap U_{k'}) \ , \ \forall u \in \D_t (\delta_0 /2) \ , \ \vert h'_{t, k,k'} (u) \vert \leq e^\theta . 
  \end{equation} 

\begin{remark}\label{rk: idhol} If $(U_k,t_k), (U_{k'} , t_{k'})$ are given by $(\pi^{-1}(U_j), t_j \circ \pi) , (U_j , t_j)$ for some $j \in J \subset I$, where $I, J$ are defined in Section \ref{sss: cov}, then $$t_{k,k'} : t_j\circ \pi ( \pi^{-1}(U^+_j) \cap U^+_j) \subset \D(2) \to t_j (\pi^{-1}(U^+_j) \cap U^+_j) \subset \D(2)  $$
is the identity mapping of $t_j(U^+_j)$. We use it in the proof of Lemma \ref{l: cl1}. \end{remark}

\subsection{Integrability of the number of crossings}\label{sss: cross} 

\begin{definition}\label{defq}
Let $\tau : [0,1] \to S^*$ be a leafwise continuous path such that $\tau[0,1]$ is not contained in some $U_k$. Let $k_- , k_+$ such that $\tau(0) \in U_{k_-}$ and  $\tau(1) \in U_{k_+}$. We define $q(\tau)$ as the smallest integer $q \geq 1$ such that there exist $(\tilde k_i)_{0 \leq i \leq q} \in K^{q+1}$ and a  subdivision 
$$0 =: \Timet_0 < \Timet_1 < \ldots < \Timet_q < \Timet_{q+1} := 1 $$  such that
$$ \tau [\Timet_i,\Timet_{i+1}] \subset U_{\tilde k_i}  \ \ , \ \  \forall i = 0,\ldots, q  , $$
with $\tilde k_0 = k_- $ and $\tilde k_q = k_+$. 
\end{definition}

\begin{definition}\label{defQ}
For every $\gamma \in \Gamma$, let $Q(\gamma)$ be the minimum of $q(\tau)$, where $\tau : [0,1] \to S^*$ runs over the  leafwise continuous paths homotopic to $\gamma_{[0,1]}$ with fixed extremities. Observe that   $Q(\gamma)$ only depends on $\gamma_{[0,1]}$.
\end{definition}

 We recall that $D_1^{g_s}$ is defined in Equation \eqref{eq: cocycle smooth distance}. 

\begin{proposition}\label{p: estimq} There exists $\zeta \geq 1$ such that $Q  \leq \zeta D_1^{g_s}$ on $ \Gamma$.
In particular, according to Proposition \ref{p: integrability}, the function $Q$ is $W^\mu_{g_P}$-integrable. 
\end{proposition}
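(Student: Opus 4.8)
The plan is to bound the number of crossings $Q(\gamma)$ by the $g_s$-length of an optimal leafwise geodesic, using the finiteness of the covering and the definition of $\rho_0$ from Section~\ref{sss: cov2}. Recall that $Q(\gamma)$ is defined as a minimum over leafwise paths $\tau$ homotopic to $\gamma_{[0,1]}$, so it suffices to exhibit, for each $\gamma$, one competitor path $\tau$ and a subdivision realizing $q(\tau) \leq \zeta\, D_1^{g_s}(\gamma)$ for a universal constant $\zeta$. First I would take $\tau = \overline{\gamma}$ to be (close to) a $g_s$-length-minimizing leafwise path homotopic to $\gamma_{[0,1]}$, reparametrized proportionally to $g_s$-arc length, so that $l_{g_s}(\overline{\gamma}) \le D_1^{g_s}(\gamma) + \epsilon$; by a limiting argument the $\epsilon$ can be absorbed, or one argues directly with a minimizer when it exists.

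The core of the argument is a subdivision of $[0,1]$ into consecutive arcs each of $g_s$-length at most $\rho_0$ (where $\rho_0$ is the radius fixed in Section~\ref{sss: cov2}), together with the use of the projection $\pi$ near the singular point. More precisely, I would split the analysis according to whether $\overline\gamma(t)$ lies in $S \setminus B$ or in $B^*$. On the portion of $\overline\gamma$ lying in $S\setminus B$, the defining property of $\rho_0$ ensures that any arc of $g_s$-length $\le \rho_0$, centered around a point of $S\setminus B$, lies inside one of the boxes $U_i$, $i\in I$; hence roughly $l_{g_s}(\overline\gamma)/\rho_0 + O(1)$ boxes suffice to cover it. On the portion lying in $B^*$, I would use the foliation box structure $\pi^{-1}(U_j)$ for $j \in J$: by Lemma~\ref{lemma: angularproj}(1), the projected path $\pi\circ\overline\gamma$ has $g_s$-length at most $C\, l_{g_s}(\overline\gamma)$, and since the $U_j$ have the uniform $g_s$-size controlled by $\rho_0$, the same counting bounds the number of boxes $\pi^{-1}(U_j)$ needed to cover the part of $\overline\gamma$ in $B$ by a constant times $l_{g_s}(\overline\gamma) + O(1)$. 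Concatenating the two bounds and taking $\zeta$ large enough to absorb the additive $O(1)$ terms (using $Q\ge 1$ and $D_1^{g_s}\ge$ a positive constant, or simply enlarging $\zeta$) gives $Q(\gamma) \le \zeta D_1^{g_s}(\gamma)$. Finally, $W^\mu_{g_P}$-integrability of $Q$ follows immediately from Proposition~\ref{p: integrability} applied with $\Timet = 1$, since $D_1^{g_s} \le \sup_{\Timeu\in[0,1]} D_\Timeu^{g_s}$.

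The main obstacle I anticipate is the careful bookkeeping near the singular point: one must check that the chosen arcs around points of $B^*$ genuinely sit inside boxes of the form $\pi^{-1}(U_j)$ (rather than escaping to infinitely many of them as one approaches the singularity), and that the reparametrized minimizing path $\overline\gamma$ can be chosen so that its successive $g_s$-short pieces each lie in a single such box. The key point making this work is that the boxes $\pi^{-1}(U_j)$ are built on the product structure $(-\infty,0]\times \partial B_j^*$ of Section~\ref{s:PM}, so their $g_s$-geometry is uniform despite the singularity, and that $\pi$ is $g_s$-Lipschitz along leaves by Lemma~\ref{lemma: angularproj}(1). Apart from this, the argument is a routine covering/length-counting estimate, and the constant $\zeta$ depends only on the finite covering, on $\rho_0$, and on the Lipschitz constant $C$ of $\pi$.
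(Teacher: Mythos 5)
Your proposal is correct and takes essentially the same route as the paper: pick a $g_s$-minimizing leafwise path homotopic to $\gamma_{[0,1]}$, count boxes via $\rho_0$-balls on $S\setminus B$, handle the part near the singularity through the projection $\pi$ and the Lipschitz bound of Lemma \ref{lemma: angularproj}, and deduce integrability from Proposition \ref{p: integrability} with $\Timet=1$. The only cosmetic difference is that the paper replaces the portions of the minimizer inside $B$ by their $\pi$-projections (plus connector segments at the endpoints) to obtain a homotopic path $\bar\tau$ and bounds $q(\bar\tau)$, whereas you cover those portions directly by the boxes $\pi^{-1}(U_j)$; the additive bookkeeping you flag is likewise absorbed (silently) in the paper's component-wise count.
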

Before proving Proposition \ref{p: estimq}, let us observe that Birkhoff ergodic Theorem implies the following corollary, where 
\begin{equation}\label{mplus} M_- := {1\over 2} \int Q \,  dW^\mu_{g_P} \ \ , \ \ M_+ := 2 \int Q \, dW^\mu_{g_P} .
\end{equation}

\begin{corollary}\label{c: estimq}
For $W^\mu_{g_P}$-almost every $\gamma \in \Gamma$, for every $n \geq n_0(\gamma)$,
\begin{equation*}
M_- \leq {1 \over n} \sum_{j=0}^{n-1} Q(\sigma^j_1 \gamma) \leq M_+ \ \ \textrm{ and } \ Q (\sigma^n_1 \gamma) \leq n \epsilon .   
\end{equation*}
\end{corollary}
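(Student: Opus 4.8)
The statement to prove is Corollary \ref{c: estimq}, which follows from Proposition \ref{p: estimq} together with Birkhoff's ergodic theorem applied to the cocycle generated by $Q$.

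\medskip

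The plan is as follows. First I would note that Proposition \ref{p: estimq} gives $Q \leq \zeta D_1^{g_s}$ on $\Gamma$, and Proposition \ref{p: integrability} ensures that $\sup_{\Timeu \in [0,1]} D_\Timeu^{g_s}$ is $W^\mu_{g_P}$-integrable; in particular $D_1^{g_s}$ is $W^\mu_{g_P}$-integrable, hence so is $Q$. Since $(\Gamma,\sigma,W^\mu_{g_P})$ is the dynamical system described in Section \ref{ss: harmonicm2}, and it is ergodic (Garnett's random ergodic theorem, as recalled there, using that $T$ is extremal), the time-one map $\sigma_1$ preserves $W^\mu_{g_P}$ and is ergodic. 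Birkhoff's ergodic theorem applied to the integrable function $Q$ and the transformation $\sigma_1$ then yields, for $W^\mu_{g_P}$-a.e. $\gamma$,
$$ \frac{1}{n} \sum_{j=0}^{n-1} Q(\sigma_1^j \gamma) \;\longrightarrow\; \int_\Gamma Q \, dW^\mu_{g_P} \quad \text{as } n \to \infty. $$
Denoting this limit by $\bar Q := \int Q \, dW^\mu_{g_P}$, and recalling the definitions $M_- = \tfrac12 \bar Q$ and $M_+ = 2\bar Q$ from Equation \eqref{mplus}, for a.e. $\gamma$ there is $n_1(\gamma)$ such that for all $n \geq n_1(\gamma)$ we have $M_- \leq \tfrac1n \sum_{j=0}^{n-1} Q(\sigma_1^j \gamma) \leq M_+$ (using $M_- < \bar Q < M_+$, assuming $\bar Q > 0$; if $\bar Q = 0$ then $Q \equiv 0$ a.e. and the claim is trivial).

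\medskip

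For the second inequality $Q(\sigma_1^n \gamma) \leq n\epsilon$, the standard trick is to observe that $\tfrac1n Q(\sigma_1^n \gamma) = \tfrac{n+1}{n}\cdot\tfrac{1}{n+1}\sum_{j=0}^{n}Q(\sigma_1^j\gamma) - \tfrac1n\sum_{j=0}^{n-1}Q(\sigma_1^j\gamma)$, so that $Q(\sigma_1^n\gamma)/n$ is a difference of two sequences both converging to $\bar Q$; hence $Q(\sigma_1^n\gamma)/n \to 0$ for $W^\mu_{g_P}$-a.e. $\gamma$. Consequently, for any fixed $\epsilon > 0$, for a.e. $\gamma$ there is $n_2(\gamma)$ with $Q(\sigma_1^n\gamma) \leq n\epsilon$ for all $n \geq n_2(\gamma)$. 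Taking $n_0(\gamma) := \max(n_1(\gamma), n_2(\gamma))$ gives both conclusions simultaneously on a set of full $W^\mu_{g_P}$-measure.

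\medskip

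There is essentially no obstacle here: the corollary is a direct application of Birkhoff's theorem, the only inputs being integrability of $Q$ (from Propositions \ref{p: estimq} and \ref{p: integrability}) and ergodicity of $(\Gamma,\sigma_1,W^\mu_{g_P})$ (from Section \ref{ss: harmonicm2}). The mildly delicate point worth stating carefully is the passage from convergence of Cesàro averages to the sublinear bound $Q(\sigma_1^n\gamma) = o(n)$, which is the telescoping argument above and is where the factor-$2$ slack in the definitions of $M_\pm$ is used to absorb the finitely many exceptional small indices $n < n_0(\gamma)$.
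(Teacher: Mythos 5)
Your proof is correct and follows the same route as the paper, which simply observes that the corollary follows from Birkhoff's ergodic theorem applied to the $W^\mu_{g_P}$-integrable function $Q$ (integrability coming from Propositions \ref{p: estimq} and \ref{p: integrability}) together with the ergodicity of the system from Section \ref{ss: harmonicm2}; your telescoping argument for $Q(\sigma_1^n\gamma)=o(n)$ is exactly the standard step the paper leaves implicit.
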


\begin{proof} {\it (of Proposition \ref{p: estimq}).}
 Let $\gamma \in \Gamma$ and let $\tau : [0,1] \to S^*$ be a leafwise smooth path homotopic to $\gamma_{[0,1]}$ such that $l_{g_s}(\tau) = D^{g_s}_1 (\gamma)$. Let us introduce a modification $\bar \tau$ of $\tau$, which will be a leafwise and piecewise smooth path homotopic to $\gamma_{[0,1]}$ with fixed extremities. For simplicity we assume that there is only one singular point, let $\pi : S^* \to S \setminus B$ be the projection map defined in Section \ref{s: angdom}. If $\tau ^{-1}(B)$ is empty, then we set $\bar \tau :=  \tau$. If not, 
\begin{equation}\label{eq: ja} \tau^{-1}(B) = [0,\Times[  \ \cup \ \left(\cup_a J_a \right)   \ \cup \  ]\Times',1] ,
\end{equation}
where $[0,\Times[$, $]\Times',1]$ may be empty, and the $J_a$'s are pairwise disjoint open intervals of $[\Times,\Times']$. If $[0,\Times[$, $]\Times',1]$ were empty, then we define 
$$\bar \tau  := \pi \circ  \tau  \textrm{ on}   \cup_a  J_a \ \ , \ \ \bar \tau := \tau \textrm{ elsewhere.}$$
If $[0,\Times[$ were not empty (we adopt a similar definition on $]\Times',1]$ if necessary), we define $\bar \tau$ on $[0,\Times[$ by  
$$ \forall \Timet \in [0,\Times/2] \ , \   \bar \tau (\Timet) := (1- {2\Timet \over \Times}) \, \tau(0) + {2\Timet \over \Times}   \, \pi(\tau (0)) , $$
$$ \forall \Timet \in [\Times/2,\Times[ \ , \ \bar \tau (\Timet) := \pi ( \tau(2\Timet-\Times)) . $$
The first part links $\tau(0)$ and $\pi(\tau(0))$ by a line in the angular domain $A$, the second part is drawn on $\partial B$.
 
Now let us find $\zeta$ independent from $\gamma$ such that $q(\bar \tau) \leq \zeta D_1^{g_s}(\gamma)$, this will prove Proposition \ref{p: estimq} since $Q(\gamma) \leq q(\bar \tau)$. Let us begin by restricting to an interval $E$ of $[0,1] \setminus \tau^{-1}(B)$. Since $\bar \tau_{\vert E}  : E \to S \setminus B$ and $\bar \tau_{\vert E} =  \tau_{\vert E}$, we get $$ \rho_0 \, q(\bar \tau_{\vert E})  \leq  l_{g_s}(\bar \tau_{\vert E}) =  l_{g_s}(\tau_{\vert E}) , $$
 where $\rho_0$ is defined in Section \ref{sss: cov2}. Now let $J_a \subset \tau^{-1}(B)$ be one of the intervals occuring in Equation (\ref{eq: ja}). Since $\bar \tau_{\vert J_a} : J_a \to \partial B \subset S \setminus B$, we get as before $\rho_0 \, q(\bar \tau_{\vert J_a})  \leq l_{g_s}(\bar \tau_{\vert J_a})$.
 Lemma \ref{lemma: angularproj} implies that $l_{g_s}(\bar \tau_{\vert J_a}) \leq C  l_{g_s}(\tau_{\vert J_a})$ for some constant $C \geq 1$, hence
 $$\rho_0 \, q(\bar \tau_{\vert J_a})  \leq C \,  l_{g_s}(\tau_{\vert J_a}).$$
Finally, if $[0,\Times[$ is not empty, a similar estimate occurs for $q(\bar \tau_{[0,\Times[})$ since $\bar \tau ([0,\Times/2])$ is contained in some $\pi^{-1}U_j$ (containing $\tau(0)$) and $\bar \tau ([\Times/2,\Times]) \subset \partial B$. To conclude, with $\zeta := C/\rho_0$, we get $q(\bar \tau)  \leq \zeta l_{g_s}(\tau) = \zeta D^{g_s}_1 (\gamma)$. \end{proof}

\subsection{Discretization of the dynamical system \( ( \OO, \sigma, W_{g_{P}}^\mu) \)}\label{ss: diext} Let $(U_k)_{k \in K} \Subset (U^+_k)_{k \in K}$ be the two finite coverings of $S^*$  introduced in Section \ref{sss: cov}. Given $\gamma \in \Gamma$, we shall need fixed foliation boxes around the $\gamma(n)$'s. For that purpose we introduce a fibered dynamical system.

Let $\psi_k : U^+_k \to \R^+$ be smooth functions such that $\sum_{k \in K} \psi_k = 1$ on $S^*$. 
Let us define the family of probability measures $(\kappa_q)_{q \in S^*}$ on $K$:
\begin{equation}\label{eq:psipsi}
\forall q \in S^*  \ \ , \ \ \kappa_q := \sum_{k \in K} \psi_k( \pi (q)  ) \, \delta_k 
\end{equation}
where $\delta_k$ is the Dirac mass at $k$ and $\pi : S^* \to S \setminus B$ is the projection defined in Section \ref{sss: cov}. We define the probability measure $W_K$ on $\Gamma \times K^\N$ by
$$ W_K( \Gamma' , L ) := \int_{\gamma \in \Gamma'} (\otimes_{n \in \N} \, \kappa_{\gamma(n)})(L) \, dW^\mu_{g_P} (\gamma)  $$
for every Borel sets $\Gamma' \subset \Gamma$ and $L \subset K^\N$. Let us  introduce  
$$ F  : \Gamma  \times K^\N  \to  ( S^* \times K)^\N \ , \  F(\gamma , (k_n)_{n \in \N}) = ( \gamma(n), k_n)_{n \in\N}  \ , \ W^1_K  := F_* (W_K) .$$ 
Observe that if $\pi_0$ denotes the projection $(p_n , k_n)_n \mapsto p_0$ then $(\pi_0)_* W^1_K= \mu$. 
Since the measure $W^\mu_{g_P}$ on $\Gamma$ is $\sigma_1$-invariant,  the measure $W^1_K$ on $( S^* \times K)^\N$ is invariant by the left shift $\sigma_K$, and we get a dynamical system $$( ( S^* \times K)^\N , \sigma_K , W^1_K) .$$
Further, we shall need leafwise continuous path parametrized by $\R$: this appears in Section \ref{proofthmB}, where the target $\gamma(n)$ replaces the initial point $\gamma(0)$. We simply use the natural extension $(\widehat \Gamma , \widehat \sigma,  \widehat W)$ of $(\Gamma , \sigma_1 , W^\mu_{g_P})$, see \cite[Section 10.4]{CFS}. The elements of $\widehat \Gamma$ are sequences $\widehat \gamma = (\ldots , \gamma_{-1} , \gamma_0 , \gamma_1 , \ldots )$ of leafwise continuous paths  satisfying $\sigma_1 (\gamma_n) = \gamma_{n+1}$ for every $n \in \Z$. The map $\widehat \sigma$ is the left shift and $\widehat W$ is the unique $\widehat \sigma$-invariant measure on $\widehat \Gamma$ satisfying $$\widehat W (s^{-1} \Gamma') =  W^\mu_{g_P}(\Gamma')$$ for every Borel set $\Gamma' \subset \Gamma$, where $ s : \widehat \Gamma \to \Gamma$ is the projection map $\widehat \gamma \mapsto \gamma_0$.
 
As before, we define the probability measure $\widehat W_K$ on $\widehat \Gamma \times K^\N$ by
$$ \widehat W_K( \widehat \Gamma' , L ) := \int_{\widehat \gamma \in \widehat \Gamma'} (\otimes_{n \in \N} \, \kappa_{\gamma_0(n)})(L) \, d\widehat W (\widehat \gamma)  $$
for every Borel sets $\widehat \Gamma' \subset \widehat \Gamma$ and $L \subset K^\N$. Let also 
$$ \widehat F  : \widehat \Gamma  \times K^\N  \to  ( S^* \times K)^\Z \ , \  F(\widehat \gamma , (k_n)_{n \in \Z}) = ( \gamma_n(0), k_n)_{n \in\Z} .  $$
The measure $\widehat W^1_K  := \widehat F_* (\widehat W_K)$ on $( S^* \times K)^\Z$ is invariant by the left shift $\widehat \sigma_K$. Here we get a dynamical system $( ( S^* \times K)^\Z , \widehat \sigma_K , \widehat W^1_K)$. 

\section{Domain of definition and contraction of holonomy maps}

We study holonomy maps defined in Section \ref{ss: holonomy}.
We shortly denote
 $$h_{\gamma_{[0,n]} , k_0, k_n} := h_{\gamma_{[0,n]} , t_{k_0}, t_{k_n}}  , $$
  where $\gamma(0) \in U_{k_0}$, $\gamma(n) \in U_{k_n}$, and the first integral $t_k : U_k \to \D$ involved in Section \ref{ss: holonomy} is equal to $t_i$ or to $t_j \circ \pi$. By Proposition \ref{p: lyapunov product}, 
  $$  \forall \gamma \in \Gamma \ , \  W_{g_P}^\mu-a.e. \ , \  \lim _{n \rightarrow +\infty} {1 \over n} \log \vert Dh_{\gamma_{[0,n]} , k_0, k_n}(\gamma(0)) \vert_{m_{pr}}   = \lambda. $$
We now want to prove that $h_{\gamma_{[0,n]} , k_0, k_n}$ exists for every $n$ on a domain which does not depend on $n$. It will be crucial that the number $Q$ of crossed foliations boxes is sublinear, see Equation \eqref{choicen0} below.

\begin{remark} The product metric $m_{pr}$ on transversals is defined using the compact subset $S \setminus B$, see Section \ref{s:PM}. We shall replace it with the classical modulus, up to multiplicative constants that we omit. 
\end{remark}

\begin{proposition}\label{prop:holodist}
For $W^1_K$-almost every $(p_n , k_n)_{n \in \N} \in (S^* \times K)^\N$, there exist 
$\gamma \in \Gamma$, $\delta_\gamma > 0$ and $C_\gamma \geq 1$ such that for every $n \geq 0$, $\gamma(n)=p_n$ and
\begin{enumerate}
\item $h_{\gamma_{[0,n]},k_0,k_n}$ is well defined on $\D_{t_{k_0}(p_0)}(\delta_\gamma)$,
 \item $\vert h'_{\gamma_{[0,n]},k_0,k_n}  \vert \leq C_\gamma e^{n(\lambda + 3 \epsilon)}$ on  $\D_{t_{k_0}(p_0)}(\delta_\gamma)$.
\end{enumerate} 
\end{proposition}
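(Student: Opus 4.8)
The plan is to build the disc $\D_{t_{k_0}(p_0)}(\delta_\gamma)$ and the constant $C_\gamma$ by an inductive concatenation argument over the integer times, controlling the distortion at each step by a combination of the pointwise Lyapunov convergence (Proposition \ref{p: lyapunov product}), the sublinearity of the number of crossings (Corollary \ref{c: estimq}), and the Koebe distortion theorem. First I would fix, for $\widehat W_K$-generic data, a leafwise continuous path $\gamma$ with $\gamma(n) = p_n$ and choose $\epsilon > 0$ small compared to $\vert \lambda \vert$ (recall $\lambda < 0$). By Proposition \ref{p: lyapunov product} and Corollary \ref{c: estimq}, there is an integer $n_0 = n_0(\gamma)$ such that for all $n \geq n_0$ one has simultaneously $\vert Dh_{\gamma_{[0,n]},k_0,k_n}(\gamma(0))\vert \leq e^{n(\lambda + \epsilon)}$ and $Q(\sigma_1^n \gamma) \leq n\epsilon$; moreover, enlarging $n_0$ if needed, the estimate of Lemma \ref{lemma: angularproj}(2) holds, so each restricted segment $\gamma_{[n,n+1]}$ (after homotoping and, inside $B$, projecting by $\pi$) crosses at most $Q(\sigma_1^n\gamma) \leq n\epsilon$ of the fixed foliation boxes $(U_k)_{k \in K}$.

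\textbf{The inductive step.} For each $n \geq n_0$, the segment $\gamma_{[n,n+1]}$ decomposes, up to homotopy with fixed extremities, into at most $n\epsilon$ subsegments, each contained in a single box $U_{k}$ of the finite covering; the corresponding elementary holonomies $h_{t,k,k'}$ are well defined on discs of a fixed radius $\delta_0/2$ and have derivative bounded by $e^{\theta}$ there, by Equations \eqref{htkk} and \eqref{cauch}. Hence $h_{\gamma_{[n,n+1]},k_n,k_{n+1}}$ is defined on a disc of radius $\delta_0 e^{-\theta n\epsilon}$ around $t_{k_n}(\gamma(n))$ with derivative bounded by $e^{\theta n\epsilon}$ there. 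The key point is then to pull this domain back through $h_{\gamma_{[0,n]},k_0,k_n}$. Since $\vert Dh_{\gamma_{[0,n]},k_0,k_n}(\gamma(0))\vert \leq e^{n(\lambda+\epsilon)}$, the Koebe one-quarter theorem (applied to the univalent map $h_{\gamma_{[0,n]},k_0,k_n}$, once we know it is defined and univalent on $\D_{t_{k_0}(p_0)}(\delta_\gamma)$) shows that the image $h_{\gamma_{[0,n]},k_0,k_n}(\D_{t_{k_0}(p_0)}(\delta_\gamma))$ contains a disc of radius comparable to $\delta_\gamma e^{n(\lambda+\epsilon)}$ around $t_{k_n}(\gamma(n))$ — wait, this is the wrong direction; rather Koebe distortion gives that on a slightly smaller disc the map $h_{\gamma_{[0,n]}}$ has image contained in a disc of radius $\lesssim \delta_\gamma e^{n(\lambda+\epsilon)}$. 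What we actually need is that $h_{\gamma_{[0,n]}}(\D_{t_{k_0}(p_0)}(\delta_\gamma))$ is contained in the domain $\D_{t_{k_n}(\gamma(n))}(\delta_0 e^{-\theta n \epsilon})$ on which $h_{\gamma_{[n,n+1]}}$ is defined. Since $\lambda + \epsilon < 0$, we have $\delta_\gamma e^{n(\lambda + 2\epsilon)} \leq \delta_0 e^{-\theta n\epsilon}$ as soon as $\epsilon$ is chosen small enough that $\lambda + 2\epsilon < -\theta\epsilon$, i.e. $\lambda < -(2+\theta)\epsilon$; this is the compatibility condition that closes the induction, provided we also absorb the accumulated distortion into the choice of $\delta_\gamma$ and $C_\gamma$.

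\textbf{Closing the induction and the main obstacle.} Concretely, I would proceed as follows. Choose $\epsilon$ with $\lambda + (2+\theta+1)\epsilon < 0$. Define $\delta_\gamma$ small enough so that, using the Koebe distortion theorem and the bound $\vert Dh_{\gamma_{[0,n]},k_0,k_n}(\gamma(0))\vert \leq C_\gamma' e^{n(\lambda+\epsilon)}$ valid for $n \geq n_0$ (and handling the finitely many $n < n_0$ by shrinking $\delta_\gamma$ further, since each of those finitely many holonomies is a fixed germ of biholomorphism), the image $h_{\gamma_{[0,n]},k_0,k_n}(\D_{t_{k_0}(p_0)}(\delta_\gamma))$ is univalent and contained in $\D_{t_{k_n}(\gamma(n))}(\delta_0 e^{-\theta n\epsilon})$. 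Then $h_{\gamma_{[0,n+1]},k_0,k_{n+1}} = h_{\gamma_{[n,n+1]},k_n,k_{n+1}} \circ h_{\gamma_{[0,n]},k_0,k_n}$ is well defined on $\D_{t_{k_0}(p_0)}(\delta_\gamma)$, proving (1) by induction; and the chain rule plus the bounds $\vert h'_{\gamma_{[n,n+1]},k_n,k_{n+1}}\vert \leq e^{\theta n\epsilon}$ on its domain and $\vert h'_{\gamma_{[0,n]},k_0,k_n}\vert \leq C_\gamma e^{n(\lambda+2\epsilon)}$ on $\D_{t_{k_0}(p_0)}(\delta_\gamma)$ (the extra $\epsilon$ coming from Koebe distortion relative to the pointwise derivative estimate) yield $\vert h'_{\gamma_{[0,n+1]},k_0,k_{n+1}}\vert \leq C_\gamma e^{\theta n\epsilon} e^{n(\lambda+2\epsilon)} \leq C_\gamma e^{(n+1)(\lambda+3\epsilon)}$ for $n$ large, giving (2). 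The main obstacle I anticipate is the bookkeeping that guarantees the distortion constants stay uniformly bounded along the whole orbit: at each step Koebe gives a distortion factor depending on the ratio of the radius of the domain to that of the subdisc actually used, and one must verify that because $h_{\gamma_{[0,n]}}$ is univalent on the \emph{fixed} disc $\D_{t_{k_0}(p_0)}(\delta_\gamma)$ — not a shrinking one — the distortion on, say, $\D_{t_{k_0}(p_0)}(\delta_\gamma/2)$ is bounded by an absolute Koebe constant independent of $n$, so that the only genuinely growing factor is $e^{\theta n\epsilon}$, which is beaten by $e^{n\vert\lambda\vert}$ thanks to the choice of $\epsilon$. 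One also has to check that the singular locus causes no trouble: this is exactly where the product metric $m_{pr}$ and the projection $\pi$ enter, via Lemma \ref{lemma: angularproj}, ensuring that segments entering $B$ still only cross finitely many of the fixed boxes $U_k$ after projection.
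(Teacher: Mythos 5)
Your overall architecture matches the paper's: induct over integer times, use Corollary \ref{c: estimq} to make the number of crossings of $\gamma_{[n,n+1]}$ at most $n\epsilon$, use the Cauchy bound \eqref{cauch} per box, the pointwise Lyapunov estimate of Proposition \ref{p: lyapunov product} at the base point, Koebe distortion, and the condition $\lambda+(2+\theta)\epsilon<0$, with $\pi$ and $m_{pr}$ neutralizing the singular set. However, there is a genuine gap exactly at the point you flag as ``the main obstacle'': to propagate well-definedness you need the image of the \emph{full} fixed disc $\D_{t_{k_0}(p_0)}(\delta_\gamma)$ under $h_{\gamma_{[0,n]}}$ to be exponentially small, and you propose to deduce this from the base-point bound $\vert h'_{\gamma_{[0,n]}}(t_{k_0}(p_0))\vert\leq e^{n(\lambda+\epsilon)}$ via Koebe applied to $h_{\gamma_{[0,n]}}$ on its own disc of univalence $\D(\delta_\gamma)$. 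Koebe only bounds the distortion on strictly smaller subdiscs (the bound blows up at the boundary of the disc of univalence), so your fix ``absolute constant on $\D(\delta_\gamma/2)$'' does not close the induction: at the next step you only control the image of the half disc, hence only get well-definedness there, and the radius shrinks by a definite factor at every step, contradicting the fixed $\delta_\gamma$ the statement requires. To get the distortion on a larger subdisc you would need univalence on a strictly larger disc, which is not part of your induction hypothesis — that is the circularity. (Nor can you fall back on multiplying the per-unit-time absolute bounds $e^{\theta q_n}$: by Corollary \ref{c: estimq} these accumulate like $e^{\theta M_+ n}$, which is not beaten by $e^{n\lambda}$ in general.)

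The paper closes this by never applying Koebe to the long composition: it runs a double induction, propagating simultaneously ($\varphi_n$) injectivity of $h_n$ on the fixed disc, an absolute bound ($\tau_j$) obtained from the Cauchy estimate $e^{\theta}$ per crossing (used only to keep the successive images inside the fixed-size discs $\D_{t_j}(\delta_0/2)$ and $\D_{t_j}(\delta_0 r_\epsilon)$, thanks to $\lambda+(3+\theta)\epsilon<0$), and a distortion bound relative to the base point ($\sigma_j$) obtained by applying Koebe only to each \emph{elementary} holonomy $h_{t_j,\tilde k_j,\tilde k_{j+1}}$ on the fixed disc $\D_{t_j}(\delta_0)$ with image in $\D_{t_j}(\delta_0 r_\epsilon)$, which costs a uniform factor $e^{\epsilon}$ per crossing, hence a total of $e^{O(n\epsilon)}$ by the sublinear crossing count; the final derivative bound on the whole disc is then (distortion)$\times$(base-point Lyapunov estimate at time $n+1$), so the $e^{\theta}$ factors never enter the final exponent. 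Your argument could be repaired along these lines (or with a more delicate scheme of slowly nested radii with positive limit, absorbing polynomial Koebe constants into $e^{n\epsilon}$), but as written the step producing the derivative bound on the full fixed disc is not justified, and the final bookkeeping $e^{\theta n\epsilon}e^{n(\lambda+2\epsilon)}\leq e^{(n+1)(\lambda+3\epsilon)}$ also fails for $\theta>1$ unless $\epsilon$ is renamed.
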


The remainder of this Section consists in proving Proposition \ref{prop:holodist}.
Let $\gamma \in \Gamma$ be such that $\gamma(n)=p_n$. 
Corollary \ref{c: estimq} provides $n_0(\gamma)$ such that 
\begin{equation}\label{choicen0}
\forall n \geq n_0 \ , \ Q (\sigma_1^n(\gamma)) \leq n \epsilon.
\end{equation}
 By Proposition \ref{p: lyapunov product}, we can increase $n_0$ so that
\begin{equation}\label{choicen1}
\forall n \geq n_0 \ , \  \vert h'_{\gamma_{[0,n]},k_0,k_n} (t_{k_0}(p_0)) \vert \leq e^{n(\lambda + \epsilon)} .
\end{equation}
Moreover, since $\lambda < 0$, one can also assume that
\begin{equation}\label{choicen2}
\forall n \geq n_0 \ , \  e^{n(\lambda + (3+ \theta)\epsilon)} \leq \min \{ \delta_0 /2 , \delta_0 r_\epsilon \} ,
\end{equation}
 where $\delta_0$ and $\theta$ are defined in Section \ref{sss: box}, and $r_\epsilon$ is given as follows:
 
{\it Koebe Distortion Theorem: there exists $r_\epsilon <1$ such that for every  holomorphic injective function $f : \D(R) \to \C$,} \begin{equation}\label{koebedist} \forall t \in \D(R r_\epsilon) \ , \ e^{-\epsilon} \vert f'(0) \vert \leq \vert f'(t) \vert \leq e^{\epsilon} \vert f'(0) \vert .
\end{equation}
 Let us define $$h_n := h_{\gamma_{[0,n]},k_0,k_n}  $$
 and let $\tilde \delta <1$ be  such that $h_{n_0}$ is well defined (and thus injective as an holonomy map) on $\D_{t_{k_0}(p_0)}(\tilde \delta)$ and takes its values in $\D_{t_{k_{n_0}}(p_{n_0})}(\delta_0 r_\epsilon)$. Let  $\delta_\gamma := \tilde \delta r_\epsilon$, simply denoted $\delta$ for the remainder of the proof, and $D_\delta := \D_{t_{k_0}(p_0)}(\delta)$. 
 Equations (\ref{koebedist}) and (\ref{choicen1}) successively yield
  \begin{equation*}\label{EQ1}
   \vert h'_{n_0} \vert \leq e^{\epsilon} \vert h'_{n_0} (t_{k_0}(p_0)) \vert  \leq e^{n_0(\lambda + 2\epsilon)} \textrm{ on } D_\delta. 
    \end{equation*}
 In particular, taking into account $\delta <1$ and Equation (\ref{choicen2}), we obtain 
 \begin{equation*}\label{EQ2}
 h_{n_0}(D_\delta) \subset \D_{ t_{k_{n_0}}(p_{n_0}) }(e^{n_0(\lambda+2\epsilon)}) \subset \D_{t_{k_{n_0}}(p_{n_0})}(\delta_0 r_\epsilon) .
 \end{equation*}
To establish Proposition \ref{prop:holodist}, it suffices to prove $(\varphi_n)$ and $(\alpha_n)$ for $n \geq n_0$:
$$ (\varphi_n) : h_n \textrm { is well defined and injective on } D_\delta ,$$
$$ (\alpha_n) : \vert h'_n \vert \leq  \vert h'_{n} (t_{k_0}(p_0)) \vert e^{2n\epsilon} \leq e^{n(\lambda + 3\epsilon)} \textrm{ on } D_\delta . $$
Both are obviously satisfied for $n=n_0$. Assume that $(\varphi_n)$, $(\alpha_n)$ are true and let us prove $(\varphi_{n+1})$, $(\alpha_{n+1})$. Recall that $\gamma^n$ is the restriction to $[0,1]$ of the $n$-shifted path $\sigma^n_1 \gamma$, it satisfies $\gamma^n(0) \in U_{k_n}$, $\gamma^n(1) \in U_{k_{n+1}}$. 

\begin{lemma}\label{needto}
Let $q:= Q(\gamma^n)$ and let $\tau$ be homotopic to $\gamma^n$ with fixed extremities such that there exist
 $(\tilde k_i)_{0 \leq i \leq q} \in K^{q+1}$ and $0 =: \Timet_0 < \Timet_1 < \ldots < \Timet_q < \Timet_{q+1} := 1$ such that (see Definitions \ref{defq} and \ref{defQ})
$$ \forall i = 0,\ldots, q  \ , \ \tau [\Timet_i,\Timet_{i+1}] \subset U_{\tilde k_i}  \ , \textrm{ with } \tilde k_0 = k_n \ , \ \tilde k_q = k_{n+1} .$$  Observe that by setting 
$$t_0 := t_{\tilde k_0} (\tau(0)) \ , \ t_i := t_{\tilde k_i} ( \tau (\Timet_i) ) \in t_{\tilde k_i} (U_{\tilde k_{i-1}} \cap U_{\tilde k_i})  $$
we get the following decomposition, where $h_{t,k,k'}$ is defined in Equation \eqref{htkk},
$$ h_{\gamma^n, \Sigma_{k_n} , \Sigma_{k_{n+1}} } = h_{t_{q-1}, \tilde k_{q-1}, \tilde k_q} \circ \ldots \circ h_{t_0, \tilde k_0, \tilde k_1} . $$
Then, for every $j = 0 , \ldots , q$, we have the following set $Z_j$ of three assertions, where $g_0 := Id_{\vert T_{k_n}}$ and $g_j := h_{t_{j-1}, \tilde k_{j-1}, \tilde k_j} : \D_{t_{j-1}}(\delta_0) \to \D(2)$: 
 $$ (\phi_j) : g_j \circ \ldots \circ g_0 \circ h_n \textrm { is well defined and injective on } D_\delta , $$ 
 $$ (\tau_j) : \vert (g_j \circ \ldots \circ g_0 \circ h_n)' \vert \leq e^{j\theta} e^{n(\lambda + 3 \epsilon)} \leq e^{n(\lambda + (3+\theta)\epsilon)} \textrm{ on } D_\delta ,  $$
 $$ (\sigma_j) : \vert (g_j \circ \ldots \circ g_0 \circ h_n)' \vert \leq  \vert (g_j \circ \ldots \circ g_0 \circ h_n)'(t_{k_0}(p_0)) \vert e^{(n+j)\epsilon} \textrm{ on } D_\delta . $$
  \end{lemma}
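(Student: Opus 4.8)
The plan is to prove Lemma \ref{needto} by induction on $j$, with the base case $j=0$ being exactly the inductive hypothesis $(\varphi_n)$, $(\alpha_n)$ (note that $g_0 = \mathrm{Id}$ so $g_0 \circ h_n = h_n$; here $(\phi_0)$ is $(\varphi_n)$, while $(\tau_0)$ and $(\sigma_0)$ follow from $(\alpha_n)$ together with $\theta > 0$ and $j=0$). The engine of the induction is the fact that $g_{j+1} = h_{t_j, \tilde k_j, \tilde k_{j+1}}$ is by \eqref{htkk} a well-defined holonomy map on $\D_{t_j}(\delta_0)$ with derivative bounded by $e^\theta$ on the smaller disc $\D_{t_j}(\delta_0/2)$ by \eqref{cauch}, and that holonomy maps are injective.

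First I would pass from the inductive step $Z_j$ to $Z_{j+1}$ as follows. Set $F_j := g_j \circ \cdots \circ g_0 \circ h_n$, well-defined and injective on $D_\delta$ by $(\phi_j)$. To compose with $g_{j+1}$ we must know that $F_j(D_\delta)$ lands inside the domain $\D_{t_j}(\delta_0)$ of $g_{j+1}$. Since $F_j(t_{k_0}(p_0))$ is a point of $t_{\tilde k_j}(U_{\tilde k_j} \cap U_{\tilde k_{j+1}})$, hence at distance $\le \delta_0/2$ from $t_j$ by the choice of $\delta_0$ in Section \ref{sss: box} (more precisely, the relevant plaque parameter lies in the range where $h_{t_j,\tilde k_j,\tilde k_{j+1}}$ is defined on a $\delta_0$-disc), and since the diameter of $F_j(D_\delta)$ is controlled: by $(\tau_j)$, $\vert F_j' \vert \le e^{n(\lambda + (3+\theta)\epsilon)}$ on $D_\delta$, so $\mathrm{diam}\, F_j(D_\delta) \le 2\delta\, e^{n(\lambda+(3+\theta)\epsilon)} \le \delta_0/2$ using $\delta < 1$ and \eqref{choicen2}. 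Therefore $F_j(D_\delta) \subset \D_{t_j}(\delta_0)$, in fact $\subset \D_{t_j}(\delta_0/2)$ after recentering at $F_j(t_{k_0}(p_0))$, so $g_{j+1} \circ F_j$ is well-defined and injective on $D_\delta$, which is $(\phi_{j+1})$. For $(\tau_{j+1})$, the chain rule and \eqref{cauch} give $\vert (g_{j+1}\circ F_j)' \vert = \vert g_{j+1}'(F_j(\cdot)) \vert \cdot \vert F_j' \vert \le e^\theta \vert F_j' \vert \le e^{(j+1)\theta} e^{n(\lambda+3\epsilon)}$ on $D_\delta$, and the second inequality of $(\tau_{j+1})$ holds because $j+1 \le q = Q(\gamma^n) \le n\epsilon$ by Corollary \ref{c: estimq}/Equation \eqref{choicen0}. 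For $(\sigma_{j+1})$, one applies the Koebe distortion estimate \eqref{koebedist} to the injective function $g_{j+1}$ on the disc $\D_{t_j}(\delta_0)$ (whose sub-disc $\D_{t_j}(\delta_0 r_\epsilon)$ contains $F_j(D_\delta)$, again by the diameter bound and \eqref{choicen2}): this yields $e^{-\epsilon}\vert g_{j+1}'(F_j(t_{k_0}(p_0)))\vert \le \vert g_{j+1}' \vert \le e^\epsilon \vert g_{j+1}'(F_j(t_{k_0}(p_0)))\vert$ on $F_j(D_\delta)$; combining with $(\sigma_j)$ and the chain rule, and noting that the error accumulates by one factor $e^\epsilon$ only if $g_{j+1}$ is genuinely non-constant, we control $\vert (g_{j+1}\circ F_j)' \vert$ by $\vert (g_{j+1}\circ F_j)'(t_{k_0}(p_0)) \vert\, e^{(n+j+1)\epsilon}$. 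Actually a cleaner bookkeeping is to absorb all $q \le n\epsilon$ Koebe factors into the total exponent, giving $(\sigma_{j+1})$ directly.

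Finally I would deduce Proposition \ref{prop:holodist} from $Z_q$. By construction $h_{n+1} = h_{\gamma_{[0,n+1]},k_0,k_{n+1}} = h_{\gamma^n,\Sigma_{k_n},\Sigma_{k_{n+1}}} \circ h_n = g_q \circ \cdots \circ g_0 \circ h_n$, so $(\phi_q)$ is exactly $(\varphi_{n+1})$. The bound $(\tau_q)$ reads $\vert h_{n+1}' \vert \le e^{q\theta}e^{n(\lambda+3\epsilon)} \le e^{n(\lambda+(3+\theta)\epsilon)}$ on $D_\delta$; combined with $(\sigma_q)$, which gives $\vert h_{n+1}' \vert \le \vert h_{n+1}'(t_{k_0}(p_0))\vert e^{(n+q)\epsilon} \le \vert h_{n+1}'(t_{k_0}(p_0))\vert e^{2n\epsilon}$ using $q \le n\epsilon$, and with Equation \eqref{choicen1} applied at index $n+1$ (so $\vert h_{n+1}'(t_{k_0}(p_0))\vert \le e^{(n+1)(\lambda+\epsilon)} \le e^{n(\lambda+\epsilon)}$ for $n$ large), we get $\vert h_{n+1}' \vert \le e^{n(\lambda+3\epsilon)}$ on $D_\delta$, which is $(\alpha_{n+1})$. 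This closes the induction on $n$, and the conclusion of Proposition \ref{prop:holodist} follows by taking $C_\gamma$ to absorb the finitely many indices $n < n_0$ and the constant implicit in passing from $e^{n(\lambda+3\epsilon)}$ at the base point to a bound valid on all of $D_\delta$.

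I expect the main obstacle to be the bookkeeping that keeps the Koebe distortion losses under control: each composition $g_{j+1}$ a priori contributes a multiplicative factor $e^{\pm\epsilon}$ to the distortion estimate $(\sigma_j)$, and the number of compositions is $q = Q(\gamma^n)$, which is only \emph{sublinear} in $n$, not bounded. The crucial point — already flagged by the authors in the remark preceding Proposition \ref{prop:holodist} — is precisely that $Q(\sigma_1^n\gamma) \le n\epsilon$ (Corollary \ref{c: estimq}), so the accumulated distortion $e^{q\epsilon}$ is at worst $e^{n\epsilon^2}$, negligible against the fixed slope $\lambda < 0$; one has to arrange the constants $\epsilon$, $\theta$ and the thresholds \eqref{choicen0}--\eqref{choicen2} so that all these sublinear-in-$n$ errors fit inside the budget of $3\epsilon$ (resp. $(3+\theta)\epsilon$) in the exponent. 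A secondary technical care is to verify at each step that the image $F_j(D_\delta)$ stays inside the disc on which the next chart transition $g_{j+1}$ is defined and on which Koebe applies (the discs $\D_{t_j}(\delta_0)$, $\D_{t_j}(\delta_0/2)$, $\D_{t_j}(\delta_0 r_\epsilon)$ of Section \ref{sss: box}); this is where the negativity of $\lambda$ enters decisively, through \eqref{choicen2}, to guarantee that the images shrink geometrically and never escape.
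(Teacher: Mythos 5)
Your induction on $j$ is essentially the paper's own proof: the base case $Z_0$ from $(\varphi_n)$, $(\alpha_n)$, the containment of the image of $D_\delta$ via $(\tau_j)$ and \eqref{choicen2} so that one may compose with $g_{j+1}$, Cauchy's estimate \eqref{cauch} for the first inequality of $(\tau_{j+1})$ with the second one from $(j+1)\theta\le q\theta\le n\theta\epsilon$, and Koebe distortion \eqref{koebedist} for $(\sigma_{j+1})$. The one point to tighten is that you treat $(g_j\circ\cdots\circ g_0\circ h_n)(t_{k_0}(p_0))$ and $t_j$ as merely close (your ``distance $\le\delta_0/2$'' claim is not justified as stated), whereas in fact they are equal --- the holonomy image of the base point is exactly the plaque parameter $t_j=t_{\tilde k_j}(\tau(\Timet_j))$, since the plaque coordinate is constant along each piece $\tau[\Timet_i,\Timet_{i+1}]$ --- and it is this identity that puts $(g_j\circ\cdots\circ g_0\circ h_n)(D_\delta)$ inside a disc \emph{centered at} $t_j$ and lets Koebe compare $\vert g_{j+1}'\vert$ on that image with its value at the center $t_j$, exactly as in the paper.
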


Let us verify that Lemma \ref{needto} (namely $Z_q$) implies $(\varphi_{n+1})$ and $(\alpha_{n+1})$. Let us observe that 
 $$ h_{n+1} =  h_{\gamma^n, \Sigma_{k_n} , \Sigma_{k_{n+1}} } \circ h_n = g_q  \circ \ldots \circ g_0 \circ h_n . $$
 Assertion $(\varphi_{n+1})$ is then  provided by $(\phi_q)$. The first inequality of $(\alpha_{n+1})$ then follows by using $(\sigma_q)$ and $q\epsilon \leq n \epsilon \cdot \epsilon \leq n \epsilon$ given by Equation (\ref{choicen0}):
$$ \vert h'_{n+1} \vert \leq \vert h'_{n+1}(t_{k_0}(p_0)) \vert e^{(n+q)\epsilon} \leq \vert h'_{n+1}(t_{k_0}(p_0)) \vert e^{2n\epsilon} \textrm{ on } D_\delta  .$$
Finally the second inequality of $(\alpha_{n+1})$ is a consequence of Equation (\ref{choicen1}).  This completes the proof of Proposition \ref{prop:holodist}.

\begin{proof} (of Lemma \ref{needto})
We have to show $Z_j$ for $j = 0 , \ldots , q$. For $Z_0$, observe that $(\phi_0)$ follows from $(\varphi_n)$ and that $(\tau_0)$, $(\sigma_0)$ follow from $(\alpha_n)$. Assume that $Z_j$ is true for some $j = 0 , \ldots, q-1$. By $(\phi_j)$, $(\tau_j)$ and Equation (\ref{choicen2}), 
$$g_j \circ \ldots \circ g_0 \circ h_n(D_\delta) \subset \D_{t_j}( e^{n(\lambda + (3+\theta)\epsilon)}) \subset \D_{t_j}( \min \{ \delta_0 /2 , \delta_0 r_\epsilon \}) , $$  
hence we can compose by $g_{j+1}$ (see Equation \eqref{htkk}), which proves $(\phi_{j+1})$. To get the first inequality of $(\tau_{j+1})$, we use Cauchy's estimates (Equation (\ref{cauch})) for $g_{j+1}$ and the assertion $(\tau_j)$, they indeed yield on $D_\delta$:
$$ \vert (g_{j+1} \circ \ldots \circ g_0 \circ h_n)' \vert \leq \vert g'_{j+1} \vert \vert (g_j \circ \ldots \circ g_0 \circ h_n)' \vert \leq e^{\theta} e^{j\theta} e^{n(\lambda + 3 \epsilon)} .$$
The second inequality  of $(\tau_{j+1})$ follows from $(j+1)\theta \leq q\theta \leq n\theta \epsilon$. It remains to prove $(\sigma_{j+1})$. We have $g_j \circ \ldots \circ g_0 \circ h_n(D_\delta) \subset  \D_{t_j}(\delta_0 r_\epsilon)$
and $g_{j+1}$ is holomorphic and injective on $\D_{t_j}(\delta_0)$. Koebe distortion (Equation (\ref{koebedist})) then  implies for every $t \in D_\delta$, 
$$ \vert (g_{j+1} \circ \ldots \circ g_0 \circ h_n)'(t) \vert \leq \vert g'_{j+1}(t_j) \vert e^\epsilon \vert (g_j \circ \ldots \circ g_0 \circ h_n)' (t) \vert  , $$
which implies $(\sigma_{j+1})$ by using $(\sigma_{j})$, completing the proof of Lemma \ref{needto}. \end{proof}

 \section{Dynamical entropy $h_D$ and transversal measures}

Recall that for every $k \in K$, we fixed a transversal $\Sigma_k$ in $U_k$, see Section \ref{sss: cov}. Let $T_{\vert \Sigma_k}$ be the restriction of the harmonic current to $\Sigma_k$ as defined in Section \ref{ss: DE}. Let $$\nu_k := (t_k)_* T_{\vert \Sigma_k}$$ be its pushforward by the first integral $t_k : U_k \to \D$. 
 The Lyapunov exponent $\lambda$, the dynamical entropy $h_D$ and the constants $M_+, M_-$ were defined in Sections \ref{ss: LE}, \ref{ss: DE} and Equation \eqref{mplus}.  

\begin{proposition}\label{prop:expentropy}
For $W^1_K$-almost every  $w = ( p_n ,  k_n)_{n \in \N} \in (S^* \times K)^\N$ and for every $n \geq n_1(w)$, 
\begin{enumerate}
\item[1.] $\nu_{k_n} ( \D_{t_{k_n}  (p_n)} ( e^{n (\lambda - \epsilon(1+M_+)) } ) ) \leq e^{-n (h_D - 5 \epsilon) }$, 
\item[2.] $\nu_{k_n} ( \D_{t_{k_n}  (p_n)} ( e^{n (\lambda + \epsilon(1+M_-)) }  ) ) \geq  e^{-n(h_D + 5 \epsilon) }$.
\end{enumerate}
 \end{proposition}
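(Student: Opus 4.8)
The plan is to combine three asymptotic ingredients along $W^1_K$-generic orbits: (a) the entropy estimate~\eqref{hpointw} of Proposition~\ref{p: integral of beta}, transported from the basepoint to a fixed disc $\D(\delta_\gamma)$ via the induction announced in the Fifth step of the Outline; (b) the holonomy domain and contraction estimate of Proposition~\ref{prop:holodist}, which says $h_{\gamma_{[0,n]},k_0,k_n}$ is defined on $\D_{t_{k_0}(p_0)}(\delta_\gamma)$ with $\vert h'_{\gamma_{[0,n]},k_0,k_n}\vert \simeq e^{n(\lambda\pm O(\epsilon))}$; and (c) the integrability of the number of crossings $Q$ (Proposition~\ref{p: estimq}, Corollary~\ref{c: estimq}) together with Harnack's inequality, to control the multiplicative discrepancy between $T_{\vert\Sigma_n}(h_{\gamma_{[0,n]}}(\D(\delta_\gamma)))/T_{\vert\Sigma_0}(\D(\delta_\gamma))$ and the pointwise Radon--Nikodym derivative. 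The constants $M_\pm$ appear precisely because the radius of the image disc $h_{\gamma_{[0,n]}}(\D(\delta_\gamma))$ is sandwiched between $\delta_\gamma e^{n(\lambda\pm\epsilon)}$ and a correction governed by the time-averaged number of crossings, which by Corollary~\ref{c: estimq} lies in $[M_-,M_+]$ along a generic orbit.

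\textbf{Step 1: transport the entropy estimate to the disc $\D(\delta_\gamma)$.} Fix a $W^1_K$-generic $w=(p_n,k_n)_n$ and the associated $\gamma\in\Gamma$, $\delta_\gamma>0$ from Proposition~\ref{prop:holodist}. Write $D_\delta:=\D_{t_{k_0}(p_0)}(\delta_\gamma)$ and $h_n:=h_{\gamma_{[0,n]},k_0,k_n}$. By~\eqref{eq: Radon Nikodym derivative} the pointwise Radon--Nikodym derivative $\frac{D(h_n)_*^{-1}\nu_{k_n}}{D\nu_{k_0}}(t_{k_0}(p_0))=\exp\int_{\gamma_{[0,n]}}\beta$, and~\eqref{hpointw} gives this is $e^{-n(h_D\pm\epsilon)}$ for $n$ large. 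Following the Fifth step of the Outline, I would prove by induction on the crossings of $\gamma^j=\sigma_1^j\gamma$ that
\begin{equation*}
\left\vert\, \log\frac{T_{\vert\Sigma_{k_n}}(h_n(D_\delta))}{T_{\vert\Sigma_{k_0}}(D_\delta)} - \log\frac{D(h_n)_*^{-1}\nu_{k_n}}{D\nu_{k_0}}(t_{k_0}(p_0)) \,\right\vert \le \sum_{j=0}^{n-1}\eta_j,
\end{equation*}
where $\eta_j$ is the one-step error. For $\eta_j$ one uses: (i) the density $\tau$ is leafwise harmonic, so by Harnack's inequality~\eqref{eq: harnack} its logarithmic oscillation over a plaque crossed during $\gamma^j$ is bounded by the Poincar\'e diameter of that crossing, hence by $c\cdot Q(\gamma^j)$ (each box contributing a bounded amount); (ii) the diameter of $h_j\circ\cdots(D_\delta)$ shrinks like $e^{j(\lambda+O(\epsilon))}\to 0$ since $\lambda<0$, so the ``geometric'' part of $\eta_j$ (replacing $\nu_{k_{j}}$ of a small disc by the value of its density times Lebesgue, via Besicovitch-type differentiation of $T_{\vert\Sigma}$) tends to $0$. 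Thus $\eta_j\le \epsilon$ eventually and $\eta_j\le cQ(\gamma^j)$ always; Birkhoff's theorem (Corollary~\ref{c: estimq}) then gives $\sum_{j=0}^{n-1}\eta_j\le n\epsilon$ for $n\ge n_1(w)$. Combining, for $n$ large,
\begin{equation*}
e^{-n(h_D+2\epsilon)} \,\le\, \frac{T_{\vert\Sigma_{k_n}}(h_n(D_\delta))}{T_{\vert\Sigma_{k_0}}(D_\delta)} \,\le\, e^{-n(h_D-2\epsilon)}.
\end{equation*}

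\textbf{Step 2: compare $h_n(D_\delta)$ with round discs.} By Proposition~\ref{prop:holodist}(2) and the Koebe distortion argument already used there, $h_n(D_\delta)$ contains the disc $\D_{t_{k_n}(p_n)}(c_1\delta_\gamma e^{n(\lambda-\epsilon)})$ and is contained in $\D_{t_{k_n}(p_n)}(c_2\delta_\gamma e^{n(\lambda+\epsilon)})$ — but the sharper control needed to produce exactly $M_\pm$ comes from chaining the one-step distortions $e^\theta$ over the $Q(\gamma^j)$ crossed boxes (Equation~\eqref{cauch}) and using $\sum_{j<n}Q(\gamma^j)\in[nM_-,nM_+]$ from Corollary~\ref{c: estimq}. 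This yields, for $n\ge n_1(w)$,
\begin{equation*}
\D_{t_{k_n}(p_n)}\!\big(e^{n(\lambda+\epsilon(1+M_-))}\big)\ \subset\ h_n(D_\delta)\ \subset\ \D_{t_{k_n}(p_n)}\!\big(e^{n(\lambda-\epsilon(1+M_+))}\big),
\end{equation*}
after absorbing the fixed constants $\delta_\gamma,c_i$ into a further enlargement of $n_1(w)$ and a harmless perturbation of $\epsilon$. Pushing forward by $t_{k_n}$ turns $T_{\vert\Sigma_{k_n}}$ into $\nu_{k_n}$ and the image discs into genuine Euclidean discs in $\D$.

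\textbf{Step 3: conclude.} Since $T_{\vert\Sigma_{k_0}}(D_\delta)$ is a fixed positive finite number (it is positive because $p_0$ is in the support of $T$ for generic $w$, and finite since $\nu_{k_0}$ is Radon), for $n$ large $e^{-n\epsilon}\le T_{\vert\Sigma_{k_0}}(D_\delta)\le e^{n\epsilon}$. Inserting the two inclusions of Step 2 into the monotonicity of $\nu_{k_n}$ and using the Step 1 sandwich gives
\begin{equation*}
\nu_{k_n}\!\big(\D_{t_{k_n}(p_n)}(e^{n(\lambda-\epsilon(1+M_+))})\big)\ \le\ \frac{T_{\vert\Sigma_{k_n}}(h_n(D_\delta))}{T_{\vert\Sigma_{k_0}}(D_\delta)}\cdot e^{n\epsilon}\ \le\ e^{-n(h_D-2\epsilon)}e^{n\epsilon}\ \le\ e^{-n(h_D-5\epsilon)},
\end{equation*}
and symmetrically $\nu_{k_n}(\D_{t_{k_n}(p_n)}(e^{n(\lambda+\epsilon(1+M_-))}))\ge e^{-n(h_D-2\epsilon)}e^{-n\epsilon}\ge e^{-n(h_D+5\epsilon)}$, using $T_{\vert\Sigma_{k_0}}(D_\delta)\le e^{n\epsilon}$ in the lower bound (and $h_{k_0}(D_\delta)\supset\cdots$ appropriately). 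This is exactly the claimed pair of inequalities.

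\textbf{Main obstacle.} The delicate point is Step 1, and specifically the ``geometric'' half of the one-step error $\eta_j$: one is comparing the mass $T_{\vert\Sigma_{k_{j+1}}}$ assigns to the highly distorted set $h_{j+1}(D_\delta)$ with the product of a density value and a Lebesgue mass, for a measure $T_{\vert\Sigma}$ that is only known to be a Radon measure varying harmonically transversally. Making this rigorous requires a Besicovitch/Lebesgue differentiation argument for $T_{\vert\Sigma}$ together with a uniform bound (from Harnack, i.e.~the $g_P$-boundedness of $d_\FF\log\tau$ in Lemma~\ref{lemmaharnack}) on how fast the density $\tau$ can vary across a crossed plaque, so that the error is genuinely controlled by the number of crossings $Q(\gamma^j)$ and not merely bounded. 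The contraction $\lambda<0$ (Proposition~\ref{d: lyapunov exponent}) is what makes $\mathrm{diam}\,h_j(D_\delta)\to 0$ and hence forces $\eta_j\to 0$ along the orbit; the integrability of $Q$ is what upgrades the crude bound $\eta_j\le cQ(\gamma^j)$ into the summable estimate $\sum_{j<n}\eta_j\le n\epsilon$.
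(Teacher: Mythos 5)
Your plan follows the same broad strategy as the paper (transport the pointwise entropy rate along the orbit, control the cumulative error by Harnack plus the integrability of $Q$, then convert to round discs via the Lyapunov estimate), but it breaks down at exactly the two points where the paper's proof is most careful. First, your Step 1 compares the $T_{\vert\Sigma_{k_n}}$-mass of the distorted images $h_n(D_\delta)$ with a pointwise Radon--Nikodym derivative, and the ``geometric'' half of the one-step error is delegated to an unspecified Besicovitch-type differentiation for these images. This cannot be quoted off the shelf: the sets $h_j(D_\delta)$ do not have uniformly bounded eccentricity (the distortion of $h_j$ on $D_\delta$ is only controlled like $e^{O(j\epsilon)}$, cf.\ the estimates $(\sigma_j)$, $(\tau_j)$ in Lemma \ref{needto}), so differentiation theorems do not apply to this family, and one must also know that the generic path only meets transversal points where the differentiation error is already small at the relevant scales. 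The paper's proof is organized precisely to avoid this: the telescoping is carried out on \emph{round discs only}, with a one-step error $\eta(t,k,k',\delta)$ defined in Equation \eqref{defeta}, uniformly bounded by Harnack (Lemma \ref{lem: Harnacktrans2}), shown to be small at small scales along $W^\mu_{g_P}$-generic paths (Lemma \ref{l: controleW}) and made summable by dominated convergence and Birkhoff (Proposition \ref{l: lebesg}); the price of staying with round discs is a Koebe factor $e^{\pm\epsilon}$ at \emph{every} box-crossing (Lemma \ref{eq:koebeRN}), encoded in the radii $\delta^j_i$.

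Second, Step 2 is incorrect as written (your displayed inclusions put the larger disc inside $h_n(D_\delta)$ inside the smaller disc), and even after fixing the orientation the claimed radii do not follow from your argument: chaining the Cauchy bounds $e^{\theta}$ of Equation \eqref{cauch} over the $\sum_j Q(\sigma^j_1\gamma)$ crossings yields a factor $e^{\theta\sum_j Q}$ with the \emph{fixed} constant $\theta$, i.e.\ an error of size $e^{\theta n M_\pm}$, which is not of order $e^{n\epsilon}$. In the paper, $M_\pm$ arise from the accumulated per-crossing Koebe losses $e^{\pm\epsilon}$ inside the measure chain, bounded through Corollary \ref{c: estimq}, and they enter as a comparison of radii (for item 1, $\delta^{n-1}_{q_{n-1}}=\vert h'_{\gamma_{[0,n]}}(t_{k_0}(p_0))\vert\,\delta\, e^{-\epsilon\sum_j q_j}\ \ge\ e^{n(\lambda-\epsilon(1+M_+))}$), not as a two-sided inclusion of $h_n(D_\delta)$ between round discs of radii $e^{n(\lambda-\epsilon(1+M_+))}$ and $e^{n(\lambda+\epsilon(1+M_-))}$. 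In particular the outer containment $h_n(D_\delta)\subset\D_{t_{k_n}(p_n)}(e^{n(\lambda+\epsilon(1+M_-))})$ that your Step 3 relies on for item 2 is not available: the accumulated outer Koebe factor is $e^{+\epsilon\sum_j Q(\sigma^j_1\gamma)}$ and Birkhoff only gives $\sum_j Q(\sigma^j_1\gamma)\ge nM_-$, which is the wrong direction; the paper obtains the lower estimate by rerunning the telescoping chain with the counterparts of (a)--(e) and the requirement that $t_{k_0}(p_0)$ lie in the support of $\nu_{k_0}$, not via such an inclusion. So while your outline identifies the right ingredients, the differentiation step and the step actually producing the constants $M_\pm$ -- the technical heart of the paper's argument -- are respectively missing and incorrect.
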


\begin{remark}\label{rk: undiscr}
By Proposition \ref{prop:expentropy} and the definition of $W^1_K$ (Section \ref{ss: diext}), for $\mu$-a.e. $x \in S^*$, for $W^x_{g_P}$-a.e. $\gamma \in \Gamma^x$ and for every $n \geq 0$, we have $\nu_k ( \D_{t_k  (\gamma(n))} ( e^{n (\lambda + \epsilon(1+M_-)) }  ) ) \geq  e^{-n(h_D + 5 \epsilon) }$ whenever $\psi_k(\pi(\gamma(n))) > 0$, where $\psi_k$ is defined in Equation \eqref{eq:psipsi}. We shall use it to prove Proposition \ref{p: separated}.
\end{remark}

\subsection{Radon-Nikodym derivative} \label{RN} 

\subsubsection{The distortion function $\eta$} 
Let $k,k'$ be such that $U_k \cap U_{k'} \neq \emptyset$. Let $t \in t_k(U_k \cap U_{k'})$ and let us set as in Equation (\ref{htkk}):
$$ h := h_{t, k,k'} :  \D_t(\delta_0) \subset t_k (U^+_k) \to t_{k'}(U^+_{k'})  .$$
For every $\delta < \delta_0$, we define $\eta(t,k,k', \delta)$ as follows:
\begin{equation}\label{defeta}
 {\nu_{k'} (h(\D_t (\delta))) \over \nu_k (\D_t (\delta) )} = e^{\eta(t , k,k',  \delta)} {D(h^{-1})_* \nu_{k'}  \over D\nu_k}  (t)  .
 \end{equation}
 Since $\nu_{k'} (h(\D_t (\delta)))$ coincides with $(h^{-1})_* \nu_{k'} (\D_t (\delta))$, by the classical properties of the Radon-Nikodym derivative, there exists a Borel set $R_{k,k'} \subset t_k(U_k \cap U_{k'})$ of full $\nu_k$-measure such that $$ \forall t \in R_{k,k'} \ , \  \lim_{\delta \to 0} \eta(t,k,k', \delta) = 0 . $$
\begin{lemma}\label{lem: Harnacktrans2}
For every $t \in R_{k,k'}$ and  $\delta < \delta_0$, we have $\vert \eta(t , k,k',  \delta) \vert \leq 2c$. 
 \end{lemma}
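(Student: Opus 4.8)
The plan is to bound the distortion term $\eta(t,k,k',\delta)$ using the Harnack inequality \eqref{eq: harnack} for the positive leafwise harmonic functions $\tau$ defining the harmonic current $T$, combined with the explicit form of the Radon--Nikodym derivative coming from Equation \eqref{eq: Radon Nikodym derivative}. First I would unwind the definition: by \eqref{defeta},
\[
e^{\eta(t,k,k',\delta)} = \frac{\nu_{k'}(h(\D_t(\delta)))}{\nu_k(\D_t(\delta))} \cdot \left( \frac{D(h^{-1})_*\nu_{k'}}{D\nu_k}(t) \right)^{-1}.
\]
The Radon--Nikodym derivative $\frac{D(h^{-1})_*\nu_{k'}}{D\nu_k}(t)$ equals $\exp\int_{\gamma}\beta$ for a short leafwise path $\gamma$ realizing the holonomy $h=h_{t,k,k'}$ between the two plaques inside $U^+_k\cap U^+_{k'}$; via the local expressions $\nu_k = (t_k)_* (\tau_k\,\nu)$ this derivative is the ratio $\tau_{k'}(h(t))/\tau_k(t)$ of the two density functions in the respective foliation boxes (up to the holomorphic change of transverse coordinate, which contributes a factor already absorbed in the normalization of $\nu_k,\nu_{k'}$), evaluated at the base point.

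Next, the numerator and denominator of the first fraction can be written, using the local expression \eqref{eq: local expression harmonic current2} of $T_{\vert\Sigma}$ and the definition $\nu_k = (t_k)_* T_{\vert\Sigma_k}$, as integrals of $\tau_{k'}$ over $h(\D_t(\delta))$ against $\nu$, resp.\ of $\tau_k$ over $\D_t(\delta)$ against $\nu$ (after the change of variables by $h$). Since $\tau_{k'}$ and $\tau_k$ are positive and leafwise harmonic, the Harnack inequality \eqref{eq: harnack} gives, for every $s$ in the plaque through $\D_t(\delta)$ and its image,
\[
e^{-d_P(t,s)}\,\tau_k(s) \leq \tau_k(t) \leq e^{d_P(t,s)}\,\tau_k(s),
\]
and similarly for $\tau_{k'}$. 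Because $\delta<\delta_0$, all the transverse displacements stay inside the fixed foliation boxes $U^+_k, U^+_{k'}$, so the relevant Poincaré distances $d_P(\cdot,\cdot)$ are bounded by a constant depending only on the finite covering $(U^+_k)_{k\in K}$ — this is exactly the constant $c$ appearing in the statement (the same $c$ governing the Harnack control on $d_\FF\log\tau$, cf.\ Lemma \ref{lemmaharnack}). Dividing the Harnack bounds for the integrands and integrating, the ratio $\frac{\nu_{k'}(h(\D_t(\delta)))}{\nu_k(\D_t(\delta))}$ differs from the pointwise ratio $\tau_{k'}(h(t))/\tau_k(t)$ by a multiplicative factor in $[e^{-2c}, e^{2c}]$, which is precisely the assertion $|\eta(t,k,k',\delta)|\leq 2c$.

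The main obstacle is bookkeeping rather than conceptual: one must be careful that the holonomy $h_{t,k,k'}$ moves points in a way that keeps everything inside $U^+_k\cap U^+_{k'}$ so that a \emph{uniform} Harnack constant applies, and one must track how the holomorphic change of transverse coordinate $t_{k,k'}$ interacts with the pushforwards $\nu_k,\nu_{k'}$ — but this change is holomorphic with derivative bounded above and below on $\D_t(\delta_0/2)$ by \eqref{cauch}, and in any case its contribution is exactly the Radon--Nikodym derivative $D(h^{-1})_*\nu_{k'}/D\nu_k$ that has been divided out in \eqref{defeta}. Once this is set up, the estimate follows directly from \eqref{eq: harnack} and the compactness of the index set $K$; no further analytic input is needed.
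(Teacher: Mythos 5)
Your argument is correct and follows essentially the same route as the paper: the Harnack inequality along the leafwise holonomy path of bounded Poincar\'e length between the fixed transversals $\Sigma_k,\Sigma_{k'}$ bounds both the ratio $\nu_{k'}(h(\D_t(\delta)))/\nu_k(\D_t(\delta))$ (uniformly over Borel sets) and the Radon--Nikodym derivative $\frac{D(h^{-1})_*\nu_{k'}}{D\nu_k}(t)$ by $e^{\pm c}$, and \eqref{defeta} then yields $\vert \eta \vert \leq 2c$. Only be careful to phrase Harnack as a comparison of $\tau(\cdot,u)$ at two points of the \emph{same} plaque (same transverse parameter), not across transverse parameters --- that leafwise comparison is all your estimate actually needs, since the transverse densities are merely measurable.
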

 
 \begin{proof}
 Let $t \in R_{k,k'}$.
 Recall that the density of $T_{\vert \Sigma_k}$ is given by the leafwise positive harmonic functions $\tau$ (see Section \ref{ss: DE}) and that $\nu_k = (t_k)_* T_{\vert \Sigma_k}$. By construction, the hyperbolic length between the transversals $\Sigma_k$, $\Sigma_{k'}$ is bounded, see Section  \ref{sss: cov}.
  Harnack inequality (\ref{eq: harnack}) then implies that there exists $c>0$ (not depending on $t, k,k'$) such that for every  $B \subset \D_t(\delta_0)$:
$$  e^{-c} \leq \frac{ \nu_{k'} (h(B)) }{\nu_k (B) } \leq e^c . $$
In particular the derivative ${D(h^{-1})_* \nu_{k'} / D\nu_k}$ satisfies the same estimates at $t$. The conclusion of the Lemma follows from Equation \eqref{defeta}.
 \end{proof}

Let $t' := h(t)$. Koebe Theorem (Equation \eqref{koebedist}) allows to approximate $h(\D_t (\delta))$ by $ \D_{t'} ( \vert h' (t) \vert \delta)$ as follows. We need these inequalities to relate the Lyapunov exponent $\lambda$ with the dynamical entropy $h_D$ in Section \ref{s: chain}.
 
\begin{lemma}\label{eq:koebeRN}
There exists $\delta_0' < \delta_0$ such that for every $\delta < \delta_0'$:
$$  {\nu_{k'} ( \D_{t'} (\vert h' (t) \vert \delta e^{-\epsilon}) \over \nu_k (\D_t (\delta) )} \leq e^{\eta(t , k,k',  \delta)} 
{D(h^{-1})_* \nu_{k'}  \over D\nu_k}  (t)  \leq {\nu_{k'} ( \D_{t'} (\vert h' (t) \vert \delta e^{\epsilon}) \over \nu_k (\D_t (\delta) )} . $$
\end{lemma}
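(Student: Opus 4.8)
The plan is to deduce Lemma \ref{eq:koebeRN} directly from Equation \eqref{defeta} together with the Koebe Distortion Theorem \eqref{koebedist}, which provides a two-sided comparison between the image $h(\D_t(\delta))$ of a disc under an injective holomorphic map and a concentric disc of radius $|h'(t)|\,\delta$. The point is purely geometric: $\eta(t,k,k',\delta)$ is \emph{defined} by Equation \eqref{defeta}, so the content of the lemma is that the quantity $e^{\eta(t,k,k',\delta)}\,\frac{D(h^{-1})_*\nu_{k'}}{D\nu_k}(t) = \frac{\nu_{k'}(h(\D_t(\delta)))}{\nu_k(\D_t(\delta))}$ can be squeezed between the two displayed ratios.

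First I would fix $k,k'$ with $U_k\cap U_{k'}\neq\emptyset$, and fix $t\in t_k(U_k\cap U_{k'})$, so that $h=h_{t,k,k'}\colon\D_t(\delta_0)\to t_{k'}(U_k^+)$ is holomorphic and injective. Writing $t'=h(t)$, I would recenter: the function $f(u):=h(t+u)-t'$ is holomorphic and injective on the disc $\D(\delta_0)$ around $0$, with $f(0)=0$ and $f'(0)=h'(t)$. The Koebe $1/4$-theorem and the Koebe distortion theorem then give, for a suitable radius, the inclusions
\begin{equation}\label{eq:koebeincl}
\D_{t'}\bigl(|h'(t)|\,\delta\,e^{-\epsilon}\bigr)\subset h(\D_t(\delta))\subset \D_{t'}\bigl(|h'(t)|\,\delta\,e^{\epsilon}\bigr),
\end{equation}
valid once $\delta$ is small enough that $\D_t(\delta)$ lies well inside the domain where the distortion bounds \eqref{koebedist} apply; concretely one takes $\delta_0'<\delta_0$ so that $\delta<\delta_0'$ forces $\delta<\delta_0 r_\epsilon$ (and, for the inner inclusion, invokes the $1/4$-theorem applied to $f$ restricted to $\D(\delta)$). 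Monotonicity of $\nu_{k'}$ then turns \eqref{eq:koebeincl} into
\begin{equation*}
\nu_{k'}\bigl(\D_{t'}(|h'(t)|\,\delta\,e^{-\epsilon})\bigr)\le \nu_{k'}\bigl(h(\D_t(\delta))\bigr)\le \nu_{k'}\bigl(\D_{t'}(|h'(t)|\,\delta\,e^{\epsilon})\bigr),
\end{equation*}
and dividing through by $\nu_k(\D_t(\delta))$ and using \eqref{defeta} yields exactly the chain of inequalities in the statement.

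There is no real obstacle here; the only mild subtlety is bookkeeping on the radius at which Koebe distortion becomes applicable. The distortion estimate \eqref{koebedist} is stated for an injective $f$ on $\D(R)$ and controls $|f'|$ on $\D(Rr_\epsilon)$; to control the \emph{image} of $\D_t(\delta)$ rather than just $|f'|$, one combines this with the growth/covering estimates of the Koebe theorem (the $1/4$-theorem for the inner inclusion, and the standard $|f(u)|\le |f'(0)|\,|u|/(1-|u|/\delta_0)^2$ type bound for the outer inclusion), all applied to $f$ on $\D(\delta_0)$. Choosing $\delta_0'$ small enough that $\delta/\delta_0$ is below the threshold where these multiplicative errors are absorbed into $e^{\pm\epsilon}$ makes \eqref{eq:koebeincl} hold for every $\delta<\delta_0'$, uniformly in $t,k,k'$ since there are finitely many charts and the domains $U_k\Subset U_k^+$ are fixed. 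This uniformity is what makes $\delta_0'$ a genuine constant, as asserted.
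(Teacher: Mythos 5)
Your proposal is correct and follows essentially the same route as the paper: the paper's proof likewise consists of using Koebe distortion to obtain the double inclusion $\D_{t'}(|h'(t)|\delta e^{-\epsilon})\subset h(\D_t(\delta))\subset \D_{t'}(|h'(t)|\delta e^{\epsilon})$ for all $\delta<\delta_0'$, and then concluding via the monotonicity of $\nu_{k'}$ and the defining relation \eqref{defeta}. The only difference is that you spell out the standard Koebe growth/quarter-theorem bookkeeping and the uniformity in $t,k,k'$, which the paper leaves implicit.
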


\begin{proof}
Koebe Theorem yields $\delta_0' < \delta_0$ such that 
\begin{equation}\label{eq:koebe}
\forall \delta < \delta_0' \ , \ \D_{t'} ( \vert h' (t) \vert \delta e^{-\epsilon}  ) \subset h(\D_t (\delta)) \subset \D_{t'} ( \vert h'(t)  \vert \delta e^{\epsilon}  ) . 
\end{equation}
The conclusion follows from Equation (\ref{defeta}).
\end{proof}

\subsubsection{The limit $\eta \to 0$ holds for generic leafwise paths}
 
\begin{lemma}\label{l: controleW}
Let $\Gamma_0$ be the set of leafwise continuous paths $\gamma \in \Gamma$ such that there exist $\Timet \geq 0$, $k , k' \in K$ satisfying $\gamma(\Timet) \in U_k \cap U_{k'}$ and $t_k(\gamma(\Timet)) \notin R_{k, k'}$. Then $\Gamma_0$ has zero $W^\mu_{g_P}$-measure.   
\end{lemma}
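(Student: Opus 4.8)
The plan is to show that the offending plaques form, in each chart, a set of zero $\mu$-measure, and then to deduce from the stationarity of $\mu$ and a soft continuity argument that a $W^\mu_{g_P}$-generic path never visits them.

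First I would reduce the statement to a $\mu$-measure computation. For each pair $(k,k')\in K^2$ with $U_k\cap U_{k'}\neq\emptyset$, put $N_{k,k'}:=t_k(U_k\cap U_{k'})\setminus R_{k,k'}$; this is a Borel subset of $\D$ with $\nu_k(N_{k,k'})=0$, since $R_{k,k'}$ is Borel of full $\nu_k$-measure. Let $A_{k,k'}:=t_k^{-1}(N_{k,k'})\subset U_k$, a union of plaques of the foliation box $U_k$, and set $A:=\bigcup_{(k,k')}A_{k,k'}$, a finite union, hence a Borel union of plaques. By the very definition of $\Gamma_0$ we have $\Gamma_0\subset\{\gamma\in\Gamma:\ \exists\,\Timet\ge 0,\ \gamma(\Timet)\in A\}$, so it suffices to bound the latter set. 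Now I claim $\mu(A)=0$. Writing $T=\tau(z,t)\,\nu(dt)$ in the box $U_k$, with $\Sigma_k=\{0\}\times\D$ so that $\nu_k=\tau(0,\cdot)\,\nu$, the identity $\int_{N_{k,k'}}\tau(0,t)\,\nu(dt)=\nu_k(N_{k,k'})=0$ and the non‑negativity of $\tau$ force $\tau(0,t)=0$ for $\nu$-a.e.\ $t\in N_{k,k'}$; since $\tau$ is leafwise harmonic and $\ge 0$, it then vanishes identically on the corresponding plaque $\D\times\{t\}$, whence $\int_{\D\times\{t\}}\tau\,\vol_{g_P}=0$ for $\nu$-a.e.\ $t\in N_{k,k'}$. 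Therefore, by the local expression \eqref{eq: local expression harmonic current} of $T$ and $\mu=T\wedge\vol_{g_P}$, we get $\mu(A_{k,k'})=\int_{N_{k,k'}}\big(\int_{\D\times\{t\}}\tau\,\vol_{g_P}\big)\nu(dt)=0$, and hence $\mu(A)=0$.

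Next I would use that $W^\mu_{g_P}$ is $\sigma_\Timet$-invariant (Section~\ref{ss: harmonicm2}): composing $\gamma\mapsto\sigma_\Timet\gamma$ with $\gamma\mapsto\gamma(0)$ shows that for every fixed $\Timet\ge 0$ the law of $\gamma\mapsto\gamma(\Timet)$ under $W^\mu_{g_P}$ is exactly $\mu$ (the law of $\gamma\mapsto\gamma(0)$). Consequently $W^\mu_{g_P}(\{\gamma:\gamma(\Timet)\in A\})=\mu(A)=0$ for every $\Timet$, and Fubini's theorem gives
\[ \int_\Gamma \mathrm{Leb}\{\Timet\ge 0:\ \gamma(\Timet)\in A\}\, dW^\mu_{g_P}(\gamma)=\int_0^{\infty} W^\mu_{g_P}(\{\gamma:\gamma(\Timet)\in A\})\, d\Timet=0 , \]
so $\mathrm{Leb}\{\Timet\ge 0:\gamma(\Timet)\in A\}=0$ for $W^\mu_{g_P}$-a.e.\ $\gamma$.

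Finally I would upgrade "Lebesgue-null in time" to "empty" using that $A$ is a union of plaques. If $\gamma(\Timet_0)\in A$, then $\gamma(\Timet_0)$ lies in a single plaque $P\subset A$ of some $U_k$; $P$ is open in the leaf topology, and $\gamma$ is leafwise continuous, so $\gamma^{-1}(P)$ is a neighbourhood of $\Timet_0$ in $[0,+\infty)$, and thus $\{\Timet:\gamma(\Timet)\in A\}$ contains an interval of positive length. Combined with the previous display, this forces $\{\Timet:\gamma(\Timet)\in A\}=\emptyset$ for $W^\mu_{g_P}$-a.e.\ $\gamma$; hence $\Gamma_0$ is contained in a $W^\mu_{g_P}$-null Borel set, which is the assertion. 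The only genuinely delicate point is this last upgrade, i.e.\ the observation that $A$ is honestly a Borel union of entire plaques (so that a hitting instant automatically occupies a time interval of positive length) — the probabilistic input, namely the stationarity of $\mu$ under the heat flow, is entirely soft here.
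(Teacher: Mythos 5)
Your proof is correct and follows essentially the same strategy as the paper's: show that the saturated bad set is $\mu$-negligible, use the stationarity of $\mu$ (equivalently the $\sigma$-invariance of $W^\mu_{g_P}$) to handle each fixed time, and then use leafwise continuity together with the plaque structure to pass from fixed times to all times. The only cosmetic differences are that the paper obtains the $\mu$-negligibility from the Harnack domination $\mu\le c_k\,\nu_k\circ t_k$ on $U_k$ and reduces to the countable family of rational times via $\sigma_{\Timeq}$-invariance, whereas you make the density $\tau$ vanish identically on the bad plaques by positivity and harmonicity and then use Fubini over continuous time plus the leafwise openness of plaques; both variants are sound.
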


\begin{proof}
For every $k,k' \in K$ such that $U_k \cap U_{k'} \neq \emptyset$, we set 
 $$H_{k,k'} := \{ \gamma \in \Gamma \ , \ \gamma(0) \in U_k \cap U_{k'} \ , \ t_k(\gamma(0)) \notin R_{k, k'} \} .$$ 
According to the expressions of $\mu$ and $T$ (Equations  \eqref{eq: harmonic measure}, \eqref{eq: local expression harmonic current}) and Harnack inequality \eqref{eq: harnack}, there exists $c_k > 0$ such that  $ \mu \leq c_k \, \nu_k \circ t_k$ on $U_k$.  
 Since $R_{k,k'}$ has full $\nu_k$-measure in $t_k(U_k \cap U_{k'})$ and since $W^\mu_{g_P} = \int W^x_{g_P} d\mu(x)$, we get $W^\mu_{g_P}(H_{k,k'}) = 0$. If $H := \cup_{k,k'} H_{k,k'}$, then $\tilde H := \cup_{\Timeq \in \Q^+} \sigma_\Timeq^{-1} H$ has zero $W^\mu_{g_P}$-measure, since $W^\mu_{g_P}$ is $\sigma_\Timeq$-invariant. We finish by observing that $\Gamma_0 \subset \tilde H$. Indeed, if $\gamma(\Timet) \in U_k \cap U_{k'}$ and $t_k(\gamma(\Timet)) \notin R_{k, k'}$, then by continuity of $\gamma$ there exists $\Timeq \in \Q^+$ such that $\gamma(\Timeq) \in U_k \cap U_{k'}$ and $t_k(\gamma(\Timeq)) =  t_k(\gamma(\Timet)) \notin R_{k, k'}$.
\end{proof}

\subsubsection{Control of $\eta$ along leafwise paths}
For every $\gamma \in \Gamma$ and $r < \delta_0$, 
$$\eta_r (\gamma) := \sup_{q(\tau) = Q(\gamma)}  \sum_{i=0}^{Q(\gamma)} \, \sup_{\delta < r }\vert \eta(t_i , \tilde k_i , \tilde k_{i+1} , \delta ) \vert ,$$
the supremum being taken on smooth leafwise paths $\tau : [0,1] \to S^*$ homotopic to $\gamma_{[0,1]}$ with fixed extremities\footnote{$q(\tau), Q(\gamma)$ are defined in Section \ref{sss: cross}. In particular $t_i$, $\tilde k_i$, $\tilde k_{i+1}$ are related to $\tau$.} such that $q(\tau) = Q(\gamma)$. Note that, as the function $Q$, the function $\eta_r$ only depends on the restriction of $\gamma$ to $[0,1]$. The following result will be crucial.

\begin{proposition}\label{l: lebesg}
There exists $0 < r_0 < \delta_0'$ such that for $W^\mu_{g_P}$-a.e. $\gamma \in \Gamma$, 
$$\forall n \geq n_1(\gamma) \ \ , \ \ -n \epsilon \leq \sum_{j=0}^{n-1} \eta_{r_0} (\sigma_1^j(\gamma)) \leq n \epsilon . $$
\end{proposition}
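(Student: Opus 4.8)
The plan is to combine two observations: first, that the per-step distortion contribution $\eta_{r_0}(\sigma_1^j(\gamma))$ is $W^\mu_{g_P}$-integrable with small integral once $r_0$ is chosen small enough, and second, that Birkhoff's ergodic theorem then forces the Birkhoff sums to be $\leq n\epsilon$ (and $\geq -n\epsilon$) eventually. So the real content is an integrability-plus-smallness estimate for $\eta_{r_0}$, and the mechanism for making the integral small is the dominated convergence theorem applied as $r_0 \to 0$.

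\textbf{Step 1: Pointwise bound and domination.} By Lemma \ref{lem: Harnacktrans2}, each summand $\sup_{\delta < r}\vert \eta(t_i, \tilde k_i, \tilde k_{i+1}, \delta)\vert$ is bounded by $2c$, uniformly in everything. Hence for any $r < \delta_0$ we have the crude bound $\eta_r(\gamma) \leq 2c\,(Q(\gamma)+1)$. By Proposition \ref{p: estimq}, $Q$ (hence $Q+1$) is $W^\mu_{g_P}$-integrable, so $\eta_r$ is $W^\mu_{g_P}$-integrable for every $r < \delta_0$, and the family $\{\eta_r\}_{0 < r < \delta_0}$ is dominated by the single integrable function $2c\,(Q+1)$.

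\textbf{Step 2: Pointwise limit as $r \to 0$.} Fix $\gamma \in \Gamma$ not in the null set $\Gamma_0$ of Lemma \ref{l: controleW}. Then for every $\Timet \geq 0$ and every pair $k,k'$ with $\gamma(\Timet) \in U_k \cap U_{k'}$, we have $t_k(\gamma(\Timet)) \in R_{k,k'}$, so $\lim_{\delta \to 0}\eta(t_k(\gamma(\Timet)), k, k', \delta) = 0$ by the defining property of $R_{k,k'}$ in Section \ref{RN}. Now $\eta_r(\gamma)$ is a supremum over finitely many homotopic reference paths $\tau$ realizing $q(\tau) = Q(\gamma)$ of a finite sum (indexed by $i = 0,\dots,Q(\gamma)$) of quantities $\sup_{\delta < r}\vert\eta(t_i,\tilde k_i,\tilde k_{i+1},\delta)\vert$, where each $t_i = t_{\tilde k_i}(\tau(\Timet_i))$ is of the form $t_k(\gamma(\Timet))$ for a suitable parameter $\Timet$ (along the leaf, as $\tau$ is homotopic to $\gamma_{[0,1]}$). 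Hence each such quantity tends to $0$ as $r \to 0$, the sum is finite, and the supremum is over finitely many configurations, so $\lim_{r\to 0}\eta_r(\gamma) = 0$. (One should be slightly careful that the set of relevant pairs $(k,k',\Timet)$ is effectively countable/finite, which is why Lemma \ref{l: controleW} is phrased with a union over rationals; the same discretization handles the supremum over the $\tau$'s.)

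\textbf{Step 3: Conclusion via Birkhoff.} By dominated convergence (Steps 1 and 2), $\int \eta_r \, dW^\mu_{g_P} \to 0$ as $r \to 0$. Choose $r_0 < \delta_0'$ small enough that $\int \eta_{r_0}\, dW^\mu_{g_P} < \epsilon/2$. Since $\eta_{r_0}$ depends only on $\gamma_{[0,1]}$ and $(\Gamma, \sigma_1, W^\mu_{g_P})$ is ergodic (Section \ref{ss: harmonicm2}), Birkhoff's ergodic theorem gives, for $W^\mu_{g_P}$-a.e. $\gamma$, that $\frac{1}{n}\sum_{j=0}^{n-1}\eta_{r_0}(\sigma_1^j(\gamma)) \to \int \eta_{r_0}\,dW^\mu_{g_P} < \epsilon/2$, hence $\sum_{j=0}^{n-1}\eta_{r_0}(\sigma_1^j(\gamma)) \leq n\epsilon$ for all $n \geq n_1(\gamma)$; and since $\eta_{r_0} \geq 0$ the lower bound $\geq -n\epsilon$ is automatic. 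This is exactly the claimed statement.

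\textbf{Main obstacle.} The delicate point is Step 2: verifying rigorously that $\eta_r(\gamma) \to 0$ pointwise off a null set. The subtlety is that $\eta_r$ involves a supremum over reference paths $\tau$ and over $\delta < r$, and one must ensure that only countably many transversal points $t_k(\gamma(\Timet))$ — each lying in the full-measure set $R_{k,k'}$ by Lemma \ref{l: controleW} — actually enter, so that the finiteness of the sum and the finitely-many-configurations structure of the supremum let the individual limits $\eta(\cdot,\cdot,\cdot,\delta)\to 0$ pass through. Once this bookkeeping is settled, the domination (Step 1) and the ergodic-theorem conclusion (Step 3) are routine.
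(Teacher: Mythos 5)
Your argument is essentially the paper's own proof: the same domination $\eta_r \leq 2c\,Q$ (up to the harmless $Q+1$) via Lemma \ref{lem: Harnacktrans2} and Proposition \ref{p: estimq}, the same pointwise limit $\eta_r(\gamma)\to 0$ off the null set of Lemma \ref{l: controleW}, then dominated convergence to pick $r_0$ with $\int \eta_{r_0}\,dW^\mu_{g_P}<\epsilon/2$ and Birkhoff's ergodic theorem to conclude. The bookkeeping subtlety you flag in Step 2 (that only a controlled family of configurations $(t_i,\tilde k_i,\tilde k_{i+1})$ enters the supremum) is left implicit in the paper as well, so your write-up is, if anything, slightly more explicit on that point.
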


\begin{proof}
By Lemma \ref{lem: Harnacktrans2}, we get 
$$ \forall \gamma \in \Gamma \ , \ \forall r < \delta_0' \ , \ \eta_r(\gamma)  \leq 2c \,  Q(\gamma) .$$ 
Since $\lim_{r \to 0} \eta_r(\gamma) = 0$ for $W^\mu_{g_P}$-almost every $\gamma$ (by Lemma \ref{l: controleW}) and since $Q \in L^1(W^\mu_{g_P})$ (by Proposition \ref{p: estimq}), Lebesgue dominated convergence yields $r_0$ such that $-\epsilon /2 < \int \eta_{r_0}(\gamma) dW^\mu_{g_P}(\gamma) < \epsilon/2$. We conclude by applying Birkhoff ergodic theorem. 
\end{proof}

\subsection{Exponential decay of transversal measures}\label{s: chain}

This section is devoted to the proof of Proposition \ref{prop:expentropy}.
Let us fix $( p_n , k_n)_{n \in \N} \in (S^* \times K)^\N$ and let $\gamma \in \Gamma$ provided by Proposition \ref{prop:holodist}, we have $\gamma(0)=p_0$. Let $q_j := Q(\sigma_1^j(\gamma))$. We decompose $h_{\gamma_{[0,n]}}$ as $q_0 + \ldots + q_{n-1}$ holonomy maps as in the proof of Proposition \ref{prop:holodist}. Taking same notations, we set for every $0 \leq j \leq n-1$ and $1 \leq i \leq q_j$:
 $$h_i^j := h_{t^j_{i-1}, \tilde k^j_{i-1}, \tilde k^j_{i}} \circ \ldots \circ h_{t_0^j, \tilde k^j_0, \tilde k^j_1} \circ h_{\gamma_{[0,j]}} .$$ 
 Recalling that $\delta'_0$ is the constant coming from Koebe Theorem (Equation \eqref{eq:koebe}), we fix $\delta < \delta_0'$ and set for every $0 \leq j \leq n-1$ and $1 \leq i \leq q_j$:
 $$ \delta^j_i :=  \vert (h_i^j)'(t_{k_0}(p_0))  \vert \delta e^{-\epsilon (q_0 + \ldots + q_j + i)} . $$
 Note that
 \begin{equation}\label{telesco}
 \delta_{i+1}^j = \vert (h_{ t^j_{i}, \tilde k^j_{i}, \tilde k^j_{i+1} })' ( t^j_i) \vert   \delta^j_i    e^{-\epsilon} \ , \  \delta_{q_{n-1}}^{n-1} =  \vert h'_{\gamma_{[0,n]}}(t_{k_0}(p_0)) \vert \delta e^{-\epsilon \sum_{j=0}^{n-1} q_j } .
 \end{equation}
We also have $ \delta^j_i  \leq \vert (h_i^j)'(t_{k_0}(p_0))  \vert \leq e^{j(\lambda + (3+ \theta)\epsilon)}$ by Lemma \ref{needto}, precisely by the second inequality of assertion $(\tau_j)$ with $n$ replaced by $j$. In the sequel we shall increase $n_1$ given by Proposition \ref{l: lebesg} without specifying it. In particular, one can assume that
 $$\forall j \geq n_1 \ , \ \forall 1 \leq i \leq q_j \ , \ \delta^j_i < r_0 < \delta_0' ,$$
where $r_0$ comes from Proposition \ref{l: lebesg}. Moreover, by Propositions \ref{d: lyapunov exponent}, \ref{p: integral of beta} and Corollary \ref{c: estimq}, we have for every $n \geq n_1$:
\begin{enumerate}
\item[(a)] $\delta \vert h'_{\gamma_{[0,n]}}(t_{k_0}(p_0)) \vert \geq e^{n (\lambda-\epsilon)}$,
\item[(b)] $\int_{\gamma_{[0,n]}} \beta \leq n (-h_D + \epsilon)$ (see Equation \eqref{eq: Radon Nikodym derivative} for the link with $\beta$),
\item[(c)]  $\sum_{j=0}^{n-1} q_j \leq  n M_+$.
 \end{enumerate}  
We can also require for every $n \geq n_1$ (see Proposition \ref{l: lebesg} for (d)):
\begin{enumerate}
\item[(d)] $\sum_{j=0}^{n-1} \eta_{r_0} (\gamma^j) \leq n \epsilon$,
\item[(e)] $\nu_{k_0} (\D_{t_{k_0}(p_0)}(\delta)) \leq e^{n \epsilon}$.
 \end{enumerate} 
Now let us prove the upper estimate on $\nu_{k_n} \left( \D_{t_{k_n}  (p_n)} (  e^{n (\lambda - \epsilon(1+M_+)) }  ) \right)$. By (a) and (c), it is smaller than (see Equation \eqref{telesco} for the equality):
$$     \nu_{k_n} \left( \D_{t_{k_n}  (p_n)}\left (\vert h'_{\gamma_{[0,n]}}(t_{k_0}(p_0)) \vert \delta e^{-\epsilon \sum_{j=0}^{n-1} q_j } \right) \right) = \nu_{k_n} \left( \D_{t_{k_n}  (p_n)} ( \delta_{q_{n-1}}^{n-1} ) \right) ,  $$
which is  in turn smaller than $\nu_{k_0} (\D_{t_{k_0}(p_0)}(\delta))$ times the telescopic product 
$$ \prod_{j=0}^{n-1} \prod_{i=0}^{q_j-1} {  \nu_{\tilde k_{i+1}^j} \left(  \D_{t_{i+1}^j}  \left( \delta_{i+1}^j \right) \right)  \over   \nu_{\tilde k_i^j} \left(  \D_{t_i^j}   (   \delta^j_i ) \right)   }  . $$
Let us set $$S_n := \sum_{j=0}^{n-1} \sum_{i=0}^{q_j-1} \eta(t^j_i , \tilde k^j_i , \tilde k^j_{i+1} , \delta^j_i ) .$$ 
By the first inequality of Lemma \ref{eq:koebeRN} (with $\delta$ replaced by $\delta_i^j < \delta_0'$) and Equation \eqref{telesco}, the telescopic product is smaller than 
$$ e^ {S_n} \prod_{j=0}^{n-1} \prod_{i=0}^{q_j-1} {D(h_{t^j_i , \tilde k^j_i , \tilde k^j_{i+1}}^{-1})_* \nu_{\tilde k^j_{i+1}}  \over D\nu_{\tilde k^j_i}} (t^j_i) . $$
We now analyse separately $e^ {S_n}$ and the double product. Let us begin with the double product.
Let $\alpha_1$ be a leafwise path connecting $\gamma(0)$ to $\Sigma_{k_0}$ in the foliation box $U_{k_0}$, and let $\alpha_2$ be a leafwise path connecting $\gamma(n)$ to $\Sigma_{k_n}$ in the foliation box $U_{k_n}$,  whose $g_P$-length is smaller than $\log (\epsilon n)$ (if $k_n \in J$, hence near a singular point, we can use the path $\Lambda_{\gamma(n)}$ provided by Lemma \ref{lemma: angularproj}).
By Equation (\ref{eq: Radon Nikodym derivative}), the double product is equal to $\exp( \int_{\gamma^*} \beta )$, where $\gamma^*$ stands for $\gamma_{[0,n]}$ concatened with $\alpha_1$ and $\alpha_2$. 
By Lemma \ref{lemmaharnack}, the $1$-form $\beta$ is $g_P$-bounded. Using (b), we get $$ \int_{\gamma^*} \beta  \leq \int_{\alpha_1} \beta + \vert \beta \vert  \log(\epsilon n) +  n(-h_D + \epsilon)\leq  n(-h_D + 2\epsilon) , $$
where the last inequality holds for $n \geq n_0$ large.
Using $\nu_{k_0} (\D_{t_{k_0}(p_0)}(\delta)) \leq e^{n \epsilon}$ provided by (e), we obtain at this stage
$$ \forall n \geq n_0 \ , \ \nu_{k_n} \left( \D_{t_{k_n}  (p_n)}\left (  e^{n(\lambda - \epsilon(1+M_+))}  \right) \right)  \leq e^ {S_n}  e^{n(-h_D + 3\epsilon)} . $$ 
It remains to prove that $e^{S_n} \leq e^{2n \epsilon}$ for $n$ large enough. Since we chose $n_1$ such that $\delta^j_i \leq r_0$ for $j \geq n_1$ and $1 \leq i \leq q_j$, we obtain by definition of $\eta_{r_0}$:
$$ S_n = S_{n_1} + \sum_{j=n_1+1}^{n-1} \sum_{i=0}^{q_j-1} \eta(t^j_i , \tilde k^j_i , \tilde k^j_{i+1} , \delta^j_i ) \leq S_{n_1} + \sum_{j=n_1+1}^{n-1} \eta_{r_0}(\gamma ^ j) ,$$
which by (d) is smaller than $2n \epsilon$ for $n$ large enough, as desired. 
 
The second item of Proposition \ref{prop:expentropy} (lower estimate) is obtained similarly, by introducing counterparts of (a), (b), (c), (d), (e) and increasing $n_1(\gamma)$ if necessary. The main difference is that we require that $t_{k_0}(p_0)$ belongs to the support of $\nu_{k_0}$, in order to get the lower bound $\nu_{k_0} (\D_{t_{k_0}(p_0)}(\delta)) \geq e^{-n \epsilon}$ for (e). This completes the proof of Proposition \ref{prop:expentropy}.

\section{Dimension of transversal measures (proof of Theorem \ref{thmB})}\label{proofthmB}

We begin with the following Lemma.

\begin{lemma}\label{prop: double}
Let $\epsilon' >0$. There exists $\Lambda \subset (S^* \times K)^\N$ of full $W^1_K$-measure such that for every $w = ( p_n,k_n )_{n \in \N} \in \Lambda$ and for every $n \geq n_2(w)$, 
\begin{enumerate}
\item[1.] $\nu_{k_0} (\D_{t_{k_0}  (p_0)} ( e^{n(\lambda- \epsilon(1+M_+))}  )) \leq e^{-n (1-3 \epsilon' ) (h_D -5\epsilon)}$,
\item[2.] $\nu_{k_0} (\D_{t_{k_0}  (p_0)} ( e^{n (1-3 \epsilon' ) (\lambda + \epsilon(1+M_-)) } )) \geq e^{-n  (h_D  + 5 \epsilon)}$.
\end{enumerate}
\end{lemma}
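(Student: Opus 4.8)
The statement says that the pointwise estimates of Proposition \ref{prop:expentropy}, which hold at the "future" points $p_n$ along $W^1_K$-generic orbits, can be transported back to the initial point $p_0$ — at the cost of replacing $n$ by $(1-3\epsilon')n$ in the radius exponent of item 2, and degrading the entropy exponent by the same factor in item 1. The natural tool is the natural extension / the ergodic theorem applied to the dynamical system $((S^*\times K)^\Z,\widehat\sigma_K,\widehat W^1_K)$ introduced at the end of Section \ref{ss: diext}: a $W^1_K$-generic one-sided orbit $(p_n,k_n)_{n\geq 0}$ is the forward part of a $\widehat W^1_K$-generic two-sided orbit $(p_n,k_n)_{n\in\Z}$, and the "present at $p_0$" is the "future at $p_{-n}$" for the shifted orbit. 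So the plan is: apply Proposition \ref{prop:expentropy} to the $W^1_K$-generic one-sided orbits obtained by forgetting the negative indices of $\widehat\sigma_K^{-m}\widehat w$ for every $m\geq 0$; this gives, for $\widehat W^1_K$-a.e. $\widehat w=(p_n,k_n)_{n\in\Z}$ and every $m$ large,
\[
\nu_{k_{-m}}\bigl(\D_{t_{k_{-m}}(p_{-m})}(e^{m(\lambda-\epsilon(1+M_+))})\bigr)\leq e^{-m(h_D-5\epsilon)},
\]
and similarly for the lower bound with $M_-$. The issue is that this controls the transversal measure at the moving point $p_{-m}$, not at the fixed point $p_0$. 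To bridge the gap I would use the holonomy $h_{\gamma_{[-m,0]}}$ (defined via the natural extension path, as in Section \ref{ss: diext}), which, by Proposition \ref{prop:holodist} applied backward (or rather, by the contraction estimates of that Proposition applied to the forward orbit of $\widehat\sigma_K^{-m}\widehat w$), carries a disc of fixed radius $\delta_{\widehat w}$ around $t_{k_{-m}}(p_{-m})$ injectively into $\Sigma_{k_0}$, with derivative comparable to $e^{m\lambda}$; and the Radon–Nikodym derivative of the pushed current is $\exp\int\beta\approx e^{-mh_D}$ by Proposition \ref{p: integral of beta}.

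Concretely, the key step is the following chain of comparisons, valid for $m$ large. By Proposition \ref{prop:holodist} (applied to the forward-shifted orbit $\widehat\sigma_K^{-m}\widehat w$, whose time-$m$ point is $p_0$), the holonomy $h:=h_{\gamma_{[-m,0]},k_{-m},k_0}$ is well defined and injective on $\D_{t_{k_{-m}}(p_{-m})}(\delta)$ for a fixed $\delta=\delta_{\widehat w}>0$, with $|h'|\approx e^{m(\lambda\pm 3\epsilon)}$ there; so $h\bigl(\D_{t_{k_{-m}}(p_{-m})}(\delta)\bigr)$ is a topological disc inside $\Sigma_{k_0}$ of roughly the comparable radius. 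Using Koebe distortion as in Lemma \ref{eq:koebeRN}, this image is sandwiched between $\D_{t_{k_0}(p_0)}(e^{m(\lambda-3\epsilon)}\delta e^{-\epsilon})$ and $\D_{t_{k_0}(p_0)}(e^{m(\lambda+3\epsilon)}\delta e^{\epsilon})$. Pushing the current, $\nu_{k_0}(h(\D(\delta)))=e^{S_m}\cdot\exp\!\bigl(\int_{\gamma_{[-m,0]}}\beta\bigr)\cdot\nu_{k_{-m}}(\D(\delta))$, where $S_m$ collects the distortion errors $\eta(\cdots)$ and is $\leq \epsilon m$ by Proposition \ref{l: lebesg}, and $\int\beta\in[m(-h_D-\epsilon),\,m(-h_D+\epsilon)]$ by Proposition \ref{p: integral of beta}. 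Combining with the Proposition \ref{prop:expentropy} estimates on $\nu_{k_{-m}}(\D(\delta))$ (note the disc radius there is $e^{m(\lambda\pm\epsilon(1+M_\pm))}$, which is $\ll\delta$, so one also needs Proposition \ref{prop:expentropy}'s estimate for the smaller disc and Harnack/monotonicity to bound $\nu_{k_{-m}}(\D(\delta))$ from above by a controlled power of $e^{m}$), one obtains, for the right radius around $t_{k_0}(p_0)$, an upper bound of the form $e^{-m(h_D-c\epsilon)}$ and a lower bound $e^{-m(h_D+c\epsilon)}$. The factor $(1-3\epsilon')$ enters because the disc around $t_{k_0}(p_0)$ one actually controls has radius $\approx e^{m(\lambda\pm(3+\cdots)\epsilon)}$ rather than $e^{m(\lambda\pm\epsilon(1+M_\pm))}$: to rewrite the radius exactly as $e^{n(\lambda+\epsilon(1+M_-))}$ for a cleanly chosen index $n$, one sets $m=(1-3\epsilon')n$ (absorbing the extra $O(\epsilon)m$ in the exponent into the gap $\lambda<0$ times $3\epsilon' n$), which is why the $(1-3\epsilon')$ multiplies both the radius exponent and the resulting bound in item 2, and why item 1's entropy is degraded to $(1-3\epsilon')(h_D-5\epsilon)$.

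The main obstacle, and the place where care is needed, is exactly this bookkeeping of exponents: one must track how Koebe distortion ($e^{\pm\epsilon}$ per crossing, summing to the controlled quantities via Propositions \ref{l: lebesg} and \ref{p: estimq}), the $(3+\theta)\epsilon$ losses from Proposition \ref{prop:holodist}, the $\log(\epsilon n)$ correction from the near-singular transversal path $\Lambda_{\gamma(-m)}$ (Lemma \ref{lemma: angularproj}) — which is $o(m)$ hence absorbed — and the mismatch between the radius $\delta$ used for the holonomy domain and the much smaller radii $e^{m(\lambda\pm\epsilon(1+M_\pm))}$ appearing in Proposition \ref{prop:expentropy} all combine to give a clean bound after reparametrizing $m=(1-3\epsilon')n$. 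The monotonicity of the trace measure (Equation \eqref{eq: monotonicity trace measure}) together with Harnack \eqref{eq: harnack} is what lets one pass between the fixed-radius disc $\D(\delta)$ and the exponentially small discs without losing more than a controlled factor. Finally, one takes $\Lambda$ to be the full-measure set where Propositions \ref{prop:holodist}, \ref{prop:expentropy}, \ref{l: lebesg}, \ref{d: lyapunov exponent}, \ref{p: integral of beta} and Corollary \ref{c: estimq} all hold along the forward orbits of $\widehat\sigma_K^{-m}\widehat w$ for every $m\geq 0$ (a countable intersection of full-measure sets, using $\widehat\sigma_K$-invariance of $\widehat W^1_K$), and $n_2(w)$ is chosen larger than the corresponding $n_1$'s.
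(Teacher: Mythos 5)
You have the right framework (natural extension, viewing $p_0$ as a forward-time point along a backward-shifted orbit), but the core of your plan rests on a misreading of Proposition \ref{prop:expentropy}, and the one genuine difficulty of the lemma is left unaddressed. First the misreading: if $\Pi$ denotes the map forgetting negative indices and you apply Proposition \ref{prop:expentropy} to the one-sided sequence $\Pi(\widehat\sigma_K^{-m}\widehat w)=(p_{-m+j},k_{-m+j})_{j\ge 0}$ at time $j=m$, its conclusion is an estimate at the \emph{time-$m$ point of that sequence}, which is exactly $(p_0,k_0)$: it gives $\nu_{k_0}\bigl(\D_{t_{k_0}(p_0)}(e^{m(\lambda-\epsilon(1+M_+))})\bigr)\le e^{-m(h_D-5\epsilon)}$ directly, i.e.\ precisely the quantity the lemma asks about. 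It does not give an estimate at the moving point $p_{-m}$, so the "gap" you then set out to bridge does not exist, and the entire holonomy-transport construction (pushing discs by $h_{\gamma_{[-m,0]}}$, Koebe, $\int\beta$, the sums $S_m$) is an unnecessary re-proof of Proposition \ref{prop:expentropy}; moreover its bookkeeping is off (transporting the disc of radius $e^{m(\lambda-\epsilon(1+M_+))}$ at $p_{-m}$ by a map of derivative $\approx e^{m\lambda}$ yields a disc of radius $\approx e^{2m\lambda}$ at $p_0$, the wrong scale, so to make it work you would have to rerun the whole chain argument of Section \ref{s: chain} rather than quote the proposition).

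Second, the real difficulty is the non-uniformity of the threshold: $n_1\bigl(\Pi(\widehat\sigma_K^{-m}\widehat w)\bigr)$ depends on $m$ and is in general unbounded along the backward orbit, so your closing step ("take $\Lambda$ a countable intersection of full-measure sets and choose $n_2(w)$ larger than the corresponding $n_1$'s") is not available — there are infinitely many thresholds and nothing forces the one at shift $m\approx n$ to be $\le n$. This is exactly what the paper's proof handles, and it is where the factor $(1-3\epsilon')$ really comes from: one fixes $N$ with $\widehat W_K\{\,n_1\circ\Pi\le N\,\}\ge 1-\epsilon'$, uses Birkhoff to get, for $n\ge n_2(w)$, at least $n(1-2\epsilon')$ backward shifts $m\in\{0,\dots,n\}$ landing in this set, picks $m_n\in\{\lceil n(1-3\epsilon')\rceil,\dots,n\}$ among them (so that $n_1$ of the shifted orbit is $\le N\le m_n$), applies Proposition \ref{prop:expentropy} to $\Pi(\widehat\sigma_K^{-m_n}\widehat w)$ at time $m_n$, and finally converts $m_n$ into $n$ in the exponents using $m_n\in[n(1-3\epsilon'),n]$, $\lambda<0$, $h_D\ge 0$ and the monotonicity of $r\mapsto\nu_{k_0}(\D_{t_{k_0}(p_0)}(r))$ — not, as you suggest, by absorbing $O(\epsilon)$-losses of a transport argument into the gap $3\epsilon' n|\lambda|$. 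Without this density argument your proof has a genuine gap even if the transport step were repaired.
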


\begin{proof} 
We use the natural extension introduced in Section \ref{ss: diext}. Let $\Pi : ( S^* \times K)^\Z \to ( S^* \times K)^\N$ be the mapping $\Pi ( (p_n , k_n)_{n \in \Z} ) := (p_n , k_n)_{n \in \N}$. Let $n_1$ be given by Proposition \ref{prop:expentropy} and let $N$ be large enough such that  
$$ \widehat \Omega := \{ \widehat w  \in (S^* \times K)^\Z  \ , \  n_1 \circ \Pi(\widehat w) \leq N \}  $$
satisfies $\widehat W_K (\widehat \Omega) \geq 1 - \epsilon'$. For every $\widehat w \in (S^* \times K)^\Z$, we denote 
$$\Sigma_n (\widehat w) :=\left  \{ k \in \{ 0 , \ldots , n \}  \ , \ \widehat \sigma_K^{-k} (\widehat w) \in \widehat \Omega \right  \} . $$
By Birkhoff ergodic Theorem, there exists $\widehat \Lambda \subset  (S^* \times K)^\Z$ of full $\widehat W_K$-measure such that for every $\widehat w \in \widehat \Lambda$, there exists $p(\widehat w) \in \N$ satisfying 
$$  \forall n \geq p(\widehat w) \ \ , \ \  \# \Sigma_n (\widehat w) \geq n (\widehat W_K (\widehat \Omega) - \epsilon') \geq n(1-  2\epsilon') . $$
Let $\Lambda := \Pi ( \widehat \Lambda)$. For every $w \in \Lambda$, let $\widehat w = (p_n , k_n)_{n \in \Z} \in \widehat \Lambda$ such that $\Pi(\widehat w)= w$. For every $n \geq n_2(w) := \max \{ p(\widehat w) , N / (1-3\epsilon') \}$, let 
$$m_n \in \Sigma_n (\widehat w) \cap \{ n(1-3\epsilon') , \ldots , n \} . $$
Since $m_n \in \Sigma_n (\widehat w)$, we have $\widehat \sigma_K ^{-m_n} (\widehat w) \in \widehat \Omega$, and thus we get 
\begin{equation}\label{enca}
 n_1 \circ \Pi (\widehat \sigma_K^{-m_n} (\widehat w)) \leq N \leq n (1-3\epsilon') \leq m_n \leq n .
 \end{equation}
Then, Proposition \ref{prop:expentropy} applied to $\Pi(\widehat \sigma_K^{-m_n} (\widehat w))$ and to the integer $m_n$ yields 
$$\nu_{k_0} ( \D_{t_{k_0}  (p_0)} ( e^{m_n (\lambda - \epsilon(1+M_+)) }  ) ) \leq e^{-m_n (h_D - 5 \epsilon) } . $$
By using Equation (\ref{enca}), we obtain finally (since $\lambda < 0$, $h_D > 0$ and $\epsilon \ll 1$)
$$ \nu_{k_0} (\D_{t_{k_0}  (p_0)} (e^{n (\lambda - \epsilon(1+M_+))} )  \leq  e^{-n(1-3\epsilon')(h_D -5 \epsilon)}   $$
 for every $n \geq n_2(\omega)$. The lower estimate is proved similarly, by applying the second estimate of Proposition \ref{prop:expentropy}.
 \end{proof}

To complete the proof of Theorem \ref{thmB}, it now suffices to show that for every $k \in K$, for $\nu_k$-almost every $t$ and for every $n \geq n_2(k,t)$, we have
\begin{enumerate}
\item[(i)] $\nu_k (\D_t ( e^{n(\lambda- \epsilon(1+M_+))}  )) \leq e^{-n (1-3 \epsilon' ) (h_D -5\epsilon)}$,
\item[(ii)] $ \nu_k (\D_t ( e^{n (1-3 \epsilon' ) (\lambda + \epsilon(1+M_-)) } )) \geq e^{-n  (h_D  + 5 \epsilon)}$.
\end{enumerate}

Recall that $(\pi_0)_* W^1_K= \mu$, where $\pi_0$ is the projection $(p_n , k_n)_n \mapsto p_0$, see Section \ref{ss: diext}. Let $k \in K$ and $t_k : U_k \to \D$ be the first integral defined in Section \ref{sss: cov}. In $U_k$,  $\mu = \int \int _{{\bf D} \times \{t\} } \tau_k (\cdot, t) d\text{vol}_{g_P}(\cdot)  \nu_k (dt)$  
where $\tau_k(\cdot, t)$ are positive harmonic functions. It follows that $(t_k)_* \mu$ is equivalent to $\nu_k$ by Harnack inequality \eqref{eq: harnack}. We deduce the aimed properties $(i)$ and $(ii)$ from $(\pi_0)_* W^1_K= \mu$ and from Lemma \ref{prop: double}, whose estimates hold $W^1_K$-generically.

\part{Entropies and discreteness}

\section{Leaf entropy $h_L$}\label{s: entropies} In the preceding sections we used the dynamical entropy $h_D$ (defined  in Section \ref{ss: DE}) to estimate the dimension of the current $T$. The \emph{leaf entropy} $h_L$ is another notion of entropy introduced by Kaimanovich \cite{Kaimanovich} and Ledrappier \cite{L3}. Endowing the leaves with the Poincar\'e metric \(g_P\), let \(p(x,y,\Timet)\) be the heat kernel associated to \(g_P\) on the leaf \( L_x\). 
We have 
\begin{equation} \label{eq: entropykingman}  h_L := \lim_{\Timet\rightarrow +\infty} - {1 \over \Timet} \int_{S^*} \int_{L_x}  p(x,y,\Timet) \log p(x,y,\Timet) \mathrm{vol}_{g_P} (dy) d\mu(x).
\end{equation} 
It satisfies (see  \cite[Section III]{L3}) for \(\mu\)-a.e. \(x\in S\):
\begin{equation} \label{eq: convergence entropy} h_L = \lim _{\Timet\rightarrow +\infty} - \frac{1}{\Timet} \int_{L_x}  p(x,y,\Timet) \log p(x,y,\Timet) \mathrm{vol}_{g_P} (dy) . \end{equation}
This number is well-defined since leaves are complete of bounded curvature, and appart the union of the separatrices that have vanishing \(\mu\)-measure, the injectivity radius of the leaves is bounded from below by a positive constant. 

Kaimanovich \cite[Theorem 4]{Kaimanovich} proved that $h_D \leq h_L$, we give a proof in Appendix for sake of completeness. We now prove Theorem \ref{t: dischh}, namely:\\

\emph{Let $\FF$ be a foliation on an algebraic surface $S$ with hyperbolic singularities and no foliated cycle.  If the holonomy pseudogroup of $\FF$ in restriction to the pseudo-minimal set is discrete, then $h_D = h_L$.}\\

{\bf Question:} Let $\FF$ be a foliation on an algebraic surface $S$ with hyperbolic singularities and no foliated cycle. If $h_D = h_L$, is the holonomy pseudogroup of $\FF$ discrete in restriction to the pseudo-minimal set? 

\section{$h_D = h_L$ (proof of Theorem \ref{t: dischh})}

It suffices to prove that $h_L \leq h_D$ when the holonomy pseudogroup of $\FF$ is discrete in restriction to the pseudo-minimal set. This follows from Proposition \ref{p: separated} below, which extends \cite[Proposition 8.2]{Deroin-Dupont} to singular foliations (in particular Item (3)). We shall need new features to neutralize the singularities of $\FF$, in particular the \(\mu\)-integrability of the function $\rho$.   \\

We fix a $\mu$-generic point\footnote{$B$ is the neighbourhood of the singular set, see Section \ref{ss: discretization}.} $x \in S \setminus B$.  We use the covering of $S^*$ by foliation boxes $(U_k,t_k)_{k \in K}$ constructed  in Section \ref{sss: cov}.

- We fix $k_x \in I \setminus J \subset K$ such that $x \in U_{k_x} \simeq \D \times \D$, 

- $\Sigma_x := \Sigma_{k_x}$ denotes the transversal\footnote{$x$ may not belong to $\Sigma_x$.} $\{ 0 \} \times \D \subset U_{k_x}$,

- $t_x := t_{k_x} : U_{k_x} \to \D$ denotes the local first integral, 

- with no loss of generality, $t_x(x) =0 \in \D$,

- $\nu_x$ denotes the pushforward measure $(t_x)_* (T_{\vert \Sigma_x}$). 

Let $\D_w(r) \subset \D$ denote the disc centered at $w$ of radius $r$. By Koebe distortion Theorem, there exists $\kappa> 0$ such that for every holomorphic injective function $h : \D_0(2 \rho) \to \D$, 
\begin{equation}\label{KDT}
 \forall r \leq \rho  \  , \ \D_{h(0)} (e^{-\kappa} |h'(0)| r ) \subset h (\D_0 (r) ) \subset \D_{h(0)} (e^{\kappa} |h'(0)| r ) .
 \end{equation}
The Lyapunov exponent $\lambda$ and the dynamical entropy $h_D$ have been introduced in Sections \ref{ss: LE} and \ref{ss: DE}.
By Lemma \ref{lemmaharnack}, the $1$-form $\beta$ is bounded with respect to the Poincar\'e metric $g_P$, let $\vert \beta \vert$ denote an upper bound.

\begin{proposition}\label{p: separated} 
Let $\FF$ be a foliation on an algebraic surface $S$ with hyperbolic singularities and no foliated cycle. Assume that the holonomy pseudogroup of $\FF$ is discrete in restriction to the pseudo-minimal set. Let $\epsilon > 0$. For every $n \geq N(\epsilon)$, there exist
\begin{enumerate}
\item[(i)] a set $I_n' \subset {\LL}_x \cap \Sigma_x$ of cardinality larger than $e^{n(h_L-3\epsilon)}$,
\item[(ii)]  leafwise continuous paths $$\gamma_z : [0,n+1] \to \LL_x$$ with endpoints $x$ and $z \in I_n' \subset L_x \cap \Sigma_x$,
\end{enumerate}
such that for some $r_1>0$ and for every $z\in I_n'$,  the holonomy map 
$$h_{x,z} := h_{\gamma_z , t_x, t_x}$$ satisfies  the following properties
\begin{enumerate}
\item[(1)] $h_{x,z} (\D_0(r_1)) \subset \D_0(r_1)$,
\item[(2)] the sets $h_{x,z}(\D_0(r_1))$ for $z \in I_n'$ are pairwise disjoint,
\item[(3)] $\nu_x (h_{x,z} (\D_0(r_1))) \geq \nu_x ( \D_{t_x(z)} ( e^{n (\lambda + \epsilon(1+M_-)) }  )) \geq e^{-n  (h_D  +  5 \epsilon + \vert \beta \vert \epsilon)}$.
\end{enumerate}
\end{proposition}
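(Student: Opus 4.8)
The plan is to prove Proposition~\ref{p: separated}, which is the engine behind $h_L \le h_D$. The basic mechanism follows \cite[Proposition~8.2]{Deroin-Dupont}: the number of leafwise $(n,\cdot)$-separated points in a ball of the leaf $\LL_x$ grows at rate $h_L$ (this is essentially the meaning of $h_L$ as the exponential volume-weighted entropy of the heat kernel, combined with the fact that the heat kernel at time $n$ is comparable, on the relevant scale, to $e^{-n h_L}$), and to each separated endpoint $z$ we attach the holonomy map $h_{x,z}$ along a path $\gamma_z$ of length $O(n)$. The discreteness of the holonomy pseudogroup on the pseudo-minimal set is what forces, for a suitable fixed radius $r_1$, that distinct separated endpoints give \emph{disjoint} images $h_{x,z}(\D_0(r_1))$: if two such images overlapped, composing one holonomy with the inverse of the other would produce a nontrivial holonomy germ uniformly close to the identity on a fixed disc, contradicting discreteness once $n$ is large. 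This gives (1) and (2). For (3), the first inequality is Koebe distortion (\ref{KDT}) applied to $h_{x,z}$, whose derivative at $0$ is $\simeq e^{n(\lambda+\cdots)}$ by Proposition~\ref{d: lyapunov exponent} / Proposition~\ref{p: lyapunov product}; the second inequality is exactly the lower transversal-measure estimate, Remark~\ref{rk: undiscr} (equivalently item~2 of Proposition~\ref{prop:expentropy}), applied along a $W^x_{g_P}$-generic Brownian path through $\gamma_z(n)$, with the extra factor $e^{-n|\beta|\epsilon}$ absorbing the bounded $1$-form $\beta$ over the short connecting arc of length $\le \epsilon n$.

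**Key steps, in order.** First, I would recall the Kaimanovich/Ledrappier characterization: for $\mu$-a.e.\ $x$ and large $n$, there is a subset $I_n \subset B_{\LL_x}^{g_P}(x, R_0 n)$ (some fixed $R_0$) of points that are pairwise $g_P$-separated by a fixed distance and with $\#I_n \ge e^{n(h_L - \epsilon)}$, obtained by a Shannon–McMillan–Breiman type argument applied to $-\tfrac1n\log p(x,\cdot,n)$ together with the volume growth of balls; this is the content borrowed from \cite{Deroin-Dupont,L3}. Second, I would replace the metric balls by plaque-coordinates: by Proposition~\ref{p: estimq} and Corollary~\ref{c: estimq}, for $W^\mu_{g_P}$-a.e.\ path the number of foliation boxes crossed up to time $n$ is $\le \epsilon n$, so after a harmless homotopy each $z \in I_n$ is joined to $x$ by a leafwise path $\gamma_z:[0,n+1]\to\LL_x$ (parametrized to land in $\Sigma_x$ at the end, using one extra unit of time for the plaque adjustment), giving the paths of item~(ii). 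Third, I would invoke the discreteness hypothesis: the holonomy pseudogroup restricted to the pseudo-minimal set (which carries $T$) being discrete means there is $\eta_0>0$ and a fixed radius $r_1>0$ such that any holonomy germ defined on $\D_0(r_1)$ and within $\eta_0$ (in $C^0$ or $C^1$) of the identity is the identity; combined with the uniform contraction $|h_{x,z}'(0)| \simeq e^{n\lambda} \to 0$ from Proposition~\ref{p: lyapunov product} and Koebe on a definite disc, this yields both the invariance $h_{x,z}(\D_0(r_1))\subset\D_0(r_1)$ and the pairwise disjointness of the images for $n$ large. Fourth, I would derive (3) by chaining Koebe (\ref{KDT}) with the already-proven lower bound of Proposition~\ref{prop:expentropy}/Remark~\ref{rk: undiscr}, noting $\psi_{k_x}(\pi(\gamma_z(n)))$ can be arranged positive and that the short connecting arc contributes a factor $\ge e^{-|\beta|\cdot\epsilon n}$ via the $g_P$-boundedness of $\beta$ (Lemma~\ref{lemmaharnack}). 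Finally, summing (3) over $I_n'$ and using disjointness inside $\D_0(r_1)\subset$ the total transversal mass gives $e^{n(h_L-3\epsilon)} \cdot e^{-n(h_D + 5\epsilon + |\beta|\epsilon)} \le \nu_x(\D_0(r_1)) \le \text{const}$, whence $h_L \le h_D$ after letting $\epsilon \to 0$, proving Theorem~\ref{t: dischh}.

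**Main obstacle.** The delicate point is making the discreteness argument produce a \emph{uniform} radius $r_1$ and disjointness simultaneously for all $e^{n(h_L-3\epsilon)}$ endpoints, in the presence of singularities. The issue is twofold: (a) the separated points $z$ and the target points $\gamma_z(n)$ may wander near $\text{sing}(\FF)$, where plaque sizes degenerate, so one must use the $\mu$-integrability of $\rho$ (Theorem~\ref{th: Nguyen estimates}) and the estimate $l_{g_P}(\Lambda_{\gamma(n)})\le\log(\epsilon n)$ of Lemma~\ref{lemma: angularproj} to control how close to the singular set the endpoints sit, and to bound the length of the connecting arcs $\alpha_1,\alpha_2$ — this is precisely why the $+1$ appears in $[0,n+1]$ and why item~(3) carries the $|\beta|\epsilon$ correction; (b) the overlap-to-identity argument requires that $h_{x,z}\circ h_{x,z'}^{-1}$ be defined and small on a fixed disc, which again needs the uniform domain-of-definition statement from Proposition~\ref{prop:holodist} (valid away from, and controlled near, the singularities via the product metric $m_{pr}$ of Section~\ref{s:PM}). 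Assembling these uniformities — fixed $r_1$, fixed $\eta_0$, control of excursions to $\text{sing}(\FF)$, all holding for $n \ge N(\epsilon)$ on a $\mu$-full set — is the technical heart; the combinatorial entropy count and the Koebe estimates are routine by comparison.
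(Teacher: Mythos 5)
Your outline reproduces the paper's architecture (separated endpoints on the transversal counted at rate $h_L$ via the almost-lattice argument, holonomy contraction at rate $\lambda$, Koebe, the lower measure bound of Proposition \ref{prop:expentropy}/Remark \ref{rk: undiscr} with the $\vert\beta\vert\epsilon$ correction for the connecting arc, and the final summation over disjoint images), but the one step where discreteness is actually used --- item (2) --- is wrong as you state it. You claim that if $h_{x,z}(\D_0(r_1))$ and $h_{x,z'}(\D_0(r_1))$ overlap, then $h:=h_{x,z'}^{-1}\circ h_{x,z}$ is a holonomy germ uniformly close to the identity on a fixed disc, contradicting discreteness. Overlap only gives $\vert t_x(z)-t_x(z')\vert\le 2\rho_n$ with $\rho_n\simeq r_1e^{\lambda_n}$; since $h_{x,z'}^{-1}$ expands by roughly $e^{-\lambda_n}$, the displacement $\vert h(0)\vert\simeq e^{-\lambda_n}\vert t_x(z)-t_x(z')\vert$ can be of order $r_1$, not small. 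So $h$ is merely a bounded-distortion element of the pseudogroup, and discreteness does not exclude nontrivial such elements: distinct points of $\LL_x\cap\Sigma_x$ that are related by a nontrivial bounded-distortion holonomy map really do have overlapping images, so no contradiction is available and your claimed disjointness for \emph{all} separated endpoints fails. The paper uses discreteness differently: it implies that the set $\mathcal H$ of holonomy maps on $\D_0(r_1)$ with $\vert\log\vert h'(0)\vert\vert\le\kappa'$ is \emph{finite}; one then passes from $I_n$ to a subset $I_n'$ of representatives of $I_n$ modulo right composition by $\mathcal H$, losing only the bounded factor $\vert\mathcal H\vert$ (this is exactly where $h_L-2\epsilon$ drops to $h_L-3\epsilon$), after which distinct representatives satisfy $\vert z-z'\vert>2\rho_n$ and disjointness follows from the diameter bound $\diam h_{x,z}(\D_0(r_1))\le\rho_n$, not from a closeness-to-identity argument.

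A second, related omission: before any comparison of images you must normalize the contraction rates. The paper pigeonholes $J_n$ into $O(E+\epsilon n)$ windows of length $\log 2$ for $\log\vert h_{x,z}'(0)\vert_{m_{pr}}$ (Lemma \ref{l: cl2}, items (i')--(b)) and keeps the largest class $I_n$, so that all maps $h_{x,z}$, $z\in I_n$, contract at a common rate $e^{\lambda_n}$ with $\lambda_n\in[n(\lambda-2\epsilon),n(\lambda+2\epsilon)]$; without this, neither a uniform $\rho_n$, nor the inclusion $h_{x,z'}(\D_0(r_0/2))\supset\D_{t_x(z)}(\rho_n)$ that makes $h_{x,z'}^{-1}\circ h_{x,z}$ well defined with distortion $\le\kappa'$, nor membership in the finite set $\mathcal H$, is available. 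Your treatment of the singular set (the $\log(\epsilon n)$ bound of Lemma \ref{lemma: angularproj}, the extra unit of time, and the $e^{-n\vert\beta\vert\epsilon}$ factor) and of item (3) is in line with the paper; the gap is concentrated in the disjointness mechanism.
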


Using properties (1), (2), (3) and then (i), we infer for every $n \geq N(\epsilon)$,
$$\nu_x ( \D_0 (r_1) )  \geq  \sum_{z \in I_n'} \nu_x (h_{x,z}(\D_0(r_1))) \geq e^{n(h_L-3\epsilon)} e^{-n  (h_D  + 5 \epsilon + \vert \beta \vert \epsilon)} . $$
This implies $h_L - 3 \epsilon \leq h_D + 5 \epsilon + \vert \beta \vert \epsilon$. Letting $\epsilon$ to zero, we obtain $h_L \leq h_D$. Now we prove  Proposition \ref{p: separated}.

\subsubsection{Discretization of entropy}\label{ss: dientropy}
Here we interpret $h_L$ as the exponential growth of separated positions for leafwise Brownian paths  starting at $\mu$-generic points. Let $d_P$ be the distance on $\LL_x$ induced by the leafwise Poincar\'e metric. A subset $Z \subset \LL_x$ is a \emph{$(C,D)$-lattice} if its elements are $C$-separated and if any element of $\LL_x$ lies at a distance $\leq D$ from $Z$ for the distance $d_P$. Shortly, $Z$ is a \emph{lattice} if it is a $(C,D)$-lattice for some $C,D$. Let  
$$ \forall z \in \LL_x \ , \  V(z) := \cap_{z' \in Z} \  \{ y \in \LL_x \, , \,  d_P(y,z) \leq d_P(y,z')   \} , $$ 
this is the tile of the Voronoi tesselation associated to $Z$ containing $z$. For $n \geq 1$ let $B_n$ be the distribution at  time $n$ of leafwise Brownian motion starting at $x$ (it has density $p(x,y,n) \mathrm{vol}_{g_P} (dy)$). Let $v$ be the projection $\LL_x \rightarrow Z$ which contracts every tile to its center, it is defined $B_n$-a.e.. Let $\beta_n := v_* B_n$, this is a probability measure on $Z$. Proposition \ref{p: adapt} below extends the following result to separated subsets which are almost lattices.

\begin{proposition} {\cite[Theorem 3]{K2}, \cite[p. 195]{L3}} \label{l: signification of entropy}
Let $Z$ be a lattice in $\LL_x$ and $(Z_n)_n$ be subsets of $Z$ such that $\beta_n (Z_n) \geq 1/2$. Then $\liminf_{n\rightarrow \infty} \frac{1}{n} \log \  \lvert Z_n \rvert \geq h_L$.
\end{proposition}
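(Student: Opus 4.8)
The plan is to deduce the bound from a pigeonhole argument, once we control the mass that $\beta_n$ can put on "large" atoms; the latter control is a one-sided Shannon–McMillan–Breiman statement for leafwise Brownian motion, which is the substantive input.

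Fix $\epsilon>0$ and set $G_n^+:=\{z\in Z:\beta_n(z)>e^{-n(h_L-\epsilon)}\}$. Since $\sum_{z\in Z}\beta_n(z)=1$ we have $|G_n^+|<e^{n(h_L-\epsilon)}$. I claim $\beta_n(G_n^+)\to 0$. Granting this, for $n$ large $\beta_n(G_n^+)<\tfrac14$, hence $\beta_n(Z_n\setminus G_n^+)\ge\beta_n(Z_n)-\beta_n(G_n^+)\ge\tfrac14$; as every $z\in Z_n\setminus G_n^+$ satisfies $\beta_n(z)\le e^{-n(h_L-\epsilon)}$,
$$\tfrac14\le\sum_{z\in Z_n\setminus G_n^+}\beta_n(z)\le |Z_n|\,e^{-n(h_L-\epsilon)},$$
so $\tfrac1n\log|Z_n|\ge h_L-\epsilon-\tfrac{\log 4}{n}$. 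Taking $\liminf_{n\to\infty}$ and then letting $\epsilon\downarrow 0$ gives $\liminf_n\tfrac1n\log|Z_n|\ge h_L$, as desired.

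It remains to prove $\beta_n(G_n^+)\to 0$. Put $W_n:=\bigcup_{z\in G_n^+}V(z)$, so that $\beta_n(G_n^+)=B_n(W_n)=\int_{W_n}p(x,y,n)\,\vol_{g_P}(dy)$. Since $Z$ is a $(C,D)$-lattice, every $y\in V(z)$ has $z$ as a closest point of $Z$ and lies within $D$ of $Z$, hence $V(z)\subset B(z,D)$; as $\LL_x$ carries the Poincar\'e metric (constant curvature $-1$) the universal–cover comparison gives $\vol_{g_P}(V(z))\le 2\pi(\cosh D-1)=:c_2$, so $\vol_{g_P}(W_n)\le c_2|G_n^+|<c_2\,e^{n(h_L-\epsilon)}$. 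Split $W_n$ along the density level set $E_n:=\{y\in\LL_x:p(x,y,n)>e^{-n(h_L-\epsilon/2)}\}$. On $W_n\setminus E_n$ the density is $\le e^{-n(h_L-\epsilon/2)}$, whence $B_n(W_n\setminus E_n)\le\vol_{g_P}(W_n)\,e^{-n(h_L-\epsilon/2)}\le c_2\,e^{-n\epsilon/2}\to 0$. For the other part,
$$B_n(W_n\cap E_n)\le B_n(E_n)=W^{x}_{g_P}\!\left(p(x,\omega(n),n)>e^{-n(h_L-\epsilon/2)}\right)=W^{x}_{g_P}\!\left(-\tfrac1n\log p(x,\omega(n),n)<h_L-\tfrac\epsilon2\right),$$
which tends to $0$ by the Shannon–McMillan–Breiman theorem for leafwise Brownian motion: for $\mu$-a.e.\ $x$ one has $-\tfrac1n\log p(x,\omega(n),n)\to h_L$ for $W^{x}_{g_P}$-a.e.\ $\omega$ (a fortiori in probability). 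This proves $\beta_n(G_n^+)\to 0$.

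The main obstacle is exactly this last input — the almost sure convergence $-\tfrac1n\log p(x,\omega(n),n)\to h_L$ — which is the genuine ergodic-theoretic content of the proposition and is due to Kaimanovich and Ledrappier (\cite[Theorem 3]{K2}, \cite[p.~195]{L3}): it follows by applying Kingman's subadditive ergodic theorem to the entropy cocycle on the path space $(\Gamma,\sigma,W^{\mu}_{g_P})$ and identifying the resulting limit with the averaged quantity defining $h_L$ in Equation \eqref{eq: entropykingman}, after checking integrability and handling the non-compactness of $S^*$ and the singular points (the separatrices are $\mu$-negligible, so generic leaves have injectivity radius bounded below, and a logarithmic weight $\rho$ is $\mu$-integrable by Theorem \ref{th: Nguyen estimates}). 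Everything else above — the pigeonhole estimate, the uniform volume bound for Voronoi tiles, and the level-set splitting of the heat kernel mass — is elementary, so I would present it in full and simply quote the Kaimanovich–Ledrappier convergence.
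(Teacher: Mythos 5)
Your argument is correct, but it should be compared against the fact that the paper does not prove this proposition at all: it is quoted verbatim from Kaimanovich \cite[Theorem 3]{K2} and Ledrappier \cite[p.~195]{L3}, and only its extension (Proposition \ref{p: adapt}) is proved in the text. What you supply is a genuine derivation of the quoted statement from a single input, the convergence in probability of $-\tfrac1n\log p(x,\gamma(n),n)$ to $h_L$ for $W^x_{g_P}$-generic paths at a $\mu$-generic $x$; your counting step (pigeonhole on the level set $G_n^+$, the inclusion $V(z)\subset B(z,D)$ for a $(C,D)$-lattice, the universal-cover volume bound $\vol_{g_P}(V(z))\le 2\pi(\cosh D-1)$, and the splitting of $W_n$ along the heat-kernel level set $E_n$) is elementary and correct, and it is slightly more economical than the classical route, which compares $\beta_n(v(\gamma(n)))$ with $p(x,\gamma(n),n)$ via a Harnack inequality for the heat kernel at scale $D$ before counting. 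The trade-off is that the substantive content is entirely in the quoted Shannon--McMillan--Breiman-type convergence, which is precisely the theorem of the cited references (and, in this singular foliated setting, requires the integrability and injectivity-radius remarks you sketch); since the paper itself cites the whole proposition rather than proving it, quoting that one input and proving the rest, as you do, is a legitimate and even somewhat more self-contained presentation. Two minor points to make explicit if you write this up: $\beta_n(G_n^+)=B_n(W_n)$ uses that Voronoi boundaries are $\vol_{g_P}$-null (the paper's ``$v$ is defined $B_n$-a.e.''), and the identification of the a.s.\ limit of $-\tfrac1n\log p(x,\gamma(n),n)$ with the quantity $h_L$ of Equations \eqref{eq: entropykingman}--\eqref{eq: convergence entropy} is part of the quoted theorem, not a formality.
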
 

We recall that $W^x_{g_P}$ is the Wiener measure on the set $\Gamma^x$. 

\begin{proposition}\label{p: adapt}
Let \( Z' \subset \LL_x \) be a $d_P$-separated subset. Assume that
 $$ \forall \gamma  \in \Gamma^x \ ,  \ W^x_{g_P}-a.e \ , \ d_P (\gamma (\Timet) , Z' ) = o (\Timet) . $$ 
Let \( E \subset \Gamma^x\) be such that $W^x_{g_P}(E) > 1/2$ and let \( E_n := \{\gamma (n), \gamma\in E\}\subset \LL_x \). Then 
\[ \liminf _{n \rightarrow +\infty} \frac{1}{n} \log \ \lvert  v' ( E_n ) \lvert \geq h_L , \] 
where \( v'  : \LL_x \rightarrow Z' \) sends $B_n$-almost every $y \in \LL_x$ to its nearest point  in $Z'$.
\end{proposition}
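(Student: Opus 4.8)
The goal is to reduce Proposition~\ref{p: adapt} to the known statement Proposition~\ref{l: signification of entropy}, which requires a genuine lattice. The separated set $Z'$ is not \emph{a priori} a lattice (the leaf $\LL_x$ may get far from $Z'$ near the singularities), but the hypothesis $d_P(\gamma(\Timet),Z') = o(\Timet)$ says that this failure is negligible along Brownian paths. First I would fix a bona fide lattice $Z$ in $\LL_x$ (for instance a maximal $C$-separated net, which is automatically a $(C,2C)$-lattice because leaves are complete with bounded geometry away from the separatrices, as recalled after Equation~\eqref{eq: convergence entropy}). Choose $C$ small enough (relative to the separation constant of $Z'$) so that the projection $y \mapsto v'(y)$ to the nearest point of $Z'$ is constant on each Voronoi tile $V(z)$, $z\in Z$; thus $v'$ factors through $v : \LL_x \to Z$, say $v' = w \circ v$ for a fixed map $w : Z \to Z'$. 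Consequently $\lvert v'(E_n)\rvert = \lvert w(v(E_n))\rvert \geq \lvert v(E_n)\rvert / \sup_{z'\in Z'} \#w^{-1}(z')$, and the fibers of $w$ are uniformly finite since $Z$ is a lattice and $Z'$ is separated (a ball of fixed radius contains boundedly many points of $Z$). Hence it suffices to prove $\liminf_n \tfrac1n \log \lvert v(E_n)\rvert \geq h_L$ with $v$ the Voronoi projection to the genuine lattice $Z$.

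\textbf{Applying the lattice statement.} Set $Z_n := v(E_n) \subset Z$. To invoke Proposition~\ref{l: signification of entropy} I must check $\beta_n(Z_n) \geq 1/2$, where $\beta_n = v_* B_n$. By definition $\beta_n(Z_n) = B_n(v^{-1}(Z_n)) = B_n(\{y : v(y)\in v(E_n)\}) \geq B_n(E_n)$. Now $E_n = \{\gamma(n) : \gamma\in E\}$ and $B_n$ is precisely the law of $\gamma(n)$ under $W^x_{g_P}$, so $B_n(E_n) \geq W^x_{g_P}(\{\gamma : \gamma(n)\in E_n\}) \geq W^x_{g_P}(E) > 1/2$. (Strictly, $E_n$ may fail to be Borel; one replaces it by its image under the time-$n$ evaluation which is analytic, hence universally measurable, and this does not affect cardinalities — alternatively one works with a Borel set of full measure inside $E$.) Thus $\beta_n(Z_n)\geq 1/2$ and Proposition~\ref{l: signification of entropy} yields $\liminf_n \tfrac1n \log\lvert Z_n\rvert \geq h_L$, which combined with the previous paragraph gives $\liminf_n \tfrac1n \log\lvert v'(E_n)\rvert \geq h_L$ as desired.

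\textbf{Where the hypothesis on $Z'$ enters, and the main obstacle.} The delicate point is the very first reduction: for $v' = w\circ v$ to hold and for $w$ to have uniformly finite fibers, I implicitly need that the nearest-point projection to $Z'$ behaves well, which is guaranteed by $Z'$ being separated \emph{and} by the sublinear-distance hypothesis $d_P(\gamma(\Timet),Z') = o(\Timet)$ $W^x_{g_P}$-a.e. Without the latter, a Voronoi tile $V(z)$ of $Z$ could be infinite or could straddle regions arbitrarily far from $Z'$, so $v'$ would not be well-defined $B_n$-a.e. or its fibers over $Z'$ would be unbounded — and indeed near the singular points of $\FF$ the leaf geometry degenerates, so one really must restrict attention to the Brownian-typical part of the leaf. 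I would make this precise by introducing, for $\Timeu(\gamma) := \sup\{\Timet : d_P(\gamma(\Timet),Z') \geq \epsilon \Timet\}$, the event that $\Timeu$ is finite (full measure by hypothesis), and only counting positions $\gamma(n)$ with $n$ beyond the relevant threshold; on that part $v'$ is controlled and the fiber bound for $w$ holds with a constant depending only on the separation constants. I expect this bookkeeping — making the factorization $v'=w\circ v$ rigorous on a full-measure, geometrically-controlled subset of $\LL_x$, uniformly in the exceptional Brownian behaviour near $\mathrm{sing}(\FF)$ — to be the main technical obstacle; once it is in place the rest is a direct citation of Proposition~\ref{l: signification of entropy}.
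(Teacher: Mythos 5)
Your overall strategy (reduce to the lattice statement, Proposition \ref{l: signification of entropy}, via a genuine lattice $Z$ and then transfer the count from $Z$ to $Z'$) is the same as the paper's, and your verification that $\beta_n(Z_n)\geq 1/2$ is fine. But the transfer step contains a genuine gap: you claim a factorization $v'=w\circ v$ in which the fibers of $w:Z\to Z'$ are \emph{uniformly finite}, ``with a constant depending only on the separation constants.'' This is false, and it cannot be repaired even after restricting to the Brownian-typical part of the leaf. The set $Z'$ is only separated, not coarsely dense, and the hypothesis gives no more than $d_P(\gamma(n),Z')\leq \varepsilon n$ for large $n$; consequently the lattice points of $Z$ relevant at time $n$ may lie at distance of order $\varepsilon n$ from $Z'$, and by hyperbolic volume growth a single point of $Z'$ can be the nearest point of $Z'$ for as many as $e^{2c\varepsilon n}$ points of $Z$. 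So the multiplicative loss in passing from $\lvert v(\cdot)\rvert$ to $\lvert v'(\cdot)\rvert$ is exponential in $n$ (with small rate $2c\varepsilon$), not bounded, and your argument as written would only give $h_L$ minus an error you have not controlled. (The auxiliary claim that $v'$ is constant on each Voronoi tile of $Z$ for $C$ small is also not true near the bisectors between points of $Z'$, but that is a secondary, essentially measure-zero issue.)

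The paper's proof handles exactly this point: fix $\varepsilon>0$, use an Egorov-type selection to find $F\subset E$ with $W^x_{g_P}(F)>1/2$ and a uniform $n_0$ such that $d_P(\gamma(n),Z')\leq\varepsilon n$ for all $\gamma\in F$, $n\geq n_0$; apply Proposition \ref{l: signification of entropy} to $v(F_n)$; then partition $v(F_n)$ by the fibers of $v'$, observe that each atom has diameter $\leq 2\varepsilon n$ (up to the lattice constant) and hence contains at most $e^{2c\varepsilon n}$ points of the separated set $Z$, which yields $\lvert v'(E_n)\rvert\geq\lvert v'(F_n)\rvert\geq\lvert v(F_n)\rvert e^{-2c\varepsilon n}$; finally take $\liminf_n\frac1n\log$ and let $\varepsilon\to 0$. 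The missing idea in your proposal is precisely this: accept an $e^{O(\varepsilon n)}$ (rather than $O(1)$) loss and remove it by a final limit $\varepsilon\to 0$, together with the uniformization of the threshold $n_0(\gamma)$ over a subevent of measure $>1/2$, which you allude to but do not carry out.
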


\begin{proof} 
By assumption, for $W^x_{g_P}$-a.e. $\gamma \in \Gamma^x$, there exists \(  n_0 (\gamma) \) such that
\[ d_P(\gamma(n) , Z') \leq \varepsilon n \text{ for } n\geq n_0(\gamma)  .\]
Let \(n_0\) be large enough so that  $F := \{ \gamma \in E \, , \, n_0 (\gamma) \leq n_0 \}$ has $W^x_{g_P}$-measure $> 1/2$. Let \( F_n := \{\gamma (n), \gamma\in F \}\subset E_n \) and let $Z \subset \LL_x$  be a lattice containing \(Z' \).  By Proposition \ref{l: signification of entropy}, 
\begin{equation} \label{eq: discretization entropy}  \liminf _{n \rightarrow +\infty} \frac{1}{n} \log \, \vert v(F_n) \vert \geq h_L. \end{equation} 
The subset \( v(F_n) \subset Z \) is partitioned by the fibers of \( v' \). The diameter of every atom of that partition is \(\leq  2 \varepsilon n \), hence the number of points of $Z$ in every atom is \( \leq e^{2 c \varepsilon n} \), where \(c\) is a constant fixed by the hyperbolic area of unit balls in $\LL_x$. The number of atoms is also $\vert v'(F_n) \vert$. We infer 
\[ \vert v'(E_n) \vert \geq \vert v'(F_n) \vert \geq  \vert v(F_n) \vert e^{-2c\varepsilon n}  .  \]
We conclude by using Equation \eqref{eq: discretization entropy} and taking $\epsilon$ arbitrary small. 
\end{proof} 

\subsubsection{Construction of separated positions at time $n$}\label{ss: csp}
We apply Proposition \ref{p: adapt}. The functions  $\delta_\gamma$, $C_\gamma$, $n_1(\gamma)$  and $m_0(\gamma)$ are defined in Propositions \ref{prop:holodist}, \ref{prop:expentropy} and Lemma \ref{lemma: angularproj}. 
Let \[ \Gamma^{x,\epsilon} := \{ \gamma \in \Gamma^x \ , \ \delta_\gamma \geq 2 r_0 \, , \, C_\gamma \leq C_0 \, , \,  m_0 (\gamma) \leq N_0 \, , \, n_1(\gamma) \leq N_0 \}  , \]
where $r_0$ is small and $C_0$, $N_0$ are large such that $W^x (\Gamma^{x,\epsilon}) > 1/2$. In the remainder we may increase $N_0$ without specifying it.  By Proposition \ref{p: lyapunov product}, for every $\gamma \in \Gamma^{x,\epsilon}$ and $n \geq N_0$, we have 
\begin{equation}\label{lylyly}
 \lambda -\epsilon  \leq {1 \over n} \log \lvert h'_{\gamma_{[0,n]} , k_x, k_{\gamma(n)}}(x) \rvert_{m_{pr}} \leq  \lambda + \epsilon .
 \end{equation}
Lemma \ref{l: inthedomain} implies that if $\gamma(n) \in B$, then for every $n \geq N_0$:
\begin{equation}\label{eq: gsgp}  
 d_P(\gamma(n) , \partial B) \leq \log (\epsilon n) .
 \end{equation}
Let $r_1 := r_0 e^{-2\kappa}/24$, where $\kappa$ is defined in Equation \eqref{KDT}.  The subset 
$$Z' := \LL_x \cap (\{ 0 \} \times \D_0 (r_1 / 2)) \subset \LL_x \cap \Sigma_x$$  is $C$-separated in $\LL_x$ for some $C>0$. Since $S \setminus B$ is compact, there exists $H \geq 1$ which does not depend on $\epsilon$ such that, by Equation \eqref{eq: gsgp}:
$$  \forall \gamma \in \Gamma^{x,\epsilon} \ , \ \forall n \geq N_0 \ , \  d_P (\gamma(n) , Z')  <  \log H + \log (\epsilon n) . $$
One can assume that $\epsilon H$ is small with respect to $\lambda_{g_P}$. We set $$\Gamma^{x,\epsilon} (n) := \{  \gamma(n) \, , \, \gamma \in \Gamma^{x,\epsilon} \}. $$
 Recall that $B_n$ is the distribution at  time $n$ of leafwise Brownian motion starting at $x$, and that \( v' \) sends $B_n$-almost every $y \in \LL_x$ to its nearest point  in $Z'$. We set 
 \begin{equation}\label{eq:JN}
 J_n := v' (\Gamma^{x,\epsilon}(n) ) \subset Z' \subset  \LL_x \cap \Sigma_x . 
 \end{equation}
Applying Proposition \ref{p: adapt} with $E:=\Gamma^{x,\epsilon}$, we get $\lvert J_n \rvert \geq e^{n (h_L -\epsilon) }$ for $n \geq N_0$. 
 
 \subsubsection{Bounded holonomy}

\begin{lemma}\label{l: cl1} There exist $E>0$ and $r_0' >0$ satisfying the following properties. Let $z \in J_n$ and let $\gamma \in \Gamma^{x,\epsilon}$ be such that $y := \gamma(n)$ satisfies $z = v'(y)$. Let $k_y \in K$ such that $y \in U_{k_y}$ and let $\nu_y := (t_{k_y})_* T_{\vert \Sigma_{k_y}}$.

There exists a smooth path $\hat \gamma : [0,1] \to  \LL_x$ with endpoints $y,z$ such that for every $0 \leq \rho \leq r_0'$:
\begin{enumerate}
\item $h_{\hat \gamma , k_y, k_x}$  is defined on $\D_{t_{k_y}(y)} (r_0')$ and  $- E \leq \log \lvert h'_{\hat \gamma , k_y, k_x} \rvert_{m_{pr}} \leq E$.
\item $e^{-n\epsilon \vert \beta \vert} \nu_{y}(\D_{t_{k_y}(y)}(\rho)) \leq  \nu_{x} ( h_{\hat \gamma , k_y,k_x} (\D_{t_{k_y}(y)}(\rho))) \leq e^{n\epsilon \vert \beta \vert} \, \nu_y(\D_{t_{k_y}(y)}(\rho))$. 
\end{enumerate}
 \end{lemma}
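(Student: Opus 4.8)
The plan is to produce the path $\hat\gamma$ by concatenating a short leafwise path inside a single foliation box near $y$ with a fixed "reference" path from that box to $U_{k_x}$, and then to control the holonomy of the short part by compactness and the holonomy of the reference part by the discreteness/boundedness estimates already established. First I would observe that $z = v'(y)$ means $z$ is the nearest point of $Z'$ to $y$, so $d_P(y,z)\le \log H + \log(\epsilon n)$ by the estimate in Section \ref{ss: csp}; in particular $y$ and $z$ lie in a bounded number of the foliation boxes $(U_k)_{k\in K}$ away of the singular set, except possibly that $y$ may be near a singularity, in which case $k_y \in J$ and one first uses the projection path $\Lambda_y$ of Lemma \ref{lemma: angularproj} (whose $g_P$-length is $\le \log(\epsilon n)$ and whose holonomy is trivial by Remark \ref{rk: idhol}, since $t_{k_y} = t_j\circ \pi$ and $\pi\circ\Lambda_y$ is constant) to reduce to the case $\pi(y)\in S\setminus B$. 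After this reduction, $\pi(y)$ and $z$ both live in the compact piece $S\setminus B$ and are joined by a path of $g_P$-length $O(\log(\epsilon n))$, hence by a path crossing $O(\log(\epsilon n))$ boxes $U_{\tilde k_i}$, each crossing contributing a holonomy map $h_{t,k,k'}$ with derivative bounded (in $m_{pr}$) by $e^\theta$ via the Cauchy estimate \eqref{cauch}, and each crossing contributing a Harnack factor $e^{\pm c}$ to the transverse measure ratio by Lemma \ref{lem: Harnacktrans2}.

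The key steps, in order: (1) Reduce to $\pi(y)\in S\setminus B$ using $\Lambda_y$ and the triviality of its holonomy (Remark \ref{rk: idhol}); note that this step contributes a $g_P$-length $\le\log(\epsilon n)$ but no holonomy distortion and, via Harnack \eqref{eq: harnack}, a measure factor $e^{\pm O(\log(\epsilon n))}$, which is exactly $e^{\pm O(n\epsilon)}$ hidden in the $e^{\pm n\epsilon|\beta|}$ of item (2) — here I use that the $1$-form $\beta$ is $g_P$-bounded (Lemma \ref{lemmaharnack}) and that $\int_{\hat\gamma}\beta$ is precisely the logarithm of the Radon–Nikodym derivative by Equation \eqref{eq: Radon Nikodym derivative}. (2) Pick, for each of the finitely many pairs $(k,k')$ of overlapping boxes in $S\setminus B$, the uniform radius $\delta_0$ of \eqref{htkk} and uniform Koebe radius; since $\pi(y)$ ranges over a compact set meeting only finitely many $U_k$, there is a uniform $r_0'$ such that $h_{\hat\gamma,k_y,k_x}$ is defined on $\D_{t_{k_y}(y)}(r_0')$, by composing at most (a fixed number) $+ O(\log(\epsilon n))$ maps $h_{t,\tilde k_i,\tilde k_{i+1}}$ each defined on a disc of radius $\delta_0$ and, by \eqref{choicen2}-type shrinking, mapping into discs of radius $\le\delta_0 r_\epsilon$ so composition is legitimate — exactly the inductive mechanism of Lemma \ref{needto}. (3) Bound the derivative: $\log|h'_{\hat\gamma,k_y,k_x}|_{m_{pr}} = \int_{\hat\gamma}\eta_{m_{pr}}$, which by $g_s$-boundedness of $\eta_{m_{pr}}$ (proof of Proposition \ref{p: lyapunov product}) and $l_{g_s}(\hat\gamma) = O(1) + O(\text{number of box crossings})$ is bounded in absolute value by a constant $E$ depending only on the finitely many boxes; the part coming from $\Lambda_y$ contributes $0$ because its holonomy is the identity. (4) Bound the measure ratio: apply Harnack \eqref{eq: harnack} across the finitely many box overlaps (Lemma \ref{lem: Harnacktrans2}) plus the factor from the $\Lambda_y$-part of length $\le\log(\epsilon n)$, giving the factor $e^{\pm n\epsilon|\beta|}$ once $n$ is large; the Koebe radius $r_0'$ is shrunk if necessary so that item (2) holds for all $\rho\le r_0'$, using \eqref{KDT}.

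The main obstacle will be step (1): cleanly handling the case where $y$ sits deep inside a linearization neighbourhood $B_p$. One must check that the path $\Lambda_y$ supplied by Lemma \ref{lemma: angularproj} genuinely has trivial holonomy for the first integral $t_{k_y} = t_j\circ\pi$ attached to the box $U_{k_y} = \pi^{-1}(U_j)$ — this is Remark \ref{rk: idhol}, but it must be invoked with care because $\hat\gamma$ is built as $\Lambda_y$ followed by a path in $S\setminus B$, and one needs the composed holonomy to equal the holonomy of the $S\setminus B$-part alone. The bookkeeping showing that the logarithmic-in-$n$ length of $\Lambda_y$ (resp.\ the logarithmic-in-$n$ number of crossings in $S\setminus B$) translates into the stated $e^{\pm n\epsilon|\beta|}$ factor and into a uniform bound on $|h'|_{m_{pr}}$ — using that $\log(\epsilon n) = o(n)$, hence $\le n\epsilon$ for $n\ge N_0$ — is routine once one recalls that $\eta_{m_{pr}}$ is $g_s$-bounded and $\beta$ is $g_P$-bounded, but it is the place where all the earlier estimates (Lemmas \ref{lemmaharnack}, \ref{lem: Harnacktrans2}, \ref{lemma: angularproj}, \eqref{cauch}, \eqref{KDT}) must be assembled simultaneously. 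The constants $E$ and $r_0'$ are then extracted as the worst case over the finite data $(U_k)_{k\in K}$, $\delta_0$, $\theta$, $\kappa$.
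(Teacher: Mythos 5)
Your overall route is the same as the paper's: split at the linearization neighbourhood $B$, use the path $\Lambda_{\gamma(n)}$ together with Remark \ref{rk: idhol} so that the excursion through $B$ carries trivial holonomy, control the transverse-measure ratio by integrating the $g_P$-bounded form $\beta$ (Equation \eqref{eq: Radon Nikodym derivative}, Lemma \ref{lemmaharnack}) along a path of $g_P$-length $\leq \log(\epsilon n)\leq \epsilon n$, and handle the remaining piece in $S\setminus B$ by compactness and the finite covering. That part of your plan is exactly what the paper does.

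There is, however, a genuine gap in your treatment of the piece of $\hat\gamma$ lying in the compact region. You budget a $g_P$-length, hence a number of box crossings, of order $\log(\epsilon n)$ for the segment joining $\pi(y)$ to $z$, and then assert that the derivative bound $E$ and the domain radius $r_0'$ are uniform constants "depending only on the finitely many boxes". These two claims are incompatible: with $O(\log(\epsilon n))$ crossings, each crossing contributes a factor up to $e^{\theta}$ (Equation \eqref{cauch}) to the derivative and forces a Koebe/Cauchy-type loss of radius, so your own estimate only yields $\lvert \log\lvert h'\rvert_{m_{pr}}\rvert = O(\log(\epsilon n))$ and a domain shrinking like a negative power of $n$ — neither a constant $E$ nor a uniform $r_0'$, which is what item (1) of the lemma requires (and what the later proof of Lemma \ref{l: cl2} consumes). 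The missing ingredient is that the distance from points of the compact region to $Z'$ is bounded by the \emph{constant} $\log H$ of Section \ref{ss: csp}, independent of $n$: the $\log(\epsilon n)$ term in $d_P(\gamma(n),Z')$ comes solely from the excursion inside $B$ (Equation \eqref{eq: gsgp}, Lemma \ref{lemma: angularproj}), i.e. precisely from the segment $\Lambda_{\gamma(n)}$ whose holonomy is the identity and therefore contributes nothing to item (1), and only the factor $e^{\pm\vert\beta\vert\log(\epsilon n)}\leq e^{\pm n\epsilon\vert\beta\vert}$ to item (2). In other words, all the $n$-dependent length must be confined to the trivial-holonomy part, while the part in $S\setminus B$ must be taken of uniformly bounded length (a bounded number of crossings); your proposal misallocates the $\log(\epsilon n)$ to the compact part, and with that allocation the stated conclusion does not follow.
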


 \begin{proof}
If $y = \gamma(n) \in S \setminus B$, the two Items follow from the compactness of $S \setminus B$. Let us go back to this situation when  $y \in B$. In that case,  $y \in \pi^{-1}(U_j)$ for some $j := k_y \in J$ (see Section \ref{sss: cov} for the definition of $U_j$, $J$ and $\pi$). By Remark \ref{rk: idhol} there exists a smooth path $\tilde \gamma$ drawn on $\pi^{-1}(U_j)$ with endpoints $y \in B$ and $\pi(y) \in \partial B$ whose corresponding holonomy map is the identity map $t_j(U^+_j) \to t_j(U^+_j)$ (consider for instance  the path $\Lambda_{\gamma(n)}$ of Section \ref{s: angdom}).  This proves the first Item. For the second Item, observe that Equation \eqref{eq: gsgp} comes from $d_P(\gamma(n) , \partial B) \leq l_{g_P}(\Lambda_{\gamma(n)}) \leq \log (\epsilon n)$, hence the $g_P$-length of $\Lambda_{\gamma(n)}$ is bounded by $\log (\epsilon n)$, which is $\leq \epsilon n$. To conclude we use Equation \eqref{eq: Radon Nikodym derivative}, by integrating $\beta$ on $\Lambda_{\gamma(n)}$.  
 \end{proof}

 \subsubsection{Proof of Proposition \ref{p: separated}}
 The set $J_n = v' (\Gamma^{x,\epsilon}(n) )$ was constructed  in Section \ref{ss: csp}, it satisfies $\lvert J_n \rvert \geq e^{n (h_L -\epsilon) }$ for $n \geq N_0$.
 
 \begin{lemma}\label{l: cl2} 
 For every $n \geq N_0$, there exist 
\begin{enumerate}
\item[(i')] a set $I_n \subset J_n \subset \LL_x \cap \Sigma_x$ of cardinality larger than $e^{n(h_L-2\epsilon)}$,
\item[(ii')] a  negative number $\lambda_n \in [n (\lambda- 2 \epsilon),n(\lambda + 2 \epsilon)]$,
\item[(iii')] leafwise continuous paths $\gamma_z : [0,n+1] \to \LL_x$ with endpoints $x$ and $z \in I_n \subset L_x \cap \Sigma_x$,
\end{enumerate}
such that the holonomy map $h_{x,z} := h_{\gamma_z , t_x, t_x}$ satisfies for every $z \in I_n$: 
\begin{enumerate}
\item[(a)]  $h_{x,z}$ is defined on $\D_0(r_0)$,
\item[(b)] $\lambda_n \ \leq \ \log \lvert h'_{x,z}  (0) \lvert_{m_{pr}} \ \leq \ \lambda_n + \log 2$,
\item[(c)] $\D_{t_k(z)} (r  e^{\lambda_n} e^{-\kappa})  \subset  h_{x,z} (\D_0(r )) \subset \D_{t_k(z)} (r  e^{\lambda_n+\log 2} e^\kappa)$ for $r \leq r_0/2$,
\item[(d)] $h_{x,z} (\D_0(r_1)) \subset \D_0(r_1)$.
\item[(e)] $\nu_x (h_{x,z} (\D_0(r_1)) \geq \nu_x ( \D_{t_k(z)} ( e^{n (\lambda + \epsilon(1+M_-)) }  )) \geq e^{-n  (h_D  + 5 \epsilon + \vert \beta \vert \epsilon)}$.
\end{enumerate}
 \end{lemma}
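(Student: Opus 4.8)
\textbf{Proof proposal for Lemma \ref{l: cl2}.} The plan is to produce, for each $z\in J_n$, an explicit path $\gamma_z$ and to deduce every assertion from the two structural inputs already at hand: the contraction estimate of Proposition \ref{prop:holodist} (valid on the \emph{common} disc $\D_0(2r_0)$, since $\gamma\in\Gamma^{x,\epsilon}$ forces $\delta_\gamma\geq 2r_0$ and $C_\gamma\leq C_0$) and the bounded-holonomy comparison of Lemma \ref{l: cl1}. Fix $z\in J_n$ and pick $\gamma\in\Gamma^{x,\epsilon}$ with $y:=\gamma(n)$ satisfying $v'(y)=z$; choose $k_y\in K$ with $y\in U_{k_y}$ and $\psi_{k_y}(\pi(y))>0$, and let $\hat\gamma$ be the path from $y$ to $z$ given by Lemma \ref{l: cl1}. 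Set $\gamma_z:[0,n+1]\to\LL_x$ equal to $\gamma$ on $[0,n]$ and to a reparametrisation of $\hat\gamma$ on $[n,n+1]$, so that $\gamma_z$ joins $x$ to $z$ and $h_{x,z}=h_{\hat\gamma,k_y,k_x}\circ h_{\gamma_{[0,n]},k_x,k_y}$. By Proposition \ref{prop:holodist}, $h_{\gamma_{[0,n]},k_x,k_y}$ is defined and injective on $\D_0(2r_0)$ with $|h'_{\gamma_{[0,n]},k_x,k_y}|\leq C_0 e^{n(\lambda+3\epsilon)}$ there; since $\lambda<0$, for $n$ large the image of $\D_0(r_0)$ sits inside $\D_{t_{k_y}(y)}(r_0')$, the domain on which $h_{\hat\gamma,k_y,k_x}$ is defined by Lemma \ref{l: cl1}(1), so the composition makes sense. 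This is assertion (a).

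For the derivative, multiplicativity of the $m_{pr}$-norm gives $\log|h'_{x,z}(0)|_{m_{pr}}=\log|h'_{\gamma_{[0,n]},k_x,k_y}(x)|_{m_{pr}}+\log|h'_{\hat\gamma,k_y,k_x}(y)|_{m_{pr}}$, which by Equation \eqref{lylyly} and Lemma \ref{l: cl1}(1) lies in $[n(\lambda-\epsilon)-E,\ n(\lambda+\epsilon)+E]\subset[n(\lambda-2\epsilon),\ n(\lambda+2\epsilon)]$ once $n\geq N_0$. I would then pigeonhole: partition $[n(\lambda-2\epsilon),n(\lambda+2\epsilon)]$ into $\lceil 4\epsilon n/\log 2\rceil$ intervals of length $\log 2$; one of them, $[\lambda_n,\lambda_n+\log 2]$, catches $\log|h'_{x,z}(0)|_{m_{pr}}$ for a set $I_n\subset J_n$ of cardinality $\geq|J_n|/\lceil 4\epsilon n/\log 2\rceil\geq e^{n(h_L-2\epsilon)}$ for $n$ large (using $|J_n|\geq e^{n(h_L-\epsilon)}$ from Section \ref{ss: csp}). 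This gives (i$'$), (ii$'$), (b). For (c), apply Koebe's estimate \eqref{KDT} to $h_{x,z}$, injective on $\D_0(2r_0)\supset\D_0(2r)$ for $r\leq r_0$, with $h_{x,z}(0)=t_x(z)=t_k(z)$ and $|h'_{x,z}(0)|\in[e^{\lambda_n},2e^{\lambda_n}]$ (the $m_{pr}$--to--modulus ratio at $x\in S\setminus B$ being bounded and absorbed, as in the Remark preceding Proposition \ref{prop:holodist}). Finally (d): by (c), $h_{x,z}(\D_0(r_1))\subset\D_{t_k(z)}(2e^\kappa e^{\lambda_n}r_1)$; since $z\in Z'=\LL_x\cap(\{0\}\times\D_0(r_1/2))$ we have $|t_k(z)|<r_1/2$, and $2e^\kappa e^{\lambda_n}r_1\to0<r_1/2$ because $\lambda_n\to-\infty$, whence $h_{x,z}(\D_0(r_1))\subset\D_0(r_1)$.

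It remains to establish the measure bound (e), which is the main obstacle. The strategy is to combine three ingredients. First, \eqref{KDT} applied to $h_{\gamma_{[0,n]},k_x,k_y}$ together with \eqref{lylyly} shows that $h_{\gamma_{[0,n]},k_x,k_y}(\D_0(r_1))$ contains a disc about $t_{k_y}(y)$ of radius of order $e^{\lambda_n}r_1=e^{n(\lambda+O(\epsilon))}$. Second, Proposition \ref{prop:expentropy}(2) in the form of Remark \ref{rk: undiscr} --- applicable since $\psi_{k_y}(\pi(y))>0$ and $\gamma$ lies in the positive-measure set $\Gamma^{x,\epsilon}$ --- bounds $\nu_{k_y}(\D_{t_{k_y}(y)}(e^{n(\lambda+\epsilon(1+M_-))}))$ from below by $e^{-n(h_D+5\epsilon)}$. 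Third, Lemma \ref{l: cl1}(2) transports such a lower bound from a disc on $\Sigma_{k_y}$ at $y$ to the corresponding region on $\Sigma_x$ at $z$ at the cost of a factor $e^{-n\epsilon|\beta|}$; equivalently one integrates $\beta$ along $\gamma_z$ (suitably concatenated with the short leafwise arcs inside $U_{k_x}$ and $U_{k_y}$) and uses the $g_P$-boundedness of $\beta$ and Equation \eqref{eq: Radon Nikodym derivative}. The delicate point is exactly the reconciliation of the two radii $e^{\lambda_n}r_1$ and $e^{n(\lambda+\epsilon(1+M_-))}$ up to the accumulated $e^{O(\epsilon n)}$ distortion and Harnack errors --- Koebe accumulation over the $q_j\leq nM_+$ crossings, the $e^{\pm n\epsilon|\beta|}$ coming from $\hat\gamma$, the pigeonhole loss, and the $\epsilon$-sized shifts produced by the fixed constants $\kappa,E,\log r_1$ --- all of which must be kept within the prescribed budget $5\epsilon+|\beta|\epsilon$ in the exponent; this is carried out exactly as the crossing bounds $\sum q_j\in[nM_-,nM_+]$ are used in the proof of Proposition \ref{prop:expentropy}. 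Everything else --- the construction of $\gamma_z$, the domain of definition (a), the derivative window (b), the Koebe sandwich (c), and the nesting (d) --- is bookkeeping on top of Propositions \ref{prop:holodist}, \ref{p: lyapunov product}, Lemma \ref{l: cl1}, and the count $|J_n|\geq e^{n(h_L-\epsilon)}$.
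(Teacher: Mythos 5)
Your construction and your handling of (i'), (ii'), (iii') and (a)--(d) are exactly the paper's proof: the same concatenation $\gamma_z=\hat\gamma*\gamma_{[0,n]}$ and decomposition $h_{x,z}=h_{\hat\gamma,k_y,k_x}\circ h_{\gamma_{[0,n]},k_x,k_y}$, the same derivative window $[n(\lambda-\epsilon)-E,\ n(\lambda+\epsilon)+E]$ coming from \eqref{lylyly} and Lemma \ref{l: cl1}, the same pigeonhole into intervals of length $\log 2$ (losing only a factor linear in $n$, absorbed by passing from $h_L-\epsilon$ to $h_L-2\epsilon$), and the same Koebe arguments for (c) and (d). Two harmless remarks: the paper obtains (a) from Koebe and \eqref{lylyly} rather than from Proposition \ref{prop:holodist}, which changes nothing; and $h_{x,z}$ is only known to be defined on $\D_0(r_0)$, not on $\D_0(2r_0)$ as you claim when proving (c) --- but since (c) is only asserted for $r\leq r_0/2$, applying \eqref{KDT} with $\rho=r_0/2$ suffices, exactly as in the paper.

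The gap is item (e), which is the only part of the lemma carrying the entropy information used afterwards. You invoke the same two ingredients the paper cites --- Remark \ref{rk: undiscr} applied to $\gamma_{[0,n]}$ in the box $U_{k_y}$ (choosing $k_y$ with $\psi_{k_y}(\pi(y))>0$), and Lemma \ref{l: cl1}(2) applied to $\hat\gamma$ --- but you then declare the ``reconciliation of the two radii $e^{\lambda_n}r_1$ and $e^{n(\lambda+\epsilon(1+M_-))}$'' to be the delicate point and assert that it ``is carried out exactly as'' in Proposition \ref{prop:expentropy}. That sentence is not an argument, and the reconciliation you propose cannot be done by inclusion and monotonicity: by (b) one has $\lambda_n\leq n(\lambda+\epsilon)+E$, while $M_->0$, so for $n$ large $r_1e^{\lambda_n}e^{-\kappa}$ is strictly smaller than $e^{n(\lambda+\epsilon(1+M_-))}$; hence the disc governed by Proposition \ref{prop:expentropy}(2) is strictly larger than the disc that (c) places inside $h_{x,z}(\D_0(r_1))$, and a lower bound on the measure of the larger disc gives no information on the smaller one, no matter how the $e^{O(\epsilon n)}$ errors are budgeted. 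What the paper's one-line proof of (e) actually produces is the middle quantity of (e) directly: Remark \ref{rk: undiscr} gives the lower bound for the disc of radius $e^{n(\lambda+\epsilon(1+M_-))}$ centered at $t_{k_y}(y)$, and Lemma \ref{l: cl1}(2) transports it through $\hat\gamma$ to $\Sigma_x$ at the cost $e^{-n\epsilon\vert\beta\vert}$, yielding the last inequality of (e). Relating this to $\nu_x(h_{x,z}(\D_0(r_1)))$ --- the first inequality of (e), i.e.\ the quantity actually summed over $I_n'$ later --- is precisely what your sketch leaves open; it cannot be extracted from the statement of Proposition \ref{prop:expentropy}(2) plus Koebe, and would require controlling the measure of the image of the fixed disc itself (for instance by rerunning the telescoping Radon--Nikodym argument from the proof of Proposition \ref{prop:expentropy} along $\gamma_{[0,n]}$ for $\D_0(r_1)$ before transporting by $\hat\gamma$). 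As written, your (e) is an assertion, so the lemma is not established by the proposal.
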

 
 \begin{proof}  Let $z \in J_n$ and let $\gamma \in \Gamma^{x,\epsilon}$ be such that $y := \gamma(n)$ satisfies $z = v'(y)$. Let $k_y \in K$ satisfying $y \in U_{k_y}$ and $\psi_{k_y}(\pi(\gamma(n))) > 0$, in particular Remark \ref{rk: undiscr} applies. Let $\hat \gamma : [0,1] \to \LL_x$ be a smooth path with endpoints $y,z$ provided by Lemma \ref{l: cl1}. We set 
$$\gamma_z :=  \hat \gamma * \gamma_{ [0,n]} \ , \ h_{x,y} := h_{\gamma_{ [0,n]} , k_x, k_y} \ , \ h_{y,z} := h_{\hat \gamma,k_y, k_x} \ , \  h_{x,z} := h_{y,z} \circ h_{x,y} .$$ 
By definition of $\Gamma^{x,\epsilon}$, $h_{x,y}$ is well-defined on $\D_0(2r_0)$. By  Equations \eqref{KDT} (with $\rho=r_0$) and \eqref{lylyly},   the image of $\D_0(r_0)$ by $h_{x,y}$ is  contained in a disc of radius $r_0 e^{n (\lambda + \epsilon)} e^\kappa$. One can assume that $r_0 e^{n (\lambda + \epsilon)} e^\kappa \leq r_0'$ ($r_0'$ is provided by Lemma \ref{l: cl1}). Then $h_{x,z} = h_{y,z} \circ h_{x,y}$ is well-defined on $\D_0(r_0)$, it implies $(a)$ for every $z \in J_n$. Moreover by Equation \eqref{lylyly} and Lemma \ref{l: cl1}: 
\[ \forall z \in J_n \ , \  - E  + n (\lambda -\epsilon )  \leq \log \lvert h_{x,z} '(0) \rvert_{m_{pr}} \leq  E  + n (\lambda + \epsilon ) .\]
This ensures the existence of $\lambda_n$ satisfying 
\begin{equation}\label{EEE}
 - E + n (\lambda -\epsilon )  \leq \lambda_n \leq   E + n (\lambda + \epsilon )
 \end{equation}
 and of a subset $I_n\subset J_n$ of cardinality 
\begin{equation*}\label{ent} 
| I_n| \geq \frac{|J_n|\log 2} { 2 (E + \epsilon n)}
\end{equation*}
such that $(b)$ is satisfied for every $z \in I_n$. Items (i') and (ii') then respectively follow from $\lvert J_n \rvert \geq e^{n (h_L -\epsilon) }$ and from Equation \eqref{EEE}, up to increasing $N_0$. Equation (\ref{KDT})  and $(b)$ ensure $(c)$. Recall that $r_1 = r_0 e^{-2\kappa} / 24$. We get $(d)$  from $h_{x,z}(0) = t_k(z) \in \D_0(r_1/2)$ and $\text{diam} \, h_{x,z} (\D_0(r_1)) \leq 2 r_1 e^{\lambda_n+\log 2}e^\kappa$ provided by $(c)$. 
Finally, $(e)$ follows from Proposition \ref{prop:expentropy} (precisely Remark \ref{rk: undiscr} applied to $\gamma_{ [0,n]}$) and the second Item of Lemma \ref{l: cl1} applied to $\hat \gamma$. 
\end{proof}

Now we infer all the items (i), (ii), (1), (2) and (3) of Proposition \ref{p: separated}. We build below $I_n' \subset I_n$ satisfying (i) and $(2)$. Since $I_n' \subset I_n$, items (ii), (1) and (3) directly follow from Lemma \ref{l: cl2}. We note that the construction of $I_n'$ satisfying (i) and $(3)$ was already performed in \cite{Deroin-Dupont}, we reproduce the argument for  completeness. Let $\kappa' := 2\kappa + \log 2$ and   
$$\mathcal H := \{ h: \D_0(r_1) \rightarrow \D \textrm{ holonomy map } , \,    - \kappa ' \leq \log |h'(0)| \leq  \kappa ' \} . $$ 
This set is finite because the holonomy pseudogroup of $\FF$ is discrete. Let $\rho_n := 2 r_1 e^{\lambda_n + \log 2} e^\kappa$ and $(z,z') \in I_n \times I_n$ satisfying  $|z - z'| \leq 2 \rho_n$. Item (c) yields (recall that $r_1 = r_0 e^{-2\kappa} / 24$, so that $3 \rho_n = r_0 e^{\lambda_n} e^{-\kappa} / 2$) 
\[ h_{x,z} \, (\D_0( 2 r_1 ) )\subset  \D_{t_x(z)} (\rho_n) ,\]
\[ h_{x,z'} \, (\D_0( r_0 / 2 ) )\supset  \D_{t_x(z')} (r_0 e^{\lambda_n} e^{-\kappa} / 2) = \D_{t_x(z')} ( 3 \rho_n)  \supset \D_{t_x(z)} (\rho_n). \]
We deduce that $h := h_{x,z'} ^{-1} \circ h_{x,z} : \D_0(2r_1) \to \D_0(r_0/2)$ is well defined and that  $- \kappa ' \leq \log |h'(0)| \leq  \kappa '$ by using $(a)$, $(b)$ and Koebe distortion. Hence the restriction of $h$ to $\D_0(r_1)$ belongs to  $\mathcal H$. Let $I_n' \subset I_n$ be such that $\{ h_{x,z} \}_{z \in I_n'}$ represents the classes of $I_n$ modulo $\mathcal H$ by right composition. We have 
$$|I_n'| \geq |I_n| / \vert \mathcal H \vert  \geq e^{n (h_L-2\epsilon)} / \vert \mathcal H \vert \geq e^{n (h_L-3\epsilon)} ,  $$ 
this proves (i). Item (3), that is $\{ h_{x,z}(\D_0(r_1)) \}_{z \in I'_n}$ are pairwise disjoint, follows from $\text{diam} \, h_{x,z} (\D_0(r_1)) \leq \rho_n$ and  $|z - z'| > 2\rho_n$ for every $(z,z') \in I'_n \times I'_n$. This completes the proof of Proposition \ref{p: separated}.

\section{Jouanolou foliation (proof of Corollary \ref{c: dimension Jouanolou})}

The Jouanolou foliation \(\mathcal J_2\) of degree \(2\) is the holomorphic foliation of $\mathbb P^2$ defined in homogeneous coordinates by  $y^2 \partial _ x + z^2 \partial _ y + x^2 \partial _z$. Let \(\mathcal J \) denote a holomorphic foliation on $\mathbb P^2$ topologically conjugate to \(\mathcal J_2\). Since it has degree $2$, the Lyapunov exponent $\lambda$ of the normalized harmonic current $T$ of $\mathcal J$ is equal to $- {d+2 \over d-1}$ with $d=2$, hence $\lambda = -4$.  \\

{\bf 1- \(\mathcal J \) has hyperbolic singularities and no foliated cycle.} 
It suffices to  verify that \(\mathcal J_2\) has hyperbolic singularities, this property being invariant by topological conjugacy \cite{CLNS84}. The eigenvalues at the $7$ singular points  are \( \lambda_{\pm} = -2\pm i\sqrt{3}\)  \cite{Jouanolou, LinsNetoJouanolou}: their quotient\footnote{The multipliers of holonomy maps of \(\mathcal J_2\) around the separatrices are therefore  \( \exp (2i\pi \lambda_\pm / \lambda_ \mp )\). They could become real in the stability component of \(\mathcal J_2\).}  does not belong to $\R$. 

Now let us verify that \(\mathcal J_2\) does not carry any foliated cycle, this property being also  invariant by topological conjugacy. The support of a foliated cycle must intersect the singular set  \cite[Theorem 2]{CLNS88}. Since the singular points of \(\mathcal J_2\) are hyperbolic, close to such a point, a foliated cycle would be supported on the union of the two separatrices, producing an algebraic leaf. The transcendence of the leaves \cite{Jouanolou} therefore prohibits the existence of a foliation cycle. \\

{\bf 2 - The holonomy pseudo-group of $\mathcal J$ is discrete} in restriction to the support of its pseudo-minimal set. That property is invariant by topological conjugacy, let us prove it for \(\mathcal J_2\).
Let \(\mathcal M_2\) be the pseudo-minimal set of \(\mathcal J_2\). 

The restriction of \(\mathcal J_2\) to its Fatou set  \( F(\mathcal J_2) \) is a fibration by discs over the Klein quartic \cite{AlvarezDeroin}. In particular, the Julia set \(J(\mathcal J_2)\) is a strict subset of \(\mathbb P^2\). Since \(\mathcal M_2 \subset J (\mathcal J_2)\)  by minimality,  \(\mathcal M_2 \) is also a strict subset of \(\mathbb P^2\).

We know that \(\mathcal J_2\) does not have any exceptional minimal set \cite{Camacho de Figueiredo}. In particular, \(\mathcal M_2\) contains a singular point of \(\mathcal J_2\). Since this one is hyperbolic, \(\mathcal M_2\) must contain at least one separatrix. Since the multiplier of the holonomy around this separatrix is not a real number, the regular part  \(\mathcal M_2' \) of \(\mathcal M_2\) is not a real analytic subset of \(\mathbb P^2 \setminus \text{sing} (\mathcal J_2)\). 

Assume by contradiction that the holonomy pseudo-group of \(\mathcal J_2\) in restriction to \(\mathcal M_2\) is not discrete. Since the holonomy pseudo-group of \(\mathcal M_2 '\) contains a hyperbolic map (associated to the separatrix above), and that it acts minimally on \(\mathcal M_2' \), one deduces by renormalization \cite[Proposition 2.0]{LorayRebelo} that the closure of the holonomy pseudo-group in restriction to \(\mathcal M_2\) contains a non trivial one parameter flow generated by a holomorphic vector field. Conjugating this flow with  a holonomy map having a non real multiplier and a fixed point in \(\mathcal M_2\), one finds another flow which is \(\mathbb R\)-independant from the first one at a generic point of \(\mathcal M_2\). In particular, \(\mathcal M_2\) has non empty interior. By minimality this implies \( \mathcal M_2 = \mathbb P^2\), a contradiction. Applying Theorem \ref{t: dischh}, one deduces that $h_D (\mathcal J)= h_L (\mathcal J)$ and that the transverse Hausdorff dimension of the harmonic current of  \(\mathcal J \) is equal to $h_L (\mathcal J)/ 4$. It remains to verify  $h_L (\mathcal J) = 1$ to complete the proof of Corollary \ref{c: dimension Jouanolou}.\\

{\bf 3 - Computation of $h_L (\mathcal J)$.}  It was proved in \cite{AlvarezDeroin} that \(\mathcal J_2\) satisfies the Anosov property: its leaves are simply connected, except for a countable number of them (which are annuli). This property is also satisfied for \(\mathcal J\). The leaf $L_x$ is therefore simply connected (isomorphic to $\D$) for $\mu$-almost every $x \in S$, hence $h_L (\mathcal J)=1$ as desired. 
 
\section{Appendix: proof of Kaimanovich's inequality $h_D \leq h_L$}\label{appendix}
 
We saw in Section \ref{ss: DE} that the desintegration of the harmonic current \(T\) along  \(\mu \)-a.e. leaf yields a single-valued positive harmonic function \( \tilde \tau \), well defined up to multiplication by a positive constant.
Let us consider the continuous time Markov process on the leaf \(L_x\) with transition probabilities  
\begin{equation} \label{eq: transition probabilities} p(x,y,\Timet) \frac{{\tilde \tau}(y)}{{\tilde \tau}(x)} \text{vol} _{g_P} (dy).\end{equation}
These probabilities define a semi-group of operators of \(  L^{\infty} (\text{vol} _{g_P}) \):
\[ P ^\Timet (\chi) (x) = \int_{L_x} \chi(y) p(x,y,\Timet) \frac{{\tilde \tau}(y)}{{\tilde \tau}(x)} \text{vol} _{g_P} (dy) .\]
It satisfies
\[  \int_{L_x}  P^\Timet(\chi)(x)  {\tilde \tau} (x) \text{vol}_{g_P} (dx) = \int_x \int_y \chi(y) p(x,y,\Timet) {\tilde \tau} (y)   \text{vol}_{g_P} (dx) \text{vol}_{g_P} (dy) , \]
which is equal to $\int \chi(y) {\tilde \tau} (y)\text{vol}_{g_P} (dy)$, 
so the measure \({\tilde \tau} (x) \text{vol}_{g_P} (dx) \) is stationary for the process. As a consequence,  \(\mu\) is a stationary measure for the process defined by \eqref{eq: transition probabilities} on \(S\). But \(\mu\) is ergodic in the sense that each \(\mathcal F\)-saturated Borel subset has \(\mu\)-measure \(0\) or \(1\). 
Hence, the random ergodic theorem shows that for \(\mu\)-a.e. \(x\in S\), a.e. Brownian path starting at \(\gamma(0)=x\) equidistributes itself wrt \( \mu\).
As in Section \ref{s: entropies} (see Equation \eqref{eq: convergence entropy}), the following limit exists for \(\mu\)-a.e. \(x \in S \):  
\begin{equation}\label{eq: hR}
 h_R := \lim _{\Timet\rightarrow +\infty} - \frac{1}{\Timet} \int_{L_x} p(x,y,\Timet) \frac{{\tilde \tau}(y) } {{\tilde \tau}(x) } \log \left( p(x,y,\Timet) \frac{{\tilde \tau}(y) } {{\tilde \tau}(x) }
\right) \text{vol}_{g_P} (dy) ,
\end{equation}
and we also have
\begin{equation}\label{eq: hRkingman}
 h_R =  \lim _{\Timet\rightarrow +\infty}  \frac{1}{\Timet} \int _S \left( \int_{L_y}  p(x,y,\Timet) \frac{{\tilde \tau} (x)}{{\tilde \tau} (y)}  \log \left( p(x,y,\Timet)^{-1} \frac{{\tilde \tau}(y)}{{\tilde \tau}(x)}\right)    \text{vol} _{g_P} (dx) \right) d\mu (y) 
 \end{equation}

The number given by \eqref{eq: hR} is non negative. Indeed, Harnack inequality \cite{Moser} for heat kernel and for positive harmonic functions shows that there exists a constant \( C >0\) so that for any \(\Timet\geq 1\),  if \( d_P(y,y') \leq 1\) then 
\begin{equation}\label{eq: Harnack inequality} p(x,y,\Timet) \leq e^C p(x,y',\Timet) \text{ and } {\tilde \tau}(y) \leq e^C {\tilde \tau} (y') .\end{equation} 
In particular, given any \(y\in L_x\) we have 
\[ {\tilde \tau} (x) = \int_{L_x}  p(x,y', \Timet) {\tilde \tau}( y')  \text{vol}_{g_P}(dy') \geq \int _{B(y,1)} p(x,y', \Timet) {\tilde \tau}( y') \text{vol}_{g_P}(dy')   \]
\[ \geq e^{-2C} p(x,y,\Timet) {\tilde \tau}(y) \text{vol}_{g_P}(B(y,1)) . \]
Since the $g_P$-volume of leafwise balls of radius one is bounded from below by a positive constant, this proves that for \(\Timet\geq 1\), the densities \( p(x,y,\Timet) \frac{{\tilde \tau} (y)}{{\tilde \tau} (x)}\) are bounded from above by a constant independant of \(x,y,\Timet\). As a consequence,  
\[\int_{L_x} p(x,y,\Timet) \frac{{\tilde \tau}(y) } {{\tilde \tau}(x) } \log \left( p(x,y,\Timet) \frac{{\tilde \tau}(y) } {{\tilde \tau}(x) }
\right) \text{vol}_{g_P} (dy)\]
is bounded from above by a constant independant of \(x,\Timet\). We deduce by Equation \eqref{eq: hR} that $h_R \geq 0$, as claimed. 

Now let us prove that $h_L - h_D = h_R$.
Given a time \(\Timet>0\), denote by \( \mathcal E : \Gamma \to S \times S \) the map \(\mathcal E (\gamma) = (\gamma(0), \gamma(\Timet))\) and let  \(\nu := \mathcal E_* W^\mu_{g_P}\). Since \(\mu \) is stationary, the projections \( {\text{pr}_i}_* \nu\) on each factor are equal to \( \mu\), and \(\nu\) gives full mass to the graph of the foliation \(G(\mathcal F)  := \{ (x,y)\in S\times S, y \in L_x \} \).  By definition of \( W^\mu_{g_P}\), and the fact that \(\mu\) decomposes as an integral of harmonic volumes along the leaves, the conditional measures of \(\nu\) wrt the projection \(\text{pr}_1\) (resp. \(\text{pr} _2\)) are equal to the family of measures \(\nu_1^x \)  (resp. \(\nu_2^y\)) supported on \( L_x \) (resp. \( L_y \)) defined by  
\[\nu _1^x = p (x,y, \Timet) \text{vol}_{g_P} (dy)  \ , \ \text{ resp. }  \nu _2^y = p(x,y,\Timet) \frac{{\tilde \tau} (x)}{{\tilde \tau} (y)} \text{vol} _{g_P} (dx).\] 
Let us consider the function \( f_\Timet : G(\mathcal F) \rightarrow \mathbb R\) defined by 
\[ f_\Timet(x,y) = \log \left( p(x,y,\Timet)^{-1} \frac{{\tilde \tau}(y)}{{\tilde \tau}(x)}\right).\]
and let us compute $\int_{G(\mathcal F)} f_\Timet \, d\nu$ by two different ways.
On one hand, we have
\[ \int_{G(\mathcal F)} f_\Timet \, d\nu = \int _S \left( \int_{L_x} f_\Timet(x,y) \nu_1^x (dy) \right) d\mu(x) . \]
Using the expression of $f_\Timet$ and $\nu_1^x$, we get 
\[ \int_{G(\mathcal F)} f_\Timet \, d\nu =  \int _S h_\Timet(x)  \, d\mu(x) + \int_S \left( \int_{L_x} \log \left( \frac{{\tilde \tau}(y)}{{\tilde \tau}(x)} \right) p(x,y,\Timet) \text{vol}_{g_P}  (dy)   \right) d\mu (x)  \]
where \( h_\Timet(x) :=\int_{L_x}  \log \left(p(x,y,\Timet) ^{-1}\right) p (x,y, \Timet) \text{vol}_{g_P} (dy)\). Proposition \ref{p: cochd} gives 
\[ \forall \Timet > 0 \ , \  \int_{G(\mathcal F)} f_\Timet \, d\nu =  \int _S h_\Timet(x)  \, d\mu(x) - \Timet h_D . \]
Using Equation \eqref{eq: entropykingman}, we obtain
\[ \lim_{\Timet\rightarrow +\infty}  \frac{1}{\Timet} \int_{G(\mathcal F)} f_\Timet \, d\nu = h_L - h_D .\]
On the other hand, we have 
\[  \int_{G(\mathcal F)} f_\Timet \, d\nu  =  \int _S \left( \int_{L_y} f_\Timet(x,y) \,  \nu_2^y (dx) \right) \, d\mu(y) . \]
Using the expression of $f_\Timet$ and $\nu_2^y$, we get
\[  \int_{G(\mathcal F)} f_\Timet \, d\nu  = \int _S \left( \int_{L_y}   \log \left( p(x,y,\Timet)^{-1} \frac{{\tilde \tau}(y)}{{\tilde \tau}(x)}\right)   p(x,y,\Timet) \frac{{\tilde \tau} (x)}{{\tilde \tau} (y)} \text{vol} _{g_P} (dx) \right) \, d\mu (y) . \]
Using Equation \eqref{eq: hRkingman}, we obtain $\lim_{\Timet\rightarrow +\infty}  \frac{1}{\Timet}  \int_{G(\mathcal F)} f_\Timet \, d\nu = h_R$, which completes the proof of the formula $h_L - h_D = h_R$.


\begin{thebibliography}{Dillo 83}

\bibitem[AD25]{AlvarezDeroin} {\sc A. Alvarez \& B. Deroin.} Stabilit\'e structurelle du feuilletage de Jouanolou de degr\'e \(2\). {\em Publ. Math. IHES} (2025). 

\bibitem[ABW]{ABW} {\sc S. Axler, P. Bourdon, R. Wade.} Harmonic function theory, Second Edition, Graduate Text in Mathematics, 137, Springer.
 
\bibitem[BS02]{BS}{\sc B. Berndtsson \& N. Sibony.} The \(\overline{\partial}\)-equation on a positive current. {\em Invent. Math. } 147, 371--428 (2002).

\bibitem[Br99] {Brunella courbes entieres} {\sc M. Brunella.} Courbes enti\`eres et feuilletages holomorphes. {\em Enseign. Math.} (2) 45 (1999), no. 1-2, 195--216.

\bibitem[Br03]{Brunella_uniformization} {\sc M. Brunella.}  Subharmonic variation of the leafwise Poincar\'e metric. {\em Invent. Math.} 152 (2003), no. 1, 119--148. 

\bibitem [Br04] {Brunella} {\sc M. Brunella.} Birational geometry of foliations. Publica\c{c}oes Matem\'aticas do IMPA. Rio de Janeiro, 2004.

\bibitem [Br07] {Brunella conforme} {\sc M. Brunella.} Mesures harmoniques conformes et feuilletages du plan projectif complexe. {\em Bull. Braz. Math. Soc.}  38 (2007), no.4, 517-524.

\bibitem[CdF01]{Camacho de Figueiredo} {\sc C. Camacho \& L. H. de Figueiredo.} The dynamics of the Jouanolou foliation on the complex projective 2-space, {\em Ergod. Theory Dyn. Syst.}, 21 (2001), 757--766.

\bibitem[CLNS84]{CLNS84} {\sc C. Camacho, A. Lins Neto \& P. Sad.} Topological invariants and equidesingularization for holomorphic vector fields. {\em J. Diff. Geom.} 20 (1984) 143-174.

\bibitem[CLNS88]{CLNS88} {\sc C. Camacho, A. Lins Neto \& P. Sad.} Minimal sets of foliations on complex projective spaces. {\em Publ. Math. I.H.E.S.}, tome 68 (1988), p. 187-203.

\bibitem[Ca93]{Candel_uniformization}{\sc  A. Candel} Uniformization of surface laminations. {\em Ann. Sci. \'Ecole Norm. Sup.} (4) 26 (1993), no. 4, 489--516.

\bibitem [Ca03] {Candel} {\sc A. Candel.} The harmonic measures of Lucy Garnett. {\em Advances in Mathematics}  176 (2003) p. 187--247.

\bibitem[CC] {Candel-Conlon} {\sc A. Candel, L. Conlon.} Foliations I. Graduate studies in mathematics, Vol. 23, AMS (2000). 

\bibitem[CFS]{CFS} {\sc I.P. Cornfeld, S.V. Fomin, Ya. B. Sinaï.} Ergodic Theory.  Die Grundlehren der mathematischen Wissenschaften. Vol. 245. Springer Verlag (1985).

\bibitem [D] {Demailly} {\sc J.-P. Demailly.} Complex Analytic and Differential Geometry, available at https://www-fourier.ujf-grenoble.fr/~demailly/documents.html. 

\bibitem [De05] {Deroin} {\sc B. Deroin.} Hypersurfaces Levi-plates immerg\'ees dans les surfaces complexes de courbure positive.
{\em Ann. Sci. \'Ec. Norm. Sup.} (4) 38 (2005), no. 1, 57-75.

\bibitem [DD17] {Deroin-Dujardin} {\sc B. Deroin \& R. Dujardin.}  Complex projective structures: Lyapunov exponent, degree and harmonic measure. {\em Duke Math. J.} 166 (2017), 2643-2695. 

\bibitem [DD16] {Deroin-Dupont} {\sc B. Deroin \& C. Dupont.} Topology and dynamics of laminations in surfaces of general type. {\em J. Amer. Math. Soc.} 29 (2016), 495-535.

\bibitem [DDK24] {DDK} {\sc B. Deroin, C. Dupont \& V. Kleptsyn.} Convexity of complements of limits sets for holomorphic foliations on surfaces.  {\em Math. Ann.} 388, 2727-2753 (2024). 

\bibitem [DK05] {Deroin-Kleptsyn} {\sc B. Deroin \& V. Kleptsyn.} Random conformal dynamical systems. {\em Geom. Funct. Anal.} 17 (2007), no. 4, 1043-1105.

\bibitem[DNS14]{DNS_Entropy II}  {\sc  T.-C.Dinh, V.-A. Nguy\^en \& N. Sibony.} Entropy for hyperbolic Riemann surface laminations II. Frontiers in complex dynamics, 593--621, Princeton Math. Ser., 51, Princeton Univ. Press, Princeton, NJ, 2014.

\bibitem[DNS18]{DNS_unique ergodicity} {\sc  T.-C.Dinh, V.-A. Nguy\^en \& N. Sibony.} Unique Ergodicity for foliations on compact K\"ahler surfaces. {\em Duke Math. J.} 171(13) (2022), 2627-2698.

\bibitem[Fe69]{Federer} {\sc H. Federer}, Geometric measure theory. Die Grundlehren der mathematischen Wissenschaften. Vol. 153. Springer Verlag (1969).

\bibitem[FS10]{FS} {\sc J. E. Fornaess \& N. Sibony.} Unique ergodicity of harmonic currents on singular foliations of \(\mathbb P^2\). {\em Geom. Funct. Anal.} 19 (2010), no. 5, 1334--1377. 

\bibitem[Fr96]{Frankel} {\sc S. Frankel}, Harmonic analysis of surface group representations in $\mathrm{Diff}(\mathbb S^1)$ and Milnor type inequalities. Pr\'epublication 1125 de l'\'Ecole Polytechnique, 1996.

\bibitem [Ga83] {Garnett} {\sc L. Garnett.} Foliations, the ergodic theorem and Brownian motion. {\em J. Funct. Anal.}
 51 (1983), no.3, p. 285-311.

\bibitem[Gh93] {Ghys93} {\sc \'E. Ghys.} Sur les groupes engendr\'es par des diff\'eomorphismes proches de l'identit\'e. {\em Bol. Soc. Brasil. Mat.} (N.S.) 24 (1993), no. 2, 137-17.

\bibitem[Gh99] {Ghys} {\sc \'E. Ghys.} Laminations par surfaces de Riemann. Dynamique et g\'eom\'etrie complexes (Lyon, 1997), 49--95, Panor. Synth\`eses, 8, Soc. Math. France, 1999.

\bibitem[HS17]{HS} {\sc M. Hochman \& B. Solomyak.} On the dimension of Furstenberg measure for $SL_2(\R)$ random matrix products. {\em Invent. Math.}, 210(3): 815-875, (2017).

\bibitem[IY08]{IY}{\sc Y. Ilyashenko \& S. Yakovenko.}  Lectures on analytic differential equations. Graduate Studies in Mathematics, vol. 86, AMS, 2008. 

\bibitem[Jo79]{Jouanolou} {\sc J.-P. Jouanolou.} \'Equations de Pfaff alg\'ebriques.  {\em Lecture Notes in Mathematics} vol. 708, Springer, Berlin, 1979.

\bibitem[Ka86]{K2} {\sc V.A. Kaimanovich.} Brownian motion and harmonic functions on covering manifolds. An entropic approach. {\em Soviet Math. Dokl.},  33 (1986), 812-816.

\bibitem[Ka88]{Kaimanovich}{\sc V.A. Kaimanovich.} Brownian motion on foliations: entropy, invariant measure, mixing. {\em Functional Analysis and Its Applications}, 22, 326-328 (1988).

\bibitem[Ka00]{K2000}{\sc V.A. Kaimanovich.} The Poisson formula for groups with hyperbolic properties. {\em Ann. Math.} (2) 152, No. 3, 659-692 (2000).

\bibitem[Kal]{Kallenberg} {\sc O. Kallenberg.} Foundations of Modern Probability. Springer (2021).

\bibitem[Le83]{L00} {\sc F. Ledrappier}. Une relation entre entropie, dimension et exposant pour certaines marches
al\'eatoires. {\em C. R. Acad. Sci. Paris S\'er. I Math.} 296 (1983), no. 8, 369-372.

\bibitem[Le84a]{L0} {\sc F. Ledrappier}. Quelques propri\'et\'es des exposants caract\'eristiques. Ecole d'\'et\'e de probabilit\'es de Saint-Flour, XII-1982, 305-396, Lecture Notes in Math., 1097, Springer, Berlin, 1984.

\bibitem[Le84b]{L1} {\sc F. Ledrappier}, Fronti\`ere de Poisson pour les groupes discrets de matrices (Poisson boundary of discrete groups of matrices). {\em C. R. Acad. Sci., Paris, S\'er. I Math.}, 298 (1984), 393-396.

\bibitem[Le85]{L2} {\sc F. Ledrappier}, Poisson boundaries of discrete groups of matrices. {\em Isr. J. Math.} 50, 319-336, 1985.

\bibitem[Le96]{L3} {\sc F. Ledrappier.}  Profil d'entropie dans le cas continu. {\em Ast\'erisque}, {236} \rm(1996), 189-198.

\bibitem[LL23]{LL} {\sc F. Ledrappier \& P. Lessa.} Exact dimension of Furstenberg measures. {\em Geom. Funct. Anal.}, 33, 1-54, 2023.


\bibitem[LN88]{LinsNetoJouanolou}{\sc A. Lins Neto.} Algebraic solutions of polynomial differential equations and foliations in dimension two. {\em Holomorphic Dynamics (1986)}, Lecture Notes in Math., 1345, 192--232, Springer, Berlin, 1988. 

\bibitem[LN99] {Lins Neto} {\sc A. Lins Neto.} Uniformization and the Poincar\'e metric on the leaves of a foliation by curves. {\em Bol. Soc. Braz. Mat.} Vol. 31, No. 3, (1999), 351--366.

\bibitem[LR03]{LorayRebelo}{\sc F. Loray \& J. Rebelo.} Minimal, rigid foliations by curves on \(\mathbb P^n\), {\em J. Eur. Math. Soc.}, 5 (2003), 147--201. 

\bibitem[Mo64]{Moser} {\sc J. Moser.} A Harnack inequality for parabolic differential operators. {\em Comm. Pure Appl. Math.} 17 (1964), 101--134.

\bibitem[N94]{N94} {\sc I. Nakai.} Separatrices for nonsolvable dynamics on $({\mathbb C}, 0)$. {\em Ann. Inst. Fourier} { 44} (1994), no. 2, 569-599.  

\bibitem[Ng17]{NguyenAMS} {\sc V.-A. Nguy\^en.} Oseledec multiplicative ergodic theorem for laminations. {\em Mem. Amer. Math. Soc.} 246 (2017).

\bibitem[Ng18]{Nguyen} {\sc V.-A. Nguy\^en.} Singular holomorphic foliations by curves I: Integrability of holonomy cocyle in dimension \(2\). {\em Invent. Math.} 212 (2018), no. 2, 531--618.

\bibitem[Ng23]{NguyenII} {\sc V.-A. Nguy\^en.} Singular holomorphic foliations by curves II: Negative Lyapunov exponent. {\em The Journal of Geometric Analysis}, 33, 315 (2023).

\bibitem[Sk] {Skoda} {\sc H. Skoda.}  Prolongement des courants, positifs, ferm\'es de masse finie. {\em Invent. Math.} 66 (1982), no. 3, 361--376.

\bibitem[Str00]{Stroock} {\sc D. Stroock.} An Introduction to the Analysis of Paths on a Riemannian Manifold. Volume 74 of {\em Mathematical Surveys and Monographs}. AMS, 2000.

\bibitem[Su76] {Sullivan}{\sc D. Sullivan.} Cycles for the dynamical study of foliated manifolds and complex manifolds. {\em Invent. Math.} 36 (1976), 225--255.

\bibitem[Ve87]{Verjovsky} {\sc A. Verjovsky.} A uniformization theorem for holomorphic foliations. The Lefschetz centennial conference, Part III (Mexico City, 1984), 233--253,
Contemp. Math., 58, III, Amer. Math. Soc., Providence, RI, 1987. 

\bibitem[Y82]{Y82} {\sc L.S. Young.} \it{Dimension, entropy and Lyapounov exponents}, \rm Ergodic Theory \& Dynamical Systems, 2 \rm (1982), no. 1, 109-124.

\end{thebibliography}
\end{document}